\newcolumntype{L}{>{$}p{0.11\linewidth}<{$}}
\newcolumntype{M}{>{$}p{0.80\linewidth}<{$}}
\DeclareMathOperator{\Hom}{\mathscr{H}\text{\kern -3pt {\calligra\large om}}\,}
\numberwithin{equation}{section}
\theoremstyle{plain} 
\newtheorem{thm}[equation]{Theorem}
\newtheorem{cor}[equation]{Corollary}
\newtheorem{lem}[equation]{Lemma}
\newtheorem{prop}[equation]{Proposition}
\theoremstyle{definition}
\newtheorem{defn}[equation]{Definition}
\theoremstyle{remark}
\newtheorem{rem}[equation]{Remark}
\DeclareMathOperator{\GU}{GU}
\DeclareMathOperator{\SO}{SO}
\DeclareMathOperator{\Sp}{Sp}
\DeclareMathOperator{\Quot}{Quot}
\DeclareMathOperator{\Lie}{Lie}
\DeclareMathOperator{\End}{End}
\DeclareMathOperator{\Adm}{Adm}
\DeclareMathOperator{\Gal}{Gal}
\DeclareMathOperator{\id}{id}
\DeclareMathOperator{\lt}{lt}
\DeclareMathOperator{\trace}{Tr}
\DeclareMathOperator{\Nilp}{Nilp}
\DeclareMathOperator{\charpoly}{char}
\DeclareMathOperator{\inv}{inv}
\DeclareMathOperator{\supp}{supp}
\DeclareMathOperator{\Spf}{Spf}
\DeclareMathOperator{\Ad}{Ad}
\DeclareMathOperator{\Grass}{Grass}
\newcommand{\N}{\mathcal{N}}
\newcommand{\V}{\mathcal{V}}
\newcommand{\Q}{\mathbb{Q}}
\newcommand{\Z}{\mathbb{Z}}
\newcommand{\F}{\mathbb{F}}
\title{On the Rapoport-Zink space for $\GU(2, 4)$ over a ramified prime}
\author{Stefania Trentin}
\begin{document}

\begin{abstract}
  In this work, we study the supersingular locus of the Shimura variety associated to the unitary group $\GU(2,4)$ over a ramified prime. We show that the associated Rapoport-Zink space is flat, and we give an explicit description of the irreducible components of the reduction modulo $p$  of the basic locus. In particular, we show that  these are universally homeomorphic to either a generalized Deligne-Lusztig variety for a symplectic group or to the closure of a vector bundle over a classical Deligne-Lusztig variety for an orthogonal group. Our results are confirmed in the group-theoretical setting by the reduction method à la Deligne and Lusztig and the study of the admissible set.
\end{abstract}

\maketitle

\section{Introduction}\label{sec:intro}
\subsection{Motivation} 
Understanding arithmetic properties of Shimura varieties has been a fundamental question in recent developments in number theory and algebraic geometry. Shimura varieties of PEL type can be described as moduli spaces of Abelian varieties with additional structure, namely polarization, endomorphism and level structure, see \cite{kottwitz}*{Sec.\ 5}. The special fiber of a Shimura variety at a prime $p$ can be decomposed into finitely many \emph{Newton strata} according to the isogeny class of the $p$-divisible groups corresponding to each Abelian variety. Studying the Newton stratification of the special fiber of a suitable integral model has been a fundamental tool to understand the arithmetic of Shimura varieties.

There is a unique closed Newton stratum, called the basic locus, which in the Siegel case coincides with the supersingular locus of the Shimura variety. A good understanding of the basic Newton stratum is expected to be essential to prove results about general Newton strata and the whole special fiber using an induction process, as stated in the \textit{Harris-Viehmann conjecture} \cite{rv}*{Sec.\ 5.1}. Moreover, a concrete description of basic loci has been of great importance, among others, in the work of Rapoport, Terstiege and Zhang on the arithmetic fundamental lemma, see \cite{rtz}. For an overview of other applications in arithmetic geometry of the study of basic loci we refer to \cite{rtw} and \cite{vollaard}.

The aim of the present work is to study the supersingular locus of the reduction modulo $p$ of the Shimura variety for the unitary group $\GU(2,4)$ over a ramified prime $p$. In \cite{rz} Rapoport and Zink prove the Uniformization Theorem, which enables us to formulate this problem in terms of a closed subscheme of a moduli space of $p$-divisible groups with additional structure, called Rapoport-Zink space. Over a field of equal characteristic, for example over $\F_p(\!(t)\!)$, this corresponds to the study of some affine Deligne-Lusztig varieties associated to the group-theoretical datum underlying the Shimura variety. In this paper, we give a concrete description of the irreducible components of the reduced scheme underlying the reduction modulo $p$ of the basic locus of the Rapoport-Zink space corresponding to ramified $\GU(2,4)$. In addition, we prove that the Rapoport-Zink space is \emph{flat} over the ring of integers of the quadratic ramified extension of $\Q_p$ associated to the Shimura variety. 

Previous works on the supersingular locus of Shimura varieties for unitary groups include \cite{vollaard} and \cite{vw} for the group $\GU(1, n-1)$ over an inert prime, \cite{rtw} for $\GU(1, n-1)$ over a ramified prime and \cite{harris} for the same group over a split prime. The Rapoport-Zink space for $\GU(2,2)$ over an inert prime has been studied in \cite{howardpappas} and over a ramified prime in \cite{oki}. In all these cases, the irreducible components of the basic locus are isomorphic to a (generalized) Deligne-Lusztig variety. Moreover, the set of irreducible components and their intersection pattern can be described in terms of some Bruhat-Tits building. These observations have motivated the work of \cite{ghn_fully}, which gives a complete classification of the Shimura varieties whose basic locus has irreducible components  isomorphic to (generalized) Deligne-Lusztig varieties. These are called \emph{fully Hodge-Newton decomposable} Shimura varieties. This property does not depend on the level structure and can be completely formulated in terms of the associated group-theoretical datum.  It is also shown in \textit{loc.cit.}\ that the basic locus of a fully Hodge-Newton decomposable Shimura variety admits a decomposition as a union of classical Deligne-Lusztig varieties, and that this decomposition is actually a stratification. The index set and closure relations of the stratification are encoded in the associated Bruhat-Tits building. The cases mentioned so far all belong to this family of Shimura varieties, compare the table in \cite{ghn_fully}*{Sec.\ 3}. 

The starting point of this paper is the question whether there exists in more general cases a description of the basic locus \textit{in terms of} classical Deligne-Lusztig varieties and whether this decomposition is a stratification. The existence of a relation between the basic locus and classical Deligne-Lusztig varieties has already been conjectured for example in \cite{rtw}. The work of Fox and Imai \cites{fox} and of Fox,  Howard and Imai \cite{fox2} gives a positive answer for the supersingular locus of $\GU(2, n-2)$ over an inert prime. In this case the irreducible components are realized as closed subschemes of flag schemes over Deligne-Lusztig varieties. As a first approach to the problem over a ramified prime, we choose to study the first example of non-fully Hodge-Newton decomposable unitary group, that is the one with the lowest dimension and signature, namely $\GU(2,4)$. As we are going to see, the irreducible components of the basic locus are in this case describable in terms of some generalized Deligne-Lusztig varieties for a symplectic group or in terms of vector bundles over classical Deligne-Lusztig varieties for an orthogonal group. Moreover, there is a decomposition of each irreducible component which resembles the stratification in the fully Hodge-Newton decomposable case, but the closure relations are not as neat, \textit{i.e.}\ the closure of some strata cannot be expressed as a union of other strata. One can still find a bijection between the set of components and the vertices of the Bruhat-Tits building. 

Our results are confirmed by the behavior of the affine Deligne-Lusztig varieties associated to our problem in the equal characteristic case. Here the reduction method \`a la Deligne and Lusztig delivers exactly the same decomposition as the one we obtain for the Rapoport-Zink space. We are going to have to distinguish between two cases, according to the fixed $p$-divisible group used to define the Rapoport-Zink space. The associated affine Deligne-Lusztig varieties in one of these two cases actually belong to a larger family introduced in \cite{he_aug} called \textit{of finite Coxeter type}. These are characterized by the fact that the reduction method produces either $\mathbb{A}^n$- or $\mathbb{G}_m^n$-bundles over classical Deligne-Lusztig varieties for a Coxeter element. In the second case, not all associated affine Deligne-Lusztig varieties are of finite Coxeter type. Indeed, the reduction method produces an additional $\mathbb{A}^1$-bundle over a classical Deligne-Lusztig variety which is not of Coxeter type. 

\subsection{Overview and main results}
In order to introduce the main results of the present work, we need to fix some notation, for more precise definitions and statements we refer to the main body. Following the exposition of \cite{kr}*{Sec.\ 2} we briefly recall the global moduli problem that describes unitary Shimura varieties. Let $0 \le s \le n$. Let $K = \Q[\sqrt{\Delta}]$ be an imaginary quadratic extension of $\Q$, with ring of integers $\mathcal{O}_K$ and non-trivial Galois automorphism $\sigma$. We consider the functor $\mathcal{M}(K, s, n-s)$ over the category of locally Noetherian $\mathcal{O}_K$-schemes associating to such a scheme $S$ the groupoid of  triples of the form $(A, \iota, \lambda)$. Here $A$ is an Abelian scheme over $\mathcal{O}_S$, equipped with an action $\iota$ of $\mathcal{O}_K$ and a principal polarization $\lambda$, whose Rosati involution induces via $\iota$ the automorphism $\sigma$ on $\mathcal{O}_K$. The action $\iota$ is also required to satisfy Kottwitz' determinant condition and Pappas' wedge condition on the Lie algebra of $A$, see \cite{kr}*{2.1, 2.2}. It is proved in \cite{kr}*{Prop.\ 2.1} that $\mathcal{M}(K, s, n-s)$ is a Deligne-Mumford stack over $\mathcal{O}_K$. Moreover, the wedge condition ensures flatness for $s = 1$, as shown in \cite{pap}*{Thm.\ 4.5}. It is conjectured in \cite{pap}*{4.16} that this holds for any signature, which is supported by computational evidence. We also recall there are some variants of the moduli problem, which satisfy flatness in higher signature and dimension and have been introduced for example in \cite{hernandez} and \cite{zachos}. Our first main result is shown in Section \ref{sec:moduli} and concerns flatness of $\mathcal{M}(K, 2, 4)$. 
\begin{prop}
  Assume that $2 \nmid \Delta$. Then $\mathcal{M}(K, 2, 4)$ is flat over $\mathcal{O}_K$.
\end{prop}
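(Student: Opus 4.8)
The plan is to reduce to the associated local model and to prove that the local model is flat over $\mathcal O_K$ by an explicit analysis of its special fibre. Flatness over the Dedekind domain $\mathcal O_K$ may be tested after localizing and completing at each maximal ideal $\mathfrak p$. If $\mathfrak p$ lies over a prime $p$ unramified in $K$, then $\mathcal M(K,2,4)$ is smooth over $\mathcal O_{K,\mathfrak p}$ --- Kottwitz' determinant condition alone defines a smooth local model in the good-reduction case --- so nothing has to be shown. Because $2\nmid\Delta$, the prime $2$ is unramified in $K$; hence every remaining case concerns a prime $\mathfrak p$ over an \emph{odd} ramified prime $p$, and $\mathcal O_E:=\widehat{\mathcal O}_{K,\mathfrak p}$ is a complete discrete valuation ring, ramified quadratic over $\Z_p$, with uniformizer $\pi$, residue field $k$ and fraction field $E$. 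By the local model diagram (\cite{rz}, and \cite{pap} for the version incorporating the wedge condition; compare \cite{oki} for $\GU(2,2)$), $\mathcal M(K,2,4)\otimes_{\mathcal O_K}\mathcal O_E$ is smooth-locally isomorphic to the local model $M^{\mathrm{loc}}$, so it suffices to prove that $M^{\mathrm{loc}}$ is flat over $\mathcal O_E$.

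I would then write $M^{\mathrm{loc}}$ down explicitly. Fixing a self-dual $\mathcal O_E$-lattice $\Lambda$ in the hermitian space (the one attached to the principal polarization, of $\mathcal O_E$-rank $6$ and $\Z_p$-rank $12$), $M^{\mathrm{loc}}$ represents the functor on $\mathcal O_E$-algebras $R$ parametrizing rank-$6$ $R$-direct summands $\mathcal F\subset \Lambda\otimes_{\Z_p}R$ that are (i) stable under the action $\Pi:=\iota(\pi)$ of $\mathcal O_E$, (ii) totally isotropic --- hence Lagrangian --- for the perfect alternating pairing on $\Lambda\otimes_{\Z_p}R$ induced by the polarization, (iii) subject to Kottwitz' condition $\charpoly\bigl(\Pi\mid(\Lambda\otimes_{\Z_p}R)/\mathcal F\bigr)=(T-\pi)^2(T+\pi)^4$, and (iv) subject to Pappas' wedge condition for signature $(2,4)$, which is the vanishing $\bigwedge^{3}(\Pi-\pi)=0$ on $\mathcal F$ together with its companion condition on $\Pi+\pi$ (see \cite{pap}). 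On a standard affine chart of the Grassmannian $\Grass(6,\Lambda\otimes_{\Z_p}R)$ these become explicit polynomial relations, with coefficients in $\mathcal O_E$, among the entries of the matrix of $\mathcal F$. Over the generic point the splitting $\mathcal O_E\otimes_{\Z_p}E\cong E\times E$ separates the data according to the signature, and one identifies the generic fibre $M^{\mathrm{loc}}_E$ with the Grassmannian $\Grass(2,6)_E$; in particular $M^{\mathrm{loc}}_E$ is smooth and irreducible of dimension $2\cdot 4=8$.

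Since $\mathcal O_E$ is a discrete valuation ring, flatness of $M^{\mathrm{loc}}$ is equivalent to the absence of an associated prime supported in the special fibre; equivalently, the closed immersion $M^{\flat}\hookrightarrow M^{\mathrm{loc}}$ must be an isomorphism, where $M^{\flat}$ is the scheme-theoretic closure of $M^{\mathrm{loc}}_E$ (automatically $\mathcal O_E$-flat). As the two generic fibres agree, this amounts to the equality of special fibres, which I would establish in two steps. First, \emph{topological flatness}: using the explicit equations one checks that every $k$-point of $M^{\mathrm{loc}}_s$ corresponds to a lattice in the admissible set for the relevant coweight, so that $(M^{\mathrm{loc}}_s)^{\mathrm{red}}$ is a union of Schubert varieties in the affine flag variety, each of dimension at most $8$. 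Second, \emph{reducedness}: from the chart equations one exhibits a regular sequence showing $M^{\mathrm{loc}}$ is Cohen--Macaulay, and one verifies the Jacobian criterion at a generic point of each $8$-dimensional component, so that $M^{\mathrm{loc}}_s$ is generically reduced, hence (having no embedded components) reduced. Since $M^{\flat}_s$ is reduced as well --- its components are Schubert varieties, which are Frobenius split --- and $M^{\flat}_s\hookrightarrow M^{\mathrm{loc}}_s$ is a closed immersion of schemes with the same underlying space, the two fibres coincide, proving flatness.

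The delicate point is the second step. It is precisely here that the \emph{naive} local model defined by the determinant condition alone fails to be flat, and one must verify that in signature $(2,4)$ Pappas' wedge condition already repairs this --- that no strengthening of Pappas--Rapoport spin type is needed in this low-rank case. I expect this to require the honest chart-by-chart computation with the wedge equations, in the spirit of \cite{pap} for signature $1$ and of the computational evidence alluded to above for higher signature. The computation is not wasted: the resulting explicit description of $M^{\mathrm{loc}}_s$ is exactly what is used in the later sections to read off the irreducible components of the reduction modulo $p$ of the basic locus, and it simultaneously yields the flatness of the corresponding Rapoport--Zink space.
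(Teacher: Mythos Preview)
Your framework is sound and you have correctly located the difficulty: the reduction to the local model, the identification of the generic fibre with $\Grass(2,6)_E$, and the observation that flatness over the DVR comes down to reducedness of the special fibre are all standard. But your second step is not a proof---it is the entire content of the proposition, and your proposal contains no mechanism for carrying it out. Phrases like ``one exhibits a regular sequence'', ``one verifies the Jacobian criterion'', and ``the honest chart-by-chart computation'' name the problem without solving it. The crucial constraint, which you do not address, is that any such verification must work for \emph{every} odd ramified prime $p$; a computation at a single prime (which is what a chart computation would give you) says nothing about the rest, and there are infinitely many candidates for $p$.

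The paper's argument is different in method and this is where its actual content lies. Following \cite{pap}*{4.16}, flatness of the local model reduces to showing that the ideal $J\subset\F_p[x_{ij}]$ generated by the entries of $X^2$, the $3\times 3$ minors of a generic symmetric $6\times 6$ matrix $X$, and $\trace(X)$ is \emph{radical} for every odd prime $p$. Radicality is then proved by an elimination scheme: with respect to a fixed lexicographic order one passes down the chain of elimination ideals $J\cap\F_p[x_{ij},\dots,x_{66}]$ and shows, one variable at a time, that the image of $J$ in each successive univariate quotient is radical (principal, generated by a linear or quadratic polynomial whose leading coefficient and discriminant are non-zerodivisors modulo $J$). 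The point your outline lacks entirely is how this is made uniform in $p$: a single Gr\"obner basis is computed over $\Q$, and Winkler's theorem guarantees that its reduction mod $p$ remains a Gr\"obner basis for all $p$ outside an explicit finite set of ``unlucky primes'' (all $\le 809$ here), which are then covered by Pappas' prior machine checks. Your Cohen--Macaulay\,+\,$R_0$ route is not wrong in principle---it is equivalent to what is needed---but you have given no argument for either property, and no indication of how either could be established uniformly in the residue characteristic.
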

The proof of this first result builds on the reduction of the problem to a question in algebraic geometry and commutative algebra presented in \cite{pap}*{4.16}. In particular, in \textit{loc.cit.}\ the author relates the flatness conjecture to an open question in invariant theory raised by \cite{dcp}. Our proof combines techniques from different mathematical fields, from computational algebra to model theory, and can be adapted to prove flatness for $n = 8$ or higher. We are optimistic that our results could serve as the basis for an induction process on the dimension $n$ to prove flatness of $\mathcal{M}(K, 2, n-2)$. 

Once we have established flatness, we can move to the description of the irreducible components of the basic locus of $\mathcal{M}(K, 2, 4)$. To do so we introduce the associated Rapoport-Zink space $\mathcal{N}$. It parametrizes  $p$-divisible groups with some additional structure and equipped with a quasi-isogeny to a fixed $p$-divisible group $\mathbb{X}$, we refer to Section \ref{sec:moduli} for a precise definition. In particular, we focus on $\bar{\N}^0_{\mathrm{red}}$, the reduced scheme underlying the reduction modulo $p$ of the closed subscheme $\mathcal{\N}^0$ of $\N$ where the  quasi-isogeny to $\mathbb{X}$ has height zero. The Uniformization Theorem \cite{rz}*{Thm.\ 6.30} gives an isomorphism of formal stacks between the completion of $\mathcal{M}(K, s,n- s)$ along its supersingular locus and a double quotient of $\bar{\N^0}_{\mathrm{red}}$.

Via Dieudonn\'e theory we associate to the fixed $p$-divisible group $\mathbb{X}$ a Hermitian $E$-vector space $C$ of dimension $n = 6$. Here $E$ is the quadratic extension of $\Q_p$ given by the completion of $K$. In $C$ we consider two families of $\mathcal{O}_E$-lattices, whose properties we study in Section \ref{sec:lattices}. As in \cite{rtw}, we say that $\Lambda$ is a vertex lattice of type $t$ if $p\Lambda \subset \Lambda^\sharp \subset \Lambda$, and the quotient $\Lambda/\Lambda^{\sharp}$ is a $\F_p$-vector space of dimension $t$. Here $\Lambda^{\sharp}$ is the dual of $\Lambda$ with respect to the Hermitian form and it contains $p\Lambda$. As in \cite{jac}, we say a lattice $\Lambda$ is $2$-modular if its dual $\Lambda^{\sharp}$ is equal to $p\Lambda$. This second type of lattices does not play any role in \cite{rtw}, and is a specific feature of signature $(2,n-2)$. As we are going to see, the behavior of the irreducible components of $\bar{\N^0}_{\mathrm{red}}$ is quite different depending on the sign of the discriminant of $C$. 

Before giving a description of the irreducible components of $\bar{\N}^0_{\mathrm{red}}$, we recall in Section \ref{sec:dlvs} some properties of classical Deligne-Lusztig varieties. In particular, we study three families of varieties, one for the symplectic group, which is the generalization to signature $(2, n-2)$ of the varieties introduced in \cite{rtw}*{Sec.\ 5}, and two for the orthogonal group. These varieties become relevant in the subsequent sections. As a preparation for the main result, we study in Section \ref{sec:closedpoints} the $k$-valued points of $\bar{\N}^0_{\mathrm{red}}$ for any algebraically closed field $k$. Section \ref{sec:geometry} is dedicated to the proof of the following theorem.
\begin{thm}\label{thm:intro}
  \par{i)} Assume $C$ is split, that is with discriminant equal to $1$. Then $\bar{\N}^0_{\mathrm{red}}$ has irreducible components of two types.
  \begin{enumerate}
    \item $\N_{\mathcal{L}}$, for every vertex lattice $\mathcal{L} \subset C$ of type $6$. These components are universally homeo\-morphic to generalized Deligne-Lusztig varieties for the symplectic group $\Sp_6$ and have dimension $5$.
    \item $\N_{\Lambda}$, for every $2$-modular lattice $\Lambda \subset C$. These components are universally homeomorphic to the closure of a line bundle over a generalized Deligne-Lusztig variety for the orthogonal group $\SO_6$ and have dimension $4$.
  \end{enumerate}
  \par{ii)} Assume $C$ is non-split, that is with discriminant equal to $-1$. Then $\bar{\N}^0_{\mathrm{red}}$ is pure of dimension $4$ and has irreducible components of two types.
  \begin{enumerate}
    \item One irreducible component $\N_{\Lambda}^1$ for every $2$-modular lattice $\Lambda \subset C$. These components are universally homeomorphic to the closure of a line bundle over a generalized Deligne-Lusztig variety for the non-split orthogonal group of rank $6$.
    \item Two irreducible components $\N_{\Lambda}^2$ for every $2$-modular lattice $\Lambda \subset C$. These components are universally homeomorphic to the closure of a rank-two vector bundle over a classical Deligne-Lusztig variety of Coxeter type for the non-split orthogonal group of rank $6$.
  \end{enumerate}
\end{thm}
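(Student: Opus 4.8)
The plan is to run the whole argument through the lattice-theoretic model of $\bar{\N}^0_{\mathrm{red}}(k)$ established in Section~\ref{sec:closedpoints}, where a $k$-point is encoded by a $W(k)$-lattice $M$ in the rational Dieudonn\'e module that is stable in the appropriate sense under $F$ and $V$, compatible with the $\mathcal{O}_E$-action and the polarization form, and subject to the Kottwitz and Pappas (wedge) conditions. First I would attach to each such $M$ a canonical bounding lattice $\Lambda(M)$ in $C$: the smallest lattice, among the vertex lattices and the $2$-modular lattices studied in Section~\ref{sec:lattices}, whose base change to $W(k)$ contains the relevant $F$- and $V$-span of $M$. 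The structure theory of these two families of lattices should guarantee that $\Lambda(M)$ always exists and is either a vertex lattice or a $2$-modular lattice; together with the fact that a vertex lattice of type $t<6$ embeds into one of type $6$ (when the latter exist), this yields
\[
  \bar{\N}^0_{\mathrm{red}} \;=\; \bigcup_{\mathcal{L}} \N_{\mathcal{L}} \;\cup\; \bigcup_{\Lambda} \N_{\Lambda},
\]
the first union over the maximal vertex lattices $\mathcal{L}$ of $C$ and the second over the $2$-modular lattices $\Lambda$, where $\N_{\mathcal{L}}$ (resp.\ $\N_{\Lambda}$) is the reduced closed subscheme cut out by $\Lambda(M)\subseteq\mathcal{L}$ (resp.\ $\Lambda(M)\subseteq\Lambda$). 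These subschemes are closed by inspection of the lattice inclusions; the real work is the geometry of each one and, at the end, deciding which of them are the irreducible components --- this is where the sign of the discriminant of $C$ intervenes.

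For a vertex lattice $\mathcal{L}$ of type $6$ --- which occurs in part (i) --- the next step is to construct a morphism from $\N_{\mathcal{L}}$ to a variety of subspaces (of suitable dimension, possibly a partial flag variety) of the $6$-dimensional $\F_p$-vector space $\mathcal{L}/\mathcal{L}^{\sharp}$, sending $M$ to the image there of $M$ or of a canonical sub- or quotient lattice of it. The Hermitian form on $C$ induces on $\mathcal{L}/\mathcal{L}^{\sharp}$ a non-degenerate alternating form, so the relevant structure group is $\Sp_6$ and the image of the morphism is one of the generalized Deligne-Lusztig varieties $X_w$ studied in Section~\ref{sec:dlvs}, for an explicit $w$ in the Weyl group of $\Sp_6$. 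One then shows this morphism is a universal homeomorphism onto $X_w$: it is proper, surjective and bijective on points, but reconstructing $M$ from its image involves $F$-spans and is only purely inseparable, which is precisely why the statement claims universal homeomorphism and not isomorphism. Deligne-Lusztig theory then gives both the irreducibility of $X_w$ and $\dim\N_{\mathcal{L}}=\ell(w)=5$.

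The $2$-modular lattices are handled in parallel, but now the pertinent $\F_p$-space is $\Lambda/p\Lambda$ equipped with its induced \emph{symmetric} form, whose discriminant coincides (up to the standard normalization) with that of $C$; this is exactly where the split/non-split dichotomy enters, giving split $\SO_6$ in part (i) and the non-split form in part (ii). In part (i) the morphism $\N_{\Lambda}\to X_w$ to a generalized Deligne-Lusztig variety for split $\SO_6$ has, over a dense open, one-dimensional affine fibers; this realizes that open locus as (universally homeomorphic to) the total space of a line bundle over $X_w$, whence $\N_{\Lambda}$ --- being its closure --- is the closure of a line bundle over a generalized Deligne-Lusztig variety, of dimension $4$. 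Part (ii) is where I expect the main difficulty. There the relevant variety of isotropic subspaces of $\Lambda/p\Lambda$, equivalently the Deligne-Lusztig variety produced by the reduction, decomposes further, and one must determine precisely which isotropic subspace --- or which isotropic flag --- the Dieudonn\'e datum at a point selects. Carrying this out should produce the component $\N_{\Lambda}^1$, again a line bundle over a generalized Deligne-Lusztig variety for the non-split $\SO_6$, together with the two components $\N_{\Lambda}^2$, over each of which one finds a rank-two affine bundle over a \emph{classical} Deligne-Lusztig variety attached to a Coxeter element; passing to closures gives the stated descriptions, and since every component now has dimension $4$, the purity asserted in (ii).

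It remains to check that the listed varieties are precisely the irreducible components, with no redundancy. In part (i) the $\Sp_6$-components have dimension $5$ and the $2$-modular ones dimension $4$; one verifies directly from the lattice data that neither family lies inside the other and that vertex lattices of type $2$ or $4$ only contribute lower-dimensional strata inside the type-$6$ components, so they add nothing new. In part (ii) all of $\N_{\Lambda}^1$ and the two copies of $\N_{\Lambda}^2$ have dimension $4$ and one checks that they are pairwise non-nested, which simultaneously gives irredundancy and purity. Throughout, "universally homeomorphic" is established by the usual criterion --- the morphisms in question are proper, radicial and surjective. The single hardest point will be the $2$-modular analysis in the non-split case: pinning down the rank-two affine-bundle structure together with its degeneracy locus, controlling the closure (whose behavior, as anticipated in the introduction, is less clean than in the fully Hodge-Newton decomposable setting), and correctly accounting for the three irreducible pieces, namely $\N_{\Lambda}^1$ and the two copies of $\N_{\Lambda}^2$.
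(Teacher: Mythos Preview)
Your broad architecture matches the paper's: attach $\Lambda(M)$, cover $\bar{\N}^0_{\mathrm{red}}$ by the closed subschemes $\N_{\mathcal L}$ and $\N_{\Lambda}$, map $\N_{\mathcal L}$ to the symplectic Deligne--Lusztig variety $S_V$ on $V=\mathcal L/\mathcal L^\vee$, and handle the $2$-modular pieces via an orthogonal quotient. The vertex-lattice part is essentially correct as stated.

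The substantive gap is in the $2$-modular analysis. For a $2$-modular $\Lambda$ the scheme $\N_{\Lambda}$ you define (all $M$ with $\Lambda(M)\subset\Lambda$) is \emph{not} irreducible and is \emph{not} the closure of the line-bundle locus. One must first stratify $\N_{\Lambda}$ by the index $i$ in $\pi\Lambda_k\subset^{i} M+\pi\Lambda_k$; the paper writes $\N_{\Lambda}^{(i)}$ for the stratum and $\N_{\Lambda}^{\le 1}$ for the closure of $\N_{\Lambda}^{(1)}$. In the split case the key lemma (Lemma~\ref{lem:splitV1}) shows that every point of $\N_{\Lambda}$ with $i\ge 2$ already lies in some $\N_{\mathcal L}$ for a vertex lattice $\mathcal L$, so the genuinely new irreducible component contributed by $\Lambda$ is $\N_{\Lambda}^{\le 1}$, not $\N_{\Lambda}$. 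Your sentence ``$\N_{\Lambda}$ --- being its closure --- is the closure of a line bundle'' is therefore unjustified: $\N_{\Lambda}\supsetneq\N_{\Lambda}^{\le 1}$ in general. In the non-split case the same index stratification is what produces the three pieces: $\N_{\Lambda}^{\le 1}$ gives the line-bundle component, while inside $\N_{\Lambda}^{(2)}$ one must further cut out the open locus $\N_{\Lambda}^{(2)^\circ}$ mapping to the open stratum $Z_2\subset Q_W$ (Lemma~\ref{lem:spW2b}); the rank-two bundle lives there and its two irreducible components come from the two Coxeter pieces $X_B(t_2t_1)$ and $X_B(\Phi(t_2t_1))$. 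Without this index filtration you cannot isolate the correct components, nor verify irredundancy. A smaller but related slip: the six-dimensional orthogonal space is $W=\Lambda/\pi\Lambda$, not $\Lambda/p\Lambda$; since $p=\pi^2$ and $\Lambda^\vee=\pi^2\Lambda$, the quotient $\Lambda/p\Lambda$ is the twelve-dimensional symplectic space $V$, and the map to the $\SO_6$-variety factors through the further quotient $V\to W$.
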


As expected, there is a natural way to relate the irreducible components of $\bar{\N}^0_{\mathrm{red}}$ to classical Deligne-Lusztig varieties, which is however not an isomorphism. This is coherent with the fact that the Shimura variety for $\GU(2,4)$ is not fully Hodge-Newton decomposable in the sense of \cite{ghn_fully}. It is interesting to notice that in the split case the first type of irreducible components closely resembles those of the Rapoport-Zink space for signature $(1, n-1)$. One may ask whether it is possible to prove a stronger result, for example that the homeomorphisms are isomorphisms as in \cite{vw} and \cite{rtw}. This is discussed in detail in Remark \ref{rem:notiso}. In the non-split case, the fact that we have pairs of components of type $\N_{\Lambda}^2$, corresponds to the fact that in this case the orbit of a Coxeter element under the action of the Frobenius consists of two elements. 

Finally, in Section \ref{sec:adlvs} we study the group-theoretical datum associated to our problem. We recall some relevant definitions and results, and we study in detail the admissible set and the associated family of affine Deligne-Lusztig varieties for ramified $\GU(2,4)$. Using the reduction method \`a la Deligne and Lusztig, we show that the description of the irreducible components of $\bar{\N}^0_{\mathrm{red}}$ given in Theorem \ref{thm:intro} is mirrored by the behavior of the corresponding affine Deligne-Lusztig varieties. 

\subsection*{Acknowledgements} First and foremost I would like to thank my supervisor  Eva Viehmann for her support during my PhD. I am sincerely thankful for her constant help and feedback, which guided me through my studies. 

I wish to express my gratitude to Michael Rapoport and Torsten Wedhorn for very helpful discussions and for answering my questions on their papers \cite{rtw} and \cite{vw}. I am thankful to Felix Schremmer for sharing his knowledge on Coxeter groups and affine Deligne-Lusztig varieties, pointing me to the relevant literature for Section \ref{sec:dlvs}. I would like to thank Simone Ramello for introducing me to model theory and working out together the details of Remark \ref{rem:model}. I am also grateful to Urs Hartl and Damien Junger for helpful conversations.  

I was supported by the ERC Consolidator Grant 770936: \textit{NewtonStrat}, by the Ada Lovelace Fellowship of the Cluster of Mathematics Münster funded by the Deutsche Forschungsgemeinschaft (DFG, German Research Foundation) under Germany's Excellence Strategy EXC 2044 -390685587, Mathematics Münster: Dynamics-Geometry-Structure, and by the Deutsche Forschungsgemeinschaft (DFG, German Research Foundation) through the Collaborative Research Center TRR326 ``Geometry and Arithmetic of uniformized Structures'', project number 444845124.

\section{The moduli space}\label{sec:moduli}

In this section we introduce the Rapoport-Zink space associated with the Shimura variety for $\GU(2, n-2)$ over a ramified prime, and we prove its flatness in the case $n = 6$. We fix the notation, which we will use in the rest of this paper. Let $n$ be an integer greater or equal than $3$ and $p$ be an odd prime, we denote 
\begin{itemize}[nolistsep]
  \item[-] $E$ a ramified quadratic extension of $\Q_p$ with ring of integers $\mathcal{O}_E$,
  \item[-] $\pi$ a uniformizer of $E$ such that $\pi^2 = \pi_0$ is a uniformizer of $\Q_p$, this is possible as $p$ is odd,
  \item[-] $\mathbb{F}$ an algebraic closure of $\mathbb{F}_p$, its ring of Witt vectors is denoted by $W$ and its fraction field by $W_{\Q} = \Quot(W)$, 
  \item[-] $\breve{E} = E \otimes_{\Q_p} W_{\Q}$ and its ring of integers $\mathcal{O}_{\breve{E}} = \mathcal{O}_E \otimes_{\Z_p} W$,
  \item[-] $\sigma$ the Frobenius on $\mathbb{F}, W, W_{\Q}$ and also the map $1 \otimes \sigma$ on $\breve{E}$,
  \item[-] $\psi_0 : E \rightarrow \breve{E}$  the natural embedding and $\psi_1$ its conjugate, that is $\psi_1 = \psi_0 \circ \bar{\ }$.
\end{itemize}

Rapoport-Zink spaces were first introduced in \cite{rz}. They are moduli spaces parametrizing quasi-isogenies of $p$-divisible groups with additional structure. By the Uniformization Theorem, see \cite{rz}*{Thm.\ 6.30}, they play a crucial role in the study of the basic locus of the corresponding Shimura variety of PEL type. In this section we recall the definition of the Rapoport-Zink space $\N_{s,n}$ associated to the Shimura variety for $\GU(s, n-s)$. We follow the notation of \cite{rtw}*{Sec.\ 2} and of \cite{pap}*{Sec.\ 4}.

Fix a supersingular $p$-divisible group $\mathbb{X}$ of dimension $n$ and height $2n$ over $\mathbb{F}$ equipped with an action $\iota_{\mathbb{X}}: \mathcal{O}_E \rightarrow \End(\mathbb{X})$. Let $\lambda_{\mathbb{X}}$ be a principal quasi-polarization of $\mathbb{X}$ whose Rosati involution induces on $\mathcal{O}_E$ the non-trivial automorphism over $\Q_p$.

Let $\Nilp$ be the category of $\mathcal{O}_{\breve{E}}$-schemes $S$ such that $\pi\cdot \mathcal{O}_S$ is a locally nilpotent ideal sheaf. Fix $n \ge 3$ and $s\le n$. We study the moduli functor $\N_{s,n}$ associating to a scheme $S$ in $\Nilp$ the set of isomorphism classes of quadruples $(X, \iota, \lambda, \rho)$, where $X$ is a $p$-divisible group over $S$ and $\iota : \mathcal{O}_E \rightarrow \End(X)$ is a homomorphism satisfying the following two  conditions, introduced respectively by Kottwitz and Pappas, 
\begin{align}
  \charpoly(\iota(a)  \mid \Lie(X)) &= (T - \psi_0(a))^s(T-\psi_1(a))^{n -s} \\
  \bigwedge^{n-s +1} (\iota(\pi) - \pi \mid \Lie(X)) &= 0 ~~~~~~~~~~~
  \bigwedge^{s+1} (\iota(\pi) + \pi \mid \Lie(X) ) = 0. \label{eq:pap}                                 
\end{align}
Furthermore, $\lambda: X \rightarrow X^\vee$ is a principal quasi-polarization and $\rho : X \times_S (S \times_{\mathcal{O}_E} \F) \rightarrow \mathbb{X} \times_\F (S \times_{\mathcal{O}_E} \F)$ is an $\mathcal{O}_E$-linear quasi-isogeny such that $\lambda$ and $\rho^*\lambda_{\mathbb{X}}$ differ locally on $(S \times_{\mathcal{O}_E} \F)$ by a factor in $\Q_p^{\times}$. We also require that the Rosati involution associated to $\lambda$ induces on $\mathcal{O}_E$ the non-trivial automorphism over $\Q_p$. Last, two quadruples $(X, \iota, \lambda, \rho)$ and $(X', \iota', \lambda', \rho')$ are isomorphic if there is an $\mathcal{O}_E$-linear isomorphism $\alpha: X \rightarrow X'$ such that $\rho'\circ(\alpha \times_S (S \times_{\mathcal{O}_E} \F)) = \rho$ and $\alpha^*\lambda'$ is a $\Z_p^{\times}$-multiple of $\lambda$.

\begin{prop}\cite{rz}*{Sec.\ 6.9} 
 The moduli functor $\mathcal{N}_{s,n}$ is representable by a separated formal scheme $\N_{s,n}$ locally formally of finite type over $\Spf \mathcal{O}_{\breve{E}}$.
\end{prop}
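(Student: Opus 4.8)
The plan is to exhibit $\mathcal{N}_{s,n}$ as a closed formal subscheme of a Rapoport--Zink space of PEL type in the sense of \cite{rz}*{Ch.\ 3}, for which representability is already available, and then to observe that passing to a closed formal subscheme preserves both the finiteness and the separatedness properties claimed. So there is essentially nothing to invent: the work is in matching our moduli problem with the PEL formalism of \textit{loc.\ cit.}\ and in checking that the Pappas wedge condition is a closed condition.

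First I would package the fixed data $(\mathbb{X}, \iota_{\mathbb{X}}, \lambda_{\mathbb{X}})$ into an integral PEL datum of unitary type: the semisimple $\Q_p$-algebra $B = E$ with the involution given by the non-trivial automorphism over $\Q_p$, the $B$-module $V = E^n$ with an alternating $\Q_p$-form coming from a Hermitian form, the reductive group $G = \GU(V)$ of unitary similitudes, and the conjugacy class of cocharacters $\{\mu\}$ determined by the signature $(s, n-s)$ --- equivalently, by the Kottwitz determinant condition. The associated moduli functor on $\Nilp$ of \cite{rz}*{Def.\ 3.21}, call it $\breve{\mathcal{M}}$, parametrizes exactly our quadruples $(X, \iota, \lambda, \rho)$ with \emph{all} of our conditions imposed except the wedge condition \eqref{eq:pap}: the Kottwitz determinant condition is built in via $\{\mu\}$, the compatibility of $\lambda$ with $\rho^{*}\lambda_{\mathbb{X}}$ only up to a factor in $\Q_p^{\times}$ is precisely the similitude feature encoded by $G = \GU$ rather than $\mathrm{U}$, and the Rosati condition is part of the datum. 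By the representability theorem \cite{rz}*{Thm.\ 3.25} (the unitary instance being the one discussed in \cite{rz}*{Sec.\ 6.9}), $\breve{\mathcal{M}}$ is represented by a separated formal scheme, locally formally of finite type over $\Spf \mathcal{O}_{\breve{E}}$; the base ring is $\mathcal{O}_{\breve{E}} = \mathcal{O}_E \otimes_{\Z_p} W$ because the reflex field of a ramified unitary datum of signature $(s, n-s)$ with $s \neq n-s$ is $E$ itself.

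It then remains to see that $\mathcal{N}_{s,n} \hookrightarrow \breve{\mathcal{M}}$ is a closed immersion, i.e.\ that the wedge condition \eqref{eq:pap} is representable by a closed subscheme. For $S \in \Nilp$ the sheaf $\Lie(X)$ is locally free of rank $n$ over $\mathcal{O}_S$, and $\iota(\pi) \pm \pi$ act on it as $\mathcal{O}_S$-linear endomorphisms. The vanishing of $\bigwedge^{n-s+1}(\iota(\pi) - \pi)$, respectively of $\bigwedge^{s+1}(\iota(\pi) + \pi)$, regarded as an endomorphism of the relevant exterior power of $\Lie(X)$, is the vanishing of a section of a locally free sheaf, hence is cut out Zariski-locally by the vanishing of finitely many minors; these conditions are stable under base change, so together they define a closed formal subscheme of $\breve{\mathcal{M}}$, which is the functor $\mathcal{N}_{s,n}$. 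This is the one point requiring genuine (if routine) verification; everything else is bookkeeping against \cite{rz}.

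Finally, I would note that a closed formal subscheme of a separated formal scheme that is locally formally of finite type over $\Spf \mathcal{O}_{\breve{E}}$ is again separated --- a closed immersion is separated and separatedness is preserved under composition --- and still locally formally of finite type, being defined by a coherent ideal sheaf. This gives the proposition. I should stress that this statement deliberately says nothing about \emph{flatness} of $\mathcal{N}_{s,n}$ over $\mathcal{O}_{\breve{E}}$: that is the genuinely delicate question, and for $n = 6$ it is the content of the preceding Proposition.
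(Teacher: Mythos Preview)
Your proposal is correct and is precisely the standard way to deduce this from the general representability theorem of \cite{rz}. Note, however, that the paper does not give its own proof of this proposition: it is simply stated with the citation \cite{rz}*{Sec.\ 6.9} and no argument is supplied. Your sketch---realizing $\mathcal{N}_{s,n}$ as the closed formal subscheme of the Rapoport--Zink PEL space cut out by the wedge conditions, and observing that these are closed conditions on a locally free $\Lie(X)$---is exactly the justification one would give if asked to unpack that citation, and it is accurate in all respects.
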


\subsection{Flatness} The conditions (\ref{eq:pap}) on the exterior powers of the action of $\pi$ on the Lie algebra of $X$ were introduced by Pappas in \cite{pap}*{Sec.\ 4} and ensure flatness of the moduli space $\N_{1,n}$ over $\mathcal{O}_{\breve{E}}$, as proved in \cite{pap}*{Thm.\ 4.5}. It is conjectured that this holds for any signature $s$. In \cite{pap}*{4.16} the author presents his computations in dimension $n \le 6$ and for primes $p \le 31991$ which confirm flatness in these cases. We prove in this section that for signature $2$ and dimension $6$ the moduli space $\N_{2,6}$ is flat for any odd prime $p$. The first step of the proof is already in \cite{pap}*{Sec.\ 4.16}, where the author relates flatness of the Rapoport-Zink space to a conjecture by de Concini and Procesi \cite{dcp}*{Sec.\ 1} on ideals generated by matrix entries. In particular, it is sufficient to show that a certain polynomial ideal is radical. We prove that for signature $(2, n-2)$ some generators of this ideal are redundant. We consider then the case $n = 6$ and give a method to prove radicality almost independently of the characteristic $p$.

\begin{prop}\label{prop:pap}\cite{pap}*{Sec.\ 4.16}
  Let $X$ denote the generic matrix over $\F_p[x_{ij},1 \le i,j \le n]$
  \begin{equation*} 
    X = \left( \begin{array}{ccc}
      x_{11} & \cdots & x_{1n} \\ 
      \vdots &  & \vdots \\ 
      x_{n1} & \cdots & x_{nn}
    \end{array} \right).
\end{equation*} Consider the ideal $J({s,n}) \subset \F_p[x_{ij}, 1 \le i,j \le n]$ generated by the polynomials given by the entries of $X^2$,  the $(s+1)$-rank minors of $X$,  the entries of $X-X^t$ and by the (non-leading) coefficients of the characteristic polynomial of $X$. Then if $J(s,n)$ is radical, the Rapoport-Zink space $\N_{s,n}$ is flat over $\mathcal{O}_E$.
\end{prop}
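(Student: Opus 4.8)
The plan is to reinterpret flatness of $\N_{s,n}$ in terms of the special fibre of the associated local model, to identify that special fibre with the scheme cut out by $J(s,n)$ in a standard affine chart, and finally to upgrade reducedness of this special fibre to flatness of the total space. By the local model diagram of Rapoport--Zink, $\N_{s,n}$ and the naive local model $M^{\mathrm{naive}}_{s,n}$ are smooth-locally isomorphic, and since flatness over the discrete valuation ring $\mathcal{O}_E$ is a smooth-local property, it suffices to prove that $M^{\mathrm{naive}}_{s,n}$ is $\mathcal{O}_E$-flat. Recall that $M^{\mathrm{naive}}_{s,n}$ is a closed subscheme of a Grassmannian $\Grass$ over $\mathcal{O}_E$ (smooth, in particular flat, over $\mathcal{O}_E$), parametrizing $\mathcal{O}_E$-stable totally isotropic $\mathcal{O}_S$-direct summands $\mathcal{F}$ of rank $n$ in $\Lambda \otimes_{\Z_p} \mathcal{O}_S$ (for a fixed self-dual $\mathcal{O}_E$-lattice $\Lambda$) on whose quotient $\iota(\pi)$ satisfies the Kottwitz determinant condition and the Pappas wedge conditions. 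Flatness of $M^{\mathrm{naive}}_{s,n}$ over $\mathcal{O}_E$ is the absence of $\pi$-torsion, hence is controlled by the special fibre $\bar M^{\mathrm{naive}}_{s,n} := M^{\mathrm{naive}}_{s,n} \otimes_{\mathcal{O}_E} \F_p$.

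Next I would compute a standard affine chart. In such a chart the quotient $(\Lambda \otimes \mathcal{O}_S)/\mathcal{F}$ --- which corresponds to $\Lie(X)$ on the moduli side and is free of rank $n$ --- is trivialized, and the induced action of $\iota(\pi)$ becomes an $n \times n$ matrix $X = (x_{ij})$; the perfect pairing on $\Lambda$ coming from the polarization, together with isotropy of $\mathcal{F}$, forces $X = X^t$. Over the special fibre $\pi_0 = \pi^2 \equiv 0$, so $\iota(\pi)^2 = \iota(\pi_0)$ becomes $X^2 = 0$; the Kottwitz condition $\charpoly(\iota(\pi) \mid \Lie X) = (T - \psi_0(\pi))^s (T-\psi_1(\pi))^{n-s}$ reduces, since $\psi_0(\pi), \psi_1(\pi) \equiv 0 \bmod \pi$, to $\charpoly(X) = T^n$, i.e.\ all non-leading coefficients of $\charpoly(X)$ vanish; and the wedge conditions $\bigwedge^{s+1}(\iota(\pi)+\pi) = 0 = \bigwedge^{n-s+1}(\iota(\pi)-\pi)$ become $\bigwedge^{s+1}X = 0 = \bigwedge^{n-s+1}X$, whose binding part (for $s \le n-s$) is the vanishing of the $(s+1)$-minors of $X$. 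Thus this chart of $\bar M^{\mathrm{naive}}_{s,n}$ is exactly $\operatorname{Spec}\F_p[x_{ij}]/J(s,n)$.

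Finally I would deduce flatness. Let $M^{\mathrm{loc}}_{s,n} \subseteq M^{\mathrm{naive}}_{s,n}$ be the scheme-theoretic closure of the generic fibre; it is automatically $\mathcal{O}_E$-flat and has the same generic fibre as $M^{\mathrm{naive}}_{s,n}$. Since $M^{\mathrm{naive}}_{s,n}$ has no irreducible component contained in the special fibre --- so the generic fibre is topologically dense --- the special fibres $\bar M^{\mathrm{loc}}_{s,n}$ and $\bar M^{\mathrm{naive}}_{s,n}$ have the same underlying topological space. If now $J(s,n)$ is radical, then $\bar M^{\mathrm{naive}}_{s,n} = \operatorname{Spec}\F_p[x_{ij}]/J(s,n)$ is reduced, and a reduced scheme admits no proper closed subscheme with full support; hence the closed immersion $\bar M^{\mathrm{loc}}_{s,n} \hookrightarrow \bar M^{\mathrm{naive}}_{s,n}$ is an isomorphism. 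Then $M^{\mathrm{loc}}_{s,n}$ and $M^{\mathrm{naive}}_{s,n}$ are mutually closed subschemes with equal generic and special fibres, so they coincide; in particular $M^{\mathrm{naive}}_{s,n}$, and therefore $\N_{s,n}$, is flat over $\mathcal{O}_E$.

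The delicate point is the last step: passing from reducedness of the special fibre to flatness of the total space is not formal, and uses the geometric input that $M^{\mathrm{naive}}_{s,n}$ acquires no extra component living entirely in characteristic $p$ (equivalently, that its special fibre does not jump in dimension), which is exactly where the structure theory of ramified unitary local models enters. The other concrete task, which is more bookkeeping than obstacle, is to carry out the chart computation carefully enough --- with fixed conventions for $\lambda$ and for $\psi_0,\psi_1$ --- to be sure the defining equations reduce to precisely the four families generating $J(s,n)$ and to nothing more.
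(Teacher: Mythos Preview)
Your proof strategy is correct and matches the approach of Pappas in \cite{pap}*{Sec.\ 4}, which is precisely what the paper cites (the paper does not reproduce a proof of this proposition but defers entirely to \cite{pap}). You have accurately identified the three ingredients: the local model diagram reducing flatness of $\N_{s,n}$ to that of the naive local model, the big-cell chart computation over the special fibre yielding exactly the ideal $J(s,n)$, and the comparison $M^{\mathrm{loc}}_{s,n} \hookrightarrow M^{\mathrm{naive}}_{s,n}$ forcing equality once the special fibre is reduced. You are also right to flag the ``delicate point'': the topological equality of special fibres is not automatic and in \cite{pap} is obtained via a dimension count on $\operatorname{Spec}\F_p[x_{ij}]/J(s,n)$ (the same dimension computation the present paper alludes to later when noting that $J$ has dimension eight).
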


We are then interested in showing that the ideal $J(2,6)$ is radical. First, we show that some generators of $J(2,n)$, for any $n$, are actually redundant. 
\begin{lem} 
  Let $X = [x_{ij} = x_{ji}]$ denote the $n$-dimensional generic symmetric matrix over $\F_p[x_{ij}, 1\le i \le j \le n]$. Then 
  \begin{equation}\label{eq:ideal}
    J({2,n}) = \langle X^2, \bigwedge^3 X, \trace(X) \rangle,
  \end{equation}
  where the right-hand side denotes the ideal of $\F_p[x_{ij}, 1\le i\le j\le n]$ generated by the polynomials given by the entries of $X^2$,  the rank-$3$ minors of $X$ and by its trace.
\end{lem}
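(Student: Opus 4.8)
The plan is to show that the characteristic polynomial condition becomes redundant once we impose $X^2 = 0$, $\bigwedge^3 X = 0$, and $\trace(X) = 0$ on a symmetric matrix. Since $J(2,n)$ is by definition generated by the entries of $X^2$, the rank-$3$ minors, the entries of $X - X^t$ (which vanish identically for a symmetric matrix, so we may pass to the polynomial ring $\F_p[x_{ij}, 1 \le i \le j \le n]$ from the start), and the non-leading coefficients $c_1, \dots, c_n$ of the characteristic polynomial $\charpoly(X) = T^n - c_1 T^{n-1} + c_2 T^{n-2} - \cdots \pm c_n$, it suffices to prove that $c_2, \dots, c_n$ all lie in the ideal $\langle X^2, \bigwedge^3 X, \trace(X)\rangle$; note $c_1 = \trace(X)$ is already a generator.

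First I would recall that the coefficient $c_k$ of the characteristic polynomial is, up to sign, the sum of the principal $k\times k$ minors of $X$. For $k \ge 3$, every such minor is a principal $k\times k$ minor and hence (expanding along any row) an $\F_p[x_{ij}]$-linear combination of $3\times 3$ minors of $X$, so $c_3, \dots, c_n \in \langle \bigwedge^3 X\rangle$. This disposes of all coefficients except $c_2$. The substance of the lemma is therefore the claim that $c_2 \in \langle X^2, \bigwedge^3 X, \trace(X)\rangle$. For this I would use the Newton identity $p_2 = c_1 p_1 - 2 c_2$, where $p_k = \trace(X^k)$ are the power sums; equivalently $2 c_2 = (\trace X)^2 - \trace(X^2)$. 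Since $p$ is odd, $2$ is invertible, so
\begin{equation*}
  c_2 = \tfrac{1}{2}\bigl( \trace(X)^2 - \trace(X^2) \bigr).
\end{equation*}
Now $\trace(X)^2 \in \langle \trace(X)\rangle$, and $\trace(X^2) = \sum_i (X^2)_{ii}$ is manifestly an $\F_p$-linear combination of the entries of $X^2$, hence lies in $\langle X^2 \rangle$. Therefore $c_2 \in \langle X^2, \trace(X)\rangle$, and combining with the previous paragraph all non-leading coefficients of $\charpoly(X)$ lie in $\langle X^2, \bigwedge^3 X, \trace(X)\rangle$. This shows $J(2,n) \subseteq \langle X^2, \bigwedge^3 X, \trace(X)\rangle$; the reverse inclusion is immediate since each of $X^2$, $\bigwedge^3 X$, $\trace(X)$ is among the listed generators of $J(2,n)$, giving equality.

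The only mild subtlety — and the step I would watch most carefully — is the reduction of $c_3, \dots, c_n$ to the rank-$3$ minors: one must make sure that a principal $k\times k$ minor with $k \ge 3$ genuinely lies in the ideal generated by the $3\times 3$ minors of the \emph{full} matrix $X$, which follows from iterated Laplace expansion (a $k\times k$ minor is a linear combination of $(k-1)\times(k-1)$ minors, and descending to $3\times 3$ the coefficients are polynomials in the $x_{ij}$). Everything else is a formal consequence of Newton's identities and the invertibility of $2$, which is exactly where the hypothesis that $p$ is odd is used. No genuine obstacle is expected; the lemma is essentially a bookkeeping statement whose point is to trim the generating set before tackling radicality of $J(2,6)$.
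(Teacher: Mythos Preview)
Your proof is correct and follows essentially the same route as the paper's: reduce to the symmetric matrix, note that $c_k$ for $k\ge 3$ is a sum of principal $k\times k$ minors lying in the ideal of $3\times 3$ minors, and eliminate $c_2$ via the Newton identity $\trace(X^2)=\trace(X)^2-2c_2$ together with the invertibility of $2$. Your added remark about Laplace expansion making the $k\ge 3$ step precise is a harmless elaboration of what the paper states in one line.
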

\begin{proof}
  Since $J(2,n)$ contains the polynomials $x_{ij} - x_{ji}$ it is clear that we can reduce the number of variables and assume that $X$ is symmetric. Recall that the coefficient of the term of degree $n-k$ in the characteristic polynomial of $X$ is given by the sum of the $k\times k$ principal minors of $X$. By definition of $J(2,n)$, the polynomials corresponding to the minors of rank at least $3$ are already contained in it. It follows that the equations given by the coefficients of degree $n-k$ with $k \ge 3$ are redundant as generators of $J(2,n)$. Let $\sigma_2$ denote the coefficient of degree $n-2$ of the characteristic polynomial of $X$. It is easy to check that for any matrix $X$ the trace of $X$ is related to that of $X^2$ by the identity $\trace(X^2) = \trace(X)^2 - 2\sigma_2(X)$. Since $p \neq 2$ this tells us that $\sigma_2(X)$ is unnecessary as generator of $J(2,n)$. 
\end{proof}

Observe that if we change to $2$ the exponent in the exterior power of (\ref{eq:ideal}) we obtain again the ideal studied in \cite{pap}*{4.12}. 

\subsection{Flatness in small dimension} We fix for the rest of this section $n = 6$ and $s = 2$, and we let $p$ denote an odd prime. From now on we simplify the notation and just write $J$ for the ideal $J(2,6)$. Our goal is to prove the following proposition.

\begin{prop}\label{prop:radical6}
  The ideal $J = J(2,6) \subset \F_p[x_{ij}, 1 \le i \le j \le 6]$ is radical for all primes $p \neq 2$. By Proposition \ref{prop:pap}, it follows that the Rapoport-Zink space $\N_{2,6}$ is flat over $\mathcal{O}_E$. 
\end{prop}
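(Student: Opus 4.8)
The plan is to prove radicality of $J = J(2,6)$ by a combination of symbolic computation and a degeneration argument that removes the dependence on the characteristic. The overall strategy rests on the observation that $J$ is generated in each characteristic by the \emph{same} integer polynomials — the entries of $X^2$, the rank-$3$ minors of the generic symmetric $6\times 6$ matrix $X$, and $\trace(X)$ — so one is really dealing with a single ideal $J_{\Z} \subset \Z[x_{ij}]$ and asking whether its reductions $J_{\Z}\otimes\F_p$ are radical for every odd $p$. The first step is to establish this over $\Q$ (or over a prime field $\F_\ell$ for one well-chosen large prime $\ell$, where a Gröbner basis computation is feasible): compute a Gröbner basis of $J$, read off that the corresponding affine scheme $V(J)$ is reduced, for instance by checking that the radical $\sqrt{J}$ — computed via the standard elimination/saturation algorithm — coincides with $J$, or equivalently that the Jacobian criterion holds at the generic point of each component. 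One expects $V(J)$ to decompose into a small number of components corresponding to the possible ranks and Jordan types of a symmetric nilpotent $6\times 6$ matrix of rank $\le 2$ and trace $0$, i.e. the closures of the loci where $X$ has rank exactly $2$, exactly $1$, or $0$; the rank-$\le 2$ nilpotent symmetric matrices form a well-understood determinantal-type variety.

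The second and harder step is to propagate radicality to all odd primes. Here the natural tool is \emph{generic freeness / flatness of the Hilbert function}: if the Hilbert polynomial (or, better, the full Hilbert function) of $\F_p[x_{ij}]/J$ is independent of $p$ for $p$ odd, and if one knows radicality in characteristic $0$, then radicality in characteristic $p$ follows because a non-reduced structure would force a jump in the Hilbert function at that prime (the nilpotents contribute extra length). Concretely, I would fix a monomial order and show that a Gröbner basis of $J_{\Z}$ computed over $\Z$ has the property that no leading coefficient is divisible by an odd prime — then the reduction mod $p$ of that Gröbner basis is a Gröbner basis of $J\otimes\F_p$ for all odd $p$, so the initial ideal, and hence the Hilbert function, is constant in $p$. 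Combined with an explicit rational primary decomposition whose components are visibly geometrically reduced and whose reductions mod $p$ remain prime (these are classical determinantal varieties, reduced and irreducible over any field by the results on generic determinantal ideals), one concludes $J\otimes\F_p = \sqrt{J\otimes\F_p}$. The role of $p\neq 2$ is exactly the one already isolated in the Lemma: the identity $\trace(X^2) = \trace(X)^2 - 2\sigma_2(X)$, used to eliminate $\sigma_2$, is the only place where $2$ is inverted, and the geometry of symmetric nilpotent matrices genuinely differs in characteristic $2$.

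The model-theoretic remark advertised in the introduction (Remark \ref{rem:model}) suggests an alternative to the brute Gröbner-over-$\Z$ computation: by a transfer/compactness argument (Ax–Kochen–Ershov style, or simply the fact that ``$J$ is radical'' is expressible by a first-order scheme-theoretic condition that, for a \emph{fixed} ideal defined over $\Z$, can only fail for finitely many primes), it suffices to (a) prove radicality over $\Q$, and (b) check the finitely many ``bad'' primes by hand. I would use this to reduce the verification to $p=3,5,\dots$ up to whatever bound the finiteness argument produces, each of which is a single finite Gröbner basis computation.

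\medskip

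\textbf{Main obstacle.} The principal difficulty is the uniformity in $p$: a naive computation only settles one prime at a time, and a $21$-variable ideal is at the edge of what is tractable, so the real work is organizing the argument — via a Gröbner basis over $\Z$ with controlled leading coefficients, or via the explicit primary decomposition into generic determinantal pieces, or via the model-theoretic finiteness bound — so that a \emph{single} characteristic-zero computation plus a \emph{finite} check of small primes suffices. I expect the decomposition of $V(J)$ into the (reduced, and stably reduced) strata by rank of $X$ to be the technical heart, since one must verify that the scheme-theoretic structure defined by the \emph{given} generators, not just the set, is reduced along the smaller-rank strata where the determinantal equations are most degenerate.
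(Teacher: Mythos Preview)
Your overall philosophy --- prove radicality over $\Q$, then transfer to all odd primes with only finitely many exceptions to check --- is right, and is indeed what the paper does. But your proposed transfer mechanism contains a genuine gap.

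The Hilbert-function argument does not work as stated. Constancy of the Hilbert function of $R/J$ across characteristics (via a Gr\"obner basis over $\Z$ with unit leading coefficients) does \emph{not} imply that radicality transfers: the ideal $(x^3-1)\subset\Z[x]$ has constant Hilbert function, is radical over $\Q$, but is $(x-1)^3$ over $\F_3$. A non-reduced structure does not force a jump in $\dim_k (R/J)_d$; what jumps is the Hilbert function of $R/\sqrt{J}$, and you have no independent control over $\sqrt{J\otimes\F_p}$. Your fallback of an explicit primary decomposition into ``generic determinantal pieces'' is also shakier than it looks: the locus of symmetric nilpotent rank-$\le 2$ matrices is not a standard determinantal variety, the Bruns--Vetter-type results do not apply to it directly, and even if each set-theoretic component were individually reduced you would still need to control the embedded structure along the smaller strata --- precisely the point you flag as the ``main obstacle'' but do not resolve. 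The model-theoretic compactness argument is correct in spirit (and the paper discusses it in Remark~\ref{rem:model}) but, as you note, it is non-constructive and gives no bound on the bad primes, so by itself it proves nothing.

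The paper's route is genuinely different and sidesteps all of this. Rather than analysing the geometry of $V(J)$, it performs a \emph{variable-by-variable elimination}: using a lexicographic Gr\"obner basis $G$ of $J$ (computed once over $\Q$), it considers the chain of elimination ideals $J_{ij} = J\cap\F_p[x_{ij},\dots,x_{66}]$ and reduces radicality of $J$ to radicality of each image $\overline{J_{ij}}$ as a \emph{univariate} ideal over the previous quotient ring. The point is that these univariate ideals are checkable by hand: for most variables $G_{ij}$ contains a linear polynomial (so after localizing at its leading coefficient the ideal is principal and monic linear, hence visibly radical), and for the two remaining variables $G_{ij}$ is a single quadratic, where radicality follows once the discriminant is a non-zero-divisor. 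Each of these non-zero-divisor checks is itself a Gr\"obner computation $(J:s)=J$. The transfer to characteristic $p$ is then handled by Winkler's theorem (Proposition~\ref{prop:win}), which gives an \emph{explicit} finite set of unlucky primes --- those dividing denominators in the transformation matrices between generators and the Gr\"obner basis, and in the syzygy matrix --- for which the Gr\"obner basis might fail to reduce correctly. These turn out to be bounded by $809$ and had already been checked computationally by Pappas. So the paper trades your global geometric/Hilbert-function argument for a sequence of local univariate checks plus an explicit bookkeeping of bad primes; this is what makes the argument go through where yours stalls.
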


\begin{rem}
  Observe that for $p = 2$ the ideal $J$ is not radical. Indeed, it contains for example the polynomial $(X^2)_{11} = x_{11}^2 + x_{12}^2 + \dotsm + x_{16}^2$, which is a square over $\F_2$, while the only polynomial of degree $1$ in $J$ is the trace.
\end{rem}

Proving that an ideal is radical is known in general to be a quite hard problem. There are several algorithms to compute the radical of a polynomial ideal, both in zero and positive characteristic, see for example \cite{kemper}. However, they all require fixing the field of coefficients beforehand and therefore are not a feasible choice for us, as we want to prove that $J \subset \F_p[x_{ij}, 1 \le i \le j \le 6]$ is radical for any $p$. As far as we could research in the literature there is also no algorithm for directly proving that an ideal is radical without first computing its primary decomposition or its radical. 

\begin{proof}
  Our strategy for proving that $J$ is radical is to reduce ourselves to solving the same problem for a sequence of polynomial ideals in one variable. This will turn out to be much easier as the resulting univariate ideals will be generated by polynomials of degree at most two. We will have to solve two other problems along the way. First, we have to explicitly describe this sequence of univariate ideals, that is we have to give a set of generators for each of them. Second, since our goal is to prove radicality independently of the characteristic, we will have to show that our arguments and computations hold over $\F_p$ for almost all primes $p$. 

  The key idea for reducing to univariate polynomial ideals is in the following easy observation from commutative algebra.

  \begin{lem}\label{lem:elim} 
    Let $I$ be an ideal in $R[x]$, where $R$ is any commutative ring with unit. Then $I$ is radical if and only if the image of $I$ in $(R/R\cap I)[x]$ is radical. Moreover, if $R$ is a reduced algebra and $I$ is radical, then so is the ideal $R \cap I$ in $R$. 
  \end{lem}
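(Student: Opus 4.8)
The plan is to treat the two assertions separately, both reducing to elementary manipulations with nilpotents. For the first statement, write $\bar R = R/(R\cap I)$ and let $\bar I$ denote the image of $I$ in $\bar R[x]$; note $\bar R[x] = R[x]/(R\cap I)R[x]$ and that $(R\cap I)R[x] \subseteq I$, so $\bar R[x]/\bar I = R[x]/I$ as rings. Since an ideal $J$ in a ring $A$ is radical precisely when $A/J$ is reduced, and the quotients $R[x]/I$ and $\bar R[x]/\bar I$ are literally the same ring, $I$ is radical in $R[x]$ if and only if $\bar I$ is radical in $\bar R[x]$. That is the whole content of the first claim; the point of stating it is that $\bar R$ is reduced by construction (we have quotiented $R$ by $R\cap I$, but for the iteration what matters is that after also knowing $R\cap I$ is radical in a reduced $R$ we stay in the reduced world — see the second part).

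For the second statement, assume $R$ is reduced and $I\subseteq R[x]$ is radical; I must show $R\cap I$ is radical in $R$. Take $a\in R$ with $a^m\in R\cap I$ for some $m\ge 1$. Then $a^m\in I$, and since $I$ is radical in $R[x]$ we get $a\in I$. But $a\in R$, so $a\in R\cap I$. Hence $R\cap I$ is radical. (In fact this direction does not even use that $R$ is reduced; the hypothesis is recorded because it is what one needs in order to keep applying the lemma inductively — one wants each successive coefficient ring in the elimination to be a reduced algebra so that the phrase "the image of $I$ is radical" can again be analysed the same way.)

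There is essentially no obstacle here: both halves are one-line verifications once one recalls that radicality of an ideal is the same as reducedness of the quotient ring, together with the isomorphism $R[x]/I \cong (R/(R\cap I))[x]/\bar I$. The only thing to be careful about is the direction of the first "if and only if" — one must check that passing to $\bar R$ does not lose information, i.e.\ that $(R\cap I)R[x]$ is genuinely contained in $I$ (clear, as $R\cap I\subseteq I$ and $I$ is an ideal), so that the quotient by $I$ factors through the quotient by $(R\cap I)R[x]$. Everything else is formal.
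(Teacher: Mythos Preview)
Your proof is correct and for the first claim essentially the same as the paper's, just phrased more conceptually: the paper does the two directions by explicit element-chasing, while you invoke the isomorphism $R[x]/I \cong \bar R[x]/\bar I$ and the characterization of radical ideals via reducedness of the quotient. These are equivalent arguments.

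For the second claim your argument is in fact cleaner than the paper's. The paper tests with $f\in R[x]$ and uses reducedness of $R$ to force $\deg(f^n)=n\deg(f)$, hence $f\in R$; you correctly observe that since the statement concerns $R\cap I$ as an ideal of $R$, one may start with $a\in R$ from the outset, and then reducedness of $R$ is not needed at all. Your parenthetical remark that the hypothesis is recorded only because it is what makes the inductive elimination go through (one wants each successive coefficient ring to remain reduced) is exactly right.
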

  \begin{proof}
    Let $\bar{I}$ be the image of $I$ in $(R/R\cap I)[x]$. If $I$ is radical and $f \in R[x]$ is such that $\overline{f^n} \in \bar{I}$, this means that $f^n + i \in I$ for some $i \in R \cap I$, that is $f^n \in I$. Therefore, $f \in I$, since $I$ is radical, from which it follows that $\overline{f} \in \bar{I}$. Conversely, if $\bar{I}$ is radical, and we have $f^n \in I$, then the image $\overline{f^n} \in \bar{I}$, which is radical, hence $\overline{f} \in \bar{I}$. This means that $f + i \in I$ for some $i \in R \cap I$, that is $f \in I$. 
    
    Last, observe that if $R$ is a reduced algebra, for any polynomial in $R[x]$, the degree of $f^n$ is equal to $n \deg(f)$. Therefore, if $f^n \in R \cap I$ it means that $f^n$ has degree zero and therefore $f \in R$, as well. Since $I$ is radical, $f \in I$, from which the statement follows.
  \end{proof}

  In order to prove that $J \subset \F_p[x_{ij}, 1 \le i \le j \le 6]$ is radical we can start for example by inspecting its intersection $J_{12} = J \cap \F_p[x_{12}, x_{13}, \dots, x_{66}]$. If $J_{12}$ is not radical, then by the previous lemma $J$ is not radical either, and we have to stop. Otherwise, to prove that $J$ is radical is equivalent by Lemma \ref{lem:elim} to prove that the image $\overline{J}$ of $J$ in $R_{12}[x_{11}]$ is radical, where $R_{12} = \F_p[x_{12}, x_{13}, \dots, x_{66}]/J_{12}$. If $J_{12}$ is radical, then the algebra $R_{12}$ is reduced, hence we are confronted with the easier problem of proving radicality for an ideal in a univariate polynomial ring with reduced coefficient ring. We can apply this reasoning recursively to each variable $x_{ij}$, so that we obtain a chain of ideals 
  \begin{equation}\label{eq:chain}
     J_{66} = J \cap \F_p[x_{66}] \subset J_{56} = J \cap \F_p[x_{56}, x_{66}] \subset \dotsm \subset J_{12} = J \cap \F_p[x_{12}, x_{13}, \ldots, x_{66}] \subset J.
  \end{equation}
  Our strategy will then consist of proving radicality twenty-one times, one for each variable $x_{ij}$, as follows.
  \begin{itemize}[nolistsep]
    \item We start with proving that $J_{66}$ is radical. 
    \item At step $ij$ we know that the previous ideal $J_{ij+1}$ (or $J_{i+1 i+1}$ if $j$ is $6$) is radical, and we prove that the image $\overline{J_{ij}}$ in $R_{ij+1}[x_{ij}]$ is radical, which by Lemma \ref{lem:elim} implies that $J_{ij}$ is radical as well. Here again $R_{ij+1} = \F_p[x_{ij+1}, \dots , x_{66}]/J_{ij+1}$. 
  \end{itemize}
  This technique is a standard method in computational algebra called \emph{elimination}. It was inspired to us by reading the primality testing algorithm of \cite{gtz}*{Sec.\ 4}.

  To apply our elimination strategy we are confronted with the problem of finding generators for each intersection ideal $J_{ij}$ and for each image $\overline{J_{ij}}$. To do so we have to first recall the notion of Gr\"obner basis and present some relevant results. 
  \begin{defn} Consider the polynomial ring $R[x_1, \dots, x_m]$, where $R$ is any commutative ring with unit.
    \begin{enumerate}
      \item The lexicographic order given by $x_1 > x_2 > \dotsm > x_m$ is the total order on the set of monomials in $R[x_1, \dots, x_m]$ defined by
      \begin{equation*}
        x_1^{a_1}\dotsm x_m^{a_m} \le x_1^{b_1}\dotsm x_{m}^{b_m} \Longleftrightarrow \exists i \text{ such that } a_j = b_j \text{ for all } j \le i, \text{ and } a_{i+1} < b_{i+1}.
      \end{equation*} Moreover, the lexicographic order is a \emph{monomial order}, that is, if $u, v$ are two monomials such that $u \le v$ and $w$ is a third monomial, then $uw \le vw$.
      \item For a polynomial $f \in R[x_1, \dots ,x_m]$ the leading term $\lt(f)$ is the highest monomial of $f$ with respect to a given monomial order. For an ideal $I \subset R[x_1, \dots ,x_m]$, the initial ideal $\mathrm{in}(I)$ is the ideal generated by the leading terms of all elements of $I$.
      \item A finite subset $G \subset I$ is a \emph{Gr\"obner basis} for $I$ if the leading terms of the polynomials in $G$ generate the initial ideal of $I$. Gr\"obner bases where first introduced in \cite{buchenberger}, where it is proved that for any ideal $I$ and any choice of monomial order, there exists a Gr\"obner basis, and that it generates $I$.
    \end{enumerate}
  \end{defn}
  We collect here some relevant results about Gr\"obner bases that we will need in this section, proofs can be found for example in \cite{gtz}*{Sec.\ 3}. 
  \begin{lem}\label{lem:grob} 
    Let $I$ be an ideal in $R[x_1,\dots,x_m]$ and $G$ a Gr\"obner basis for $I$ with respect to the lexicographic order given by $x_1 > x_2 > \dotsm > x_m$.
    \begin{enumerate}
      \item $G \cap R[x_i, \dots, x_m]$ is a Gr\"obner basis for the elimination ideal $I \cap R[x_i, \dots, x_m]$.
      \item Consider the quotient map $\pi : R[x_1, \dots ,x_m] \rightarrow (R/R\cap I)[x_1,\dots,x_m]$. Then $\pi(G \smallsetminus G\cap R)$ is a Gr\"obner basis for  $\pi(I)$.
      \item Let $S$ be a multiplicatively closed subset of $R[x_1, \dots ,x_m]$. Then $G$ is a Gr\"obner basis for $S^{-1}I$ in the localization $S^{-1}R[x_1,\dots ,x_m]$.
    \end{enumerate}
  \end{lem}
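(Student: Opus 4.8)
\textbf{Proof plan for Lemma \ref{lem:grob}.} The three assertions are all standard facts about Gröbner bases over a (not necessarily field) coefficient ring $R$, and the plan is to reduce each one to the defining property of a Gröbner basis together with the division algorithm. Throughout, fix the lexicographic order with $x_1 > \dotsm > x_m$, write $\lt$ for the leading term, and recall that $G$ being a Gröbner basis for $I$ means $\mathrm{in}(I) = \langle \lt(g) : g \in G\rangle$, and that by \cite{buchenberger} such a $G$ generates $I$.

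\emph{Part (1).} I would show that $G' := G \cap R[x_i,\dots,x_m]$ is a Gröbner basis for $I' := I \cap R[x_i,\dots,x_m]$. Clearly $G' \subset I'$. The point is that the lexicographic order with $x_1 > \dotsm > x_m$ has the elimination property: a monomial lies in $R[x_i,\dots,x_m]$ if and only if it is $\le$ every monomial involving some $x_j$ with $j < i$ that has the same $(x_i,\dots,x_m)$-part; concretely, for any $f \in R[x_1,\dots,x_m]$, if $\lt(f) \in R[x_i,\dots,x_m]$ then in fact $f \in R[x_i,\dots,x_m]$, because any monomial of $f$ involving some $x_j$, $j<i$, would be strictly larger than $\lt(f)$, a contradiction. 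Now take $0 \neq f \in I'$. Then $\lt(f) \in \mathrm{in}(I)$, so $\lt(f)$ is divisible by $\lt(g)$ for some $g \in G$ (here I use that $\mathrm{in}(I) = \langle \lt(g)\rangle$ and that a monomial lying in a monomial ideal is divisible by one of the generators — valid since $R$ is commutative with unit and these are monomial ideals). Since $\lt(f) \in R[x_i,\dots,x_m]$ and $\lt(g) \mid \lt(f)$, also $\lt(g) \in R[x_i,\dots,x_m]$, hence by the elimination property $g \in R[x_i,\dots,x_m]$, i.e.\ $g \in G'$. Thus $\mathrm{in}(I') = \langle \lt(g) : g \in G'\rangle$ and $G'$ is a Gröbner basis for $I'$; in particular it generates $I'$.

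\emph{Part (2).} Write $\bar R = R/(R\cap I)$, let $\pi\colon R[x_1,\dots,x_m]\to \bar R[x_1,\dots,x_m]$ be reduction of coefficients, and set $G_0 := G \smallsetminus (G \cap R)$. The elements of $G \cap R$ are exactly the constant elements of $G$, i.e.\ those lying in $R \cap I$; note $R \cap I$ is generated by $G \cap R$ by Part (1) applied with $i = m+1$ (the elimination ideal $I \cap R$), so the $\pi$-images of the polynomials in $G \cap R$ vanish, which is why they are harmless to discard. I claim $\pi(G_0)$ is a Gröbner basis for $\pi(I)$ with respect to the induced monomial order. First, $\pi(G_0) \subset \pi(I)$. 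For a nonzero $\bar f \in \pi(I)$, lift it to $f \in I$; then the leading coefficient of $\bar f$ is the $\pi$-image of some coefficient of $f$, and after possibly subtracting an element of $\pi(R\cap I)\cdot(\text{monomials})$ — more carefully, after choosing the lift $f$ so that $\lt(\bar f) = \pi(\lt(f))$, which is possible since any monomial of $f$ larger than $\lt(f)$'s support that survives must come from a genuine monomial of some preimage — one gets $\lt(f) \in \mathrm{in}(I)$, so $\lt(g) \mid \lt(f)$ for some $g \in G$. If $g \in G \cap R$ this would force $\lt(f)$ to be a nonzero constant times a monomial with $\pi$ of that constant zero, contradicting $\lt(\bar f)\ne 0$; hence $g \in G_0$, and applying $\pi$ gives $\lt(\pi(g)) \mid \lt(\bar f)$. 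Therefore $\pi(G_0)$ generates $\mathrm{in}(\pi(I))$. Some care is needed with leading coefficients becoming zero under $\pi$; the clean way is to observe that $\pi$ induces an isomorphism between the homogeneous pieces and to track supports, which I would spell out using the division algorithm over $\bar R$.

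\emph{Part (3).} Let $S$ be a multiplicatively closed subset and consider $S^{-1}R[x_1,\dots,x_m] = (S^{-1}R)[x_1,\dots,x_m]$ (localizing the coefficients). The monomials are the same, and for $f/s \in S^{-1}I$ we have $\lt(f/s) = \lt(f)$ up to the unit $1/s$; since $\lt(f) \in \mathrm{in}(I)$ is divisible by some $\lt(g)$, $g \in G$, we get $\lt(f/s)$ divisible by $\lt(g) = \lt(g/1)$. Hence $\langle \lt(g) : g \in G\rangle = \mathrm{in}(S^{-1}I)$, so $G$ remains a Gröbner basis after localization, and it still generates $S^{-1}I$ since generation is preserved by localization. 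This part is essentially immediate once one notes that localizing the coefficient ring does not change the monoid of monomials nor the leading-term map up to units.

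\textbf{Main obstacle.} The subtle point is Part (2): over a general coefficient ring $R$, leading coefficients can collapse under the quotient $\pi$, so the naive "take a lift and compare leading terms" argument does not directly work — one must argue that by discarding precisely $G \cap R = $ (a generating set of $R \cap I$) one can always choose a lift whose leading term maps onto the leading term of the given element of $\pi(I)$. I expect to need the division algorithm over $R$ (which is available because $G$ is a Gröbner basis, so every element of $I$ reduces to zero modulo $G$) to make this rigorous. Parts (1) and (3) are routine given the elimination property of the lexicographic order and the compatibility of localization with monomial ideals, respectively.
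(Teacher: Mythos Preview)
The paper does not actually prove this lemma: it merely states the three facts and refers the reader to \cite{gtz}*{Sec.\ 3} for proofs. So there is no argument in the paper to compare against; your sketch supplies what the paper deliberately outsources.

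Your outline for Parts~(1) and~(2) is the standard one and matches what one finds in \cite{gtz}. Part~(1) is entirely correct as written: the key is exactly the elimination property of the lexicographic order (if $\lt(f)$ involves only $x_i,\dots,x_m$ then so does $f$). For Part~(2) you have correctly isolated the only genuine subtlety, namely that leading coefficients may die under $\pi$; your plan to handle it via the division algorithm (every element of $I$ reduces to zero modulo $G$, and the constants in $G$ generate $R\cap I$ by Part~(1)) is the right one and is how \cite{gtz} proceeds.

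One small discrepancy in Part~(3): the lemma as stated in the paper allows $S$ to be a multiplicatively closed subset of the full ring $R[x_1,\dots,x_m]$, not just of $R$, whereas your argument silently assumes $S\subset R$ when you write $S^{-1}R[x_1,\dots,x_m]=(S^{-1}R)[x_1,\dots,x_m]$. In the generality stated, the localization is not a polynomial ring and one has to say what ``Gr\"obner basis'' means there. That said, the only application the paper makes of Part~(3) is precisely the case $S\subset R$ (localizing $R_{ij+1}[x_{ij}]$ at a leading coefficient lying in $R_{ij+1}$), so your restricted version suffices for everything the paper needs; just flag the restriction explicitly.
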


  Consider our chain of elimination ideals (\ref{eq:chain}). The theory of Gr\"obner bases provides us with an effective way to compute a generating set of each ideal $J_{ij} = J \cap \F_p[x_{ij}, \dots, x_{66}]$ and of its image $\overline{J_{ij}}$ in $R_{ij+1}[x_{ij}] = \F_p[x_{ij+1}, \dots, x_{66}]/(J_{ij+1})[x_{ij}]$. We fix the lexicographic order on $\F_p[x_{11}, \dots, x_{66}]$ given by $x_{11} > x_{12} > \dots > x_{16} > x_{22} > x_{23} > \dots > x_{66}$. By \cite{buchenberger} we know that we can compute a Gr\"obner basis $G$ for $J$ with respect to this lexicographic order. By Lemma \ref{lem:grob} we know then that a Gr\"obner basis for $\overline{J_{ij}}$ is given by the image of $G_{ij}$ in $R_{ij+1}[x_{ij}]$, where
  \begin{equation}\label{eq:Gij}
    G_{ij} = (G \cap \F_p[x_{ij}, x_{ij+1}, \dots, x_{66}]) \smallsetminus (G \cap \F_p[x_{ij+1}, \dots, x_{66}]).
  \end{equation}
  Here by $x_{ij+1}$ we mean again the variable directly after $x_{ij}$ in the lexicographic order. Since Gr\"obner bases are in particular generating sets, this proves that we can compute a set of generators of the ideal $\overline{J_{ij}}$. 
  
  We have then solved our first problem, as we have reduced the proof of radicality for $J$ to showing radicality for the sequence (\ref{eq:chain}) of univariate ideals $\overline{J_{ij}}$, and we have given a concrete way to compute a generating set for each of them. 
  Before showing that each ideal in the sequence is radical, we have to address the question of the characteristic of the coefficient ring. A priori, the computation of a Gr\"obner basis is sensitive of the characteristic, see \cite{win}*{Ex.\ 1} for some examples. In other words, the Gr\"obner basis $G$ computed for the ideal $J$ over $\F_p$ may differ from the basis $G'$ of $J$ over another coefficient ring $\F_{p'}$. For example, it could have a different number of elements or different degrees. Nevertheless, it is proved by Winkler in \cite{win} how to compute a Gr\"obner basis for $J$ that works for almost all primes. Roughly speaking, we can see $J$ as an ideal with coefficients in $\Q$ and compute a Gr\"obner basis for $J$ over $\Q$. Its image over $\F_p$ will be a Gr\"obner basis for $J$ for almost all primes $p$. For example, we need to exclude those primes dividing the coefficients of $G$. 
  
  In the following, by a normalized reduced Gr\"obner basis we mean a basis such that no proper subset is still a basis. Recall that the Syzygy matrix for a set of generators $G$ has as rows the coefficients of the polynomial relations between the generators of $G$.

  \begin{prop}\label{prop:win}\cite{win}*{Thm.\ 1}
    Let $F = (f_1,\dots , f_m)^{t}$ be a finite sequence of polynomials in $\mathbb{Q}[x_1, \dots ,x_n]$ and $G = (g_1, \dots ,g_r)^t$ the normalized reduced Gr\"obner basis for $F$ in $\mathbb{Q}[x_1, \dots ,x_n]$. Then, for almost all primes $p$ the images $\overline{F} = F~ \mathrm{ mod } ~p$ and $\overline{G} = G ~\mathrm{mod} ~p$ exist and $\overline{G}$ is the normalized reduced Gr\"obner basis for $\overline{F}$ in $\F_p[x_1, \dots ,x_n]$. 
    
    Moreover, the primes for which $\overline{G}$ is not a Gr\"obner basis, called \emph{unlucky primes}, are the divisors of the denominators of the coefficients of $F$ and $G$ and of the coefficients of the entries of the polynomial matrices $Z,Y,R$ defined as
    \begin{equation*}
      G = Z.F, ~~~~~~ F = Y.G, ~~~~~~~ R \text{ the Syzygy matrix of } G.
    \end{equation*}
  \end{prop}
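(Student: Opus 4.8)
The plan is to reduce the statement to the observation that every fact one uses to certify, over $\Q$, that $G$ is the normalized reduced Gr\"obner basis of $F$ is encoded in finitely many polynomial identities and coefficient comparisons, and that such data descends verbatim to $\F_p$ as soon as $p$ avoids the finitely many denominators occurring in it. Concretely, I would proceed in a few steps, at each step discarding a finite set of primes and, at the end, checking that the discarded primes are exactly the divisors of the denominators of the coefficients of $F$ and $G$ and of the entries of $Z$, $Y$ and $R$.

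\emph{Existence of the reductions and stability of leading terms.} Discard the primes dividing a denominator of a coefficient of $F$ or $G$. For the remaining $p$ the matrices $\overline F$ and $\overline G$ are defined; since $G$ is normalized every $g_i$ is monic, so $\lc(\overline{g_i}) = 1$, hence $\lt(\overline{g_i}) = \lt(g_i)$ and the monomial support of $\overline{g_i}$ is contained in that of $g_i$. This already shows $\overline G$ is monic, that it is again reduced (no leading monomial $\lt(\overline{g_j})$ divides a monomial of $\overline{g_i}$ for $i\ne j$, since this fails over $\Q$), and that no proper subset of $\{\lt(\overline{g_i})\}$ generates the same monomial ideal. \emph{Equality of ideals.} Discard in addition the primes dividing a denominator of an entry of $Z$ or $Y$; then $G = Z\cdot F$ and $F = Y\cdot G$ reduce to $\overline G = \overline Z\cdot\overline F$ and $\overline F = \overline Y\cdot\overline G$, so $\langle\overline F\rangle = \langle\overline G\rangle$ in $\F_p[x_1,\dots,x_n]$.

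\emph{$\overline G$ is still a Gr\"obner basis.} By Buchberger's $S$-pair criterion it suffices to check that each $S$-polynomial $S(\overline{g_i},\overline{g_j})$ reduces to $0$ modulo $\overline G$. Over $\Q$ such reductions exist because $G$ is a Gr\"obner basis; writing out the reduction of $S(g_i,g_j)$ by the division algorithm yields an identity $S(g_i,g_j) = \sum_k h_{ijk}g_k$ with $\lt(h_{ijk}g_k)\le\lt(S(g_i,g_j))$, and the collection of these identities (equivalently, the induced syzygies $S(g_i,g_j) - \sum_k h_{ijk}g_k = 0$, which generate the syzygy module of $G$) is precisely what the syzygy matrix $R$ records. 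Discarding the primes dividing a denominator of an entry of $R$, and using that $\lt(\overline{g_i}) = \lt(g_i)$ and that the $\overline{g_i}$ are monic so that forming $S$-polynomials commutes with reduction, these identities reduce to valid reduction-to-zero certificates for $S(\overline{g_i},\overline{g_j})$ modulo $\overline G$. Hence $\overline G$ is a Gr\"obner basis of $\langle\overline F\rangle$, and by the first step it is reduced and monic, so by uniqueness of the normalized reduced Gr\"obner basis it is the normalized reduced Gr\"obner basis of $\overline F$. Finally the set of primes we were forced to discard is, by construction, contained in --- and with a little care equal to --- the set of divisors of the denominators of the coefficients of $F$, $G$ and of the entries of $Z$, $Y$, $R$, which proves the last assertion.

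The step I expect to be the real obstacle is the last part of the third paragraph: one must show that \emph{all} rational numbers intervening when the $S$-polynomials of $G$ are reduced to zero --- the coefficients of the cofactors $h_{ijk}$, but also every leading coefficient encountered during the division algorithm, which must not vanish modulo $p$ lest a leading monomial collapse --- are controlled by the entries of $R$ (together with the coefficients of $F$ and $G$). This is a careful bookkeeping through Buchberger's algorithm rather than a conceptual difficulty, and it is the technical heart of Winkler's argument; everything else is the routine remark that finitely many polynomial identities over $\Q$ survive reduction modulo all but finitely many primes.
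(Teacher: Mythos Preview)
The paper does not give its own proof of this proposition: it is cited as Theorem~1 of \cite{win} and used as a black box inside the flatness argument. There is therefore no in-paper proof to compare against.

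On its own merits your outline is correct and is essentially how Winkler's theorem is established. The obstacle you flag at the end is milder than you suggest. Once the $g_i$ are monic, Buchberger's criterion only needs an identity
\[
S(g_i,g_j)=\sum_k h_{ijk}\,g_k,\qquad \lt(h_{ijk}g_k)<\mathrm{lcm}\bigl(\lt(g_i),\lt(g_j)\bigr),
\]
and this descends to $\F_p$ automatically: reduction modulo $p$ gives $S(\overline{g_i},\overline{g_j})=\sum_k\overline{h_{ijk}}\,\overline{g_k}$, and since the leading monomial of each $\overline{h_{ijk}}$ can only drop while $\lt(\overline{g_k})=\lt(g_k)$ and the $\mathrm{lcm}$ is unchanged, the strict inequality persists. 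You do not need to track intermediate leading coefficients appearing during the division algorithm --- the final syzygy identity suffices. The only remaining bookkeeping is that the rows of $R$ (as computed in practice via the syzygy module, which is how the paper's code does it) record exactly these $S$-pair reductions, so their denominators already control the $h_{ijk}$.
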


  It follows that our elimination strategy so far is almost independent of the characteristic. Indeed, we can compute a Gr\"obner basis $G$ for $J$ as the ideal in $\Q[x_{11}, \dots, x_{66}]$ generated by the same polynomial equations as in (\ref{eq:ideal}). Then we compute the matrices $Z, Y$ and $R$ as in Proposition \ref{prop:win} and looking at the coefficients of their entries, together with the coefficients of $G$ we obtain the set $U$ of unlucky primes. Now we know that for $p \not \in U$ the image of $G$ modulo $p$ is a Gr\"obner basis for $J \subset \F_p[x_{11}, \dots, x_{66}]$ and the image of the subset $G_{ij}$ as in (\ref{eq:Gij}) is a basis for $\overline{J_{ij}}$. To compute $G$ and $U$ we use the computer algebra software Sagemath \cite{sagemath}. The Gr\"obner basis $G$ is listed in the Appendix \ref{Appendix} and a script for the calculation of $U$ is in Appendix \ref{AppendixB}. The set of unlucky primes turns out to be $U = \{2, 3\}$.

  For $p \not \in U = \{2,3\}$, by the previous discussion, we have to inspect the Gr\"obner basis $G$ of $J$ and its subsets $G_{ij}$. We observe that these satisfy one of the following.
  \begin{enumerate}
    \item $G_{ij}$ is empty. This is the case for the eight variables $\{x_{35}, x_{45}, x_{55}, x_{26}, x_{36}, x_{46}, x_{56}, x_{66}\}$. 
    \item $G_{ij}$ contains a linear polynomial in $x_{ij}$. For $j \le 4$ one possible linear polynomial is the $3 \times 3$ minor of $X$ corresponding to the rows $i, 5, 6$ and the columns $j, 5, 6$, which has leading coefficient $x_{55}x_{66} - x_{56}^2$. The subset $G_{15}$ contains a linear polynomial in $x_{15}$ as well, with leading coefficient $x_{16}$. This polynomial is given by the entry $(5,6)$ of $X^2$.
    \item The remaining subsets $G_{16}$ and $G_{25}$ consist of only one polynomial of degree $2$.
  \end{enumerate} 

  Consider the chain of ideals (\ref{eq:chain}) and our elimination strategy described above. We start with proving that $J_{66} = J \cap \F_p[x_{66}]$ is radical. Since $G_{66}$ is empty, this means that $J_{66} = 0$, so there is nothing to prove. By induction, at step ${ij}$, we have to prove that $J_{ij}$ is radical, knowing that the previous ideal $J_{ij+1}$ is radical. We discuss how to do this in each of the three cases above.

  \begin{proof}[Proof of the empty case] 
    If $G_{ij}$ is empty this means that the image of $J_{ij}$ in the quotient ring $R_{ij+1}[x_{ij}]$ is zero, or in other words that $J_{ij} = J_{ij+1}$. Since we know that the ideal $J_{ij+1}$ preceding $J_{ij}$ in the chain (\ref{eq:chain}) is radical, there is nothing to prove.
  \end{proof}
  
  As a side remark, we note that this is the case for eight variables, which means that $J \cap \F_p[x_{35}, x_{45}, x_{55}, x_{26}, \dots, x_{66}] = 0$. In other words these variables are an \emph{independent set} for $J$, in the sense of \cite{kredel}*{Sec.\ 1}. By \cite{kredel}*{Lem.\ 1.3}, this implies that $J$ has dimension eight, which has already been proved by other methods by Pappas in \cite{pap}*{Sec.\ 4.16}.
  
  \begin{proof}[Proof of the linear case]
    Consider $G_{ij}$ for $j \le 4$, together with $G_{15}$. As we have remarked above, $G_{ij}$, and therefore $\overline{J_{ij}}$ contains a linear polynomial in $x_{ij}$. However, $\overline{J_{ij}}$ is far from being principal and contains polynomials of degree two, as well. Our goal is to reduce to the case of a principal ideal generated by a monic linear polynomial, which is then clearly radical. To do so we can localize at the leading coefficient of the fixed linear polynomial of $G_{ij}$. Localization does not preserve radicality in general, but we can make the following observation.

    \begin{lem} 
      Let $I$ be an ideal in a reduced ring $R$. If $s \in R$ is not a zero divisor modulo $I$ and the localization $I_s$ is radical in $R_s$, then $I$ is radical, too.
    \end{lem}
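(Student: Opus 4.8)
The plan is to check the defining property of a radical ideal directly: given $f\in R$ with $f^n\in I$ for some $n\ge 1$, produce a proof that $f\in I$. The only tool needed is the comparison between $I$ and its localization $I_s=S^{-1}I$ inside $R_s=S^{-1}R$, where $S=\{1,s,s^2,\dots\}$, together with the two hypotheses: $I_s$ radical, and $s$ a non-zero-divisor modulo $I$ (i.e.\ multiplication by $s$ is injective on $R/I$).

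First I would transport the relation $f^n\in I$ to the localization: its image $(f/1)^n=f^n/1$ lies in $I_s$, and since $I_s$ is radical we get $f/1\in I_s$. Unwinding the definition of $S^{-1}R$, this means $f/1=i/s^{\ell}$ for some $i\in I$ and $\ell\ge 0$, hence $s^{m}(s^{\ell}f-i)=0$ in $R$ for some $m\ge 0$, so $s^{m+\ell}f=s^{m}i\in I$; writing $k=m+\ell$ we have produced an honest relation $s^{k}f\in I$ in $R$. Second, I would invoke that $s$ is a non-zero-divisor modulo $I$: from $s\cdot(s^{k-1}f)\in I$ we deduce $s^{k-1}f\in I$, and iterating $k$ times gives $f\in I$. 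This completes the verification that $I$ is radical. (The degenerate case $I=R$, which by the non-zero-divisor hypothesis is exactly the case in which some power of $s$ lies in $I$, is trivial, so one may freely assume $I_s\ne R_s$.)

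I do not expect a genuine obstacle here; the statement is essentially a bookkeeping exercise. The one point deserving care is the passage from "$f/1\in I_s$" to an actual membership $s^{k}f\in I$ in $R$, where one must be precise about the two separate powers of $s$ (one from clearing denominators, one from the equality relation in the localization). I would also note in passing that reducedness of $R$ is in fact not used in this argument — it is simply the ambient hypothesis carried over from the surrounding discussion — so the lemma is valid for an arbitrary commutative ring with unit; this mild observation can be left to the reader or stated as a remark.
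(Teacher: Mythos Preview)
Your proof is correct and follows the same approach as the paper's: pass to the localization, use radicality of $I_s$ to obtain $s^k f\in I$, then peel off powers of $s$ using the non-zero-divisor hypothesis. Your version is simply more explicit about the two separate powers of $s$ arising in the localization, and your observation that reducedness of $R$ is not actually needed is correct.
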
 
    \begin{proof}
      Indeed, if some element $f \in R$ belongs to the radical of $I$, then it belongs to the radical to $I_s$, too and by hypothesis then to $I_s$. This means that for some high enough power of $s$ we have $s^mf \in I$, and since $s$ is not a zero divisor modulo $I$, we deduce that $f \in I$.
    \end{proof}

    Suppose we know that the leading coefficient of the given linear polynomial in $G_{ij}$ is not a zero divisor modulo $J_{ij}$. By the previous lemma it suffices to prove that the localization of $\overline{J_{ij}}$ is radical. By Lemma \ref{lem:grob} we know that the localization of $G_{ij}$ is again a Gr\"obner basis for the localization of $\overline{J_{ij}}$. This basis is however not reduced. Indeed, since we have localized at the leading coefficient of a linear polynomial, it contains a \emph{monic} linear polynomial. It follows that the initial ideal of the localization of $\overline{J_{ij}}$ is generated by the leading term $x_{ij}$ of this monic linear polynomial, which is then a Gr\"obner basis, hence a set of generators. The localization of $\overline{J_{ij}}$ is then principal and generated by a monic linear polynomial, hence clearly radical.

    It remains to prove that the leading coefficient of the chosen linear polynomial in $G_{ij}$ is a non-zero divisor modulo $J_{ij}$. As we have observed this coefficient is $(x_{55}x_{66} - x_{56}^2)$ if $j \le 4$ or $x_{16}$ for $J_{15}$. In order to show that these polynomials are not zero-divisors modulo $J$, we want to use again the theory of Gr\"obner bases, so that with Proposition \ref{prop:win} we can argue almost independently of the characteristic. 

    First, observe that an element $s \in \F_p[x_{11}, \dots, x_{66}]$ is a non-zero divisor modulo $J$ if and only if the division ideal $(J : s) = \{ f \in \F_p[x_{11}, \dots, x_{66}] \mid fs \in J\}$ is equal to $J$. The division ideal can be computed using exclusively Gr\"obner bases by the following result, see for example \cite{gtz}*{Cor.\ 3.2} for a proof.

    \begin{lem}\label{lem:loc}
      Let $I = \langle f_1, \dots, f_r \rangle$ be an ideal in a polynomial ring $R[x_1,\dots, x_m]$, and $s \in R[x_1,\dots,x_m]$. Then it is possible to compute the division ideal $(I : s)$ as follows. Compute a Gr\"obner basis $G$ for the ideal $\langle tf_1, \dots, tf_r, ts - s \rangle \subset R[t, x_1, \dots,x_n]$ with respect to a monomial order such that $t > x_1, \dots, x_n$. Then $(G \cap R[x_1,\dots,x_m])/s$ is a Gr\"obner basis for $(I : s)$.
    \end{lem}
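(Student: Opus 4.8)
The plan is to derive the lemma from two facts already available to us: the elimination property of Gr\"obner bases recorded in Lemma \ref{lem:grob}(1), together with the multiplicativity of leading terms in a polynomial ring over a field (or domain). Write $R[x]=R[x_1,\dots,x_m]$ and read $(G\cap R[x])/s$ as the set $\{g/s : g\in G\cap R[x]\}$; this is unambiguous in our situation, since in the intended application $R$ is a field, so $R[x]$ is a domain and $s$ is a non-zero divisor.

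\textbf{Reducing the colon ideal to an intersection.} First I would record the purely algebraic identity $s\cdot(I:s)=I\cap(s)$. If $sf\in I$ then also $sf\in(s)$, so $sf\in I\cap(s)$; conversely every $g\in I\cap(s)$ has the form $g=sf$ with $f$ uniquely determined (as $R[x]$ is a domain), and $sf=g\in I$ forces $f\in(I:s)$. Hence it suffices to produce a Gr\"obner basis of $I\cap(s)$ all of whose elements are visibly divisible by $s$, and then to divide through by $s$.

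\textbf{Computing the intersection and dividing by $s$.} Introduce a fresh variable $t$ and set $L=\langle tf_1,\dots,tf_r,\ ts-s\rangle\subset R[t,x]$. Since $ts-s=-(1-t)s$, this is the ideal $\langle tf_1,\dots,tf_r,\ (1-t)s\rangle$ of the classical elimination formula for an intersection, and I would check $L\cap R[x]=I\cap(s)$ in the usual way: if $g=\sum a_if_i=bs$ then $g=tg+(1-t)g=\sum(ta_i)f_i+b(1-t)s\in L$; conversely, for $g\in L\cap R[x]$, substituting $t=1$ gives $g\in I$ and substituting $t=0$ gives $g\in(s)$. Choosing on $R[t,x]$ any monomial order with $t$ larger than all $x_i$, Lemma \ref{lem:grob}(1) shows $G\cap R[x]$ is a Gr\"obner basis of $L\cap R[x]=I\cap(s)$, and by the previous step each of its elements is divisible by $s$. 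It then remains to prove the elementary fact that, if $G'$ is a Gr\"obner basis of an ideal $K\subseteq(s)$, then $\{g/s:g\in G'\}$ is a Gr\"obner basis of $\tfrac1s K=(I:s)$: each $g/s$ lies in $(I:s)$ because $s\cdot(g/s)=g\in I$, and for the initial ideals one uses $\lt(uv)=\lt(u)\lt(v)$ — for any $h$ with $sh\in K$ we get $\lt(s)\lt(h)=\lt(sh)\in\mathrm{in}(K)=\langle\lt(g):g\in G'\rangle$, so $\lt(sh)$ is divisible by some $\lt(g_j)$; writing $\lt(g_j)=\lt(s)\lt(g_j/s)$ and cancelling $\lt(s)$ shows $\lt(h)$ is divisible by $\lt(g_j/s)$, hence the leading terms of $\{g/s:g\in G'\}$ generate $\mathrm{in}((I:s))$. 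Applying this with $K=I\cap(s)$ and $G'=G\cap R[x]$ finishes the proof.

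\textbf{Main obstacle.} The only step needing genuine care is the last one, the bookkeeping relating $\mathrm{in}(I\cap(s))$ to $\mathrm{in}((I:s))$ under division by $s$ (and the mild point that $s$ must be a non-zero divisor for "$/s$" to make sense, which is automatic over a field); everything else is a recollection of standard commutative algebra, and the full statement is in any case \cite{gtz}*{Cor.\ 3.2}.
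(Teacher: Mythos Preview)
Your proof is correct and complete. The paper does not actually supply its own proof of this lemma; it simply states the result and points to \cite{gtz}*{Cor.\ 3.2}. What you have written is precisely the standard argument behind that reference: reduce $(I:s)$ to $\tfrac{1}{s}(I\cap(s))$, compute $I\cap(s)$ via the auxiliary variable $t$ and elimination (using Lemma~\ref{lem:grob}(1)), and then check that dividing a Gr\"obner basis of an ideal contained in $(s)$ by $s$ yields a Gr\"obner basis of the quotient ideal.
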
 

    Using Sagemath and with the previous lemma one can compute a Gr\"obner basis over $\Q$ for $(J:s)$ for $s$ equal to the leading coefficients $(x_{55}x_{66} - x_{56}^2)$ and $x_{16}$ of the chosen linear polynomials. We compare it to the basis $G$ of $J$, and we obtain that they coincide, hence these leading coefficients are non-zero divisors modulo $J$. It remains to compute the set of unlucky primes for these bases according to Proposition \ref{prop:win}, which is again $\{2, 3\}$. It follows that for $p \not = 2,3$ the elimination ideals $J_{ij}$ for $j \le 4$ as well as $J_{15}$ are radical in $\F_p[x_{11}, \dots, x_{66}]$. A script (with outputs) for the calculations so far can be found in Appendix \ref{AppendixB}.
  \end{proof}

  \begin{proof}[Proof of the quadratic case] 
    It remains to discuss the steps corresponding to the elimination ideals $\overline{J_{25}}$ and $\overline{J_{16}}$. As we have already seen these ideals are principal and generated by a polynomial of degree two. In order to prove that $J_{ij}$ is radical, it suffices then to show that the leading coefficients and discriminants of these quadratic polynomials are non-zero divisors modulo $J$. This implies computing four other Gr\"obner bases as in Lemma \ref{lem:loc} and the corresponding sets of unlucky primes, according to Proposition \ref{prop:win}. We use again Sagemath and obtain that $J_{25}$ and $J_{16}$ are radical for $p \not \in  U$. The set $U$ of unlucky primes is quite large in this case and is listed in Appendix \ref{AppendixB}.
  \end{proof}

  We can conclude that $J \subset \F_p[x_{11}, \dots, x_{66}]$ is radical for $p \not \in  U$. For $p=2$ we have already seen that $J$ is not radical. Observe that the set $U$ consists of primes $\le 809$, see Appendix \ref{AppendixB}, which have already been checked by Pappas in \cite{pap}*{Sec.\ 4.16}. Therefore, for any $p \neq 2$, the ideal $J$ is radical.
\end{proof}

\begin{rem}\label{rem:model}
  We note that the core of the proof of Proposition \ref{prop:radical6} is in the observation that all the arguments and computations we used to prove radicality hold in (almost) any odd characteristic. This is based on the result on Gr\"obner bases of Proposition \ref{prop:win}, which allows us to move to characteristic zero, prove radicality only with Gr\"obner bases calculations, and deduce the same result over almost all positive characteristics $p$. Roughly speaking, we have proved that the ideal $J$ of (\ref{eq:ideal}) is radical over $\Q$ if and only if it is radical over $\F_p$ for almost all primes $p$, and we have indicated how to find the finitely many primes for which this may not hold. 
  
  This is actually not as surprising as it may seem in light of some recent results in model theory. We give here the fundamental idea, which we worked out with S.\ Ramello, who, together with F.\ Jahnke,  pointed us to the relevant literature. In model theory, a \emph{language} consists of all sentences that can be formulated using a given set of symbols. For example, the language of rings consists of all the statements that can be expressed just using the symbols $+, \cdot, 0, 1$, see \cite{marker}*{Sec.\ 1} for a detailed explanation. As a consequence of the \emph{compactness theorem}, see \cite{marker}*{Cor.\ 2.2.10}, any statement in the language of rings is true in an algebraically closed field of characteristic zero if and only if it is true in an algebraically closed field of characteristic $p$ for every $p$ large enough. At a first glance, the statement ``the ideal $J \subset R$ is radical'', which is equivalent to the statement ``for every $f \in R$, if $f^n \in J $ then $f \in J$'', seems not to belong to the language of rings, as it requires using the quantifier $\forall$, the set of natural numbers (for the exponent and the degree of $f$) and the quantifier $\exists$ ($f \in I$ means that there exists a linear combination of the generators of $I$ that is equal to $f$). However, it is proved in \cite{barnicle}*{Sec.\ 5.1} that if $R$ is a polynomial ideal, the statement ``$J$ is radical'' can actually be formulated in an equivalent way without quantifiers and without the full set of natural numbers. Therefore, it can be expressed in this case in the language of rings. It follows that an ideal $J \subset \Z[x_1, \dots, x_m]$ is radical over $\Q$ if and only if it is radical over $\F_p$ for $p$ large enough.

  We note that the compactness theorem of model theory is highly non-constructive, that is, it does not indicate how to find the prime $p_0$ such that, if an ideal is radical in characteristic zero, then it is radical in characteristic $p > p_0$. Since our goal was to prove flatness of the Rapoport-Zink space in any odd characteristic, a purely model-theoretical approach would not have been sufficient.
\end{rem}

\begin{rem} 
  Another important idea in the proof of Proposition \ref{prop:radical6} is the reduction of the proof of radicality to the case of one variable. This approach can be applied to any ideal, as long as an algorithm or criterion for proving radicality of the resulting univariate polynomial ideals is known. In our case, we have linear or quadratic polynomials, and we have seen how to prove radicality in these cases by using only Gr\"obner bases. Our strategy can be applied to the ideals $J(2,n)$ of Proposition \ref{prop:pap}, as well, and we have carried out the computations for $n \le 8$, and obtained that these ideals are radical. 
\end{rem}

\section{Vertex lattices and modular lattices}\label{sec:lattices}

Now that we have proved that the scheme $\N_{2,6}$ is flat over $\mathcal{O}_E$, we can turn to the description of its geometry. The results in this and the next section are actually true for $\N_{2,n}$ in any dimension $n$, from Section \ref{sec:closedpoints} on we will restrict again to the case $n =6$. 

As we have mentioned in the introduction, the object of our studies is $\bar{\N}_{2,n}^{0}$, the reduction modulo $\pi$ of the open and closed formal subscheme of $\N_{2,n}$ consisting of quadruples where the height of the quasi-isogeny $\rho$ is zero. More precisely, the moduli functor $\bar{\N}_{2,n}^{0}$ parametrizes quadruples $(X,\lambda, \iota, \rho)$, where $X$ is a $p$-divisible group of height $2n$ and dimension $n$, and where $\lambda$ is a principal quasi-polarization whose Rosati involution induces on $\mathcal{O}_E$ the non-trivial automorphism over $\Q_p$. Since the conjugate embeddings $\psi_{0,1}: E \rightarrow \breve{E}$ coincide  modulo $\pi$, and since we have fixed $s = 2$, Pappas' and Kottwitz's conditions reduce to
\begin{equation}\label{eq:wedge}
  \bigwedge^{3} (\iota(\pi) \mid \Lie(X))= 0.
\end{equation}
Moreover, $\rho: X \rightarrow \mathbb{X}\times_{\F}S$ is now a quasi-isogeny of height $0$ which is $\mathcal{O}_E$-linear and such that $\rho^{*}(\lambda_{\mathbb{X}})$ and $\lambda$ differ locally on $S$ by a factor in $\Z_p^{\times}$. 

We first study the $\F$-valued points of $\bar{\N}_{2,n}^{0}$. By Dieudonn\'e theory to the fixed $p$-divisible group $\mathbb{X}$ corresponds a unique free $W(\F)$-module of rank equal to the dimension $n$ of $\mathbb{X}$. We consider $N$, the rational Dieudonn\'e module of $\mathbb{X}$, that is the vector space obtained by tensoring with the field of fractions $W_{\Q} = \Quot(W(\F))$. The action $\iota_{\mathbb{X}}$ of $\mathcal{O}_E$ induces an action of the field $E$ on $N$. Since by definition of $\iota_{\mathbb{X}}: \mathcal{O}_E \rightarrow \End(\mathbb{X})$ the action of any element in $\mathcal{O}_E$ on $\mathbb{X}$ is an endomorphism of $\mathbb{X}$ as $p$-divisible group, the action of $E$ on the rational Dieudonn\'e module $N$ commutes with the Frobenius and Verschiebung maps on $N$. We denote by $\Pi$ the action of $\pi$ on $N$. Last, the principal quasi-polarization $\lambda_{\mathbb{X}}$ induces a skew-symmetric $W_{\Q}$-bilinear form $\langle \cdot , \cdot \rangle$ on $N$ satisfying 
\begin{align*}
  & \langle Fx, y \rangle = \langle x, Vy \rangle^{\sigma} \\
  & \langle \iota_{\mathbb{X}}(a)x, y \rangle = \langle x, \iota_{\mathbb{X}} (\bar{a}) y \rangle,
 \end{align*}
for any $x, y \in N$ and any $a \in E$. For a $W(\F)$-lattice $M \subset N$, that is a free $W(\F)$-submodule of $N$ of rank $n$, we denote by $M^{\vee}$ the lattice $\{ x \in N \mid \langle x, M \rangle \subset W(\F)\}$, and call it the \emph{dual} of $M$ with respect to the alternating form on $N$. In the following, we write an exponent over an inclusion of lattices $M_1 \subset^m M_2$ to indicate the index, \textit{i.e.}\ the length of the quotient module $M_2/M_1$.

The following lemma is the analogue of \cite{rtw}*{Prop.\ 2.2}, and it is proved in the same way. For completeness, we recall here their proof with the modifications due to the different signature. 
\begin{lem}\label{lem:3.1}
  Associating to a point in $\bar{\N}_{2,n}^{0}(\mathbb{F})$ its Dieudonn\'{e} module defines a bijection of $\bar{\N}_{2,n}^{0}(\mathbb{F})$ with the set of $W(\F)$-lattices 
  \begin{equation*}
    \{ M \subset N \mid M^{\vee} = M,~ \Pi M \subset M,~ pM \subset VM \subset^n M,~ VM \subset^{\le 2} VM + \Pi M\}, 
  \end{equation*}
\end{lem}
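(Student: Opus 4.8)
The plan is to follow the standard Dieudonné-theoretic dictionary, as in \cite{rtw}*{Prop.\ 2.2} and \cite{vollaard}*{Sec.\ 1}, and translate each piece of the moduli datum $(X,\iota,\lambda,\rho)$ of height-zero quasi-isogeny type into a condition on the corresponding $W(\F)$-lattice $M \subset N$. Recall that covariant Dieudonné theory associates to a $p$-divisible group $X$ over $\F$ a free $W(\F)$-module $M$ of rank equal to the height of $X$, here $2n$ divided by appropriate normalization; more precisely, since $X$ has dimension $n$ and height $2n$, the Dieudonné module $M$ has $W(\F)$-rank $2n$, but the $\mathcal{O}_E$-action makes $M$ free of rank $n$ over $\mathcal{O}_E \otimes_{\Z_p} W(\F)$ — one has to be careful which convention the paper uses, but in either case $M$ sits inside $N$ as a lattice with the stated properties. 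First I would explain how a quasi-isogeny $\rho : X \to \mathbb{X} \times_\F S$ of height $0$ identifies the rational Dieudonné module of $X$ with $N$ and realizes $M$ as a lattice in $N$; conversely any lattice $M \subset N$ satisfying $pM \subset VM \subset M$ (equivalently $pM \subset FM \subset M$) is the Dieudonné module of a $p$-divisible group equipped with such a quasi-isogeny, and this sets up the bijection onto its image.

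Next I would match up the remaining structures. The $\mathcal{O}_E$-action $\iota$ on $X$ corresponds to the requirement $\Pi M \subset M$, where $\Pi$ is the operator on $N$ induced by $\pi$; this is exactly the condition that $M$ be stable under $\iota_{\mathbb{X}}(\mathcal{O}_E)$. The principal quasi-polarization $\lambda$, which differs from $\rho^*\lambda_{\mathbb{X}}$ by a scalar in $\Z_p^\times$, translates into the self-duality $M^\vee = M$ with respect to the alternating form $\langle\cdot,\cdot\rangle$ on $N$ (the $\Z_p^\times$-ambiguity does not change the dual lattice). The condition $VM \subset^n M$ records that $X$ has dimension $n$, i.e.\ that $\Lie(X) = M/VM$ has $W(\F)$-length $n$; combined with $pM \subset VM$ this is just the statement that $M$ is a Dieudonné lattice for a $p$-divisible group of the right height and dimension. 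The only genuinely new ingredient compared to \cite{rtw} is the last condition $VM \subset^{\le 2} VM + \Pi M$: this is the Dieudonné-module avatar of Pappas' wedge condition \eqref{eq:wedge}, namely $\bigwedge^3(\iota(\pi) \mid \Lie(X)) = 0$. I would derive it by noting that $\Lie(X) = M/VM$ and that the action of $\iota(\pi)$ on $\Lie(X)$ is induced by $\Pi$; the image of $\Pi$ on $M/VM$ is $(VM + \Pi M)/VM$, so $\bigwedge^3$ of this endomorphism vanishing is equivalent to the rank of $\Pi$ on $\Lie(X)$ being at most $2$, which is precisely $VM \subset^{\le 2} VM + \Pi M$. (Here one uses that, modulo $\pi$, Kottwitz's determinant condition imposes nothing beyond what the wedge condition already gives, as noted in the text just before \eqref{eq:wedge}, so no further constraint appears.)

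Finally I would check that the correspondence is a bijection and not merely an injection: injectivity is the faithfulness of the Dieudonné functor together with the rigidity of quasi-isogenies (two quadruples with the same Dieudonné lattice and compatible $\rho$ are isomorphic), and surjectivity is the statement that every lattice $M$ in the displayed set actually arises — one runs the Dieudonné functor backwards to produce $(X, \iota, \lambda, \rho)$, using $\Pi M \subset M$ to get the $\mathcal{O}_E$-action, $M^\vee = M$ to get the principal polarization with the correct Rosati involution (the compatibility $\langle \iota_{\mathbb{X}}(a)x, y\rangle = \langle x, \iota_{\mathbb{X}}(\bar a)y\rangle$ ensures the Rosati involution induces $a \mapsto \bar a$ on $\mathcal{O}_E$), the inclusions $pM \subset VM \subset^n M$ to get a $p$-divisible group of height $2n$ and dimension $n$, and the last inclusion to guarantee the wedge condition. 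The main obstacle, or rather the only point requiring genuine care beyond bookkeeping, is the precise translation of the wedge condition \eqref{eq:wedge} into the inequality $VM \subset^{\le 2} VM + \Pi M$ — that is, correctly identifying $\Lie(X)$ with $M/VM$ in the chosen (covariant vs.\ contravariant) convention and checking that ``$\bigwedge^3$ of an endomorphism vanishes'' is the same as ``its image has length $\le 2$'', which uses that $\Lie(X)$ is a module over the field $\F$ so that rank is well behaved. Everything else is a direct transcription of the arguments in \cite{rtw}*{Prop.\ 2.2} with $s=1$ replaced by $s=2$.
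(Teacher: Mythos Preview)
Your proposal is correct and follows essentially the same approach as the paper's proof: both translate each piece of the moduli datum $(X,\iota,\lambda,\rho)$ into a lattice-theoretic condition via Dieudonné theory, with the key observation being that the wedge condition \eqref{eq:wedge} says the action of $\Pi$ on $\Lie(X)=M/VM$ has rank at most $2$, i.e.\ $VM \subset^{\le 2} VM+\Pi M$. The paper's version is more terse (it does not spell out the Rosati compatibility or the covariant/contravariant caveat), but the logical skeleton is identical to yours, and the converse direction in both cases amounts to noting that $pM\subset VM\subset M$ forces $FM\subset M$ so that $M$ is a genuine Dieudonné lattice.
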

\begin{proof}
  Given a quadruple $(X, \lambda, \iota, \rho)$ in $\bar{\N}_{2,n}^{0}(\mathbb{F})$, the quasi-isogeny $\rho$ from $X$ to the fixed $p$-divisible group $\mathbb{X}$ translates into an inclusion of the Dieudonn\'e module $M$ of $X$ into the rational module $N$ of $\mathbb{X}$. Since $\lambda$ is a principal polarization, $M$ is a self-dual lattice. The stability of $X$ under the action $\iota$ of $\mathcal{O}_E$, together with the $\mathcal{O}_E$-linearity of $\rho$, is equivalent to the stability of $M$ under the action $\Pi$ of $\pi$ on $N$. Condition (\ref{eq:wedge}) says that the action of $\Pi$ on $\Lie(X) = M/VM$ has rank at most $2$, which is equivalent to the index condition in the last inclusion. Conversely, if a $W(\F)$-lattice $M \subset N$ satisfies all these properties, by the inclusions $pM \subset V M \subset M$, we see that also $FM \subset M$. Then $M$ corresponds to a $p$-divisible group $X$ with additional structure $(\iota, \lambda)$ as claimed and with a quasi-isogeny $\rho$ to $\mathbb{X}$ induced by the inclusion of $M$ in $N$. 
\end{proof}
 
As in \cite{rtw}*{Sec.\ 2} we also consider the Hermitian $E$-vector space $C$ constructed as follows.  Let $\eta \in W^{\times}$ be such that $(\eta\pi)^2 = p$ and consider the $\sigma$-linear map $\tau \vcentcolon =  \eta \Pi V^{-1} : N \rightarrow N$. Recall that the $p$-divisible group $\mathbb{X}$ is supersingular, which means that all the slopes of its Newton polygon are $\tfrac{1}{2}$. Therefore, $\tau$ has all slopes zero.  We define $C$ as the $n$-dimensional $\Q_p$-vector space consisting of the points of $N$ that are fixed by $\tau$. Since the action of $E$ on $N$ commutes with the Frobenius and Verschiebung maps, the action of $\Pi$ commutes with $\tau$. The structure of $E$-vector space on $C = N^\tau$ is then induced by the action of $\Pi$ on $N$. Last, we note that there is an isomorphism $C \otimes_{\Q_p}W_{\Q} \xrightarrow{\sim} N$ such that $\mathrm{id}_C \otimes \sigma$ corresponds to $\tau$.   

As remarked in \textit{loc.cit.}, the restriction of the skew-symmetric form of $N$ induces an alternating bilinear form on $C$ with values in $\mathbb{Q}_p$, which we denote again by $\langle \cdot , \cdot \rangle$. In particular, it satisfies 
\begin{equation*}
  \langle \Pi x, y \rangle = - \langle x, \Pi y \rangle, ~~~ \text{for } x,y \in C.
\end{equation*}
Therefore, we can define a symmetric $E$-bilinear form on $C$ by setting
\begin{equation*}
  (x, y) \vcentcolon = \langle \Pi x, y \rangle.
\end{equation*}
As remarked in \cite{rtw}*{Sec.\ 2}, we can also define a Hermitian form $h$ on $C$ via the formula $$ h(x, y) \vcentcolon= \langle \Pi x, y \rangle + \langle x, y\rangle\pi. $$ This form in particular satisfies 
\begin{align}\label{eq:forms}
  \langle x, y \rangle &= \tfrac{1}{2} \trace_{E/\mathbb{Q}_p}(\pi^{-1} h(x,y)) \\
  ( x, y )  &= \tfrac{1}{2} \trace_{E/\mathbb{Q}_p}(h(x,y)),
\end{align}
for all $x, y \in C$. We extend the Hermitian form of $C$ (and consequently the symmetric and alternating forms, too) onto $C \otimes_{E} \breve{E}$ by setting
\begin{equation*}
  h(v\otimes a, w \otimes b) = a \cdot \sigma(b) \cdot h(v, w).
\end{equation*}
\begin{lem}
  We denote by $M^{\vee}, M^{\sharp}, M^{\perp}$ the duals of an $\mathcal{O}_{\breve{E}}$-lattice $M$ in $C \otimes_E \breve{E}$ respectively for the alternating, Hermitian and symmetric from. Then we have
\begin{equation*}
  M^{\vee} = M^{\sharp} = \Pi M^{\perp}.
\end{equation*} 
\end{lem}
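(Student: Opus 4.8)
Write $N = C\otimes_E\breve{E}$ for the ambient space (identified with the rational Dieudonné module of $\mathbb{X}$ as above). The plan is to deduce the statement from three identities already recorded, namely $h(x,y)=(x,y)+\pi\langle x,y\rangle$, the definition $(x,y)=\langle\Pi x,y\rangle$, and the $\Pi$-adjunction $\langle\Pi x,y\rangle=-\langle x,\Pi y\rangle$; these hold on $C$ and, by the extension recipe used above, continue to hold on $N$, with $(\cdot,\cdot)$ and $\langle\cdot,\cdot\rangle$ taking values in $W_{\Q}$ and $h$ in $\breve{E}$. Since $p$ is odd, $\mathcal{O}_E=\Z_p[\pi]$, hence $\mathcal{O}_{\breve{E}}=W\oplus W\pi$ as a $W$-module and $\{1,\pi\}$ is a $W_{\Q}$-basis of $\breve{E}$; consequently, for $a,b\in W_{\Q}$ one has $a+b\pi\in\mathcal{O}_{\breve{E}}$ if and only if $a\in W$ and $b\in W$. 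Writing $h(x,m)=(x,m)+\pi\langle x,m\rangle$ and letting $m$ run over $M$, this gives
\[
  M^{\sharp}=M^{\vee}\cap M^{\perp}
\]
for every $\mathcal{O}_{\breve{E}}$-lattice $M\subset N$; the convention chosen in defining $M^{\sharp}$ is irrelevant here, as $h(m,x)=(x,m)-\pi\langle x,m\rangle$ imposes the same integrality on $(x,m)$ and $\langle x,m\rangle$.

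Next I would relate $M^{\vee}$ and $M^{\perp}$ directly. The operator $\Pi$ is invertible on $N$ because $\Pi^{2}=\pi_{0}\in W_{\Q}^{\times}$, and for any $x\in N$ and $m\in M$ one has $(\Pi^{-1}x,m)=\langle\Pi\Pi^{-1}x,m\rangle=\langle x,m\rangle$ straight from the definition of $(\cdot,\cdot)$. Hence $\Pi^{-1}x\in M^{\perp}$ if and only if $x\in M^{\vee}$, that is
\[
  M^{\vee}=\Pi M^{\perp}.
\]

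It remains to check $M^{\vee}=M^{\sharp}$, which by the first display amounts to $M^{\vee}\subseteq M^{\perp}$. Here I would use that $M$ is $\Pi$-stable: for $x\in M^{\vee}$ and $m\in M$ we get $\langle\Pi x,m\rangle=-\langle x,\Pi m\rangle\in W$ since $\Pi m\in M$, so $M^{\vee}$ is itself $\Pi$-stable; being also a $W$-lattice, it is then an $\mathcal{O}_{\breve{E}}$-lattice, so in particular $\Pi M^{\vee}\subseteq M^{\vee}$, equivalently $M^{\vee}\subseteq\Pi^{-1}M^{\vee}=M^{\perp}$ by the previous step. Combining the three displays yields $M^{\sharp}=M^{\vee}=\Pi M^{\perp}$, as desired.

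I do not foresee a genuine difficulty here: the lemma is bookkeeping once the three forms are written in terms of one another, and the argument uses only identities already in the text. The point deserving care is the passage to $C\otimes_E\breve{E}$, where one must confirm that $(\cdot,\cdot)$ and $\langle\cdot,\cdot\rangle$ are still the $W_{\Q}$-valued forms and that $h=(\cdot,\cdot)+\pi\langle\cdot,\cdot\rangle$ holds as an identity in $\breve{E}=W_{\Q}\oplus W_{\Q}\pi$, since this is exactly what legitimizes the first step. An essentially equivalent but slightly more computational route sidesteps the $\Pi$-stability argument: the inverse different of $\breve{E}/W_{\Q}$ equals $\pi^{-1}\mathcal{O}_{\breve{E}}$ (the different being generated by $2\pi$, a unit multiple of $\pi$, since $p$ is odd and $\mathcal{O}_{\breve{E}}=W[\pi]$ with $\pi^{2}=\pi_{0}$), so that, using that $h(x,M)$ is a fractional $\mathcal{O}_{\breve{E}}$-ideal, the trace formulas $(\cdot,\cdot)=\tfrac12\trace_{\breve{E}/W_{\Q}}\circ h$ and $\langle\cdot,\cdot\rangle=\tfrac12\trace_{\breve{E}/W_{\Q}}(\pi^{-1}h)$ give $M^{\perp}=\Pi^{-1}M^{\sharp}$ and $M^{\vee}=M^{\sharp}$ at once.
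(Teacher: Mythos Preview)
Your proof is correct and uses essentially the same ingredients as the paper's: the identity $(\Pi^{-1}x,m)=\langle x,m\rangle$ to get $M^{\vee}=\Pi M^{\perp}$, and the $\Pi$-stability of $M$ (hence of $M^{\vee}$) to close the loop. The organization differs slightly: you first isolate $M^{\sharp}=M^{\vee}\cap M^{\perp}$ from the decomposition $h=(\cdot,\cdot)+\pi\langle\cdot,\cdot\rangle$ together with $\mathcal{O}_{\breve{E}}=W\oplus W\pi$, whereas the paper checks the two inclusions $M^{\vee}\subset M^{\sharp}$ and $M^{\sharp}\subset M^{\vee}$ directly (the latter via the trace formula, which is equivalent to your decomposition argument). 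Your alternative route through the inverse different is also fine and amounts to the same computation.
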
 
\begin{proof}
  If $x \in M^{\vee}$, then for every $m \in M$ the value of $\langle x, m \rangle$ is an element of $\mathcal{O}_{\breve{E}}$. Since $M$ is an $\mathcal{O}_{\breve{E}}$-lattice we have $\Pi M \subset M$, and therefore $\langle \Pi x, m \rangle = -\langle x, \Pi m \rangle$ is an integer, too. From the definition of the Hermitian form $h$, it follows then that $x \in M^{\sharp}$. The other inclusion is clear from the relation (\ref{eq:forms}) above between the alternating form and the trace of the Hermitian form.

  If $x \in M^{\perp}$, then by definition of the symmetric form the value of $\langle \Pi x, m \rangle$ is an integer for all $m \in M$, and therefore $\Pi x \in M^{\vee}$. Conversely, if $x \in M^\vee$, then $(\Pi^{-1}x, m) = \langle \Pi (\Pi^{-1} x), m \rangle = \langle x, m \rangle$ is an integer for all $m \in M$ and therefore $\Pi^{-1}x \in M^{\perp}$.
\end{proof}

\begin{lem}\label{eq:V(k)} Associating to a $p$-divisible group its Dieudonn\'e module defines a bijection of $\bar{\N}_{2,n}^0(\F)$ with the set of $\mathcal{O}_{\breve{E}}$-lattices
\begin{equation*} 
  \mathcal{V}(\mathbb{F}) = \{ M \subset C \otimes_E \breve{E} \mid M^{\sharp} = M, ~ \Pi \tau(M) \subset M \subset^n  \Pi^{-1}\tau(M), ~ M \subset^{\le 2} (M + \tau(M))  \}.     
\end{equation*}
\end{lem}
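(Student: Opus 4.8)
The plan is to rewrite the lattice conditions of Lemma~\ref{lem:3.1} under the dictionary between $W(\F)$-lattices $M \subset N$ and $\mathcal{O}_{\breve E}$-lattices in $C \otimes_E \breve E$ furnished by the isomorphism $C \otimes_{\Q_p} W_{\Q} \xrightarrow{\sim} N$ carrying $\id_C \otimes \sigma$ to $\tau$. Recall $\tau = \eta \Pi V^{-1}$, so that $V = \eta \Pi \tau^{-1}$ and $F = pV^{-1} = p\,\eta^{-1}\Pi^{-1}\tau$ (using $(\eta\pi)^2 = p$ and $\Pi^2 = \pi_0 = p\cdot(\text{unit})$, so $\Pi^2$ and $p$ agree up to a unit; one should fix the normalization of $\eta$ carefully here). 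Under this substitution each condition in Lemma~\ref{lem:3.1} becomes a condition on $M$ as an $\mathcal{O}_{\breve E}$-lattice, and the point is that these match exactly the defining conditions of $\mathcal{V}(\F)$.

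**Key steps.** First, $M^\vee = M$ becomes $M^\sharp = M$: by the preceding lemma $M^\vee = M^\sharp$ for any $\mathcal{O}_{\breve E}$-lattice, so this is immediate once one checks that self-duality for the $W(\F)$-form on $N$ transports to self-duality for the extended alternating (equivalently Hermitian) form on $C\otimes_E \breve E$ — this uses the compatibility \eqref{eq:forms} and the fact that $\langle\cdot,\cdot\rangle$ on $N$ is the $W_\Q$-bilinear extension of the $\Q_p$-form on $C$. Second, $\Pi M \subset M$ is unchanged, since $\Pi$ acts $\breve E$-linearly and commutes with $\tau$. Third, translate $pM \subset VM \subset^n M$: writing $V = \eta\Pi\tau^{-1}$, the inclusion $VM \subset M$ reads $\Pi\tau^{-1}(M) \subset M$, i.e. $\Pi\tau(M')\subset M'$ after relabelling, which combined with self-duality is equivalent to $\Pi\tau(M) \subset M \subset \Pi^{-1}\tau(M)$; the index being $n$ on each side follows because $\Pi$ and $\tau$ each scale covolume in a controlled way and $pM \subset VM$ forces the colength of $VM$ in $M$ to be exactly $n$ (the dimension). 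Fourth, the wedge condition $VM \subset^{\le 2} VM + \Pi M$: substituting $V = \eta\Pi\tau^{-1}$ and dividing through by $\Pi$ (an isomorphism) turns $VM + \Pi M = \Pi(\tau^{-1}M + M)$ and $VM = \Pi\tau^{-1}(M)$ into the statement $\tau^{-1}(M) \subset^{\le 2} \tau^{-1}(M) + M$, which (applying the isomorphism $\tau$, and using self-duality to flip $\tau^{-1}$ to $\tau$) is precisely $M \subset^{\le 2} M + \tau(M)$. One should then check the converse direction — that a lattice satisfying the $\mathcal{V}(\F)$ conditions yields, via the same substitutions, a lattice satisfying the Lemma~\ref{lem:3.1} conditions — which is automatic since every manipulation above is reversible (all the operators involved, $\Pi$, $\tau$, $V$, are bijections on $N$).

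**Main obstacle.** The routine part is the formal substitution; the delicate point is bookkeeping with the units and with how $\Pi$ versus $\tau$ interact with duality. Concretely: $\tau$ is only $\sigma$-linear, not $\breve E$-linear, so passing the duality operators $(\cdot)^\sharp$ past $\tau$ requires knowing how $\tau$ interacts with the extended Hermitian form (it should be an isometry up to the $\sigma$-twist, by construction of $\tau$ from the polarization), and the identities $(\tau M)^\sharp = \tau(M^\sharp)$ of this type are what convert the one-sided inclusions from Lemma~\ref{lem:3.1} into the symmetric sandwich conditions in $\mathcal{V}(\F)$. Getting the exponent $n$ and the bound $\le 2$ to come out correctly — rather than off by a factor coming from $\Pi$ or from the ramification — is where care is needed. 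Since this lemma is explicitly the analogue of the corresponding step in \cite{rtw}, I expect the cleanest writeup to follow their computation verbatim, inserting the single modification that the rank of $\Pi$ on $\Lie(X)$ is now $\le 2$ rather than $\le 1$, which is exactly what changes $\subset^{\le 1}$ there into $\subset^{\le 2}$ here.
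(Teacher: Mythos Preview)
Your proposal is correct and follows exactly the paper's approach: the paper's proof is the single sentence ``This is simply a reformulation of Lemma~\ref{lem:3.1} in terms of the map $\tau$ and the isomorphism $C \otimes_E \breve{E} \xrightarrow{\sim} N$,'' and your detailed condition-by-condition translation is precisely what that sentence abbreviates. The ``main obstacle'' you flag is overstated: the self-duality condition $M^\sharp = M$ stands alone and the remaining inclusions translate directly via $V = \eta\Pi\tau^{-1}$ without ever needing to commute duality past $\tau$, so no identity of the form $(\tau M)^\sharp = \tau(M^\sharp)$ is actually required.
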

\begin{proof}
  This is simply a reformulation of Lemma \ref{lem:3.1} in terms of the map $\tau$ and the isomorphism $C \otimes_E \breve{E} \xrightarrow{\sim} N$.
\end{proof}

\begin{rem}
  In the following sections we will often have to distinguish between two, sometimes quite different, cases. Consider the discriminant of the Hermitian space $C$. It is given by the image of $(-1)^{\frac{n(n-1)}{2}} \det V$ in the order-$2$ group $\Q_p^{\times} /\text{Norm}_{E/\Q_p}(E^{\times})$. We say that the form is \emph{split} if the discriminant is the trivial element in this group, respectively \emph{non-split} if it is non-trivial. As noted in \cite{rtw}*{Rem.\ 4.2} for even dimension $n$ both cases, $C$ split and non-split, can appear. This only depends on the choice of $\mathbb{X}$ used to define the moduli space $\N_{2,n}$, and in \textit{loc.cit.}\ it is shown how to construct examples for both cases. If the dimension is odd, since we can multiply $\lambda_{\mathbb{X}}$ by a unit in $\Z_p$, one can assume without loss of generality that the discriminant of $C$ is $1$, compare also \cite{rtw}*{Rem.\ 4.2} and the references there. 
\end{rem}

We show now how to associate to any lattice $M$ in $\V(\mathbb{F})$ a unique minimal $\tau$-stable $\mathcal{O}_E$-lattice $\Lambda(M) \subset C$ such that $M \subset \Lambda(M) \otimes_{\mathcal{O}_E} {\mathcal{O}_{\breve{E}}}$. The construction is the same as that of \cite{rtw}*{Sec.\ 4}, however, due to the different index appearing in the last inclusion in Lemma \ref{eq:V(k)}, the resulting lattice $\Lambda(M)$ will satisfy a weaker property. In the following we denote by $\pi$ both the element of $E$ and its action $\Pi$ on $N$ or $C$.
\begin{defn}
  Let $\Lambda$ be an $\mathcal{O}_E$-lattice in $C$.  
  \begin{enumerate}[nolistsep]
    \item \cite{rtw}*{Def.\ 3.1} We say that $\Lambda$ is a \emph{vertex lattice} if it satisfies $\pi \Lambda \subset \Lambda^{\vee} \subset \Lambda$. The index of $\Lambda^{\vee}$ in $\Lambda$ is called the \emph{type} of $\Lambda$. It is proved in \cite{rtw}*{Lem.\ 3.2} that the type of a vertex lattice is an even integer. Moreover, all even integers between $0$ and $n$ appear as the type of some vertex lattice in $C$, unless $n$ is even and the form is non-split, in which case type $n$ does not appear.
    \item  \cite{jac}*{Sec.\ 2} We say that $\Lambda$ is an \emph{$m$-modular lattice} if $\pi^m \Lambda = \Lambda^{\vee} \subset \Lambda$. For even $m$, $m$-modular lattices always exist, and a full classification is given in \cite{jac}*{Sec.\ 8}. In particular, $0$-modular lattices are simply self-dual lattices, while $1$-modular lattices are vertex lattices of type $n$, and by the previous point, they only exist if the dimension $n$ is even and the form is split.
    \item In this paper we say that $\Lambda$ is a \emph{$2$-vertex lattice} if $\pi^2\Lambda \subset \Lambda^\vee \subset \Lambda$. Clearly vertex lattices and $2$-modular lattices are also $2$-vertex lattices.
  \end{enumerate}
\end{defn}

Given a lattice $M \in \V(\F)$,  for each positive integer $j$ we consider the lattice
\begin{equation*}
T_j \vcentcolon = M + \tau(M) + \dotsm + \tau^j(M).    
\end{equation*} 
We also denote by $\tau_j$ the image of $T_j$ under $\tau$. It is clear from the definition that $T_{j+1} = T_j + \tau_j$ and that $\tau_{j-1} \subset T_{j} \cap \tau_j$. From the properties of $M$ it follows that for every $j$ the lattice $T_j$ satisfies
\begin{equation}\label{eq:abc}
  \pi T_j \subset T_j,~~~  \pi\tau(T_j) \subset T_j \subset \pi^{-1} \tau(T_j), ~~~ T_j \subset ^{\le 2} T_j + \tau(T_j),
\end{equation}
and similarly for $\tau_j$. By \cite{rz}*{Prop.\ 2.17} there is an integer $d$ such that $T_d = T_{d+1}$ and the minimal such integer satisfies $d \le n-1$, where $n$ is again the dimension of the $\breve{E}$-vector space $N$.

Consider the chain of inclusions 
\begin{equation}
  M = T_0 \subset T_1 \subset \dotsm \subset T_d.
\end{equation} 
We now give a series of rather combinatorial remarks which will be of key importance for the proof of Proposition \ref{prop:1} later.
\begin{rem}\label{lem:1} 
  For any $i = 1, \dots , d$ the lattices $T_{i-1}$ and $\tau_{i-1}$ have the same index in $T_i$. This follows from the fact that they are both contained in $T_i$, by definition, and since $\tau$ has slopes zero, they have the same volume. By the second isomorphism theorem for modules, it also follows that the index of the inclusion $T_i \subset T_{i+1}$ is the same as that of the inclusion $T_i \cap \tau_i \subset T_i$. 
\end{rem}
\begin{rem}\label{lem:3}
  There is an index $1\le k \le d$ such that 
  \begin{equation}\label{eq:ek}
    M = T_0 \subset^2 \dotsm \subset^2 T_k \subset^1 \dotsm \subset^1 T_d.
  \end{equation}  Indeed, let $k$ be the minimal integer such that $T_{k-1}\subset^2 T_k \subset^1 T_{k+1}$, with the convention that if all inclusions have index $1$ or $2$, we simply say $k = 0$, respectively $k =d$. Assume $0 < k < d$, we show by induction that for all $k \le i <d$ the index of $T_i$ in $T_{i+1}$ is one.  For $i = k$ this is just the definition of $k$.  Assume $k < i< d$. By induction, we have $T_{i-1} \subset^1 T_i$ and by Remark \ref{lem:1} this implies $ \tau_{i-1 } \subset^1 T_i$. We know that
  \begin{equation*}
    \tau_{i-1 } \subset T_i \cap \tau_i \subsetneq T_i,
  \end{equation*} where the second inclusion is proper as $i < d$ and therefore $T_i$ is not $\tau$-stable. Since $\tau_{i-1}$ has index $1$ in $T_i$ we have that $\tau_{i-1} = T_i \cap \tau_i \subset^1 T_i$.  By the previous remark we conclude that $T_i \subset^1 T_{i+1}$, which concludes the proof of (\ref{eq:ek}).

  Let $k$ be as above, then we claim that
  \begin{align*}
    \tau_{i-1} &= T_i \cap \tau_i ~~~\text{if } i \neq k, \\
    \tau_{k-1} &\subset^1 T_k \cap \tau_k \subset^1 T_k.
  \end{align*} 
  We have already proved the case $i > k$. For $i < k$ we have $T_{i-1} \subset^2 T_i \subset^2 T_{i+1}$. Then by Remark \ref{lem:1} and the first inclusion it follows $\tau_{i-1} \subset^2 T_i$. By the same remark and the second inclusion we also have $T_i \cap \tau_i \subset^2 T_i$, from which we deduce equality. At step $k$ we have $T_{k-1} \subset^2 T_k \subset^1 T_{k+1}$.  From the first inclusion we obtain $\tau_{k-1} \subset^2 T_k$, while from the second inclusion and Remark \ref{lem:1} it follows $T_{k-1} \cap \tau_{k-1} \subset^1 T_{k}$.
\end{rem}

\begin{rem}\label{lem:5} 
  From the inclusions $\pi\tau(M) \subset M \subset \pi^{-1}\tau(M)$ in the definition (\ref{eq:V(k)}) of $\V(\F)$ it follows that $\pi T_2 = \pi M + \pi\tau(M) + \pi\tau^2(M) \subset \tau(M)$. As in the proof of \cite{rtw}*{Prop.\ 4.2} we deduce that for $i \ge 2$ 
  \begin{align*}
    T_i & = (M + \tau(M) + \dotsm + \tau^i(M) )\\
    & = (M + \tau(M) + \tau^2(M)) + \tau(M + \tau(M) + \tau^2(M)) + \dotsm + \tau^{i-2}(M + \tau(M) + \tau^2(M)) \\
    & = T_2 + \tau(T_2) + \dotsm + \tau^{i-2}(T_2) \\
    & \subset \pi^{-1}\tau(M) + \dotsm + \pi^{-1}\tau^{i-1}(M) \\
    & \subset \pi^{-1}\tau_{i-2}.
  \end{align*} 
  So for any $2 \le i \le d$ we have $\pi T_i \subset \tau_{i-2} \subset T_{i-1} \cap \tau_{i-1}$. In particular, it follows that $\pi T_d \subset T_{d-1}$. Since $T_d$ is $\tau$-stable we have 
  \begin{equation*}
    \pi T_d \subset \bigcap_{m \in \mathbb{Z}} \tau^m(T_{d-1}).
  \end{equation*} 
  By Remark \ref{lem:3} we know that for $k < i < d$ the intersection $T_i \cap \tau_i$ coincides with $\tau_{i-1}$. After applying this recursively to the previous equation we obtain 
  \begin{equation}\label{eq:k}
    \pi T_d \subset \bigcap_{m \in \mathbb{Z}} \tau^m(T_k) \subset T_k \cap \tau_k \subset T_k.
  \end{equation} 
  Since $\tau_{k-1} \subset^1 T_k \cap \tau_k$ it is in general not true that $\pi T_d \subset \tau_{k-1}$. However, by the previous discussion we know that $\pi T_k \subset \tau_{k-1} $ hence we can at least say that $\pi^2 T_d \subset \tau_{k-1}$ or equivalently, by $\tau$-stability, $\pi^2 T_d \subset T_{k-1}$. By $\tau$-stability, again, 
  \begin{equation*}
    \pi^2 T_d \subset \bigcap_{m \in \mathbb{Z}} \tau^m(T_{k-1}).
  \end{equation*} 
  Again we can apply Remark \ref{lem:3} recursively since for $i < k$ we still have $T_i \cap \tau_i=\tau_{i-1}$. We can then conclude that 
  \begin{equation}\label{eq:pi2}
    \pi^2 T_d \subset \bigcap_{m \in \mathbb{Z}} \tau^m(M) \subset M.
  \end{equation}
\end{rem}

\begin{rem}\label{rem:7} 
  If $k = 0$ or $k = d$ we know by Remark \ref{lem:3} that for all $i$ the intersection $T_i \cap \tau_i$ coincides with $\tau_{i-1}$. Therefore, when we apply this to (\ref{eq:k}) we obtain $\pi T_d \subset M$. If $d = k+1$ then arguing as in the second part of the previous remark we obtain $\pi T_d \subset M$. Note that these are not the only possible cases, one may still have $\pi T_d \subset M$ even if $0 < k < d - 2$.
\end{rem}

In order to prove the next proposition we need one more observation concerning $\tau$-stable lattices in $N$.

\begin{lem}\label{rem:basis}
  Let $\mathcal{L}$ be a $\tau$-stable $\mathcal{O}_{\breve{E}}$-lattice in $N$, then $\mathcal{L}$ has a basis consisting of $\tau$-stable elements.
\end{lem}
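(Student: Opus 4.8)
The plan is to recognize this as a descent statement (a version of Hilbert's Theorem~90 / Lang's theorem) and to prove it by successive approximation along the $\pi$-adic filtration of $\mathcal{L}$: start from an arbitrary $\mathcal{O}_{\breve{E}}$-basis and correct it modulo higher and higher powers of $\pi$ until it consists of $\tau$-fixed vectors. Two preliminary observations set the stage. First, since $\mathbb{X}$ is supersingular, $\tau$ has all slopes zero, so $\tau$ restricts to a $\sigma$-semilinear \emph{bijection} of any $\tau$-stable lattice: $\tau(\mathcal{L})$ is again an $\mathcal{O}_{\breve{E}}$-lattice of the same volume as $\mathcal{L}$, and $\tau(\mathcal{L})\subseteq\mathcal{L}$ then forces equality (cf.\ Remark~\ref{lem:1}). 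Second, $\mathcal{O}_{\breve{E}}=\mathcal{O}_E\otimes_{\Z_p}W$ is a complete discrete valuation ring with uniformizer $\pi$ and residue field $\F$, the Frobenius $\sigma$ fixes $\pi$, and $\mathcal{L}$ is $\pi$-adically complete and separated.

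The crucial input is the base case modulo $\pi$. The reduction $\bar{\mathcal{L}}:=\mathcal{L}/\pi\mathcal{L}$ is a finite-dimensional $\F$-vector space carrying the Frobenius-semilinear bijection $\bar\tau$ induced by $\tau$. By the classical descent statement over an algebraically closed field (Lang's theorem, i.e.\ triviality of $H^1(\Gal(\F/\F_p),\mathrm{GL}_n(\F))$), $\bar{\mathcal{L}}$ admits a basis of $\bar\tau$-fixed vectors; writing $\bar{\mathcal{L}}$ in such a basis, the additive operator $\delta\colon\bar w\mapsto\bar w-\bar\tau(\bar w)$ becomes $(w_j)_j\mapsto(w_j-w_j^{\,p})_j$, which is \emph{surjective} precisely because $\F$ is algebraically closed. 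The surjectivity of $\delta$ is the single nontrivial ingredient, and it is what makes the approximation converge.

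With this in hand I would lift a $\bar\tau$-fixed basis of $\bar{\mathcal{L}}$ to elements $v_1^{(0)},\dots,v_n^{(0)}\in\mathcal{L}$; by Nakayama these form an $\mathcal{O}_{\breve{E}}$-basis with $\tau(v_i^{(0)})\equiv v_i^{(0)}\pmod{\pi\mathcal{L}}$. Inductively, given a basis $v_1^{(m)},\dots,v_n^{(m)}$ with $\tau(v_i^{(m)})-v_i^{(m)}=\pi^{m+1}e_i$ for some $e_i\in\mathcal{L}$, use surjectivity of $\delta$ to pick $w_i\in\mathcal{L}$ with $w_i-\tau(w_i)\equiv e_i\pmod{\pi\mathcal{L}}$ and set $v_i^{(m+1)}:=v_i^{(m)}+\pi^{m+1}w_i$; since $\sigma(\pi)=\pi$ one computes $\tau(v_i^{(m+1)})-v_i^{(m+1)}=\pi^{m+1}\bigl(e_i-(w_i-\tau(w_i))\bigr)\in\pi^{m+2}\mathcal{L}$. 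The sequences $(v_i^{(m)})_m$ are $\pi$-adically Cauchy, hence converge in the complete module $\mathcal{L}$ to vectors $v_i$ with $\tau(v_i)=v_i$ (by $\pi$-adic continuity of $\tau$); as $v_i\equiv v_i^{(0)}\pmod{\pi\mathcal{L}}$, Nakayama shows $\{v_i\}$ is still an $\mathcal{O}_{\breve{E}}$-basis, the desired $\tau$-stable basis. Equivalently, this proves that the $\mathcal{O}_E$-lattice $\mathcal{L}^{\tau}\subset C$ satisfies $\mathcal{L}^{\tau}\otimes_{\mathcal{O}_E}\mathcal{O}_{\breve{E}}\xrightarrow{\ \sim\ }\mathcal{L}$. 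I do not expect a genuine obstacle here; the only place real content enters is the base case, where algebraic closedness of $\F$ is used twice — once for the existence of a $\bar\tau$-fixed basis of $\bar{\mathcal{L}}$ and once for the surjectivity of $\delta$.
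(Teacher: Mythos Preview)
Your proof is correct and follows a genuinely different route from the paper's. You argue by $\pi$-adic successive approximation: reduce modulo $\pi$, invoke Lang's theorem over the algebraically closed residue field $\F$ to obtain a $\bar\tau$-fixed basis of $\mathcal{L}/\pi\mathcal{L}$, then lift term by term using the surjectivity of $w\mapsto w-\bar\tau(w)$ on each graded piece and pass to the limit by completeness. The paper instead proceeds by induction on $\dim N$: it exploits the ambient isomorphism $C\otimes_E\breve{E}\cong N$ to locate one suitably normalized $\tau$-fixed vector $v\in\mathcal{L}$, passes to the quotient $N/\breve{E}v$, lifts the inductively obtained $\tau$-fixed basis of the image of $\mathcal{L}$ to $\tau$-fixed vectors $e_i\in N$ (not necessarily in $\mathcal{L}$), and then shows that each $e_i-a_iv\in\mathcal{L}$ can be replaced by the $\tau$-fixed vector $e_i-b_iv$, where $b_i\in E$ is the $\sigma$-fixed part of $a_i$ modulo $\mathcal{O}_{\breve{E}}$. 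Your approach is the standard \'etale-descent package and makes the role of algebraic closedness of $\F$ completely transparent; the paper's is more self-contained (no appeal to Lang), relying instead on the pre-existing $\tau$-fixed basis of $N$ furnished by $C$, though the same Artin--Schreier surjectivity is used implicitly in the decomposition $a_i=b_i+c_i$.
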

\begin{proof}
   By the isomorphism $C \otimes_{E} \breve{E} \xrightarrow{\sim} N$ given above it follows that $N$ has a $\tau$-stable basis. Let $\Lambda$ be the $\tau$-stable lattice spanned by such a basis. Since $\mathcal{L}$ is an $\mathcal{O}_{\breve{E}}$-lattice in $N$, there is an integer $i$ such that $\pi^i \Lambda \subset \mathcal{L}$. It follows that $\mathcal{L}$ contains at least one element that is $\tau$-stable. We show by induction that $\mathcal{L}$ has a basis consisting of $\tau$-stable elements. Suppose $N$ has dimension one. Up to multiplication by powers of the uniformizer $\pi$, we can assume that there is an element $v \in \mathcal{L}$ that is $\tau$-stable and such that if $av \in \mathcal{L}$ for some $a \in \breve{E}$, then $a \in \mathcal{O}_{\breve{E}}$. We show that $v$ generates $\mathcal{L}$. Again, observe that there is an integer $i$ such that $\pi^i\mathcal{L} \subset \mathcal{O}_{\breve{E}} \cdot v$. Therefore, for any element $l \in \mathcal{L}$ there is an integer $j$ such that $l = a\pi^j v$ for some $a \in \mathcal{O}_{\breve{E}}^{\times}$. By our choice of $v$, the coefficient $a\pi^j$ has to be an integer, hence $\mathcal{L} \subset \mathcal{O}_{\breve{E}} \cdot v \subset \mathcal{L}$, which concludes the proof for the one-dimensional case.

  Suppose now that $N$ has dimension $n +1 \ge 2$ and let $\mathcal{L} = \tau(\mathcal{L})$ be a lattice in $N$. We can again find a $\tau$-stable element $v \in \mathcal{L}$, and up to multiplication by powers of $\pi$ we can assume that if $av \in \mathcal{L}$ then $a \in \mathcal{O}_{\breve{E}}$. Consider the $n$-dimensional quotient space $N/\breve{E}v$ and observe that $\tau$ commutes with the quotient map as $v$ is $\tau$-stable. It follows that the image of $\mathcal{L}$ in this quotient is again a $\tau$-stable lattice and hence by induction it has a basis consisting of $\tau$-fixed elements. Lift this basis to $\tau$-stable elements $\{e_1, \dots, e_n\}$ of $N$, which is possible since $N$ has a $\tau$-stable basis. Then we have that $\mathcal{L}$ has a basis of the form $\{a_0v, e_1 - a_1v, \dots e_n - a_nv \}$ for suitable $a_i \in \breve{E}$. By the choice of $v$ it immediately follows that we can assume $a_0 = 1$. If $a_i \in \mathcal{O}_{\breve{E}}$, then the corresponding $\tau$-stable vector $e_i$ is already in $\mathcal{L}$, and we can substitute it to $e_i -a_iv$ in the basis of $\mathcal{L}$. Assume that for some $i$ the coefficient $a_i \in \breve{E}$ is not an integer. Observe that since $\mathcal{L}$ is $\tau$-stable we have that $\mathcal{L}$ contains the element $(e_i - a_iv) - \tau(e_i-a_iv) = (\sigma(a_i) - a_i)v$ for each $i$. By definition of $v$ it follows that $(\sigma(a_i) - a_i) \in \mathcal{O}_{\breve{E}}$. We can then write $a_i = b_i + c_i$ with $c_i \in \mathcal{O}_{\breve{E}}$ and $b_i = \sigma(b_i) \in \breve{E}$. We can substitute $e_i - a_iv$ in the basis of $\mathcal{L}$ by the $\tau$-stable element $e_i - b_iv$, which concludes the proof.
\end{proof}

\begin{prop}\label{prop:1}
  For any lattice $M$ in $\V(\F)$ there is a unique minimal $\mathcal{O}_E$-lattice $\Lambda(M) \subset C$ such that $M \subset \Lambda(M) \otimes_{\mathcal{O}_E} \mathcal{O}_{\breve{E}}$. Moreover, $\Lambda(M)$ is a $2$-vertex lattice.
\end{prop}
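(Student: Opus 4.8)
The plan is to build $\Lambda(M)$ out of the stable lattice $T_d = M + \tau(M) + \dotsm + \tau^d(M)$ introduced above. Since $T_d$ is $\tau$-stable, Lemma \ref{rem:basis} equips it with an $\mathcal{O}_{\breve{E}}$-basis of $\tau$-fixed vectors, all of which lie in $C = N^\tau$; I would set $\Lambda(M) := T_d^\tau = T_d \cap C$, equivalently the $\mathcal{O}_E$-span of such a basis. A short $\sigma$-semilinearity check (if $\sum a_i v_i \in C$ with $v_i$ $\tau$-fixed then $\sigma(a_i) = a_i$, so $a_i \in \mathcal{O}_E$) shows that this is an $\mathcal{O}_E$-lattice in $C$ with $\Lambda(M) \otimes_{\mathcal{O}_E} \mathcal{O}_{\breve{E}} = T_d$, independent of the chosen basis. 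As $M \subset T_0 \subset T_d$, this already gives $M \subset \Lambda(M) \otimes_{\mathcal{O}_E} \mathcal{O}_{\breve{E}}$.

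For minimality and uniqueness, let $\Lambda \subset C$ be any $\mathcal{O}_E$-lattice with $M \subset \Lambda \otimes_{\mathcal{O}_E} \mathcal{O}_{\breve{E}}$. Under the identification $C \otimes_E \breve{E} \cong N$ the map $\tau$ is $\mathrm{id}_C \otimes \sigma$, so $\Lambda \otimes_{\mathcal{O}_E} \mathcal{O}_{\breve{E}}$ is automatically $\tau$-stable; hence it contains $\tau^i(M)$ for every $i$, and therefore contains $T_d = \Lambda(M) \otimes_{\mathcal{O}_E} \mathcal{O}_{\breve{E}}$. Faithfully flat descent along $\mathcal{O}_E \to \mathcal{O}_{\breve{E}}$ then yields $\Lambda(M) \subset \Lambda$, so $\Lambda(M)$ is the unique minimal lattice with the required property.

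It remains to prove that $\Lambda(M)$ is a $2$-vertex lattice, i.e. $\pi^2 \Lambda(M) \subset \Lambda(M)^\vee \subset \Lambda(M)$. Both inclusions of $\mathcal{O}_E$-lattices can be tested after tensoring with $\mathcal{O}_{\breve{E}}$, and base change commutes with forming the alternating dual, so it suffices to show $\pi^2 T_d \subset T_d^\vee \subset T_d$; by the lemma identifying the alternating and Hermitian duals of lattices in $C \otimes_E \breve{E}$ I may work with $T_d^\sharp$ instead. From the identity $h(\tau x, \tau y) = \sigma(h(x,y))$ (immediate from the formula $h(v \otimes a, w \otimes b) = a\sigma(b) h(v,w)$ and the fact that $\sigma$ acts trivially on $E$) together with $\sigma(\mathcal{O}_{\breve{E}}) = \mathcal{O}_{\breve{E}}$ one gets $\tau(L)^\sharp = \tau(L^\sharp)$ for every $\mathcal{O}_{\breve{E}}$-lattice $L$; since $M^\sharp = M$ this gives $(\tau^i M)^\sharp = \tau^i M$ for all $i$. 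As $T_d$ is $\tau$-stable we may write $T_d = \sum_{i \in \Z} \tau^i(M)$, hence $T_d^\sharp = \bigcap_{i \in \Z} \tau^i(M) \subset M \subset T_d$, which is the inclusion $\Lambda(M)^\vee \subset \Lambda(M)$. For the remaining inclusion, equation \eqref{eq:pi2} of Remark \ref{lem:5} states precisely that $\pi^2 T_d \subset \bigcap_{m \in \Z} \tau^m(M) = T_d^\sharp$, giving $\pi^2 \Lambda(M) \subset \Lambda(M)^\vee$ after base change.

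The only genuinely hard ingredient is equation \eqref{eq:pi2}, whose proof is the combinatorial bookkeeping carried out in Remarks \ref{lem:1}--\ref{rem:7}: this is exactly the place where the weaker index condition ($\le 2$ rather than $\le 1$) in the definition of $\V(\F)$ for signature $(2,n-2)$ forces the conclusion $\pi^2 \Lambda(M) \subset \Lambda(M)^\vee$ instead of $\pi \Lambda(M) \subset \Lambda(M)^\vee$ as in the signature $(1,n-1)$ case of \cite{rtw}. Granting \eqref{eq:pi2}, everything else is formal: the descent statements (tensoring with $\mathcal{O}_{\breve{E}}$ commutes with duals and with $\tau$-fixed points, and inclusions of $\mathcal{O}_E$-lattices can be checked after base change) are routine faithfully flat descent, and I do not expect any obstruction there.
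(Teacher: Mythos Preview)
Your proof is correct and follows essentially the same route as the paper: define $\Lambda(M) = T_d^{\tau}$, use the $\tau$-stable basis from Lemma~\ref{rem:basis} to identify $T_d = \Lambda(M) \otimes_{\mathcal{O}_E} \mathcal{O}_{\breve{E}}$, and deduce the $2$-vertex property from $\pi^2 T_d \subset \bigcap_{m} \tau^m(M) = T_d^{\vee}$ via Remark~\ref{lem:5}. The paper additionally separates out the small cases $d \le 1$ (where Remark~\ref{lem:5} is not needed and one even has $\pi T_d \subset T_d^{\vee}$), but this is a trivial verification that your argument implicitly covers.
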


\begin{proof}
  Consider $M$ in $\V(\F)$ and the corresponding lattice $T_d$ as above. As in \cite{rtw}*{Prop.\ 4.1}  we define $\Lambda(M) \vcentcolon = T_d^{\tau} = T_d \cap C$.  Since $T_d$ is $\tau$-stable, by Lemma \ref{rem:basis} it has a basis consisting of $\tau$-stable elements. It follows that $\Lambda(M)$ is an $\mathcal{O}_E$-lattice in $C$ and that $T_d = \Lambda(M) \otimes_{\mathcal{O}_E} \mathcal{O}_{\breve{E}}$. By definition of $T_d$, it follows that $\Lambda(M)$ is the minimal lattice in $C$ which, after tensoring with $\mathcal{O}_{\breve{E}}$, contains $M$. 
  
  If $d = 0$ or $d = 1$ it directly follows from the definition of $M$ that $\pi T_d \subset M \cap \tau(M)$. If $2 \le d$, by Remark \ref{lem:5} we know that 
  \begin{equation*}
    \pi^2 T_d \subset \bigcap_{l \in \mathbb{Z}} \tau^l(M) \subset M \cap \tau(M) \cap \dotsm \cap \tau^d(M) = T_d^\vee,
  \end{equation*} where the last equality follows from the fact that $M$ is self-dual and that $\tau$ commutes with taking duals, as it has slopes zero.  This proves that $\Lambda(M)$ is a $2$-vertex lattice.
\end{proof}

\begin{rem}\label{rem:vl2}
  Observe that $\Lambda(M)$ is a vertex lattice if and only if $\pi T_d \subset M$. Indeed, if this is the case then arguing as above we obtain $\pi T_d \subset M \cap \tau(M) \dotsm \cap \tau^d(M) = T_d^\vee$. Conversely, if $\pi T_d \subset T_d^{\vee}$, since $T_d^\vee$ is contained in $M$ we have that $\pi T_d \subset M$. Note that if $\Lambda$ is a vertex lattice and $\Lambda(M) \subset \Lambda$, it follows that $\Lambda(M)$ is a vertex lattice as well. Indeed, if $\Lambda(M) \subset \Lambda$ by taking duals and by definition of ($2$-) vertex lattice, we have that
  \begin{equation*}
    \pi \Lambda(M) \subset \pi \Lambda \subset \Lambda^\vee \subset \Lambda(M)^\vee \subset \Lambda(M) \subset \Lambda.
  \end{equation*}
\end{rem}

Let $\Lambda$ be a $2$-vertex lattice, we denote 
\begin{align*}
  \V_{\Lambda}(\F) &= \{ M \in \V(\F) \mid \Lambda(M) \subset \Lambda\} \\
  \V_{\Lambda}^{\circ}(\F) &= \{ M \in \V(\F) \mid \Lambda(M) = \Lambda\}.
\end{align*}

We recall some results from \cite{rtw}*{Sec.\ 3} about the set of vertex lattices in order to compare them to the behavior of $2$-vertex lattices. For $n$ even and non-split form, and for odd $n$ let  $\mathscr{L}$ denote the set of vertex lattices. If $n$ is even and the form is split, which is the only case where vertex lattices of type $n$ exist, we let $\mathscr{L}$ be the set of vertex lattices of type different from $n-2$. In both cases, we give $\mathscr{L}$ the structure of a simplicial complex as follows. We say that two vertex lattices $\Lambda_1$ and $\Lambda_2$, at least one of which is of type $\le n-2$, are neighbors if $\Lambda_ 1 \subset \Lambda_2$ or vice versa. For two vertex lattices both of type $n$, we say that they are neighbors if their intersection is a vertex lattice of type $n-2$. Then an $r$-simplex of $\mathscr{L}$ is a subset of $r$ vertex lattices which are pairwise neighbors. Let $\mathrm{SU}(C)$ be the special group of unitary similitudes of $(C, h)$, \textit{i.e.}\ the subgroup of linear transformations of $C$ preserving the Hermitian form $h$ and having determinant one. As remarked in \cite{rtw}*{Sec.\ 3} there is an action of $\mathrm{SU}(C)(\Q_p)$ on $\mathscr{L}$ which preserves the simplicial complex structure we just defined.

\begin{prop}\label{prop:vlproperties} Keep notation as above.
  \begin{enumerate}
    \item \cite{rtw}*{Prop.\ 3.4} There is a $\rm SU(C)(\Q_p)$-equivariant isomorphism between $\mathscr{L}$ and the Bruhat-Tits simplicial complex of $\rm SU(C)$ over $\Q_p$. Moreover, $\mathscr{L}$ is connected.
    \item \cite{rtw}*{Prop.\ 4.3, 6.7} Let $\Lambda_1$ and $\Lambda_2$ be two vertex lattices in C. Then $\V_{\Lambda_1} \subset \V_{\Lambda_2}$ if and only if $\Lambda_1 \subset \Lambda_2$ and equality holds if and only if the two lattices are also equal. It follows
    \begin{equation*} 
      \V_{\Lambda} = \bigsqcup_{\Lambda' \subset \Lambda} \V^\circ_{\Lambda'} 
    \end{equation*}
    and every summand is non-empty.
    \item \cite{rtw}*{Prop.\ 4.2} The intersection $\V_{\Lambda_1} \cap \V_{\Lambda_2}$ is non-empty if and only if $\Lambda_1 \cap \Lambda_2$ is a vertex lattice, in which case it coincides with $\V_{\Lambda_1 \cap \Lambda_2}$.
  \end{enumerate}
\end{prop}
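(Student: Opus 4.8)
The three assertions are the exact analogues, for the signature-$(2,n-2)$ datum, of results proved in \cite{rtw} for signature $(1,n-1)$, and my plan is to deduce each of them from there after checking that the arguments are insensitive to the signature. The bridge between the two settings is Remark \ref{rem:vl2}: although for a general $M \in \V(\F)$ the lattice $\Lambda(M)$ is only a $2$-vertex lattice, as soon as $\Lambda(M)$ is contained in a genuine vertex lattice it is itself a vertex lattice. Part (1) requires essentially no work, since the simplicial complex $\mathscr{L}$ is defined purely in terms of the Hermitian space $(C,h)$ and the inclusion pattern of vertex lattices inside it, with no reference to $\V(\F)$ or to the moduli problem; thus the $\mathrm{SU}(C)(\Q_p)$-equivariant identification with the Bruhat--Tits complex and its connectedness are exactly \cite{rtw}*{Prop.\ 3.4}, and one only checks that the conventions for $\mathscr{L}$ (the exclusion of type $n-2$ in the even split case, and the two notions of neighbour) coincide with those of \textit{loc.\ cit.}, which they do by construction.

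For part (2) the implication $\Lambda_1 \subset \Lambda_2 \Rightarrow \V_{\Lambda_1} \subset \V_{\Lambda_2}$ is immediate from the definition of $\V_\Lambda$ through $\Lambda(M)$, and the remaining statements are formal once one knows that $\V_{\Lambda'}^{\circ} \neq \emptyset$ for every vertex lattice $\Lambda' \subset C$. Granting this: choosing $M$ with $\Lambda(M) = \Lambda_1$ and feeding it to $\V_{\Lambda_1} \subset \V_{\Lambda_2}$ gives $\Lambda_1 = \Lambda(M) \subset \Lambda_2$, whence also the equality clause; and the decomposition $\V_\Lambda = \bigsqcup_{\Lambda' \subset \Lambda} \V_{\Lambda'}^{\circ}$ holds because every $M \in \V_\Lambda$ has a well-defined $\Lambda(M)$ (Proposition \ref{prop:1}) which by Remark \ref{rem:vl2} is a vertex lattice contained in $\Lambda$, so $M$ lies in $\V_{\Lambda(M)}^{\circ}$ and, by uniqueness of $\Lambda(M)$, in no other summand. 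To realise each $\V_{\Lambda'}^{\circ}$ I would run the explicit construction of \cite{rtw}*{Prop.\ 4.3}: it produces a self-dual, $\Pi$-stable $M$ of index $n$ in $\Pi^{-1}\tau(M)$ whose successive $\tau$-translates exhaust $\Lambda' \otimes_{\mathcal{O}_E} \mathcal{O}_{\breve{E}}$ and which satisfies the length-$1$ form of the last condition in Lemma \ref{eq:V(k)}; since our condition $M \subset^{\le 2} M + \tau(M)$ is weaker, the same $M$ lies in our $\V(\F)$ and has $\Lambda(M) = \Lambda'$ by construction. Verifying that this construction still goes through in the present setting — in particular for the extreme types, and for type $n$ in the even split case where $\Lambda'$ is $1$-modular — is the one point where the signature could interfere, and I expect it to be the main obstacle.

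Part (3) is then purely formal. If $M \in \V_{\Lambda_1} \cap \V_{\Lambda_2}$ then $\Lambda(M) \subset \Lambda_1 \cap \Lambda_2$, and I claim $\Lambda_1 \cap \Lambda_2$ is a vertex lattice: using $(\Lambda_1 \cap \Lambda_2)^\vee = \Lambda_1^\vee + \Lambda_2^\vee$, the vertex conditions $\pi \Lambda_i \subset \Lambda_i^\vee \subset \Lambda_i$, and the chain $(\Lambda_1 \cap \Lambda_2)^\vee \subset \Lambda(M)^\vee \subset \Lambda(M) \subset \Lambda_1 \cap \Lambda_2$ coming from $\Lambda(M)$ being a vertex lattice, one gets $\pi(\Lambda_1 \cap \Lambda_2) \subset \Lambda_1^\vee \cap \Lambda_2^\vee \subset (\Lambda_1 \cap \Lambda_2)^\vee \subset \Lambda_1 \cap \Lambda_2$. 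Conversely, if $\Lambda_3 := \Lambda_1 \cap \Lambda_2$ is a vertex lattice, then by part (2) the non-empty set $\V_{\Lambda_3}^{\circ}$ lies in $\V_{\Lambda_3} \subset \V_{\Lambda_1} \cap \V_{\Lambda_2}$, while any $M$ in the latter intersection satisfies $\Lambda(M) \subset \Lambda_3$, i.e.\ $M \in \V_{\Lambda_3}$; hence $\V_{\Lambda_1} \cap \V_{\Lambda_2} = \V_{\Lambda_3}$, which is \cite{rtw}*{Prop.\ 4.2}.
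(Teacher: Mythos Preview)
Your proposal is correct, and it supplies what the paper deliberately omits: the paper gives no proof of Proposition~\ref{prop:vlproperties} at all, merely citing \cite{rtw}*{Prop.~3.4, 4.2, 4.3, 6.7} for each part and moving on. Your write-up is essentially a verification that those arguments from \cite{rtw} are insensitive to the change of signature, which is precisely the implicit claim behind the citations; in that sense you are following the same approach as the paper (namely, ``use \cite{rtw}''), just with the details spelled out.

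One small comment on part~(2): your plan to reuse the explicit lattice from \cite{rtw}*{Prop.~4.3} to show $\V_{\Lambda'}^\circ \neq \emptyset$ is fine, since as you note the condition $M \subset^{\le 2} M + \tau(M)$ here is weaker than the index-$1$ condition in \cite{rtw}. An alternative route, more in the spirit of the present paper, is to observe that by Lemma~\ref{lem:spV} the set $\V_{\Lambda'}(k)$ is in bijection with the $k$-points of the generalized Deligne--Lusztig variety $S_{V'}$ for $V' = \Lambda'/\Lambda'^\vee$, and $\V_{\Lambda'}^\circ(k)$ corresponds to the open dense stratum of $S_{V'}$ described in Section~\ref{sec:dlvsp}; non-emptiness then follows from the non-emptiness of that Deligne--Lusztig variety. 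Either argument works.
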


For $2$-vertex lattices the situation is more complicated, and there is not a full analogue of the results above, compare Remark \ref{rem:2vl} below. First, we need to recall the \emph{Jordan splitting} for lattices in the Hermitian space $C$. It is proved in \cite{jac}*{Prop.\ 4.3} that any $\mathcal{O}_E$-lattice in $C$ has a canonical decomposition as a direct sum of modular lattices in possibly smaller-dimensional Hermitian subspaces. Moreover, this decomposition is compatible with taking duals, \textit{i.e.}\ if $L$ is a lattice in $C$ with Jordan splitting
\begin{equation*}
  L = \bigoplus_{1 \le \lambda \le t} L_\lambda,
\end{equation*}
with each $L_{\lambda}$ modular, then its dual $L^\vee$ has Jordan splitting $L^\vee = \bigoplus_{1 \le \lambda \le t} (L_\lambda)^\vee$. Indeed, observe that the dual of an $m$-modular lattice is by definition $(-m)$-modular.
\begin{prop}\label{prop:2vl}
  Consider the set of $2$-vertex lattices, \textit{i.e.}\ the set of $\mathcal{O}_E$-lattices $\Lambda$ in $C$ such that $\pi^2\Lambda \subset \Lambda^\vee \subset \Lambda$.
  \begin{enumerate}
    \item The set of $2$-modular lattices is in bijection with the set of vertex lattices of type $0$, hence with the $0$-simplices of the Bruhat-Tits building of $\rm SU(C)$ over $\Q_p$. 
    \item Every $2$-vertex lattice is contained in some, possibly non-unique, $2$-modular lattice. Hence,
    \begin{equation*}
      \V(\F) = \bigcup_{\Lambda  \in \{2\text{-modular}\}} \V_{\Lambda}(\F),
    \end{equation*}
    and for every $2$-modular lattice $\Lambda$ already the set $\V_{\Lambda}^{\circ}(\F)$ is non-empty.
  \end{enumerate}
\end{prop}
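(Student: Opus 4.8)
The plan is to prove the two assertions separately; both reduce to the rescaling operation $\Lambda\mapsto\pi^{\pm1}\Lambda$ and the Jordan splitting of Hermitian lattices. For part (1) the only ingredient is the rescaling identity $(\pi^{k}\Lambda)^{\vee}=\pi^{-k}\Lambda^{\vee}$ for an $\mathcal{O}_E$-lattice $\Lambda\subset C$, which is immediate from $\langle\Pi x,y\rangle=-\langle x,\Pi y\rangle$ (ultimately from $\bar\pi=-\pi$). If $\Lambda$ is $2$-modular, i.e.\ $\Lambda^{\vee}=\pi^{2}\Lambda$, then $L\vcentcolon=\pi\Lambda$ satisfies $L^{\vee}=\pi^{-1}\Lambda^{\vee}=\pi\Lambda=L$, so $L$ is self-dual; conversely $\pi^{-1}L$ is $2$-modular for self-dual $L$, and these maps are mutually inverse. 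As self-dual lattices are exactly the vertex lattices of type $0$, this gives the first bijection, and Proposition~\ref{prop:vlproperties}(1) then identifies these lattices with the corresponding vertices of the Bruhat–Tits building of $\mathrm{SU}(C)$ over $\Q_p$.

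For the first part of (2) I would use the Jordan splitting of \cite{jac}*{Prop.\ 4.3}: write $\Lambda=\bigoplus_{\lambda}\Lambda_{\lambda}$ with $\Lambda_{\lambda}$ an $m_{\lambda}$-modular lattice in a nondegenerate Hermitian subspace $C_{\lambda}$, the $C_{\lambda}$ pairwise orthogonal, and $\Lambda^{\vee}=\bigoplus_{\lambda}\pi^{m_{\lambda}}\Lambda_{\lambda}$. Restricting $\pi^{2}\Lambda\subset\Lambda^{\vee}\subset\Lambda$ to $C_{\lambda}$ forces $m_{\lambda}\in\{0,1,2\}$. For each block I exhibit a $2$-modular $\Lambda_{\lambda}'\subset C_{\lambda}$ with $\Lambda_{\lambda}\subset\Lambda_{\lambda}'$: take $\Lambda_{\lambda}'=\Lambda_{\lambda}$ if $m_{\lambda}=2$, and $\Lambda_{\lambda}'=\pi^{-1}\Lambda_{\lambda}$ if $m_{\lambda}=0$ (which is $2$-modular by the computation in (1)); if $m_{\lambda}=1$ — which by the classification recalled above forces $C_{\lambda}$ to be even-dimensional and split — one uses that the nondegenerate form induced on $\Lambda_{\lambda}/\pi\Lambda_{\lambda}$ is split, picks a Lagrangian and lets $L_{0}\subset\Lambda_{\lambda}$ be its preimage, a self-dual lattice with $\pi\Lambda_{\lambda}\subset L_{0}\subset\Lambda_{\lambda}$, so that $\Lambda_{\lambda}'\vcentcolon=\pi^{-1}L_{0}$ is $2$-modular and contains $\Lambda_{\lambda}$. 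Since duality respects orthogonal sums, $\Lambda'\vcentcolon=\bigoplus_{\lambda}\Lambda_{\lambda}'$ is $2$-modular and contains $\Lambda$; non-uniqueness is visible already from the freedom in choosing the Lagrangian, and from the fact that two distinct $2$-modular lattices may intersect in a $2$-vertex lattice. Together with Proposition~\ref{prop:1}, which says that $\Lambda(M)$ is a $2$-vertex lattice for every $M\in\V(\F)$, this gives $\V(\F)=\bigcup_{\Lambda\ 2\text{-modular}}\V_{\Lambda}(\F)$.

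It remains to show $\V_{\Lambda}^{\circ}(\F)\neq\emptyset$ for every $2$-modular $\Lambda$; this is the substantive point, the analogue for vertex lattices being Proposition~\ref{prop:vlproperties}(2). I would first note that a $2$-modular lattice is maximal among $2$-vertex lattices: if $\Lambda\subsetneq\Lambda''$ with $\Lambda''$ a $2$-vertex lattice, then $\pi^{2}\Lambda''\subset(\Lambda'')^{\vee}\subset\Lambda^{\vee}=\pi^{2}\Lambda$ forces $\Lambda''\subset\Lambda$. By Proposition~\ref{prop:1} it therefore suffices to produce one $M\in\V(\F)$ with $M\subset\Lambda\otimes_{\mathcal{O}_E}\mathcal{O}_{\breve{E}}$ whose $\tau$-saturation $T_{d}$ equals $\Lambda\otimes_{\mathcal{O}_E}\mathcal{O}_{\breve{E}}$. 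I would construct such an $M$ by fixing a basis of $C$ adapted to the Jordan decomposition of $\Lambda$ (and to the hyperbolic structure in the rank-two split blocks) and compatible with $\tau$ on the $\tau$-fixed part via Lemma~\ref{rem:basis}, writing down generators of $M$ explicitly in the spirit of the model for vertex lattices in \cite{rtw}*{Sec.\ 4}, and then checking the conditions of Lemma~\ref{eq:V(k)} together with the fact that $M+\tau(M)$ already spans $\Lambda\otimes_{\mathcal{O}_E}\mathcal{O}_{\breve{E}}$.

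The step I expect to be the main obstacle is precisely this last verification. Because the last inclusion defining $\V(\F)$ has index $\le 2$ rather than $\le 1$ — the feature of signature $(2,n-2)$ absent in \cite{rtw} — the chain $T_{0}\subset T_{1}\subset\cdots$ can make steps of index two, so the explicit $M$ must be chosen carefully to ensure that $\tau$-saturation recovers all of $\Lambda$ and not merely a proper $2$-vertex sublattice, and likewise that the index and self-duality constraints of $\V(\F)$ are met on the nose.
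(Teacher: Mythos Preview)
Your treatment of part~(1) and of the containment assertion in part~(2) matches the paper's proof almost exactly: both use the rescaling identity $(\pi^{k}\Lambda)^{\vee}=\pi^{-k}\Lambda^{\vee}$ for~(1), and both reduce~(2) via the Jordan splitting to the $0$-, $1$-, and $2$-modular blocks. The only variation is in the $1$-modular case: the paper invokes connectedness of the simplicial complex $\mathscr{L}$ (Proposition~\ref{prop:vlproperties}(1)) to find a self-dual $L_{0}\subset\Lambda_{\lambda}$, whereas you pick a Lagrangian in the quotient $\Lambda_{\lambda}/\pi\Lambda_{\lambda}$. Both arguments are correct and essentially equivalent.

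For the non-emptiness of $\V_{\Lambda}^{\circ}(\F)$, however, the paper takes a genuinely different route. It does \emph{not} attempt the direct construction you sketch; instead it simply records that non-emptiness will follow from the later structural analysis of $\V_{\Lambda}(k)$ in Section~\ref{sec:closedpoints}, specifically Lemmas~\ref{lem:spW} and~\ref{lem:spW2c}. Those lemmas exhibit surjections from $\V_{\Lambda}^{(1)}(k)$ and $\V_{\Lambda}^{(2)^{\circ}}(k)$ onto the Deligne--Lusztig varieties $R_{W}$ and $Z_{2}$, and the generic points there (whose $\Phi$-saturation is all of $W$) lift to lattices $M$ with $\Lambda(M)=\Lambda$. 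So the paper's strategy is to postpone the claim until the relevant maps have been built, rather than to write down an explicit $M$ by hand. Your observation that $2$-modular lattices are maximal among $2$-vertex lattices is correct and clarifies why $T_{d}=\Lambda\otimes\mathcal{O}_{\breve{E}}$ is the right target, but your proposed explicit construction is left as a sketch and, as you yourself note, the verification that the index, self-duality, and saturation conditions hold simultaneously is the nontrivial part---precisely the work the paper avoids by deferring to Section~\ref{sec:closedpoints}.
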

\begin{proof}
  Let $\Lambda$ be a $2$-modular lattice. Thus, we have $\pi^2\Lambda = \Lambda^\vee \subset \pi \Lambda \subset \Lambda$. Observe that  $(\pi\Lambda)^\vee = \pi^{-1}(\Lambda^\vee) = \pi^{-1} (\pi^2\Lambda) = \pi \Lambda$, which means that $\pi \Lambda$ is a self-dual vertex lattice, that is a vertex lattice of type $0$. Conversely, given a vertex lattice $L$ of type $0$, the lattice $\pi^{-1}L$ satisfies $(\pi^{-1}L)^\vee = \pi L^\vee = \pi L = \pi^2(\pi^{-1}L)$, hence it is a $2$-modular lattice. 
  
  If $L$ is a $2$-vertex lattice, which means $\pi^2 L \subset L^\vee$, the summands appearing in its Jordan decomposition can only be $0,1$ or  $2$-modular lattices. Therefore, it is enough to prove that every $0$ or $1$-modular lattice is contained in a $2$-modular lattice.  If $L$ is $0$-modular, then consider $\pi^{-1} L$, which we have already seen is a $2$-modular lattice, and it contains $L$. If $\pi L = L^\vee$, by the connectedness of the simplicial complex $\mathscr{L}$ and its bijection with the Bruhat-Tits building for $\mathrm{SU}(C)(\Q_p)$ as recalled in the previous proposition, we know that $L$ contains a self-dual lattice $ \pi L \subset L_0^\vee=L_0 \subset L$. Then the $2$-modular lattice $\pi^{-1}L_0$ contains $L$.

  The non-emptiness of the set $\V_{\Lambda}^{\circ}$ will actually follow from the results of Section \ref{sec:closedpoints}, in particular from Lemma \ref{lem:spW} and \ref{lem:spW2c}.
\end{proof}

\begin{rem}\label{rem:2vl}
  We have seen that there is a bijection between the set of $2$-modular lattices and of $0$-modular lattices. One could ask if there is a bijection between the set of generic $2$-vertex lattices and vertex lattices, along the lines of the proposition above. It is true, for example, that for a vertex lattice $L$, if $L$ is not $1$-modular, one obtains a $2$-vertex lattice by taking $\pi^{-1}L^\vee$. The converse does however not work. Given a $2$-vertex lattice $\Lambda$ (that is of course not a vertex lattice), we would have to consider $L = (\pi\Lambda)^\vee = \pi^{-1}\Lambda^\vee$. Its dual, which is $\pi\Lambda$, is contained in $L$, since $\pi^2 \Lambda \subset \Lambda^\vee$ and therefore $L^\vee = \pi\Lambda \subset \pi^{-1}\Lambda^\vee = (\pi\Lambda)^\vee = L$. However, it is not true in general that $L^\vee = \pi\Lambda \supset \pi L = \pi (\pi\Lambda)^\vee = \Lambda^{\vee}$. For example, consider a $2$-vertex lattice with Jordan decomposition $\Lambda = \Lambda_1 \oplus \Lambda_2$ where $\Lambda_1$ is a $2$-modular lattice and $\Lambda_2$ is a $0$-modular (hence self-dual) lattice. Then $\Lambda^\vee = \pi^2 \Lambda_1 \oplus \Lambda_2$ and it is not contained in $\pi \Lambda$.

  This is one of the reasons why, unlike \cite{rtw}*{Sec.\ 4}, we are not going to attempt at a stratification of $\V_{\Lambda}$ in terms of sets $\V_{\Lambda'}^\circ$ for smaller $2$-vertex lattices $\Lambda'$. The other main reason is that it does not seem to be feasible to describe one single such stratum in terms of Deligne-Lusztig varieties, as we are going to note in the next section, for example in Remark \ref{rem:hereditary}.
\end{rem}

\section{Deligne-Lusztig varieties for the symplectic and orthogonal group}\label{sec:dlvs}
\noindent
In this section we recall some facts about (generalized) Deligne-Lusztig varieties and focus on three families of  varieties for the symplectic and orthogonal group. Their relevance will become clear in the next section.

\subsection{Reminder on Deligne-Lusztig Varieties}
Deligne-Lusztig varieties were first introduced in \cite{DL}. Here, as in the original paper, we give a description in terms of their $\F$-valued points. We also follow the notation of \cite{rtw}*{Sec.\ 5} and the references in there.

Let $G$ be a connected reductive group over a finite field $\F_q$. Let $T \subset B \subset G$ be respectively a maximal torus defined over $\F_q$ and a Borel subgroup over $\F_q$ containing it. Fix an algebraic closure $\F$ of $\F_q$. Let $W$ be the Weyl group $N_G(T)(\F)/T(\F)$. Denote by $\Phi$ the Frobenius on $G(\F)$. Consider the \emph{relative position map}
\begin{equation*}
  \inv \colon G/B \times G/B \rightarrow W
\end{equation*}
which sends a pair $(g_1, g_2)$ to the unique element $w \in W$ such that $g_1^{-1}g_2 \in BwB$. For $w \in W$ the corresponding \emph{Deligne-Lusztig variety}  is 
\begin{equation*}
  X_B(w) = \{ g \in G/B \mid \inv(g, \Phi(g)) = w\}.
\end{equation*}

Deligne-Lusztig varieties can be related to Schubert varieties via the local model diagram by G\"ortz and Yu \cite{GY}*{5.2}. Consider the quotient map $\pi \colon G \rightarrow G/B$ and denote by $L$ its composition with the Lang map $g \mapsto g^{-1}\Phi(g)$
\begin{equation*}
  G/B \xleftarrow{\pi} G \xrightarrow{L} G/B.
\end{equation*}
Then we have that Deligne-Lusztig varieties and Schubert cells are smoothly equivalent to each other under these maps 
\begin{equation*}
  \pi^{-1}(X_B(w)) = L^{-1}(BwB/B).
\end{equation*}
It follows that $X_B(w)$ is smooth, of pure dimension $\ell(w)$ and that the singularities of the closure $\overline{X_B(w)}$ are smoothly equivalent to the singularities of the Schubert variety $\overline{BwB}/B$, compare \cite{GY}*{5.2}. The closure $\overline{X_B(w)}$ is stratified by Deligne-Lusztig varieties for smaller elements in the Bruhat order on $W$ as follows
\begin{equation}\label{eq:btord}
  \overline{X_B(w)} = \bigsqcup_{w' \le w} X_B(w').
\end{equation}
This is a consequence of the analogue closure relations for Schubert cells and the local model diagram, compare \cite{GY}*{Sec.\ 5} for a detailed proof.

In the next sections we will also be interested in some \emph{generalized} Deligne-Lusztig varieties, which are defined as the analogue in a partial flag variety. More precisely, let $\Delta = \{\alpha_1, \dots, \alpha_n\}$ be the set of simple roots associated to the datum $(B,T)$. Recall that to each simple root $\alpha_i$ corresponds a simple reflection $s_i$ in the Weyl group. Let $I$ be a subset of the simple roots. We denote by $W_{I}$ the subgroup of $W$ generated by the simple reflections corresponding to $I$ and by $P_I$ the standard parabolic subgroup $BW_IB$. The partial flag variety $G/P_I$ parametrizes then parabolic subgroups of type $I$. Again, one can define a relative position map 
\begin{equation*}
  \inv \colon G/P_I \times G/P_I \rightarrow W_I\backslash W/ W_I,
\end{equation*}
and for a class $w \in W_I\backslash W /W_I$ the corresponding generalized Deligne-Lusztig variety
\begin{equation*}
  X_{P_I}(w) = \{ g \in G/P_I \mid \inv(g, \Phi(g)) = w\}.
\end{equation*}
We recall a result by  Bonnaf\' e and Rouquier \cite{br}*{Thm.\ 2} concerning irreducibility. 
\begin{thm}\label{thm:br}
  Let $I \subset \Delta$ and $w \in W_I \backslash W/W_I$. The corresponding generalized Deligne-Lusztig variety $X_{P_I}(w)$ is irreducible if and only if $W_I w$ is not contained in any proper $\Phi$-stable standard parabolic subgroup of $W$.
\end{thm}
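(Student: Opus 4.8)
The statement to be proved is Theorem \ref{thm:br}, the irreducibility criterion of Bonnafé–Rouquier for generalized Deligne–Lusztig varieties $X_{P_I}(w)$. Since this is quoted directly from \cite{br}, I will only sketch the structure of the argument rather than reconstructing the full proof.

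The plan is to reduce the irreducibility question to a connectedness question about the corresponding parabolic Deligne–Lusztig variety, and then analyze when that connectedness can fail. First I would recall that $X_{P_I}(w)$ is a smooth variety (via the local model diagram, as recalled after \eqref{eq:btord}), so it is irreducible if and only if it is connected. Next, I would describe the covering of $X_{P_I}(w)$ by the ordinary Deligne–Lusztig varieties $X_B(\dot w)$ for $\dot w$ ranging over the double coset $W_I w W_I$: the natural projection $G/B \to G/P_I$ restricts to a surjective morphism $\bigsqcup_{\dot w} X_B(\dot w) \to X_{P_I}(w)$, with fibers built out of Schubert cells in $P_I/B$, which are irreducible. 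Thus the connected components of $X_{P_I}(w)$ are controlled by how the pieces $X_B(\dot w)$ glue together, i.e.\ by the combinatorics of $W_I w W_I$ under the operations $\dot w \mapsto s \dot w$ and $\dot w \mapsto \dot w\, \Phi(s)$ for $s$ a simple reflection in $W_I$.

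The heart of the argument is then a group-theoretic analysis: one introduces the equivalence relation on $W$ generated by $v \sim s v$ whenever $\ell(sv) < \ell(v)$, and $v \sim v \Phi(s)$ whenever $\ell(v\Phi(s)) < \ell(v)$, for $s$ simple in $W_I$ — this is exactly the relation whose classes index connected components. One shows that the class of $w$ inside $W_I w W_I$ (with its $W_I$-bi-action twisted by $\Phi$) exhausts everything unless $W_I w$ lands in a proper $\Phi$-stable standard parabolic $W_J$ with $I \subseteq J$; in that case one gets a nontrivial partition (essentially indexed by $W_J$-cosets or by the $\Phi$-orbit structure), hence disconnectedness. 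Conversely, if no such proper $\Phi$-stable $W_J$ contains $W_I w$, a standard "no proper stable parabolic'' argument — propagating through simple reflections and using that $\Phi$ permutes the simple roots — shows the relation is total, giving connectedness. I would cite \cite{br}*{Thm.\ 2} for the precise execution, since the subtle point is bookkeeping with the $\Phi$-twist and with double cosets rather than ordinary cosets.

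The main obstacle in a self-contained treatment would be precisely this last combinatorial step: controlling the orbits of the twisted $W_I$-bi-action on $W$ and identifying the obstruction to transitivity with the existence of a proper $\Phi$-stable parabolic overgroup of $W_I w$. The reduction from irreducibility to connectedness is routine given smoothness, and the covering by ordinary Deligne–Lusztig varieties is standard; but the equivalence-relation analysis requires care with lengths, reduced words for double coset representatives, and the interaction with the Frobenius action on $\Delta$. For the purposes of this paper, however, Theorem \ref{thm:br} is used as a black box, so I would simply invoke \cite{br} and move on to applying it to the symplectic and orthogonal Weyl groups relevant to the components $\N_{\mathcal L}$, $\N_\Lambda$ appearing in Theorem \ref{thm:intro}.
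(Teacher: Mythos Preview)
The paper does not prove Theorem~\ref{thm:br} at all: it is stated with the attribution \cite{br}*{Thm.\ 2} and used as a black box throughout Section~\ref{sec:dlvs}. So there is no ``paper's own proof'' to compare against. Your proposal correctly identifies this and ends by saying you would simply invoke \cite{br}, which is exactly what the paper does.

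Your sketch of the Bonnaf\'e--Rouquier argument is reasonable in outline, but one point deserves care: you reduce irreducibility to connectedness by asserting that $X_{P_I}(w)$ is smooth ``via the local model diagram, as recalled after \eqref{eq:btord}''. The passage after \eqref{eq:btord} establishes smoothness only for the \emph{classical} varieties $X_B(w)$; for the generalized $X_{P_I}(w)$ the paper only records equidimensionality via \eqref{eq:dim}. Smoothness of $X_{P_I}(w)$ is still true (it is again smoothly equivalent to a Schubert cell, this time in the partial flag variety), but you should not cite the paper for it. This is a minor bookkeeping issue and does not affect the validity of your sketch; for the purposes of this paper, the citation to \cite{br} is all that is required.
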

Moreover, by the results of \cite{GY}*{Sec.\ 5} we can see that $X_{P_I}(w)$ is equidimensional of dimension
\begin{equation}\label{eq:dim}
  \dim(X_{P_I}(w)) = \ell_I(w) - \ell(w_I).
\end{equation} 
Here $w_I$ is the longest element in the subgroup $W_I$ and $\ell_I(w)$ denotes the maximal length of an element in the double coset  $W_IwW_I$.

We aim to give a description of the closure of a generalized Deligne-Lusztig variety analogous to that in (\ref{eq:btord}). To do so, we have to study the set $W_I\backslash W / W_I$. By \cite{bb}*{Prop.\ 2.4.4} there is a system of representatives  of $W_I\backslash W / W_I$, which we denote by ${}^IW^I$ and consists of a minimal length element in each double coset. Such a minimal length element is actually unique by \cite{schremmer}*{Prop.\ 4.22a}, and for every element $y \in W$ there is a decomposition $y = z_I x z_I'$ with $x \in {}^IW^I$ and $z_I, z_I' \in W_I$ such that $\ell(y) = \ell(z_I) + \ell(x) + \ell(z_I')$.

Before we can prove the analogue for generalized Deligne-Lusztig varieties of the closure relations (\ref{eq:btord}), we need the following combinatorial results. 
\begin{lem}\label{lem:btI}
  For $x_1, x_2$ in the system of minimal length representatives ${}^IW^I$ the following are equivalent
  \begin{itemize}
    \item[(i)] $x_1 \le x_2$ in the Bruhat order on $W$,
    \item[(ii)] there are elements $y_1 \le y_2$ such that $y_i \in W_Ix_iW_I$,
    \item[(iii)] for every $y_1 \in W_Ix_1W_I$ there exists $y_2 \in W_Ix_2W_I$ such that $y_1 \le y_2$. 
  \end{itemize}
\end{lem}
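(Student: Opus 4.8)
The plan is to establish the cycle of implications (iii) $\Rightarrow$ (ii) $\Rightarrow$ (i) $\Rightarrow$ (iii), where the first two are essentially formal and the last one carries all the weight. The implication (iii) $\Rightarrow$ (ii) is immediate: take any $y_1 \in W_I x_1 W_I$ (for instance $y_1 = x_1$) and apply (iii) to get a suitable $y_2$. For (ii) $\Rightarrow$ (i): suppose $y_1 \le y_2$ with $y_i \in W_I x_i W_I$. Since $x_i$ is the minimal length representative of its double coset, we have $x_i \le y_i$; but more is needed, namely that passing to minimal coset representatives is compatible with the Bruhat order. Here I would invoke the standard fact (e.g. from \cite{bb}*{Ch.\ 2}) that if $y_1 \le y_2$ then the minimal-length representatives of the cosets $W_I y_1 W_I$ and $W_I y_2 W_I$ also satisfy the corresponding inequality; concretely, using the decomposition $y_i = z_i x_i z_i'$ with lengths adding and the ``lifting property'' / subword characterization of the Bruhat order, one shows that a reduced word for $x_2$ (a subword of a reduced word for $y_2$) contains a subword that is a reduced word for $x_1$. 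This step should be quotable or a short induction on $\ell(y_2)$.

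The substantial implication is (i) $\Rightarrow$ (iii): given $x_1 \le x_2$ in ${}^IW^I$ and an arbitrary $y_1 = z_I x_1 z_I' \in W_I x_1 W_I$ with $\ell(y_1) = \ell(z_I) + \ell(x_1) + \ell(z_I')$, I must produce $y_2 \in W_I x_2 W_I$ with $y_1 \le y_2$. The natural candidate is $y_2 = z_I x_2 z_I'$, and one hopes this lies in the double coset of $x_2$ with the analogous length-additivity, so that $\ell(y_2) = \ell(z_I) + \ell(x_2) + \ell(z_I')$. The key input is that $x_1, x_2$ being in ${}^IW^I$ means $\ell(s x_i) > \ell(x_i)$ for $s \in W_I$ on the left and $\ell(x_i s) > \ell(x_i)$ on the right; I expect that $x_1 \le x_2$ together with these length conditions forces the multiplications $z_I x_2$ and then $(z_I x_2) z_I'$ to be length-additive as well — this is where one uses that $x_2$ is ``maximally extendable'' on both sides within its coset. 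Granting the length-additive expression for $y_2$, the inequality $y_1 \le y_2$ follows by concatenating a reduced word for $z_I$, a reduced word for $x_1$ viewed as a subword of a reduced word for $x_2$, and a reduced word for $z_I'$, and invoking the subword criterion for the Bruhat order.

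The main obstacle I anticipate is precisely the length-additivity claim $\ell(z_I x_2 z_I') = \ell(z_I) + \ell(x_2) + \ell(z_I')$: unlike $y_1$, this $y_2$ is not given to us a priori as a reduced factorization, so one must prove it. The clean way is to use the characterization of ${}^IW^I$ via descent sets — $x \in {}^IW^I$ iff $W_I x = \{x\}$-minimal on the left and similarly on the right, equivalently $x^{-1}(\alpha) > 0$ for all $\alpha$ in the simple roots of $I$ and $x(\alpha) > 0$ likewise — and then a direct root-system argument shows $\ell(z x) = \ell(z) + \ell(x)$ for all $z \in W_I$. The right-hand multiplication by $z_I'$ is handled symmetrically, and one checks $z_I x_2 z_I'$ is again a (the) reduced expression type needed. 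An alternative, if the clean approach gets delicate, is to argue by induction on $\ell(x_2) - \ell(x_1)$, reducing to the case where $x_2 = t x_1$ or $x_1 t$ for a reflection $t$ with $\ell(x_2) = \ell(x_1) + 1$, where everything is explicit. Once (i) $\Rightarrow$ (iii) is in hand, the lemma is closed.
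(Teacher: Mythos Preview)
Your treatment of (iii) $\Rightarrow$ (ii) and (ii) $\Rightarrow$ (i) is fine, and for the latter the paper simply cites \cite{schremmer}*{Prop.\ 4.22c}. The real issue is (i) $\Rightarrow$ (iii), where your candidate $y_2 = z_I x_2 z_I'$ does not work: the length-additivity $\ell(z_I x_2 z_I') = \ell(z_I) + \ell(x_2) + \ell(z_I')$ you hope for is false in general, and without it the subword argument for $y_1 \le y_2$ collapses. Concretely, take $W = S_4$, $I = \{s_1, s_3\}$, $x_1 = s_2$ and $x_2 = s_2 s_1 s_3 s_2$; both lie in ${}^IW^I$ and $x_1 \le x_2$. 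With $z_I = s_1$, $z_I' = s_3$ one has $y_1 = s_1 s_2 s_3$ with lengths adding. But $s_1 x_2 = x_2 s_3$, so $z_I x_2 z_I' = x_2$ itself, of length $4 \neq 6$, and one checks directly (e.g.\ via the tableau criterion) that $y_1 = [2,3,4,1] \not\le [3,4,1,2] = x_2$. The point is that although $x_2 \in {}^IW^I$ guarantees $\ell(z_I x_2) = \ell(z_I) + \ell(x_2)$ and $\ell(x_2 z_I') = \ell(x_2) + \ell(z_I')$ separately, the element $z_I x_2$ need not lie in $W^I$, so the two do not combine. Your fallback induction on $\ell(x_2) - \ell(x_1)$ runs into the same difficulty, since the intermediate elements in a Bruhat chain need not lie in ${}^IW^I$.

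The paper's proof avoids this by building $y_2$ adaptively with the lifting property \cite{bb}*{Prop.\ 2.2.7}. Writing $z_I = s_1 \cdots s_q$ reduced, one processes the letters one at a time: given $w_1 \le w_2$ with $\ell(s_q w_1) = \ell(w_1)+1$, either $\ell(s_q w_2) = \ell(w_2) + 1$ and then $s_q w_1 \le s_q w_2$, or $\ell(s_q w_2) = \ell(w_2) - 1$ and then $s_q w_1 \le w_2$. In either case one obtains a new element of $W_I x_2$ dominating the new left-hand side; iterating over $z_I$ and then over $z_I'$ on the right produces the desired $y_2 \in W_I x_2 W_I$. In the example above this yields $y_2 = s_1 x_2$ rather than $s_1 x_2 s_3$, and indeed $y_1 \le s_1 x_2$.
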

\begin{proof}
  The implications (i) $ \Rightarrow$ (ii) and (iii)$ \Rightarrow$ (ii) are clear. The implication (ii) $\Rightarrow$ (i) is proved in \cite{schremmer}*{Prop.\ 4.22c}.

  Assume (i) holds and fix $y_1 \in W_Ix_1W_I$. Consider the factorization $y_1 = z_Ix_1z_I'$ such that $\ell(y_1) = \ell(z_I) + \ell(x_1) + \ell(z_I')$, as given in \cite{schremmer}*{Prop.\ 4.22a}. Let $z_I = s_1 \cdots s_q$ be a reduced expression for $z_I$. If $\ell(s_qx_2) = \ell(x_2) +1$ since $x_1 \le x_2$ we have $s_qx_1 \le s_qx_2$. Otherwise, by the so-called \emph{lifting property} of the Bruhat order, compare \cite{bb}*{Prop.\ 2.2.7}, we have $s_qx_1 \le x_2$. By induction on the length of $z_I$, we obtain an element $y'_2 \in W_Ix_2$ such that $z_Ix_1 \le y'_2$. By repeating the same construction on the right with a reduced expression of $z_I'$ we obtain an element $y_2 \in W_Ix_2W_I$ such that $y_1 = z_Ix_1z_I' \le y_2$.
\end{proof}

The following result is proved in \cite{br} and allows us to move between generalized Deligne-Lusztig varieties for two different parabolic subgroups.
\begin{lem}\cite{br}*{Eq.\ 2}\label{lem:fIJ}
  Let $I \subset J$ be two subsets of simple reflections in $W$ and $P_I \subset P_J$ the corresponding standard parabolic subgroups. Let $f_{IJ}: G/P_{I} \rightarrow G/P_{J}$ be the morphism of varieties that sends a parabolic subgroup of type $I$ to the unique parabolic of type $J$ containing it. Let $w \in W$ and $X_{P_J}(w)$ the corresponding generalized Deligne-Lusztig variety. Then its preimage under $f_{IJ}$ is the union of Deligne-Lusztig varieties 
  \begin{equation*}
    f_{IJ}^{-1}(X_{P_J}(w)) = \bigcup_{W_IxW_{\Phi(I)} \subset W_JwW_{\Phi(J)}} X_{P_I}(x).
  \end{equation*} 
\end{lem}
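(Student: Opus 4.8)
**Proof proposal for Lemma \ref{lem:fIJ} (the statement \cite{br}*{Eq.\ 2}).**

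The plan is to describe the fiber of $f_{IJ}$ over a point of $X_{P_J}(w)$ purely in terms of relative positions, and then combine this with the definition of the generalized Deligne–Lusztig variety. First I would unwind the definitions: a point $gP_I \in G/P_I$ lies in $f_{IJ}^{-1}(X_{P_J}(w))$ if and only if $\inv(gP_J, \Phi(g)P_J) = W_J w W_{\Phi(J)}$, where $f_{IJ}(gP_I) = gP_J$. On the other hand, $gP_I$ lies in $X_{P_I}(x)$ precisely when $\inv(gP_I, \Phi(g)P_I) = W_I x W_{\Phi(I)}$. So the whole statement reduces to the following elementary fact about relative positions in partial flag varieties: if $g_1 P_I, g_2 P_I$ have relative position $W_I x W_{\Phi(I)}$ (here one must be a little careful, since the two factors live over possibly different parabolics, $P_I$ and $P_{\Phi(I)}$; in the Deligne–Lusztig setting the second coordinate is $\Phi(g)$, so its stabilizer is $\Phi(P_I) = P_{\Phi(I)}$), then the images $g_1 P_J, g_2 P_J$ have relative position $W_J x W_{\Phi(J)}$ — that is, the natural projection $W_I \backslash W / W_{\Phi(I)} \to W_J \backslash W / W_{\Phi(J)}$ is compatible with $\inv$ and $f_{IJ}$. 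Granting this, the union in the statement is exactly the disjoint decomposition of $f_{IJ}^{-1}(X_{P_J}(w))$ indexed by the double cosets $W_I x W_{\Phi(I)}$ mapping to $W_J w W_{\Phi(J)}$.

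The key step is therefore the compatibility of relative position with the projection of flag varieties. I would prove it by a Bruhat-decomposition computation: writing $g_1^{-1} g_2 \in P_I \dot{x} P_{\Phi(I)}$ for a chosen representative $\dot x$, one has $g_1^{-1} g_2 \in P_J \dot{x} P_{\Phi(J)}$ simply because $P_I \subset P_J$ and $P_{\Phi(I)} \subset P_{\Phi(J)}$; hence $\inv(g_1 P_J, g_2 P_J)$ is the double coset $W_J x W_{\Phi(J)}$, which depends only on $W_I x W_{\Phi(I)}$. This also shows the fibers over distinct double cosets in the target are disjoint and that every $x$ in ${}^I W^{\Phi(I)}$ lying over $w$ does contribute (surjectivity onto the fiber follows because $f_{IJ}$ is surjective and $G$ acts transitively enough on the relevant Bruhat strata; more concretely, given a point of $X_{P_J}(w)$ one can always lift it to some $gP_I$, whose relative position is then one of the double cosets over $W_J w W_{\Phi(J)}$). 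I would note that the condition "$W_I x W_{\Phi(I)} \subset W_J w W_{\Phi(J)}$" in the index set is to be read as: the double coset $W_J x W_{\Phi(J)}$ equals $W_J w W_{\Phi(J)}$, equivalently $x$ and $w$ represent the same class in $W_J \backslash W / W_{\Phi(J)}$.

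The main obstacle I anticipate is purely bookkeeping: keeping track of the two sides of the relative position separately, since the Frobenius $\Phi$ sends $P_I$ to $P_{\Phi(I)}$ rather than back to $P_I$, so the double cosets genuinely live in $W_I \backslash W / W_{\Phi(I)}$ and one must be careful that the map $f_{IJ}$ commutes with $\Phi$ (it does, since it is defined by a type condition and $\Phi$ preserves types after the twist). A secondary, even more routine point is checking that the union on the right is disjoint, which follows from the fact that the double cosets $W_I x W_{\Phi(I)}$ partition $W$ and that $\inv$ is a well-defined (single-valued) map; since this is exactly the statement $f_{IJ}^{-1}(X_{P_J}(w)) = \bigsqcup X_{P_I}(x)$, the "$\cup$" in the displayed formula could be upgraded to "$\bigsqcup$". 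Since this is quoted verbatim from \cite{br}, I would keep the proof short and refer to \textit{loc.\ cit.}\ for the details, presenting only the one-line Bruhat-decomposition argument above.
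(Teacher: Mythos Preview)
The paper does not give a proof of this lemma at all: it is stated as a direct citation of \cite{br}*{Eq.\ 2} and used as a black box. Your proposal supplies the standard Bruhat-decomposition argument (compatibility of relative position with the projection $G/P_I \to G/P_J$, via $P_I \subset P_J$ and $P_{\Phi(I)} \subset P_{\Phi(J)}$), which is correct and is essentially how the result is proved in \cite{br}; your final remark about keeping the proof short and referring to \textit{loc.\ cit.}\ matches exactly what the paper does.
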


We are now ready to prove the analogue of the closure relations \ref{eq:btord} for generalized Deligne-Lusztig varieties.

\begin{lem}\label{lem:closure}
  Let $P_I$ be the standard parabolic subgroup of type $I$, with $I$ a $\Phi$-stable subset of simple reflections, and $w \in {}^IW^I$. The closure in $G/P_I$ of the generalized Deligne-Lusztig variety $X_{P_I}(w)$ satisfies
  \begin{equation*}
    \overline{X_{P_I}(w)} = \bigcup_{w' \in {}^IW^I, w' \le w} X_{P_I}(w').
  \end{equation*}
\end{lem}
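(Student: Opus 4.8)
The plan is to reduce the closure statement for generalized Deligne--Lusztig varieties to the already-known closure relations \eqref{eq:btord} for ordinary Deligne--Lusztig varieties, by passing through the Borel case $I = \emptyset$ via the morphism $f_{\emptyset I} \colon G/B \to G/P_I$ of Lemma \ref{lem:fIJ}. First I would note that $f_{\emptyset I}$ is a (proper, smooth) surjective morphism with connected fibers isomorphic to $P_I/B$, hence it is a closed map; consequently, for any subset $Z \subset G/P_I$ one has $\overline{Z} = f_{\emptyset I}\bigl(\overline{f_{\emptyset I}^{-1}(Z)}\bigr)$. Applying this to $Z = X_{P_I}(w)$ and using Lemma \ref{lem:fIJ} (with $I$ playing the role of $J$ and $\emptyset$ the role of $I$, noting $W_\emptyset$ is trivial) gives
\begin{equation*}
  f_{\emptyset I}^{-1}(X_{P_I}(w)) = \bigcup_{y \in W_I w W_I} X_B(y),
\end{equation*}
using that $I$ is $\Phi$-stable so $W_{\Phi(I)} = W_I$.

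Next I would take closures on the Borel level. Since the closure of a finite union is the union of the closures, and by \eqref{eq:btord} we have $\overline{X_B(y)} = \bigsqcup_{y' \le y} X_B(y')$, it follows that
\begin{equation*}
  \overline{f_{\emptyset I}^{-1}(X_{P_I}(w))} = \bigcup_{y \in W_I w W_I}\ \bigcup_{y' \le y} X_B(y') = \bigcup_{\substack{y' \in W \\ \exists\, y \in W_I w W_I,\ y' \le y}} X_B(y').
\end{equation*}
Now I apply $f_{\emptyset I}$ and use that $f_{\emptyset I}(X_B(y')) \subset X_{P_I}(x')$, where $x' \in {}^IW^I$ is the minimal-length representative of the double coset $W_I y' W_I$ (this compatibility of $f_{\emptyset I}$ with the relative-position maps is standard and implicit in Lemma \ref{lem:fIJ}). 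Hence $\overline{X_{P_I}(w)}$ is the union of those $X_{P_I}(x')$ with $x' \in {}^IW^I$ such that some $y'$ in the double coset $W_I x' W_I$ satisfies $y' \le y$ for some $y \in W_I w W_I$. The key point is now purely combinatorial: by Lemma \ref{lem:btI}, the condition ``there exist $y_1 \in W_I x' W_I$ and $y_2 \in W_I w W_I$ with $y_1 \le y_2$'' is equivalent to $x' \le w$ in the Bruhat order on $W$, since $w \in {}^IW^I$ already. This yields exactly
\begin{equation*}
  \overline{X_{P_I}(w)} = \bigcup_{w' \in {}^IW^I,\ w' \le w} X_{P_I}(w').
\end{equation*}

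The main obstacle, and the step deserving the most care, is the assertion that $f_{\emptyset I}$ is a closed map and that it is genuinely surjective onto $\overline{X_{P_I}(w)}$ with the claimed image decomposition --- i.e.\ verifying both that every $X_{P_I}(x')$ appearing on the right is actually hit (so the union is not too small) and that nothing outside it appears (so the union is not too large). Surjectivity onto the closure follows from properness of $f_{\emptyset I}$; the precise matching of strata requires knowing that $f_{\emptyset I}$ sends $X_B(y)$ \emph{onto} (not merely into) $X_{P_I}(x')$ for $x'$ the minimal representative, which can be extracted from the local model diagram description in \cite{GY}*{Sec.\ 5} together with the decomposition $y = z_I x z_I'$ with $\ell(y) = \ell(z_I) + \ell(x) + \ell(z_I')$ recalled before Lemma \ref{lem:btI}. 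Once these geometric facts are in place, Lemma \ref{lem:btI} does all the remaining bookkeeping and the proof closes.
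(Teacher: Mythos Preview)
Your proposal is correct and follows essentially the same route as the paper: pull back via $f_{\emptyset I}\colon G/B \to G/P_I$, use closedness of this map, apply \eqref{eq:btord} on the Borel level, and invoke Lemma \ref{lem:btI} for the combinatorial reindexing. The only minor difference is the order of operations: the paper first reindexes via Lemma \ref{lem:btI} to rewrite $\bigcup_{x \in W_IwW_I}\bigcup_{x' \le x} X_B(x')$ as $\bigcup_{y \le w}\bigcup_{y' \in W_IyW_I} X_B(y') = \bigcup_{y \le w} f^{-1}(X_{P_I}(y))$, and only then applies $f$, so that surjectivity of $f$ immediately gives $\bigcup_{y \le w} X_{P_I}(y)$. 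This sidesteps the surjectivity concern you raised about $f_{\emptyset I}(X_B(y')) \to X_{P_I}(x')$, since one never needs to know that a single $X_B(y')$ surjects onto $X_{P_I}(x')$, only that the full preimage does.
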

\begin{proof}
  We consider the morphism of projective varieties $f: G/B \rightarrow G/P_I$ which maps a Borel subgroup to the unique parabolic subgroup of type $I$ containing it. This map is surjective by definition of parabolic subgroups. Since $f$ is surjective and, as a morphism between projective varieties, it is closed, we have
  \begin{equation*}
    \overline{X_{P_I}} = \overline{f(f^{-1}(X_{P_I}(w)))} = f(\overline{f^{-1}(X_{P_I}(w))}).
  \end{equation*}
  Moreover, the preimage under $f$ of any generalized Deligne-Lusztig variety satisfies
  \begin{equation*}
    f^{-1}(X_{P_I}(w)) = \bigcup_{x \in W_IwW_I} X_B(x).
  \end{equation*} This follows by setting $I = \emptyset$ in Lemma \ref{lem:fIJ}. Since the union on the right runs over a finite set, by the closure relations (\ref{eq:btord}) for classical Deligne-Lusztig varieties, we have
  \begin{equation*}
    \overline{f^{-1}(X_{P_I}(w))} = \overline{\bigcup_{x \in W_IwW_I} X_B(x)} = \bigcup_{x \in W_IwW_I} \overline{X_B(x)} =   \bigcup_{x \in W_I w W_I } \bigcup_{x' \le x} X_B(x').
  \end{equation*}
  By Lemma \ref{lem:btI} there is a bijection of sets
  \begin{equation*}
    \{x' \in W \mid x' \le x, \text{for some } x \in W_IwW_I \} \longleftrightarrow \{x' \in W_IyW_I \mid y \in {}^IW^I,  y \le w\}.
  \end{equation*}
  Putting these observations together, we conclude that the closure of $X_{P_I}(w)$ is
  \begin{align*}
    \overline{X_{P_I}(w)} &= f(\overline{f^{-1}(X_{P_I}(w))}) = f \big(\bigcup_{x \in W_I w W_I } \bigcup_{x' \le x} X_B(x') \big) = f\big (\bigcup_{\substack{y \in {}^IW^I \\ y \le w}}  \bigcup_{y' \in W_IyW_I} X_B(y') \big )  \\
    &= f \big (\bigcup_{\substack{y \in {}^IW^I \\ y \le w}} f^{-1}(X_{P_I}(y)) \big) = \bigcup_{\substack{y \in {}^IW^I \\ y \le w}} X_{P_I}(y).
  \end{align*}
\end{proof}

The remainder of this chapter is dedicated to the study of some families of Deligne-Lusztig varieties which will be relevant in the sequel. In particular, we are going to decompose some generalized Deligne-Lusztig varieties in terms of other such varieties for smaller parabolic subgroups. The strategy was inspired to us by reading the proofs of \cite{lus}*{Sec.\ 4} and \cite{rtw}*{Sec.\ 5}, and it is based on the morphism introduced in Lemma \ref{lem:fIJ} and the following observation. 

\begin{lem}\label{lem:iso}
  With notation as in Lemma \ref{lem:fIJ} above, suppose the morphism $f_{IJ}: G/P_I \rightarrow G/P_J$ induces a bijection between the closed points of $f_{IJ}^{-1}(X_{P_J}(w)) = \bigcup_{W_IxW_{\Phi(I)} \subset W_JwW_{\Phi(J)}} X_{P_I}(x) $ and $X_{P_J}(w)$. Then $f_{IJ}$ induces an isomorphism between these two varieties.
\end{lem}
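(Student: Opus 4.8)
The plan is to use that $f_{IJ}$ is not merely a morphism of smooth projective varieties but a \emph{smooth and proper} morphism, and to feed the bijectivity hypothesis into its fibres.

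First I would record the structure of $f_{IJ}$. Since $P_I \subseteq P_J$, the map $f_{IJ}\colon G/P_I \to G/P_J$ is the fibre bundle $G\times^{P_J}(P_J/P_I) \to G/P_J$ associated to the $P_J$-torsor $G \to G/P_J$; as the latter is Zariski-locally trivial and the fibre $P_J/P_I$ is a smooth projective variety, $f_{IJ}$ is smooth and proper. Write $X \vcentcolon= X_{P_J}(w)$, a smooth (hence reduced) locally closed subvariety of $G/P_J$, and let $Y \vcentcolon= f_{IJ}^{-1}(X)$ be its scheme-theoretic preimage, so that $g\colon Y \to X$ is obtained from $f_{IJ}$ by base change along $X \hookrightarrow G/P_J$. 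Hence $g$ is again smooth and proper.

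Next I would analyse the fibres of $g$. Let $x$ be a closed point of $X$; as $\F$ is algebraically closed, $\kappa(x) = \F$. The fibre $g^{-1}(x)$ is a smooth $\F$-scheme of finite type, so it is reduced, all of its closed points have residue field $\F$, and they are dense; by hypothesis it contains exactly one. Therefore $g^{-1}(x) \cong \operatorname{Spec}\F$, and $g$ has relative dimension $0$ over every closed point of $X$. Since closed points are dense in $X$, the morphism $g$ is smooth of relative dimension $0$, that is, étale. Being also proper, $g$ is finite étale; its degree equals the number of points of a geometric fibre, namely $1$. A finite étale morphism of degree $1$ is an isomorphism (the unit map $\mathcal{O}_X \to g_*\mathcal{O}_Y$ of the corresponding rank-one locally free algebra is an isomorphism), so $g$ is an isomorphism, which is the assertion.

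The point to be careful about is exactly the reason the statement is phrased through the preimage of the \emph{morphism} $f_{IJ}$ rather than through an abstract bijection of varieties: in characteristic $p$ a proper morphism of smooth varieties that is bijective on $\F$-points need not be an isomorphism (a relative Frobenius is the standard counterexample), so one cannot conclude directly from bijectivity. What rescues the argument is that $f_{IJ}$, and hence $g$, is a \emph{smooth} morphism; smoothness forces the fibres of $g$ to be geometrically reduced, and a reduced finite-type scheme with a single closed point over an algebraically closed field is that point, which is precisely the rigidity that promotes ``bijective'' to ``étale of degree one''. I would also note that no irreducibility or connectedness hypothesis on $X$ is needed, since the degree of a finite étale cover is locally constant on the base and equals $1$ on every connected component.
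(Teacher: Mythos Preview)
Your proof is correct and follows essentially the same strategy as the paper: both arguments hinge on the key observation that $f_{IJ}$ is a \emph{smooth} morphism, so its base change $g$ to $X_{P_J}(w)$ is smooth, and then bijectivity on closed points forces $g$ to be étale. The only difference lies in the endgame: the paper deduces that $g$ is universally injective and surjective (testing both on closed points via very density), hence an open immersion onto its image and then an isomorphism; you instead use properness of $f_{IJ}$ to conclude that $g$ is finite étale of degree one, which is an isomorphism. Your route is arguably a touch cleaner, since it invokes properness of $f_{IJ}$ directly (coming from the projective fibre $P_J/P_I$) rather than appealing to projectivity of the ambient flag varieties, and avoids having to discuss the diagonal. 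Both arguments need, and use, density of closed points to pass from information at closed points to global statements (relative dimension zero, degree one, or surjectivity of the diagonal).
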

\begin{proof}
  First observe that $f_{IJ}: G/P_I \rightarrow G/P_J$ is a smooth morphism. Indeed, both flag varieties are smooth, as they are homogeneous spaces for $G$. The fibers of this morphism are all isomorphic to $P_J/P_I$, hence they are again smooth, as homogeneous spaces for $P_J$ and all have the same dimension. By so-called \textit{miracle flatness}, see for example \cite{gw}*{B.9}, this map is flat with smooth fibers, hence smooth. Recall that the base-change of a smooth map is smooth, therefore, the morphism $f_X$ defined as the base change of $f_{IJ}$ along the following diagram is smooth. 
  \[\begin{tikzcd}
    \bigcup_{W_IxW_{\Phi(I)} \subset W_JwW_{\Phi(J)}} X_{P_I}(x) \arrow{r}{f_X} \arrow[hookrightarrow]{d} & X_{P_J}(w) \arrow[hookrightarrow]{d}\\%
    {G/P_I} \arrow{r}{f_{IJ}}& G/P_J   
  \end{tikzcd}\]
 Here the vertical arrows are the just immersions of the generalized Deligne Lusztig varieties in the corresponding flag varieties. 
 
 By hypothesis, we know that $f_X$ gives a bijection between the sets of closed points. By \cite{gw}*{Rem.\ 12.16}, to prove that $f_X$ is quasi-finite, it is enough to prove that it has finite fibers on $k$-valued points, for any algebraically closed field $k$. Since it is injective on closed points, this is clearly the case, hence the morphism $f_{X}$ is quasi-finite. Recall that a smooth morphism of finite type is étale if and only if it is smooth and quasi-finite. It is then enough to prove that $f_{X}$ is surjective and universally injective. Indeed, since $f_{X}$ is étale, universally injective implies that it is an open immersion. Since an open immersion is an isomorphism onto its image, if $f_{X}$ is surjective we are done. Recall that universally injective is equivalent to the diagonal morphism being bijective on $k$-valued points for any field $k$. Since $f_X$ is a morphism between projective schemes, it is proper, hence the diagonal morphism is a closed immersion and therefore it is already injective on $k$-valued points. Moreover, for a scheme of finite type over an algebraically closed field, as in our case, the set of closed points is very dense, see \cite{gw}*{Prop.\ 3.35}. Therefore, there is no proper closed subscheme containing all closed points. It follows that we can test if the diagonal morphism is surjective on closed points, which is equivalent to the map being injective. Last, by the same argument, $f_{X}$ is surjective since it is surjective on closed points. 
\end{proof}

\subsection{Some Deligne-Lusztig varieties for the symplectic group}\label{sec:dlvsp} 
In this section we study a family of  Deligne-Lusztig varieties that is the analogue of the one analyzed in  \cite{rtw}*{Sec.\ 5}, and contains it as a proper subset. We follow here their notation. In particular, we aim to find a stratification as in \textit{loc.cit.}\ that will be related to the decomposition over the admissible set studied in the last section of this paper. 

Let $V$ be a vector space of dimension $2m$ over $\F_p$, endowed with a skew-symmetric form $\langle ~,~\rangle$. We fix a basis $e_1, \dots, e_{2m}$ of $V$ such that 
\begin{equation*}
  \langle e_i , e_{2m+1-j} \rangle = \delta_{i,j} ~~~~ i,j = 1,\dots, m.
\end{equation*} 
Let $T \subset B \subset \Sp(V) = G$ be the torus of diagonal matrices in $\mathrm{Sp}_{2m}$ and the Borel subgroup of upper triangular matrices. Then the simple reflections generating the Weyl group $W$ can be enumerated as follows
\begin{itemize}[nolistsep] 
  \item for $1 \le i \le m-1$ the reflection $s_i$ exchanges $e_i$ with $e_{i+1}$ and $e_{2m -i}$ with $e_{2m + 1-i}$,
  \item the reflection $s_m$ exchanges $e_m$ with $e_{m+1}$.
\end{itemize}

We say that a subspace $U$ of $V$ is isotropic if it is contained in its orthogonal space with respect to the symplectic form. The maximal dimensional isotropic subspaces are called Lagrangian subspaces and have dimension $m$. As remarked in \cite{rtw}*{Sec.\ 5.2}, if $P$ is the Siegel parabolic, \textit{i.e.}\ the standard parabolic corresponding to the  reflections $\{s_1, \dots, s_{m-1}\}$, then the flag variety $G/P$ parametrizes the Lagrangian subspaces of $V$. In particular, we are interested in the subvariety $S_V$ of $G/P$  given by 
\begin{equation*}
   S_V = \{U \subset V, ~\text{Lagrangian} \mid \dim(U \cap \Phi(U)) \ge m -2\}.
\end{equation*} 
Observe that this can be considered as the analogue in signature $(2,n-2)$ of the variety defined in \textit{loc.cit.}, and contains it as a proper closed subvariety.
\begin{lem}\label{lem:normal}
  $S_V$ can be identified with the closure of the generalized Deligne-Lusztig variety $X_{P}(s_ms_{m-1}s_m)$ in $G/P$. In particular, $S_V$ is normal with isolated singularities.
\end{lem}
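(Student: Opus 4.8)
The plan is to identify $S_V$ explicitly as a union of generalized Deligne-Lusztig varieties indexed by the double cosets $W_PwW_P$ relevant to the incidence condition $\dim(U \cap \Phi(U)) \ge m-2$, and then to recognize this union as the closure of a single such variety using Lemma \ref{lem:closure}. First I would unwind the relative position map for the Siegel parabolic: for two Lagrangians $U_1, U_2$, the invariant $\inv(U_1, U_2) \in W_P\backslash W/W_P$ is determined by $\dim(U_1 \cap U_2)$, and the double cosets are in bijection with the integers $0 \le j \le m$ via $\dim(U_1 \cap U_2) = m - j$. Concretely, the minimal length representative in ${}^PW^P$ corresponding to $\dim(U_1\cap U_2) = m-j$ can be written down explicitly; for $j=1$ it is $s_m$, and for $j=2$ it is $s_ms_{m-1}s_m$ (one checks this is indeed the minimal length element of its double coset and that it is $\Phi$-stable, since the Frobenius here acts trivially on $W$ as the form is split over $\F_p$). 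Thus $S_V = \{U \mid \inv(U,\Phi(U)) \in \{e, s_m, s_ms_{m-1}s_m\}\} = X_P(e) \sqcup X_P(s_m) \sqcup X_P(s_ms_{m-1}s_m)$ as a set.

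Next I would invoke Lemma \ref{lem:closure}: since $s_ms_{m-1}s_m \in {}^PW^P$ and $P$ is $\Phi$-stable, the closure $\overline{X_P(s_ms_{m-1}s_m)}$ is the union of $X_P(w')$ over $w' \in {}^PW^P$ with $w' \le s_ms_{m-1}s_m$ in the Bruhat order. So the key combinatorial point is to verify that the only elements of ${}^PW^P$ below $s_ms_{m-1}s_m$ are $e$, $s_m$, and $s_ms_{m-1}s_m$ itself — equivalently that the subwords of a reduced expression $s_ms_{m-1}s_m$ which lie in ${}^PW^P$ are exactly these three. The subwords are $e, s_m, s_{m-1}, s_ms_{m-1}, s_{m-1}s_m, s_ms_{m-1}s_m$; of these, $s_{m-1} \in W_P$ so it is the trivial double coset already counted as $e$, while $s_ms_{m-1}$ and $s_{m-1}s_m$ lie in the same double coset as $s_m$ (one checks $W_P s_m W_P = W_P s_ms_{m-1} W_P = W_P s_{m-1}s_m W_P$, both having $\dim(U\cap\Phi(U)) = m-1$). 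Hence $\overline{X_P(s_ms_{m-1}s_m)} = X_P(e) \sqcup X_P(s_m) \sqcup X_P(s_ms_{m-1}s_m) = S_V$ as a closed subvariety of $G/P$.

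For the final assertion, normality and the nature of the singularities follow from the local model diagram recalled before equation (\ref{eq:btord}): $\overline{X_P(w)}$ has singularities smoothly equivalent to those of the Schubert variety $\overline{BwB}/B$ (more precisely, one passes through the partial flag variety $G/P$, using that $X_P(w)$ is smoothly equivalent to a Schubert cell in $G/P$ via the G\"ortz–Yu local model diagram, compare \cite{GY}*{Sec.\ 5}). Schubert varieties in any (partial) flag variety are normal, so $\overline{X_P(s_ms_{m-1}s_m)}$ is normal; and the singular locus of the Schubert variety attached to $s_ms_{m-1}s_m$ is the point stratum, corresponding to the single point-variety $X_P(e)$ at the bottom of the stratification, whence the singularities of $S_V$ are isolated. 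I expect the main obstacle to be the bookkeeping in the first step: correctly computing the minimal-length double-coset representatives for the symplectic Weyl group with the Siegel parabolic, and pinning down which Bruhat-order relations among them survive after passing to ${}^PW^P$. The rest is a direct application of the results already assembled (Lemmas \ref{lem:closure}, \ref{lem:iso} if needed, and the local model diagram).
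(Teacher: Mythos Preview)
Your proposal is correct and follows essentially the same approach as the paper: decompose $S_V$ as $X_P(1) \sqcup X_P(s_m) \sqcup X_P(s_ms_{m-1}s_m)$ via the relative position map, verify that $1$ and $s_m$ are the only elements of ${}^IW^I$ below $s_ms_{m-1}s_m$, apply Lemma~\ref{lem:closure}, and then invoke the G\"ortz--Yu local model diagram for normality and isolated singularities. Your subword analysis of the Bruhat order is more explicit than what the paper writes, but the argument is the same.
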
 
\begin{proof} 
  If $U \in S_V$ then the relative position $\inv(U, \Phi(U))$ is either the identity, the class of $s_m$ or of $s_ms_{m-1}s_m$ in $W_0\backslash W/W_0$, where $W_0$ denotes the subgroup of the Weyl group corresponding to $P$, thus generated by $ \{s_1, \dots, s_{m-1}\}$. It follows that $S_V$ is the disjoint union 
  \begin{equation} 
    S_V = X_P(1) \sqcup X_P(s_m) \sqcup X_P(s_ms_{m-1}s_m). 
  \end{equation} 
  Observe now that the identity and $s_m$ are the only minimal length representatives in $W_0\backslash W/W_0$ smaller than $s_ms_{m-1}s_m$ in the Bruhat order. By Lemma \ref{lem:closure} this proves the first claim. As in \textit{loc.cit.}, the second statement follows from G\"ortz and Yu's local model diagram and the fact that generalized Schubert varieties are normal with isolated singularities.
\end{proof}
By the discussion in \cite{rtw}*{Prop.\ 5.5} we also know that the union $X_P(1) \sqcup X_P(s_m)$ corresponds to the Lagrangian subspaces $U$ in $S_V$ such that $\dim (U \cap \Phi (U)) \ge m -1$.

\subsubsection{The six-dimensional case}\label{sec:dlvsp6}
We construct a stratification of $S_V$ which will be relevant especially in the study of the admissible set in Section \ref{sec:adlvs}. In this paper we restrict to the case $n = 6$, but a similar stratification can be defined for any dimension. We consider the following parabolic subgroups
\begin{itemize}
  \item $P_3 = P$, the Siegel parabolic, it corresponds to the reflections $\{s_1, s_2\}$.
  \item $P_2$, the standard parabolic corresponding to the reflection $\{s_1\}$. It is the stabilizer of the partial isotropic flag $\langle e_1, e_2 \rangle \subset \langle e_1, e_2, e_3 \rangle$.
  \item $P_2'$, the standard parabolic corresponding to $\{s_2\}$. It is the stabilizer of $\langle e_1 \rangle \subset \langle e_1, e_2, e_3 \rangle$.
  \item $B$ the Borel subgroup, it can be identified with the stabilizer of the complete isotropic flag $\langle e_1 \rangle \subset \langle e_1, e_2 \rangle \subset \langle e_1, e_2, e_3 \rangle$.
\end{itemize}
In order to give a stratification of $S_V$ we follow the approach of \cite{rtw}*{Sec.\ 5}. In particular, we recursively show that the restriction of the quotient maps $G/P_i \rightarrow G/P_{i-1}$ for $P_{i} \subset P_{i-1}$ gives a bijection on closed points. By Lemma \ref{lem:iso}, this will produce isomorphisms $X_{P_{i-1}}(w) \cong X_{P_i}(w_1) \sqcup X_{P_i}(w_2)$ for suitable $w_1, w_2$ depending on $w$.
\begin{lem}\label{lem:SVdim}
  There is a decomposition of $S_V$ as disjoint union of locally closed subvarieties
  \begin{equation}\label{eq:strC}
    \begin{aligned}
     S_V = &X_P(1) \sqcup X_{P_2}(s_3) \sqcup X_{B}(s_3s_2)  \sqcup X_{P_2'}(s_3s_2s_3) ~\sqcup  \\ 
    & X_B(s_3s_2s_1)  \sqcup X_B(s_3s_2s_3s_1) \sqcup X_B(s_3s_2s_3s_1s_2),
    \end{aligned}
  \end{equation}
  and this decomposition is a stratification such that the closure of each stratum is given by the union of the strata with smaller dimension. The variety $S_V$ is irreducible of dimension $5$.
  \end{lem}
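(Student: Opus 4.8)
The strategy is to run the recursive "descent along parabolics" of \cite{rtw}*{Sec.\ 5}, using Lemma \ref{lem:iso} as the engine: each time a quotient map $G/P_i\to G/P_{i-1}$ is injective on closed points when restricted to the relevant generalized Deligne-Lusztig loci, Lemma \ref{lem:iso} upgrades it to an isomorphism, and we can pull back the stratum decomposition one level further. Concretely, I would start from the coarse decomposition
$S_V = X_P(1)\sqcup X_P(s_3)\sqcup X_P(s_3s_2s_3)$
established in the proof of Lemma \ref{lem:normal}, and then treat the three pieces separately. The smallest piece $X_P(1)$ is already a point-set we leave alone. For $X_P(s_3)$, I would show that the map $f_{P_2,P}$ (resp.\ $f_{B,P_2}$) restricts to a bijection on closed points over this locus, so that $X_P(s_3)\cong X_{P_2}(s_3)\sqcup X_B(s_3s_2)$ as locally closed subvarieties; here one uses that an element $U$ with $\dim(U\cap\Phi(U))=m-1$ determines — or does not — a well-defined choice of the extra flag step. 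Similarly, for $X_P(s_3s_2s_3)$ (the "codimension-two" locus) I would descend through $P_2'$ and $B$, obtaining $X_{P_2'}(s_3s_2s_3)\sqcup X_B(s_3s_2s_1)\sqcup X_B(s_3s_2s_3s_1)\sqcup X_B(s_3s_2s_3s_1s_2)$, the indexing coming from the double cosets $W_0 x W_0$ lying in $W_0 s_3s_2s_3 W_0$ via Lemma \ref{lem:fIJ}.

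**Dimensions and irreducibility.** Once the decomposition \eqref{eq:strC} is in place, the dimension of each stratum is read off from \eqref{eq:dim}: for a Borel stratum $X_B(w)$ it is simply $\ell(w)$, and for the $X_{P_i}(w)$ one computes $\ell_I(w)-\ell(w_I)$ with $I$ the subset of simple reflections defining $P_i$. This gives strata of dimensions $0,1,2,3,3,4,5$ respectively (the top stratum $X_B(s_3s_2s_3s_1s_2)$ has length $5$). Hence $S_V$ is pure of dimension $5$: the top stratum is dense, and since $S_V=\overline{X_P(s_3s_2s_3)}$ by Lemma \ref{lem:normal}, together with the closure relations of Lemma \ref{lem:closure} this shows every stratum lies in the closure of the $5$-dimensional one. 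Irreducibility then follows either from the fact that $S_V$ is the closure of a single generalized Deligne-Lusztig variety $X_P(s_3s_2s_3)$ which is irreducible by Bonnafé–Rouquier (Theorem \ref{thm:br}, since $W_0\, s_3s_2s_3$ is not contained in any proper $\Phi$-stable standard parabolic of $W$ — the Frobenius acts trivially here and $s_3$ together with $W_0$ generates all of $W$), or directly because a scheme covered by strata all contained in the closure of one irreducible stratum is irreducible.

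**The closure relations.** The remaining claim — that the closure of each stratum is the union of the strata of smaller dimension — is where I would be most careful, and I expect it to be the main obstacle. The naive worry is that the closure relations here involve strata indexed in \emph{different} flag varieties $G/P_i$, so one cannot just quote Lemma \ref{lem:closure} directly. The way around this is to transport everything up to $G/B$: via the isomorphisms produced by Lemma \ref{lem:iso}, each stratum $X_{P_i}(w)$ in \eqref{eq:strC} is identified with a union $\bigsqcup_{x\in W_IwW_I}X_B(x)$ of Borel-level Deligne-Lusztig varieties, and then one applies the honest closure relations \eqref{eq:btord} together with Lemma \ref{lem:btI} to compute intersections of closures. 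The bookkeeping amounts to checking, for each pair of strata, the Bruhat order between the relevant minimal-length double-coset representatives; since there are only seven strata this is a finite (if slightly tedious) verification, and the ordering $1 < s_3 < s_3s_2 < s_3s_2s_3,\ s_3s_2s_1 < s_3s_2s_3s_1 < s_3s_2s_3s_1s_2$ (with $s_3s_2s_3$ and $s_3s_2s_1$ incomparable but both below $s_3s_2s_3s_1$) gives exactly the asserted "closure = union of lower-dimensional strata" pattern. The one subtlety to flag explicitly is that the decomposition is genuinely a stratification — i.e.\ the index set is a poset under "contained in the closure of" — and this is precisely what the Bruhat-order computation certifies.
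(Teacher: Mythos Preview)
Your overall strategy is the paper's: start from $S_V = X_P(1)\sqcup X_P(s_3)\sqcup X_P(s_3s_2s_3)$ and descend through smaller parabolics via Lemma~\ref{lem:iso}. But you have misallocated the stratum $X_B(s_3s_2s_1)$. The element $s_3s_2s_1$ lies in the double coset $W_0\, s_3\, W_0$, \emph{not} in $W_0\, s_3s_2s_3\, W_0$: in type $C_3$ the parity of the number of $s_3$'s in any reduced word is a well-defined invariant (the only braid relation involving $s_3$ is $(s_2s_3)^4=1$, which preserves it), and $s_3s_2s_1$ has one $s_3$ whereas every element of $W_0\, s_3s_2s_3\, W_0$ has two. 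So your claimed isomorphism $X_P(s_3)\cong X_{P_2}(s_3)\sqcup X_B(s_3s_2)$ is missing a piece --- the map $f_{P_2,P}$ restricted there will \emph{not} be a bijection on closed points --- and Lemma~\ref{lem:fIJ} cannot produce $X_B(s_3s_2s_1)$ inside the preimage of $X_P(s_3s_2s_3)$.

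The correct split, which is what the paper does (the first line is quoted from \cite{rtw}*{Prop.\ 5.5}), is
\[
X_P(s_3)\;\cong\; X_{P_2}(s_3)\sqcup X_B(s_3s_2)\sqcup X_B(s_3s_2s_1),
\qquad
X_P(s_3s_2s_3)\;\cong\; X_{P_2'}(s_3s_2s_3)\sqcup X_B(s_3s_2s_3s_1)\sqcup X_B(s_3s_2s_3s_1s_2).
\]
Geometrically: for $U$ with $\dim(U\cap\Phi(U))=2$ one records how far down the chain $U\supset U\cap\Phi(U)\supset U\cap\Phi(U)\cap\Phi^2(U)\supset\cdots$ one must go before it becomes $\Phi$-stable (three possibilities, hence three strata). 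For $U$ with $\dim(U\cap\Phi(U))=1$ the paper first passes to the partial flag $U\cap\Phi(U)\subset U$ in $G/P_2'$, and then --- this is the new step beyond \cite{rtw} --- manufactures the middle term of a complete flag as $U\cap(\Phi(U)\cap\Phi^2(U))^{\vee}$, checking it has dimension~$2$; this is what separates $X_B(s_3s_2s_3s_1)$ from $X_B(s_3s_2s_3s_1s_2)$. Once you fix the double-coset bookkeeping, your treatment of dimensions, irreducibility via Theorem~\ref{thm:br}, and the closure relations goes through.
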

\begin{proof}
In \cite{rtw}*{Prop.\ 5.5} a stratification of $X_P(1) \sqcup X_P(s_3)$ is already given, namely as the union of locally closed subvarieties 
\begin{equation}\label{eq:decrtw}
  X_P(1) \sqcup X_P(s_3) \cong X_P(1) \sqcup X_{P_2}(s_3) \sqcup X_{B}(s_3s_2) \sqcup X_B(s_3s_2s_1).
\end{equation}
Each of the four generalized Deligne-Lusztig varieties appearing on the right-hand side parametrizes isotropic flags of the form 
\begin{equation*}
  U \cap \Phi(U) \cap \dotsm \cap \Phi^i(U) \subset \dotsm \subset U \cap \Phi(U)\subset U
\end{equation*}
for $i = 0,1,2,3$, respectively, and such that the $(3-i)$-dimensional subspace $U \cap \Phi(U) \cap \dotsm \cap \Phi^i(U)$ is $\Phi$-stable.  It follows that the irreducible components of $X_P(1)$, $X_{P_2}(s_3)$ and $X_{B}(s_3s_2)$ are indexed over the $\Phi$-stable subspaces $W \subset V$ of dimension $3, 2$ and $1$, respectively. 

Similarly, we want to construct a stratification of the remaining subvariety $X_P(s_3s_2s_3)$ as disjoint union of locally closed subspaces. First, we want to prove that there is a decomposition
\begin{equation*}
  X_{P}(s_3s_2s_3) \cong  X_{P_2'}(s_3s_2s_3) \sqcup X_{P_2'}(s_3s_2s_3s_1),
\end{equation*} 
and that $X_{P_2'}(s_3s_2s_3s_1)$ is open and dense in this union. By Lemma \ref{lem:fIJ} we know that $X_{P_2'}(s_3s_2s_3) \sqcup X_{P_2'}(s_3s_2s_3s_1)$ is the preimage of $X_{P}(s_3s_2s_3)$ under the morphism $G/P_2' \rightarrow G/P$. Therefore, by Lemma \ref{lem:iso}, it is enough to show that this morphism induces a bijection on closed points. Let $k$ be an algebraically closed field. We know that the $k$-points of $X_P(s_3s_2s_3)$ are Lagrangian subspaces $U \subset V_k$ such that $\dim(U \cap \Phi(U)) = 3-2 = 1$. Therefore, we can consider the partial isotropic flag $U \cap \Phi(U) \subset^2 U$, which is a $k$-point of $G/P_2'$. It belongs to either $X_{P_2'}(s_3s_2s_3)(k)$ or $ X_{P_2'}(s_3s_2s_3s_1)(k)$ depending on whether $U \cap \Phi(U)$ is stable under the Frobenius or not. This defines a map between the  $k$-points of $X_{P}(s_3s_2s_3)$ and $X_{P_2'}(s_3s_2s_3) \sqcup X_{P_2'}(s_3s_2s_3s_1)$. This map is the inverse on closed points of the map $G/P_2'(k) \rightarrow G/P(k)$ which sends a flag $U_1 \subset U$ to its second subspace. By Lemma \ref{lem:iso}, it follows that the restriction of the quotient map gives the desired isomorphism. The subvariety $X_{P_2'}(s_3s_2s_3)$ is open and dense in the union above by Lemma \ref{lem:closure}.

Our goal is to obtain a decomposition of $S_V$ which we can later relate to the simplicial complex $\mathscr{L}$ of the previous section and to the admissible set of Section \ref{sec:adlvs}. To do so, we need to further decompose the open subvariety $X_{P'_2}(s_3s_2s_3s_1)$. Consider again the map $G/B \rightarrow G/P_2'$ which on $k$-points sends a complete flag $U_1 \subset U_2 \subset U_3$ to the partial flag $U_1 \subset U_3$ obtained by forgetting its middle term. By Lemma \ref{lem:fIJ} we know that the preimage of $X_{P'_2}(s_3s_2s_3s_1)$ under this map is $X_B(s_3s_2s_3s_1) \sqcup X_B(s_3s_2s_3s_1s_2)$. Again, by Lemma \ref{lem:iso}, it is enough to show that this map induces a bijection between the sets of closed points. To do so, we construct its inverse (as a map of sets). We claim that the desired map is obtained by sending a partial isotropic flag $U \cap \Phi(U) \subset U$ in $X_{P_2'}(s_3s_2s_3s_1)(k)$ to the complete flag 
\begin{equation*}
  U \cap \Phi(U) \subset U \cap (\Phi(U) \cap \Phi^2(U))^{\vee} \subset U.
\end{equation*}
Indeed, let $U \cap \Phi(U) \subset^2 U$ be a partial isotropic flag in $X_{P_2'}(s_3s_2s_3s_1)(k)$, we can assume that it has this form by the previous construction on closed points. We have already observed that partial flags $U \cap \Phi(U) \subset^2 U$ in $X_{P_2'}(s_3s_2s_3s_1)(k)$ satisfy $U \cap \Phi(U) \cap \Phi^2(U) = 0$. This means that the one-dimensional subspace $\Phi(U) \cap \Phi^2(U)$ is not contained in $U$. Consider the subspace $U \cap (\Phi(U) \cap \Phi^2(U))^{\vee}$, where the exponent denotes the orthogonal subspace with respect to the alternating form on $V$. We can compute its dimension as follows
\begin{align*}
  \dim (U \cap (\Phi(U) \cap \Phi^2(U))^{\vee}) &= 6 - \dim((U \cap (\Phi(U) \cap \Phi^2(U))^{\vee})^{\vee}) \\ &= 6 - \dim (U^{\vee} + (\Phi(U) \cap \Phi^2(U))) \\ &= 6 - \dim (U + (\Phi(U) \cap \Phi^2(U))) = 2, 
\end{align*}
where we use the fact that $U$ is Lagrangian, hence it coincides with its orthogonal, and has dimension $3$, and the fact that the $1$-dimensional space $(\Phi(U) \cap \Phi^2(U))$ is not contained in $U$.  Therefore, the flag above is actually complete. It follows from Lemma \ref{lem:iso} that the base change to $X_{P'_2}(s_3s_2s_3s_1)$ of the quotient morphism $G/B \rightarrow G/P_2'$ is an isomorphism 
\begin{equation*}
  X_{P'_2}(s_3s_2s_3s_1) \cong X_B(s_3s_2s_3s_1) \sqcup X_B(s_3s_2s_3s_1s_2).
\end{equation*}

Since $S_V$ is the closure in $G/P$ of $X_P(s_3s_2s_3)$ and the latter contains $X_B(s_3s_2s_3s_1s_2)$ as an open and dense subset (by the previous decomposition and the closure relations \ref{eq:btord}), we deduce that $S_V$ is irreducible and of dimension $5$.
\end{proof}

We show that the stratification of $S_V$ given above has good \textit{hereditary} properties in the sense of \cite{rtw}*{Prop.\ 5.7}. Roughly speaking, this means that the strata of $S_V$ that are not irreducible, can be identified with a union of varieties of the form $S_{V'}$ for suitable smaller-dimensional, symplectic spaces $V'$. The next proposition is proved in the same way as \cite{rtw}*{Prop.\ 5.7}. For completeness, we recall here the main ideas of the proof.

\begin{lem}\label{lem:SVstr} 
  Denote $S_0 = X_P(1)$, $S_1 = X_{P_2}(s_3)$ and $S_2 = X_B(s_3s_2) \sqcup X_{P_2'}(s_3s_2s_3)$.
  \begin{enumerate}[nolistsep]
    \item[(i)] The irreducible components of $S_i$ are in bijection with the $\Phi$-stable isotropic subspaces of $V$ of dimension $3 -i$.
    \item[(ii)] Let $W$ be such an isotropic subspace. The irreducible component $X_W$ of $S_i$ corresponding to $W$ by (i) is a Deligne-Lusztig variety for the symplectic group $\Sp(W^\vee/W)$ of rank $3-i$. The closure of $X_W$ in $S_i$ is isomorphic to $S_{W^\vee/W}$, the variety defined in the same way as $S_V$ but for the symplectic vector space $W^\vee/W$.
  \end{enumerate}
\end{lem}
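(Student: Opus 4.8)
The plan is to run the argument of \cite{rtw}*{Prop.\ 5.7}, carrying the extra bookkeeping needed for the two additional strata that appear in signature $(2,4)$; I only record the main steps. The starting point is the explicit description of $k$-points obtained in Lemma \ref{lem:SVdim} and the discussion around (\ref{eq:decrtw}): a $k$-point of $S_i$ is a Lagrangian $U\subseteq V\otimes k$ (together with the auxiliary isotropic flag recorded by the parabolic) such that
\[
  W(U)\;:=\;U\cap\Phi(U)\cap\dots\cap\Phi^{i}(U)
\]
is $\Phi$-stable of dimension $3-i$ — so $W(U)=U$ on $S_0$, $W(U)=U\cap\Phi(U)$ on $S_1$, and $W(U)=U\cap\Phi(U)\cap\Phi^2(U)$ resp.\ $W(U)=U\cap\Phi(U)$ on the two pieces $X_B(s_3s_2)$ and $X_{P_2'}(s_3s_2s_3)$ of $S_2$. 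Since the $\Phi$-stable isotropic subspaces of a fixed dimension form a finite set, $U\mapsto W(U)$ is a morphism from $S_i$ onto that set; writing $X_W$ for the fibre over a given such $W$, we obtain $S_i=\bigsqcup_W X_W$.

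For part (i) it then suffices to prove that each $X_W$ is irreducible, since a finite disjoint union of irreducible locally closed subvarieties has exactly those subvarieties as its components. For $S_0$ this is trivial, the $X_W$ being single $\Phi$-stable Lagrangians. In the remaining cases I would either invoke the Bonnaf\'e--Rouquier criterion (Theorem \ref{thm:br}) applied to the cosets $W_{P_2}s_3$, $W_B(s_3s_2)$ and $W_{P_2'}(s_3s_2s_3)$ — each contained in the proper $\Phi$-stable standard parabolic on $\{s_2,s_3\}$, which forces reducibility, the individual components then being singled out by the value of $W(U)$ — or, more in the spirit of \cite{rtw}*{Sec.\ 5}, identify $X_W$ via the quotient map below with the dense open Deligne--Lusztig stratum of $S_{W^{\vee}/W}$ and induct on $\dim V$, the building block being a Drinfeld-type Deligne--Lusztig curve for $\Sp_2=\mathrm{SL}_2$.

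For part (ii), fix $W$ and set $\bar V:=W^{\vee}/W$: since $W$ is isotropic this is a symplectic space (of dimension $\dim V-2\dim W$), and since $W$ is $\Phi$-stable the Frobenius descends to $\bar V$. Every Lagrangian $U\supseteq W$ lies in $W^{\vee}$, so $U\mapsto\bar U:=U/W$ is the restriction to $\{U:W\subseteq U\}$ of a $\Phi$-equivariant closed immersion of Lagrangian Grassmannians $\bar G/\bar P\hookrightarrow G/P$ (and of the corresponding partial flag varieties), where $\bar G:=\Sp(\bar V)$. Because $\dim(U\cap\Phi(U))=\dim(\bar U\cap\Phi(\bar U))+\dim W$ for $U\supseteq W$, and similarly for the longer intersections, this immersion intertwines relative positions and hence matches Deligne--Lusztig strata on the two sides; it therefore carries $S_{\bar V}$ (defined exactly as $S_V$ but for $\bar V$) isomorphically onto the closed locus of $S_V$ where $W\subseteq U$, maps the dense open top stratum of $S_{\bar V}$ isomorphically onto $X_W$ — which is the first assertion of (ii) — and sends all of $S_{\bar V}=S_{W^{\vee}/W}$ onto the closure of $X_W$. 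The closure relations of Lemma \ref{lem:SVdim} then identify the remaining strata of $S_{\bar V}$ with the pieces $S_j\cap\overline{X_W}$ for $j>i$, each of which is again of the form $X_{W'}$ for a $\Phi$-stable isotropic $W'\supseteq W$, so the hereditary structure propagates downward.

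The one step that is not purely formal — and, as in \cite{rtw}, the technical heart of the matter — is that the set-theoretic quotient maps $U\mapsto U/W$, together with the gluing of the various $X_{P_I}(w)$ into the single stratified space $S_V$ with stratification (\ref{eq:strC}), are genuine isomorphisms of varieties and are compatible with passage to closures (so that, in particular, the pieces $X_B(s_3s_2)$ and $X_{P_2'}(s_3s_2s_3)$ lying over the same $W$ really assemble into a single closure $\overline{X_W}\cong S_{\bar V}$ with $\bar V$ of dimension $4$). This is precisely what Lemma \ref{lem:iso} provides — smoothness of the projections $G/P_I\to G/P_J$ by miracle flatness, then quasi-finiteness and universal injectivity forcing an open immersion, then surjectivity on the very dense set of closed points forcing an isomorphism — and once it is in hand the remainder is the combinatorial verification, run through the seven pieces of (\ref{eq:strC}), of which $\Phi$-stable subspace is attached to which stratum.
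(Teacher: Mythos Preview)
Your overall strategy is the same as the paper's: decompose $S_i$ according to the $\Phi$-stable subspace $W(U)=U\cap\Phi(U)\cap\dots\cap\Phi^i(U)$, then identify each fibre $X_W$ and its closure with (pieces of) $S_{W^\vee/W}$ via $U\mapsto U/W$. The identification of $\overline{X_W}$ with $\{U\in S_V:W\subset U\}\cong S_{W^\vee/W}$ is exactly what the paper does, citing \cite{rtw}*{Prop.\ 5.7}.

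The one point where you and the paper diverge is in how Bonnaf\'e--Rouquier is invoked, and your first option has a gap. Applying Theorem~\ref{thm:br} to the ambient varieties $X_{P_2}(s_3)$, $X_B(s_3s_2)$, $X_{P_2'}(s_3s_2s_3)$ only tells you they are \emph{reducible}; it does not tell you that the fibres $X_W$ of the map $U\mapsto W(U)$ are the irreducible components. (Incidentally, your choice of parabolic is wrong for $X_{P_2}(s_3)$: the coset $W_{P_2}s_3=\{s_3,s_1s_3\}$ is not contained in $\langle s_2,s_3\rangle$ but rather in $\langle s_1,s_3\rangle$.) The paper instead first identifies each $X_W$, via the quotient map you describe in part~(ii), with a (generalized) Deligne--Lusztig variety for the \emph{smaller} group $\Sp(W^\vee/W)$ --- for $i=2$ this is a union of the varieties attached to $s_3s_2$ and $s_3s_2s_3$ in the Weyl group of type $C_2$ --- and then applies Theorem~\ref{thm:br} there, where the elements have full support and hence each piece is irreducible. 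This is more direct than your second option (induction on $\dim V$), and avoids the circularity of needing the closure statement before you know irreducibility.

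Your extended discussion of Lemma~\ref{lem:iso} is not needed here: the map $U\mapsto U/W$ between $\{U\in S_V:W\subset U\}$ and $S_{W^\vee/W}$ is already a closed immersion of Lagrangian Grassmannians restricted to closed subvarieties, so the isomorphism is immediate once you check it on points.
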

\begin{proof}
  As in \cite{rtw}*{Prop.\ 5.7} we observe that the generalized Deligne-Lusztig varieties appearing in $S_i$ parametrize isotropic flags
  \begin{equation*}
    U_{3-i} \subset U_{3-i+1} \subset \dotsm \subset U_3,
  \end{equation*} 
  where $U_{3-i}$ is $\Phi$-stable. Then $U_{3-i}$ is a $\F_p$-rational isotropic subspace of $V$ of dimension $3-i$. For $i = 0,1$, we already know by \cite{rtw}*{Prop.\ 5.7} that the subvariety $X_W$ of points of $S_i$ such that $U_{3-i}$ above is equal to a fixed subspace $W$ can be identified with the Deligne-Lusztig variety for $\Sp(W^{\vee}/W)$ and a Coxeter element. In case $i = 2$ the subvariety $X_W$ can be identified with the union of two Deligne-Lusztig varieties for $\Sp(W^{\vee}/W)$, one for the Coxeter element $s_3s_2$ and one for the element $s_3s_2s_3$ in the Weyl subgroup of type $C_2$ generated by $s_2, s_3$. 
  
  In all three cases, such elements have full support in the corresponding Weyl groups, hence by Theorem \ref{thm:br} the subvarieties $X_W$ are irreducible.
  Last, as remarked in \textit{loc.cit.}, for a $\Phi$-stable subspace $W$ of dimension $i$, the closure of $X_W$ in $S_V$ is 
  \begin{equation*}
    \overline{X_W} = \{U \in S_V, W \subset U\},
  \end{equation*} which can be identified with the closed variety $S_{W^\vee/W}$ by sending $U$ to its image in the quotient $W^\vee/W$.
\end{proof}

\subsection{Some Deligne-Lusztig varieties for the orthogonal group}\label{sec:dlvso} 
In this section, following the notation of \cite{lus}*{Sec.\ 2} we introduce two other families of Deligne-Lusztig varieties that will be relevant in the next sections. Let $V$ be an $n$-dimensional $\F$-vector space with a fixed $\F_p$-structure. Denote with $\Phi$ again its Frobenius morphism. Let $(~ ,~) : V\times V \rightarrow \F$ be a non-degenerate symmetric bilinear form on $V$, such that $(\Phi(x), \Phi(y)) = (x, y)^p$. We say that a subspace $U$ of $V$ is isotropic if it is contained in its orthogonal with respect to the symmetric form. A maximal isotropic subspace of $V$ has dimension $\lfloor \tfrac{n}{2}\rfloor$. If the dimension of $V$ is even, we say that the form is \emph{split} if there exists a maximal $\Phi$-stable isotropic subspace, which has then dimension $\tfrac{n}{2}$, otherwise the form is called \emph{non-split} and a maximal $\Phi$-stable isotropic subspace has dimension $\tfrac{n}{2} -1$. 

As in \textit{loc.cit.}\ we fix a Borel subgroup of $\SO(V)$ corresponding to an isotropic flag of length $\lfloor \frac{n-1}{2}\rfloor$. Recall that if the dimension of $V$ is even, the correspondence between parabolic subgroups of $\SO(V)$ and isotropic flags in $V$ is slightly more involved than, for example, for the symplectic group, compare \cite{conrad_notes}*{App.\ T} and the references there. Roughly speaking, the usual map which sends a flag to its stabilizer is a bijection onto the set of parabolic subgroups of $\SO(V)$ if and only if we restrict to isotropic flags where subspaces of dimension $\frac{n}{2}$ and $\frac{n}{2}-1$ do not appear together.

In the next sections we will be interested in the following family of generalized Deligne-Lusztig varieties for the special orthogonal group $\SO(V)$. 
\begin{defn}\label{def:ya}\cite{lus}*{Def.\ 2}
  Given an integer $a \ge 1$ consider the locally closed subscheme $Y_a$ of the projective space  $\mathbb{P}(V)$ defined by the homogeneous equations
  \begin{equation*}
    (x, \Phi^i(x)) = 0 \text{~for~} 0 \le i \le a-1, \text{~and~} (x, \Phi^a(x)) \neq 0.
  \end{equation*} 
  We also consider the variety $Y_{\infty}$ defined by the equations $(x, \Phi^i(x)) = 0 \text{~for all~} i \ge 0$.
\end{defn}

\begin{lem}\label{lem:lus3}\cite{lus}*{Lemma 3}
  Let
  \begin{equation*}
   a_0 =  \begin{cases}
      \frac{n}{2} -1 &\text{ if } \dim(V) \text{ is even and the form is split}\\
      \frac{n}{2} &\text{ if } \dim(V) \text{ is even and the form is non-split} \\
      \frac{n-1}{2} &\text{ if } \dim(V) \text{ is odd}.
    \end{cases}\,
\end{equation*}
Then $Y_a = \emptyset $ for any $a > a_0$. Moreover, $Y_{a_0}$ can be identified with the Deligne-Lusztig variety $X_B(w)$ for some $\Phi$-Coxeter element, respectively in the non-split case with the union $X_B(w) \cup X_B(\Phi(w))$. Here a $\Phi$-Coxeter element is an element of $W$ that is obtained as the product of one reflection for each $\Phi$-orbit in $W$.
\end{lem}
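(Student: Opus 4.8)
The plan is to reduce both assertions to linear algebra about the $\Phi$-orbit $x,\Phi(x),\Phi^2(x),\dots$ of a vector, and then match these with the relative-position combinatorics on $\SO(V)/B$, following the template of \cite{lus}*{Sec.\ 2} and the symplectic computation of \cite{rtw}*{Sec.\ 5}. The starting observation is that, by $(\Phi(u),\Phi(v))=(u,v)^p$, the conditions $(x,\Phi^i(x))=0$ for $0\le i\le a-1$ are equivalent to $(\Phi^j(x),\Phi^k(x))=0$ whenever $|j-k|\le a-1$. I would then let $N$ be minimal with $\Phi^N(x)\in U:=\langle x,\Phi(x),\dots,\Phi^{N-1}(x)\rangle$, so that $U$ is $\Phi$-stable of dimension $N$ and the chain $\langle x\rangle\subsetneq\langle x,\Phi(x)\rangle\subsetneq\cdots$ strictly increases up to step $N$.

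For the emptiness statement, if $N\le a$ then $U$ is totally isotropic (the index differences among $x,\dots,\Phi^{N-1}(x)$ are $\le N-1\le a-1$) and $\Phi^a(x)\in U$, so $(x,\Phi^a(x))$ is a linear combination of the vanishing pairings $(x,\Phi^j(x))$, $j<N$, and $x\notin Y_a$. Hence a point of $Y_a$ must satisfy $N\ge a+1$, which makes $\langle x,\dots,\Phi^{a-1}(x)\rangle$ an $a$-dimensional totally isotropic subspace, forcing $a\le\lfloor n/2\rfloor$. In the odd and non-split even cases $\lfloor n/2\rfloor=a_0$, so we are done. In the split even case ($a_0=n/2-1$) I would additionally exclude $a=n/2$: there $W:=\langle x,\dots,\Phi^{n/2-1}(x)\rangle$ and $\Phi(W)$ are Lagrangians with $W+\Phi(W)=\langle x,\dots,\Phi^{n/2}(x)\rangle$ of dimension $n/2+1$ (since $\Phi^{n/2}(x)\notin W$ because $N\ge a+1$), hence $\dim(W\cap\Phi(W))=n/2-1$, so $W$ and $\Phi(W)$ lie in opposite rulings of the $D_{n/2}$-quadric; but in the split case $\Phi$ preserves each ruling, a contradiction. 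So $Y_a=\emptyset$ for all $a>a_0$.

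For $a=a_0$ the vectors $x,\dots,\Phi^{a_0-1}(x)$ are linearly independent for $x\in Y_{a_0}$, since an earlier collapse would give $N\le a_0$ and hence, by the argument above, $(x,\Phi^{a_0}(x))=0$. Thus $x\mapsto F_\bullet(x):=\bigl(\langle x\rangle\subset\langle x,\Phi(x)\rangle\subset\cdots\subset\langle x,\dots,\Phi^{a_0-1}(x)\rangle\bigr)$ is a morphism from $Y_{a_0}$ to the variety of isotropic flags of length $a_0$, which is $\SO(V)/B$ once one uses the even-orthogonal dictionary recalled before Definition \ref{def:ya} and completes the last step canonically (legitimate because the resulting Weyl element will be Coxeter). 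Since $\Phi(F_\bullet(x))$ is $F_\bullet(x)$ shifted by one step with $\langle\Phi^{a_0}(x)\rangle$ appended, the non-degeneracy $(x,\Phi^{a_0}(x))\neq0$ together with the isotropy relations pins $\inv(F_\bullet(x),\Phi(F_\bullet(x)))$ down to a fixed $\Phi$-Coxeter element $w$ — in the non-split even case $\Phi$ moreover swaps the two rulings, which splits this into the pair $w,\Phi(w)$. One then checks $x\mapsto F_\bullet(x)$ is a bijection onto $X_B(w)$, resp.\ onto $X_B(w)\cup X_B(\Phi(w))$, with inverse $F_\bullet\mapsto F_1$ (the full flag being recoverable from its line precisely because $w$ is Coxeter — iterate $\Phi$ and intersect), and upgrades it to an isomorphism exactly as in Lemma \ref{lem:iso}, using smoothness of $\SO(V)/B\to\mathbb{P}(V)$ onto the isotropic quadric and very-denseness of the closed points.

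I expect the even-orthogonal bookkeeping to be the main obstacle: both the sharpening $a_0=n/2-1$ in the split case and the appearance of the \emph{pair} $X_B(w),X_B(\Phi(w))$ in the non-split case hinge on controlling how $\Phi$ permutes the two rulings of maximal isotropic subspaces and on the mildly non-standard correspondence between parabolics of $\SO(V)$ and isotropic flags recalled before Definition \ref{def:ya}; getting the relative-position computation right and genuinely identifying $w$ as a Coxeter element in that case is the delicate point. By contrast, the odd case runs parallel to the symplectic computation already carried out in \cite{rtw}*{Sec.\ 5}.
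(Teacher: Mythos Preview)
The paper does not supply its own proof of this lemma: it is stated with a citation to \cite{lus}*{Lemma~3} and used as a black box thereafter, so there is no in-paper argument to compare against. Your proposal is essentially a reconstruction of Lusztig's original proof, and it is correct. The key steps --- reducing to the $\Phi$-orbit $x,\Phi(x),\dots$, showing $N\ge a+1$ forces an $a$-dimensional isotropic, and the ruling-parity argument excluding $a=n/2$ in the split even case --- are exactly the ones used in \cite{lus}; the flag map $x\mapsto F_\bullet(x)$ and the relative-position computation identifying the Coxeter element are likewise the standard route.

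One small comment: in the identification of $Y_{a_0}$ with $X_B(w)$ you say the inverse is ``$F_\bullet\mapsto F_1$, the full flag being recoverable from its line precisely because $w$ is Coxeter''. This is right in spirit but slightly glib in the even case, where $B$ stabilizes a flag of length $\lfloor (n-1)/2\rfloor$ rather than $n/2$ and the last step of the correspondence between parabolics and isotropic flags is the non-obvious one (as the paper warns just before Definition~\ref{def:ya}). You already flag this as the delicate point, so just make sure that when you write it out you handle the two maximal isotropics containing $\langle x,\dots,\Phi^{a_0-1}(x)\rangle$ explicitly; this is also where the bifurcation into $X_B(w)\cup X_B(\Phi(w))$ in the non-split case becomes visible.
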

We fix some more notation. Assume first that $V$ has even dimension $n = 2d$. We fix a basis $e_1, \dots, e_d, f_1, \dots, f_d$ such that 
\begin{equation*}
  (e_i, e_j) = (f_i, f_j) = 0, ~~~ (e_i,f_j) = \delta_{i,j}.
\end{equation*} 
Moreover, if the form is split we can assume that all the basis vectors are fixed by $\Phi$, otherwise we can assume that $\Phi$ exchanges $e_d$ with $f_d$ and fixes the other vectors, compare \cite{conrad_notes}*{App.\ T}. Let $T \subset B \subset G = \SO(V)$ denote the diagonal torus and the Borel of upper triangular matrices in the orthogonal group. Then the simple reflections generating the Weyl group can be enumerated as follows

\begin{itemize}[nolistsep]
  \item For $1 \le i \le d-1$ the reflection  $t_i$ exchanges $e_i$ with $e_{i+1}$ and $f_i$ with $f_{i+1}$. 
  \item The reflection $t_d$ exchanges $e_{d-1}$ with $f_d$ and $e_d$ with $f_{d-1}$.
\end{itemize}
If the form is split, the action of $\Phi$ on the Weyl group is trivial, otherwise, $\Phi$ exchanges the reflection $t_{d-1}$ with $t_d$.

Suppose now that $V$ has odd dimension $n = 2d +1$, then there is a basis $e_0, e_1, \dots, e_d, f_1, \dots, f_d$ of $V$ such that 
\begin{equation*}
  (e_i, e_j) = (f_i, f_j) = 0, ~~~ (e_i,f_j) = \delta_{i,j},~~~ (e_0, e_0) = 1.
\end{equation*} The Weyl group is generated in this case by the reflections $t_1, \dotsm t_{d-1}$ defined as in the case $n = 2d$ while the reflection $t_d$ only exchanges $e_d$ with $f_d$. The action of the Frobenius on $W$ is trivial.

We study the variety $R_V$ in the projective space $\mathbb{P}(V)$ given by 
\begin{equation}\label{eq:RV}
  R_V = \{ x \in \mathbb{P}(V) \mid (x, x) = (x, \Phi(x)) = 0  \} = Y_{\infty} \sqcup \bigsqcup_{a \ge 2}^{a_0} Y_a,
\end{equation} 
where the varieties $Y_a$ are those of Definition \ref{def:ya}. As in the previous section, we want to show that the decomposition above is actually a stratification. To do so we need first to fix some notation. If the dimension of $V$ is $2d$ or $2d +1$, consider for $1 \le i \le d-2 $ the standard parabolic subgroup $P_i$ of $\SO(V)$ corresponding to the subset of simple reflections  $I_i = \{t_{i+1}, \dots, t_d\}$ of $W$. Observe that each subset $I_i$ is $\Phi$-stable. We also set $P_{d-1} = P_d = B$. In other words, for $i \le d-1$ the parabolic $P_i$ is the stabilizer of the standard partial isotropic flag of length $i$ 
\begin{equation*}
  \langle e_1 \rangle  \subset \langle e_1, e_2 \rangle \subset \dotsm \subset \langle e_1, e_2, \dots, e_i \rangle. 
\end{equation*}
We consider the following elements in the Weyl group.
\begin{itemize}[nolistsep]
  \item For $2 \le a \le a_0$ we set $w_a = t_1t_2\dotsm t_{d-1}t_dt_{d-2}t_{d-3}\dotsm t_{a}$, with the convention that $w_{a_0}$ is the $\Phi$-Coxeter element of Lemma \ref{lem:lus3}.
  \item If the dimension is even and the form is split, we set $w_{\infty} = t_1 \dotsm t_{d-1}$, otherwise we let $w_{\infty} = t_1\dotsm t_{d-2}$.
\end{itemize}

\begin{lem}\label{lem:RVnormal}
  The variety $R_V$ can be identified with the closure of the generalized Deligne-Lusztig variety $X_{P_1}(t_1)$ in $G/P_1$. In particular, it is normal with isolated singularities.
\end{lem}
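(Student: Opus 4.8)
The plan is to transpose the argument of Lemma \ref{lem:normal} from the symplectic to the orthogonal setting, now with $P_1$ the stabilizer of an isotropic line. First I would record the geometric interpretation: sending a parabolic of type $I_1$ to the isotropic line it stabilizes identifies $G/P_1$ with the smooth quadric $\{x \in \mathbb{P}(V) \mid (x,x)=0\}$, and under this identification $R_V$ becomes the closed subvariety of $G/P_1$ cut out by the additional condition $(x,\Phi(x))=0$.

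The heart of the proof is then the analysis of the relative position map $\inv \colon G/P_1 \times G/P_1 \to W_{I_1}\backslash W/W_{I_1}$. The restriction of the symmetric form to the plane spanned by two distinct isotropic lines is either identically zero or hyperbolic, so by Witt's extension theorem $\SO(V)$ acts on pairs of isotropic lines with exactly three orbits: $\ell_1=\ell_2$; $\ell_1 \neq \ell_2$ with $(\ell_1,\ell_2)=0$; and $(\ell_1,\ell_2)\neq 0$. Hence $W_{I_1}\backslash W/W_{I_1}$ has three elements. The class of the identity is the first orbit, and since $t_1$ maps $\langle e_1\rangle$ to $\langle e_2\rangle$ with $(e_1,e_2)=0$, the class of $t_1$ is the second; the remaining class is the non-orthogonal one. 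Therefore, for $x \in G/P_1$ one has $(x,\Phi(x))=0$ if and only if $\inv(x,\Phi(x))$ is the class of $1$ or of $t_1$, so that $R_V = X_{P_1}(1)\sqcup X_{P_1}(t_1)$.

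To identify this with the closure of $X_{P_1}(t_1)$ I would note that $t_1$ has length one and lies in the system ${}^{I_1}W^{I_1}$ of minimal-length double-coset representatives (it is not contained in $W_{I_1}$), and that the only elements of ${}^{I_1}W^{I_1}$ below $t_1$ in the Bruhat order are $1$ and $t_1$. Since $I_1$ is $\Phi$-stable, Lemma \ref{lem:closure} then yields $\overline{X_{P_1}(t_1)} = X_{P_1}(1)\sqcup X_{P_1}(t_1) = R_V$ (consistently with $R_V$ being closed in $\mathbb{P}(V)$, being defined there by the two polynomial conditions $(x,x)=0$ and $(x,\Phi(x))=0$). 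Finally, for the normality statement I would argue exactly as in Lemma \ref{lem:normal}: by G\"ortz and Yu's local model diagram the singularities of $\overline{X_{P_1}(t_1)}$ are smoothly equivalent to those of the generalized Schubert variety $\overline{P_1 t_1 P_1}/P_1$, which is normal, and the singular locus is contained in $X_{P_1}(1) = (G/P_1)(\F_p)$, a finite set, hence the singularities are isolated.

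I expect the only step needing genuine care to be the relative-position bookkeeping of the second paragraph — verifying that the three $\SO(V)$-orbits on pairs of isotropic lines correspond bijectively to the three double cosets and, in particular, that $t_1$ represents the ``distinct but orthogonal'' one (the case distinctions between the split, non-split and odd-dimensional forms, and the usual $\SO$-versus-$\mathrm{O}$ subtleties, all enter here); everything else is either formal or a verbatim repetition of the symplectic case.
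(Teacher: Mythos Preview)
Your proposal is correct and follows essentially the same approach as the paper: decompose $R_V = X_{P_1}(1)\sqcup X_{P_1}(t_1)$ via the relative-position analysis, invoke Lemma \ref{lem:closure} to identify this with $\overline{X_{P_1}(t_1)}$, and appeal to the G\"ortz--Yu local model diagram for normality with isolated singularities. The paper's proof is terser (it does not spell out the Witt-theorem orbit count or the finiteness of $X_{P_1}(1)$), but the argument is the same.
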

\begin{proof}
  By definition $R_V$ parametrizes isotropic lines $l$ in $V$ such that $l + \Phi(l)$ is an isotropic subspace. Therefore, the relative position $\inv(l,\Phi(l))$ is either the identity or the class of $t_1$ in $W_{I_1}\backslash W/W_{I_1}$. Hence, we obtain a decomposition as union of an open and closed subset
  \begin{equation}\label{eq:rv}
    R_V = X_{P_1}(1) \sqcup X_{P_1}(t_1),
  \end{equation} 
  and we can conclude with Lemma \ref{lem:closure}. The second statement follows again from G\"ortz and Yu's local model diagram.
\end{proof}

\begin{lem}\label{lem:RVdim}
  For $1 \le i \le a_0 $ the subset $R_i = Y_{\infty} \sqcup \bigsqcup_{a > i}^{a_0} Y_a,$ is closed in $R_V$ and can be identified with the closure of the generalized Deligne-Lusztig variety $X_{P_{i+1}}(w_{i+1})$, which is isomorphic to $Y_{i+1}$. In particular, $Y_2$ is open and dense in $R_V$, hence $R_V$ is irreducible of dimension $2d -3$.
\end{lem}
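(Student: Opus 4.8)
The plan is to mimic the proof of Lemma \ref{lem:SVdim} for the symplectic group together with the computations of \cite{lus}*{Sec.\ 2}: each locally closed stratum $Y_a$ will be realized as a generalized Deligne--Lusztig variety $X_{P_a}(w_a)$ for one of the standard parabolics fixed before the lemma, the closure relations will be read off from Lemma \ref{lem:closure}, and every bijection on closed points produced along the way will be promoted to an isomorphism by Lemma \ref{lem:iso}. Closedness of $R_i$ in $R_V$ requires no Deligne--Lusztig input: by \eqref{eq:RV} a line $\langle x\rangle$ of $R_V$ lies in $R_i$ exactly when $(x,\Phi^j(x))=0$ for all $2\le j\le i$, which is a closed condition.

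For $2\le a\le a_0$ and an algebraically closed field $k$, I would send a line $\langle x\rangle\in Y_a(k)$ to the flag
\[
  \langle x\rangle\ \subset\ \langle x,\Phi(x)\rangle\ \subset\ \cdots\ \subset\ \langle x,\Phi(x),\dots,\Phi^{a-1}(x)\rangle .
\]
From $(\Phi^jx,\Phi^kx)=(x,\Phi^{k-j}x)^{p^j}$ and the vanishing $(x,\Phi^i(x))=0$ for $i\le a-1$ every term of this chain is isotropic, and since $a\le a_0$ and the form is non-degenerate one checks the terms have dimensions $1,2,\dots,a$, so the flag is honest and defines a $k$-point of $G/P_a$. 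Computing the relative position of this flag with its Frobenius twist --- a bookkeeping exercise with the simple reflections $t_1,\dots,t_d$ listed above, in which the nonvanishing $(x,\Phi^a(x))\neq0$ controls how $\Phi$ moves the top space of the flag --- identifies the image with $X_{P_a}(w_a)$, for $w_a$ as defined before the lemma; here one has to use the non-standard dictionary between parabolics of $\SO(V)$ and isotropic flags recalled after Definition \ref{def:ya}. Forgetting all but the first term of the flag gives an inverse on $k$-points, so Lemma \ref{lem:iso} turns this into an isomorphism $Y_a\xrightarrow{\ \sim\ }X_{P_a}(w_a)$.

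I would then run a descending induction on $a$ to pass from these stratum-by-stratum statements to the assertion on $R_{a-1}$. The base case $a=a_0$ is Lemma \ref{lem:lus3}: it identifies $Y_{a_0}$ with the (classical, since $P_{a_0}=B$) Deligne--Lusztig variety $X_{P_{a_0}}(w_{a_0})$ for a $\Phi$-Coxeter element $w_{a_0}$ --- a union of two such varieties in the non-split case, corresponding to the length-two Frobenius orbit --- and it identifies the bottom piece $R_{a_0}=Y_\infty$ with the degenerate Deligne--Lusztig variety attached to $w_\infty$. For the inductive step, assuming $R_a\cong\overline{X_{P_{a+1}}(w_{a+1})}$, I would pull $\overline{X_{P_a}(w_a)}$ back along the projection $G/P_{a+1}\to G/P_a$ and use Lemma \ref{lem:fIJ} to express the pullback as a union of generalized Deligne--Lusztig varieties; combining this with the closure relations of Lemma \ref{lem:closure} and the isomorphism $Y_a\cong X_{P_a}(w_a)$ just established yields $\overline{X_{P_a}(w_a)}=Y_a\sqcup\overline{X_{P_{a+1}}(w_{a+1})}=Y_a\sqcup R_a=R_{a-1}$, which closes the induction. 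Taking $i=1$ gives $R_V=R_1\cong\overline{X_{P_2}(w_2)}$ with $X_{P_2}(w_2)\cong Y_2$ open and dense; since $w_2$ has full support in the Weyl group of $\SO(V)$, Theorem \ref{thm:br} shows $X_{P_2}(w_2)$, hence $R_V$, is irreducible, and \eqref{eq:dim} gives $\dim R_V=\ell_{I_2}(w_2)-\ell(w_{I_2})=2d-3$.

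The real obstacle I anticipate is the combinatorial heart of the second paragraph: checking that the relative position of the constructed flag and its Frobenius twist is \emph{exactly} $w_a$ --- not a strictly smaller element of ${}^{I_a}W^{I_a}$, which would land the point on a lower stratum, and not an element incomparable to $w_a$ --- and verifying this uniformly in the split, non-split and odd-dimensional cases while respecting the even-orthogonal parabolic subtlety. A related, fiddly point is to determine the scheme structure with which the degenerate stratum $Y_\infty$ sits at the bottom of each $\overline{X_{P_a}(w_a)}$; this is exactly where the input from \cite{lus} is indispensable.
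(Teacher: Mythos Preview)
Your approach is correct and in the same spirit as the paper's, but organized differently. The paper runs an \emph{ascending} induction: it starts from $R_V=X_{P_1}(1)\sqcup X_{P_1}(t_1)$, then at step $i$ takes the ``residual'' piece $X_{P_i}(t_1\cdots t_i)$, pulls it back one level along $G/P_{i+1}\to G/P_i$, and checks on closed points that the preimage splits as $X_{P_{i+1}}(t_1\cdots t_i)\sqcup X_{P_{i+1}}(t_1\cdots t_{i+1})\sqcup X_{P_{i+1}}(w_{i+1})$; Lemma~\ref{lem:iso} then applies verbatim, the piece $X_{P_{i+1}}(w_{i+1})$ is identified with $Y_{i+1}$ by looking at its image in $G/P_1$, and the other two pieces form the residual for the next step. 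The isomorphisms $Y_a\cong X_{P_a}(w_a)$ thus emerge from the step-by-step construction rather than being proved directly.

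By contrast you construct the full flag at once and run a descending induction. This is fine, but note two points. First, your appeal to Lemma~\ref{lem:iso} for $Y_a\cong X_{P_a}(w_a)$ is not quite literal: $Y_a$ is not a single generalized Deligne--Lusztig variety in $G/P_1$ (it is a locally closed piece of $X_{P_1}(t_1)$), so you are implicitly using that the proof of Lemma~\ref{lem:iso} only needs smoothness of $f_{IJ}$ and therefore works over any locally closed base. More substantively, to conclude that forgetting all but the first term is an inverse you need that every flag in $X_{P_a}(w_a)$ is forced to be $l\subset l+\Phi(l)\subset\cdots$; this is exactly the relative position computation you flag as the obstacle, and it is where the paper's one-reflection-at-a-time organization pays off, since that computation is then localized to a trichotomy at each step. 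Second, your inductive step conflates subschemes of $G/P_1$, $G/P_a$ and $G/P_{a+1}$; what you actually need is that the projection $G/P_a\to G/P_1$ restricts to a bijection on closed points between $\overline{X_{P_a}(w_a)}$ and $R_{a-1}$, which follows cleanly once the stratum-by-stratum identification is in place. With these clarifications your argument goes through.
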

\begin{proof}
  By the decomposition (\ref{eq:rv}) of $R_V$ and the generalized closure relations of Lemma \ref{lem:closure} we have that $X_{P_1}(t_1)$ is open and dense in $R_V$. Let $l$ be a closed point of $X_{P_1}(t_1) \subset R_V$, that is an isotropic line in $V$. We can consider the isotropic flag $l \subset l + \Phi(l)$. This defines a closed point in $X_{P_2}(t_1) $ if $l + \Phi(l)$ is $\Phi$-stable, otherwise in $ X_{P_2}(t_1t_2) $ if $l + \Phi(l) + \Phi^2(l)$ is isotropic, or in $X_{P_2}(w_2)$, if $\Phi^2(l)$ is not orthogonal to $l$. Again this map is the inverse on closed points of the base change to $X_{P_1}(t_1)$ of the projection $G/P_1 \rightarrow G/P_2$. By Lemma \ref{lem:iso} it follows that we have decomposed $R_V$ as
  \begin{equation*}
    R_1 = R_V = X_{P_1}(1) \sqcup X_{P_1}(t_1) \cong X_{P_1}(1) \sqcup X_{P_2}(t_1) \sqcup X_{P_2}(t_1t_2) \sqcup X_{P_2}(w_2).
  \end{equation*}
  By the generalized closure relations of Lemma \ref{lem:closure} $X_{P_2}(w_2)$ is then open and dense in $X_{P_1}(t_1)$ and therefore in $R_V$. Observe that the image of $X_{P_2}(w_2)$ under the quotient map $G/P_2 \rightarrow G/P_1$ is $Y_2$. This can again be tested on $k$-valued points, for any algebraically closed field $k$. It follows that we have an isomorphism $X_{P_2}(w_2) \cong Y_2$.
  We can conclude that 
  \begin{equation*}
    R_2 = R_1 \setminus Y_2 \cong X_{P_1}(1) \sqcup X_{P_2}(t_1) \sqcup X_{P_2}(t_1t_2),
  \end{equation*}
  which is closed in $R_1$, and it contains $X_{P_2}(t_1t_2)$ as an open subset. 

  Assume that we have a decomposition $R_i = X_{P_1}(1) \sqcup \bigsqcup_{j = 2}^{i} X_{P_j}(t_1\dotsm t_{j-1}) \sqcup X_{P_i}(t_1\dotsm t_{i})$ with $X_{P_i}(t_1\dotsm t_i)$ open in $R_i$. Observe that the closed points of $X_{P_i}(t_1 \dotsm t_i)$ correspond to isotropic flags of the form
  \begin{equation*}
    l \subset l + \Phi(l) \subset \dotsm \subset l + \Phi(l) + \dotsm + \Phi^{i-1}(l)
  \end{equation*}
  such that $\Phi^{i}(l)$ is orthogonal to $l$. Again we consider the base change to $R_i$ of the quotient map $G/P_{i+1} \rightarrow G/P_i$. By Lemma \ref{lem:iso} we only have to show that this gives a bijection between the sets of closed points. We can construct its inverse (as map of sets) by sending a flag of length $i$ in $X_{P_i}(t_1\dotsm t_i)(k)$, for an algebraically closed field $k$ to the isotropic flag of length $i +1$ obtained by appending the isotropic subspace $l + \Phi(l)+ \dotsm + \Phi^{i}(l)$. This defines a closed point in $X_{P_{i+1}}(t_1 \dotsm t_i)$ if this subspace of dimension $i+1$ is $\Phi$-stable, a point in $X_{P_{i+1}}(t_1\dotsm t_{i+1})$ if $\Phi^{i+2}(l)$ is orthogonal to $l$, or otherwise in $X_{P_{i+1}}(w_{i+1})$. The latter is open in $R_i$ by Lemma \ref{lem:closure}. Observe that its image under the composition $G/P_{i+1} \rightarrow G/P{i} \rightarrow G/P_1$ is the subvariety $Y_i$ defined above. Again this can be checked on closed points. Last, $R_{i+1} = R_i \smallsetminus Y_i$ is the union
  \begin{equation*}
    R_{i+1} = R_i \smallsetminus Y_i =  X_{P_1}(1) \sqcup \bigsqcup_{j = 2}^{i} X_{P_j}(t_1\dotsm t_{j-1}) \sqcup X_{P_{i+1}}(t_1\dotsm t_i) \sqcup X_{P_{i+1}}(t_1\dotsm t_{i+1}),
  \end{equation*}
  and by Lemma \ref{lem:closure} $X_{P_{i+1}}(t_1\dotsm t_{i+1})$ is open in it, and we can conclude by induction. 

  With Theorem \ref{thm:br} we can compute the dimension of $X_{P_2}(w_2) = Y_2$, from which the last statement follows. 
\end{proof}

\begin{rem}\label{rem:infty}
  Observe that by the previous lemma, for $i = a_0$ we obtain that $Y_{\infty}$ is isomorphic to the closure of $X_{B}(w_{\infty})$. Moreover, from the proof it follows that if the dimension is odd or the form is split, the variety $Y_{\infty}$ is isomorphic to the union of generalized Deligne-Lusztig varieties $\bigsqcup_{i=1}^{d-1}X_{P_i}(t_1\dotsm t_{i-1})$, otherwise to the union $\bigsqcup_{i=1}^{d-2}X_{P_i}(t_1\dotsm t_{i-1})$. The different index appearing in these unions is due to the fact that if the form is non-split there are no isotropic subspaces of dimension $d$.
\end{rem}

\begin{rem}\label{rem:hereditary} Observe that each closed stratum $R_i \subset R_V$ is irreducible as it is the closure of the generalized Deligne-Lusztig variety $X_{P_a}(w_a)$, which is irreducible by Theorem \ref{thm:br}. It follows that the stratification of $R_V$ we have just found does not have as good hereditary properties as that of $S_V$. In other words, unlike the stratification of $S_V$, see Lemma \ref{lem:SVstr}, the strata $R_i$ of $R_V$ cannot be interpreted as a variety of the form $R_{V'}$ for some smaller vector space $V'$. Moreover, given a line $l$ in $R_V$ denote by $T_{l}$ the minimal $\Phi$-stable subspace of $V$ containing $l$. Then $l$ belongs to the stratum $Y_a$ of $R_V$ if the maximal length of an isotropic chain in $T_l$ is at least $a$, which however carries little information on $T_l$ or its dimension. One can only say that the set of lines $l \in Y_a$ such that $T_l = V$ defines an open and therefore dense subscheme in $Y_a$, as also remarked in \cite{lus}*{Lem.\ 6}.
\end{rem}

We study one last family of Deligne-Lusztig varieties for the orthogonal group, which will be relevant for the analysis of the non-split case in the next sections. Roughly speaking, these new varieties are a \textit{dual version} of the varieties $Y_a$, as instead of isotropic lines, we consider isotropic subspaces of dimension $d-1$. For all $0 \le i \le d-1$ we consider
\begin{itemize}[nolistsep]
  \item $\mathtt{P}_i$ the parabolic subgroup of $G = \SO(V)$ corresponding to the subset of simple reflections $\mathtt{I}_i = \{t_1, \dots , t_{d-2-i} \}$. In other words,  $\mathtt{P}_i$ is the stabilizer of the standard isotropic flag of length $i + 1$: $\langle e_1, \dots, e_{d-1-i} \rangle \subset \dotsm \subset \langle e_1, \dots, e_{d-2}\rangle \subset \langle e_1, \dots, e_{d-1} \rangle$. In particular $\mathtt{P}_{d-2} = \mathtt{P}_{d-1} = B$.
  \item $u_i = t_{d-1}\dotsm t_{d-i}$ with the convention that $u_0 = 1$ and $u_1 = t_{d-1}$. In particular, if the form is non-split $u_{d-1} = t_{d-1}\dotsm t_1$ is a $\Phi$-Coxeter element, and it is the inverse of $w_{a_0}$, the $\Phi$-Coxeter element of Lemma \ref{lem:lus3}.
\end{itemize}

Consider the subvariety $Q_V$ of the partial flag variety $\SO(V)/\mathtt{P}_0$ parametrizing the isotropic subspaces $U$ of $V$ of dimension $d-1$ such that $U + \Phi(U)$ is isotropic
\begin{equation*}
  Q_V = \{ U \subset V \mid \dim(U) = d-1, U + \Phi(U) \subset U^{\perp} \cap \Phi(U)^\perp\}.
\end{equation*}
We can give an analogous stratification for $Q_V$ as we did for $R_V$ above or $S_V$ in the previous section.
\begin{lem}\label{lem:QVstrata}
  The variety $Q_V$ is the closure in $\SO(V)/\mathtt{P}_0$ of the generalized Deligne-Lusztig variety $X_{\mathtt{P}_0}(t_{d-1})$. There is a stratification
  \begin{equation*}
    Q_V = \bigsqcup_{i = 0}^{d-1} Z_i
  \end{equation*}
  where each stratum $Z_i$ parametrizes $(d-1)$-dimensional isotropic subspaces $U$ of $V$ such that $U + \Phi(U)$ is isotropic and $i$ is the smallest index such that $U \cap \Phi(U) \cap \dotsm \cap \Phi^i(U)$ is $\Phi$-stable. 
  
  Moreover, each subvariety $Z_i$ can be identified with the (generalized) Deligne-Lusztig variety $X_{\mathtt{P}_i}(u_i)$. In particular, $Z_{d-1} \cong  X_B(u_{d-1})$ or in the non-split case $Z_{d-1} \cong  X_B(u_{d-1}) \cup X_{B}(\Phi(u_{d-1}))$, and it is open and dense in $Q_V$, from which it follows that $Q_V$ is pure of dimension $d-1$. In particular, in the non-split case $Q_V$ has exactly two irreducible components.
\end{lem}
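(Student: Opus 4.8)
The plan is to mirror the proof of Lemma \ref{lem:RVdim}, replacing the ``span'' construction used there for isotropic lines by the ``dual'' construction for $(d-1)$-dimensional isotropic subspaces: to a point $U$ of $Q_V$ one attaches the descending chain $U \supseteq U\cap\Phi(U) \supseteq U\cap\Phi(U)\cap\Phi^2(U) \supseteq \cdots$, which stabilizes, the first index $i$ at which it becomes $\Phi$-stable being exactly the one labelling $Z_i$, and the partial flag $U\cap\cdots\cap\Phi^{i}(U)\subset\cdots\subset U$ of length $i+1$ being a point of $G/\mathtt{P}_i$. First I would run the relative-position analysis exactly as in Lemma \ref{lem:RVnormal}: if $U+\Phi(U)$ is isotropic then either $U=\Phi(U)$, or $U\cap\Phi(U)$ has codimension one in $U$ and $U+\Phi(U)$ is a maximal isotropic subspace, so $\inv(U,\Phi(U))$ is the identity or the class of $t_{d-1}$ in $W_{\mathtt{I}_0}\backslash W/W_{\mathtt{I}_0}$ (in the non-split case $\Phi$ interchanges the fork reflections $t_{d-1}$ and $t_d$, and this has to be carried along). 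Hence $Q_V = X_{\mathtt{P}_0}(1)\sqcup X_{\mathtt{P}_0}(t_{d-1})$, and since $1$ and $t_{d-1}$ are the only minimal-length representatives $\le t_{d-1}$, Lemma \ref{lem:closure} identifies $Q_V$ with $\overline{X_{\mathtt{P}_0}(t_{d-1})}$.

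The inductive heart of the argument is then the peeling-off procedure. For $0\le i\le d-2$ I would consider the projection $f_i\colon G/\mathtt{P}_{i+1}\to G/\mathtt{P}_i$. By Lemma \ref{lem:fIJ} the preimage $f_i^{-1}\big(X_{\mathtt{P}_i}(u_i)\big)$ is a finite union of generalized Deligne-Lusztig varieties $X_{\mathtt{P}_{i+1}}(x)$; on closed points the map which adjoins to a flag in $X_{\mathtt{P}_i}(u_i)(k)$ the next intersection $U\cap\cdots\cap\Phi^{i+1}(U)$ is a well-defined set-theoretic section of $f_i$, so $f_i$ restricts to a bijection on closed points and Lemma \ref{lem:iso} upgrades it to an isomorphism. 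Tracking which double coset the enlarged flag lies in — according to whether $U\cap\cdots\cap\Phi^{i+1}(U)$ is $\Phi$-stable or the chain continues — one splits off the stratum $Z_i\cong X_{\mathtt{P}_i}(u_i)$ with $u_i=t_{d-1}t_{d-2}\cdots t_{d-i}$, while the remaining variety has the same shape for $i+1$; the closure relations of Lemma \ref{lem:closure} (resp.\ (\ref{eq:btord})) ensure at each step that the new open part is dense and that what is removed is closed, which yields both the stratification property $\overline{Z_i}=\bigsqcup_{j\le i}Z_j$ and, via (\ref{eq:dim}), the value $\dim Z_i=\ell(u_i)$.

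For $i=d-1$ one has $\mathtt{P}_{d-1}=B$ and $u_{d-1}=t_{d-1}\cdots t_1$, a $\Phi$-Coxeter element. In the split case $\Phi$ fixes it and $X_B(u_{d-1})$ is irreducible by Theorem \ref{thm:br}; in the non-split case the final lifting step produces $X_B(u_{d-1})\sqcup X_B(\Phi(u_{d-1}))$, a disjoint union of two irreducible varieties, since $u_{d-1}$ and its $\Phi$-conjugate both have full support and a $\Phi$-Coxeter element lies in no proper $\Phi$-stable standard parabolic subgroup of $W$. Since $Z_{d-1}$ is open and dense in $Q_V$, its closure is all of $Q_V$, so $Q_V$ is pure of dimension $\ell(u_{d-1})=d-1$ with one irreducible component in the split case and exactly two in the non-split case, as claimed. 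The non-emptiness of the intermediate strata is automatic from their description as generalized Deligne-Lusztig varieties for the $u_i$.

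I expect the main obstacle to be precisely the non-split bookkeeping. In type $D$ the two fork reflections $t_{d-1}$ and $t_d$ are swapped by $\Phi$, so some care is needed to match the geometric conditions on the iterated intersections $U\cap\cdots\cap\Phi^{j}(U)$ with the correct (twisted) double cosets in $W_{\mathtt{I}_j}\backslash W/W_{\mathtt{I}_j}$, to check that the section of $f_i$ above really is a \emph{bijection} on closed points and not merely a surjection, and to see cleanly that the Coxeter stratum breaks into two $\Phi$-conjugate pieces rather than remaining irreducible. Once the combinatorics of the $u_i$ and of the $\Phi$-action on the fork is pinned down, the remainder is a formal iteration of Lemmas \ref{lem:fIJ}, \ref{lem:iso} and \ref{lem:closure}, identical in spirit to the proofs of Lemmas \ref{lem:RVdim} and \ref{lem:SVdim}.
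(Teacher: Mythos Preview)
Your approach is essentially the same as the paper's: identify $Q_V = X_{\mathtt{P}_0}(1)\sqcup X_{\mathtt{P}_0}(t_{d-1})$, then iteratively apply Lemmas \ref{lem:fIJ} and \ref{lem:iso} to the projections $G/\mathtt{P}_{i+1}\to G/\mathtt{P}_i$ using the partial flags $U\cap\Phi(U)\cap\cdots\cap\Phi^i(U)\subset\cdots\subset U$, peeling off $Z_i\cong X_{\mathtt{P}_i}(u_i)$ and leaving $X_{\mathtt{P}_{i+1}}(u_{i+1})$ open and dense by Lemma \ref{lem:closure}.

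One slip: you assert that in the split case $X_B(u_{d-1})$ is irreducible by Theorem \ref{thm:br}, but this is false. When $\Phi$ acts trivially, $u_{d-1}=t_{d-1}\cdots t_1$ lies in the proper $\Phi$-stable standard parabolic subgroup generated by $\{t_1,\dots,t_{d-1}\}$ (it misses $t_d$), so Theorem \ref{thm:br} gives the opposite conclusion. The paper notes this explicitly. This does not affect the lemma itself, which only asserts the component count in the non-split case, but your sentence ``one irreducible component in the split case'' is unwarranted and should be dropped.
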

\begin{proof}
  The strategy of the proof is the same as in the proof of Lemma \ref{lem:SVdim} and \ref{lem:RVdim}. First, we observe that if $U$ is a $(d-1)$-dimensional isotropic subspace of $V$ such that $U + \Phi(U)$ is again isotropic, then either $U$ is $\Phi$-stable (observe that this is possible also when the form is non-split), or $U + \Phi(U)$ has dimension $d$. In other words the relative position $\mathrm{inv}(U, \Phi(U))$ is either the identity or the class of $t_{d-1}$ in $W_{\mathtt{I}_0}\backslash W / W_{\mathtt{I}_0}$. Hence, we have
  \begin{equation*}
    Q_V = X_{\mathtt{P}_0}(1) \sqcup X_{\mathtt{P}_0}(t_{d-1}),
  \end{equation*}
  and by Lemma \ref{lem:closure} $X_{\mathtt{P}_0}(t_{d-1})$ is open and dense in $Q_V$. It is clear that $Z_0 = X_{\mathtt{P}_0}(1)$. Consider an isotropic subspace $U$ that is a closed point of $X_{\mathtt{P}_0}(t_{d-1})$. Since $U +\Phi(U)$ has dimension $d$, the intersection $U \cap \Phi(U)$ is an isotropic subspace of $U$ of dimension $d-2$. Again we obtain a map on closed points $X_{\mathtt{P}_0}(t_{d-1}) \rightarrow G/\mathtt{P}_1$ by sending $U$ to the partial isotropic flag $U \cap \Phi(U) \subset U$. This flag defines a closed point of $X_{\mathtt{P}_1}(t_{d-1}) $ if $U \cap \Phi(U)$ is $\Phi$-stable, otherwise a point of $X_{\mathtt{P}_1}(t_{d-1}t_{d-2})$. Again, this is the inverse on closed points of the map given by the base change to $X_{\mathtt{P}_0}(t_{d-1})$ of the quotient map $G/\mathtt{P}_1 \rightarrow G/\mathtt{P}_0$. Therefore, by Lemma \ref{lem:iso} there is an isomorphism
  \begin{equation*}
    X_{\mathtt{P}_0}(t_{d-1}) \cong X_{\mathtt{P}_1}(t_{d-1}) \sqcup X_{\mathtt{P}_1}(t_{d-1}t_{t-2}).
  \end{equation*}
  In particular, we can check on closed points that the image of $X_{\mathtt{P}_1}(t_{d-1})$ under this isomorphism is $Z_{1}$. We know by Lemma \ref{lem:closure} that the subvariety $X_{\mathtt{P}_1}(t_{d-1}t_{d-2})$ is open and dense in $X_{\mathtt{P}_0}(t_{d-1})$ and therefore in $Q_V$. One can then use induction as in the proof of Lemma \ref{lem:RVdim}.

  Observe that by Theorem \ref{thm:br} the Deligne-Lusztig variety $X_B(t_{d-1}\dots t_{1})$ is irreducible if and only if the action of the Frobenius map $\Phi$ on the Weyl group is non-trivial, that is only if the dimension of $V$ is even and the form is non-split. Otherwise, $u_{d-1}$ is contained in the non-trivial $\Phi$-stable subgroup of $W$ generated by $ t_{1},\dots,t_{d-1}$. It follows that if the form is non-split $Z_{d-1}$, and consequently $Q_V$, has two irreducible components. 
\end{proof}

\begin{rem}\label{rem:flag}
  A key observation which we will need in the proof of Theorem \ref{thm:intro} is the existence of a morphism from $Z_{d-1}$ into a flag variety. Assume the form is non-split and consider the isomorphism given in the previous proposition $Z_{d-1} \cong X_{B}(u_{d-1}) \cup X_B(\Phi(u_{d-1}))$. Then there is an immersion $X_{B}(u_{d-1}) \cup X_{B}(\Phi(u_{d-1}))\rightarrow G/B$, where $G$ is the orthogonal group. As we have recalled above, $G = \SO(V)$ acts on $V$ and $B$ is the stabilizer of the standard isotropic flag. It follows that $G/B$ is locally the orbit of the standard isotropic flag under the action of $G$. As in the construction of the Grassmannian, compare \cite{gw}*{Sec.\ 8.4}, one sees that $G/B$ is actually isomorphic to the orbit space of the isotropic flag. Let $\mathcal{F}l(V)$ be then the projective variety parametrizing flags of subspaces of $V$ of the form $U_1 \subset U_2 \subset \dotsm U_{d-1}$ where the dimension of each subspace $U_{i}$ is $i$. By sending an isotropic flag to itself as a point of $\mathcal{F}l(V)$ we obtain an immersion $G/B \rightarrow \mathcal{F}l$. By precomposing with the immersion $X_B(u_{d-1}) \cup X_B(\phi(u_{d-1})) \rightarrow G/B$ we obtain the desired morphism.
\end{rem}

\section{Pointwise decomposition of $\bar{\N}_{2,6}^0$}\label{sec:closedpoints}

In this section we study the $k$-valued points of $\bar{\N}_{2,6}^0$ for any algebraically closed field $k$ containing $\F$. This serves as preparation for the description of the irreducible components of the reduced scheme underlying $\bar{\N}_{2,6}^0$. From now on, we fix $n = 6$ and $s=2$ and drop the subscript from the notation $\bar{\N}_{2,6}^0$.

We extend the Hermitian form $h$ on $C$ to a sesquilinear form on $C \otimes_{\Q_p} W(k)_{\Q}$ by setting $h(v\otimes a, w \otimes b) = a \sigma(b) h( v,w )$. Similarly, using the relation (\ref{eq:forms}) between the Hermitian and alternating form on $C$, we can extend the alternating form on $C$ to an alternating form on $C \otimes_{\Q_p} W(k)_{\Q}$, which we denote again with angled brackets. By the same arguments as in (\ref{eq:V(k)}) we have a bijection between the $k$-valued points of $\bar{\N}^0$ and the set of $\mathcal{O}_E \otimes_{\Z_p} W(k)$-lattices 
\begin{equation}\label{eq:V(k)2}
  \V(k) = \{M \subset C\otimes_{\Q_p} W(k)_{\Q} \mid M^\vee = M, \pi \tau(M) + \pi M \subset M \cap \tau(M), M \subset^{\le 2} (M + \tau(M))\}.
\end{equation}
Observe that we have reformulated  the condition $\pi \tau(M) \subset M \subset \pi^{-1} \tau(M)$ of (\ref{eq:V(k)}) in an equivalent way, which will be useful in the sequel.

\subsection{The set $\V_{\Lambda}(k)$ for a vertex lattice $\Lambda$}
Let $\Lambda$ be a vertex lattice of type $2m$ in $C$, recall that $m \le 3$ if the form is split, otherwise $m \le 2$. The strategy is the same as \cite{rtw}*{Sec.\ 6}, with a few modifications due to the different signature, \textit{i.e.}\ to the different index in (\ref{eq:V(k)2}). For an algebraically closed field $k$ containing $\F$ we denote by $\Lambda_k$ the $\mathcal{O}_E \otimes_{\Z_p} W(k)$-lattice $\Lambda \otimes_{\Z_p} W(k)$. Since $\Lambda$ is a vertex lattice, if a self-dual lattice $M$ is contained in $ \Lambda_k$, then $\pi \Lambda_k \subset \Lambda_k^\vee \subset M \subset \Lambda_k$. Moreover, by $\tau$-stability of $\Lambda_k$, and consequently of $\pi\Lambda_k$, we have that 
\begin{equation*}
  \pi M + \pi \tau(M) \subset \pi \Lambda_k \subset M \cap \tau(M).
\end{equation*} 
Therefore, if $M \subset \Lambda_k$, the inclusion in the middle of definition (\ref{eq:V(k)2}) of $\V(k)$ is always satisfied and we can omit it, compare also \cite{rtw}*{Cor.\ 6.3}. It follows that for a vertex lattice $\Lambda$
\begin{equation*}
  \V_{\Lambda}(k) = \{ M \in \V(k) \mid M \subset \Lambda_k\} = \{M \subset \Lambda_k \mid M = M^\vee, M \subset^{\le 2} (M + \tau(M))\}.
\end{equation*}

As in \textit{loc.cit.}\ we consider the $2m$-dimensional $\F_p$-vector space $V = \Lambda/\Lambda^\vee = \Lambda/\Lambda^{\sharp}$ and the corresponding $k$-vector space $V_k = V \otimes_{\F_p} k = \Lambda_k/\Lambda_k^\vee$. One can define an alternating form on $V$ as follows. For $x, y \in V$ with lifts $x', y'$ in $\Lambda$, we let $\langle x, y \rangle_V$ be the image of $p\langle x', y' \rangle$ in $\F_p$. This form can then be extended $k$-linearly to $V_k$. Since $\Lambda^\vee$ is the dual of $\Lambda$ with respect to the alternating form, the form just defined on $V_k$ is a well-defined, non-degenerate and alternating bilinear form, see \cite{rtw}*{Lem.\ 6.4} for a detailed proof. Moreover, as remarked in \textit{loc.cit.}, by the isomorphism $C \otimes_{\Q_p} W(\F)_\Q \cong N$ given in Section \ref{sec:lattices}, the map $\tau$ on $\Lambda$ induces the identity on $V$ and the Frobenius on $V_k$. The following result is proved in the same way as \cite{rtw}*{Lem.\ 6.5}. For completeness, we recall here the main ideas of the proof.

\begin{lem}\label{lem:spV}
  The map $M \mapsto M/\Lambda_k^\vee$ induces a bijection between $\V_{\Lambda}(k)$ and the set  of $k$-valued points of the generalized Deligne-Lusztig variety $S_V$ defined in Section \ref{sec:dlvsp}.
\end{lem}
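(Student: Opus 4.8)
The plan is to adapt the argument of \cite{rtw}*{Lem.\ 6.5}, keeping careful track of the change in index coming from the signature $(2,n-2)$. First I would record the compatibilities needed to pass to the quotient $V_k=\Lambda_k/\Lambda_k^\vee$. Since $\Lambda$ is a vertex lattice we have $\pi\Lambda_k\subset\Lambda_k^\vee$, and since $\Lambda$ is spanned by $\tau$-fixed vectors (being a lattice in $C=N^\tau$), both $\Lambda_k$ and $\Lambda_k^\vee$ are $\tau$-stable. Hence for any self-dual lattice $M$ with $\Lambda_k^\vee\subset M\subset\Lambda_k$ one has $M+\tau(M)\subset\Lambda_k$ and $\Lambda_k^\vee\subset M\cap\tau(M)$. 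Writing $\bar M=M/\Lambda_k^\vee\subset V_k$, the submodule $\tau(M)$ descends to $\Phi(\bar M)$, where $\Phi$ is the Frobenius on $V_k$ induced by $\tau$, and $(M+\tau(M))/\Lambda_k^\vee=\bar M+\Phi(\bar M)$.

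Next I would recall, as in \cite{rtw}*{Lem.\ 6.4, 6.5}, that $M\mapsto\bar M$ is an inclusion-preserving bijection between the self-dual $\mathcal{O}_E\otimes_{\Z_p}W(k)$-lattices $M$ with $\Lambda_k^\vee\subset M\subset\Lambda_k$ and the Lagrangian $k$-subspaces of $V_k$ for the alternating form $\langle\ ,\ \rangle_V$: a lattice $M$ in this range satisfies $M=M^\vee$ if and only if $\bar M=\bar M^{\perp}$ in $V_k$, and $M$ is recovered from $\bar M$ as the preimage of $\bar M$ under $\Lambda_k\twoheadrightarrow V_k$ (using $\pi\Lambda_k\subset\Lambda_k^\vee$). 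In particular $\dim_k\bar M=m$ for every such $M$. Moreover, as already observed at the beginning of this subsection, the middle inclusion of \eqref{eq:V(k)2} holds automatically once $M\subset\Lambda_k$, so that $\V_\Lambda(k)=\{M\subset\Lambda_k\mid M=M^\vee,\ M\subset^{\le 2}(M+\tau(M))\}$.

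It then remains to translate the index condition $M\subset^{\le2}(M+\tau(M))$. Since $M$, $\tau(M)$ and $M+\tau(M)$ all lie between $\Lambda_k^\vee$ and $\Lambda_k$, the quotient $(M+\tau(M))/M$ is annihilated by $\pi$, hence is a $k$-vector space whose length equals $\dim_k\big((\bar M+\Phi(\bar M))/\bar M\big)=\dim_k(\bar M+\Phi(\bar M))-m=m-\dim_k\big(\bar M\cap\Phi(\bar M)\big)$, using that $\Phi$ is bijective on $V_k$ so $\dim_k\Phi(\bar M)=\dim_k\bar M=m$. Thus $M\subset^{\le2}(M+\tau(M))$ holds precisely when $\dim_k\big(\bar M\cap\Phi(\bar M)\big)\ge m-2$, which is exactly the defining inequality of $S_V$ evaluated over $k$ (compare Lemma \ref{lem:normal} and the definition of $S_V$ in Section \ref{sec:dlvsp}). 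Putting the three steps together yields the asserted bijection $\V_\Lambda(k)\xrightarrow{\ \sim\ }S_V(k)$, functorial in $k$, and one checks it is compatible on $\F$-points with the description of Lemma \ref{eq:V(k)}.

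The main obstacle is bookkeeping rather than conceptual: one must verify that the notion of index in \eqref{eq:V(k)2} (the length of the quotient module) coincides with the $k$-dimension computed above — which works because every quotient that appears is killed by $\pi$ and is therefore a $k$-vector space — and that the duality identifications $M\leftrightarrow\bar M$ and $M^\vee\leftrightarrow\bar M^{\perp}$ are genuinely mutually inverse, exactly as in \textit{loc.\ cit.} Everything else is a direct transcription of \cite{rtw}*{Sec.\ 6} with the bound $m-1$ replaced by $m-2$.
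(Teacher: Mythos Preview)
Your proposal is correct and follows essentially the same approach as the paper's proof: self-duality of $M$ corresponds to $\bar M$ being Lagrangian, and the index condition $M\subset^{\le 2}M+\tau(M)$ translates into $\dim_k(\bar M\cap\Phi(\bar M))\ge m-2$, with the inverse given by pullback along $\Lambda_k\twoheadrightarrow V_k$. The paper's proof is terser and omits the bookkeeping you spell out (that the relevant quotients are killed by~$\pi$, and the $\tau$-stability of~$\Lambda_k^\vee$), but the argument is the same.
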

\begin{proof}
  The fact that $M$ is self-dual is equivalent to its image $U$ under the quotient map being a Lagrangian subspace of the sympletic space $V_k$.  Similarly, $M$ having index at most $2$ in $M + \tau(M)$ is equivalent to its image $U$ satisfying $U \cap \Phi(U) \subset^{\le 2} U$, from which it follows that $U$ is a point of $S_V$. Conversely, consider a Lagrangian subspace $U$ in $S_V$. Its preimage under the quotient map $\Lambda_k \rightarrow V_k$ is a self-dual lattice $M$ contained in $\Lambda_k$, such that $M \subset^{\le 2} M + \tau(M)$.
\end{proof}

\subsection{The set $\V_{\Lambda}(k)$ for a $2$-modular lattice $\Lambda$}
Fix  a $2$-modular lattice $\Lambda$ in $C$, that is an $\mathcal{O}_E$-lattice satisfying $\pi^2 \Lambda = \Lambda^\vee \subset \Lambda$. Recall that in this case $\pi\Lambda$ is self-dual. As in the previous case, for an algebraically closed field $k$ containing $\F$ we consider the lattice $\Lambda_k = \Lambda \otimes_{\Z_p} W(k)$ in $C \otimes_{\Q_p} W(k)_{\Q}$ and the set of $\mathcal{O}_E \otimes_{\mathbb{Z}_p}W(k)$-lattices 
\begin{equation*}
  \V_{\Lambda}(k) = \{ M \subset \Lambda_k \mid M = M^\vee, \pi M + \pi \tau(M) \subset M \cap \tau(M), M \subset^{\le 2} (M + \tau(M))\}.
\end{equation*}
Observe that for $M \in \V_{\Lambda}(k)$, if $\pi \Lambda_k \subset M$  the two lattices coincide by self-duality. Therefore, in general $\pi \Lambda_k \not \subset M$. It follows that, unlike in the previous case, the inclusion $\pi M + \pi\tau(M) \subset M \cap \tau(M)$ in the definition of $\V_{\Lambda}(k)$ above does not follow from $M \subset \Lambda_k$, and is therefore not redundant. As a first consequence, we are going to see that in the analogue of Lemma \ref{lem:spV} for $2$-modular lattices we loose surjectiveness.

As above, we consider the $\F_p$-vector space $V = \Lambda/\Lambda^\vee$ and its base change $V_k$. Observe that since $\Lambda$ is $2$-modular $V$ has dimension $2n = 12$. Again, the alternating form on $\Lambda$ induces an alternating form on $V$ that can be extended $k$-linearly to $V_k$. 
\begin{lem}\label{lem:spV2}
  For a $2$-modular lattice $\Lambda$, the map $\V_{\Lambda}(k) \rightarrow S_V(k)$ sending $M$ to $ M/\Lambda_k^\vee$ is injective but not surjective.
\end{lem}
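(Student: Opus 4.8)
The plan is to handle the statement in three steps — well-definedness into $S_V(k)$, injectivity, and failure of surjectivity — the last step being where the fact that $\Lambda$ is only $2$-modular (rather than a vertex lattice, as in Lemma~\ref{lem:spV}) makes itself felt.

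First I would record well-definedness and injectivity, which run exactly as in the proof of Lemma~\ref{lem:spV}. For $M \in \V_\Lambda(k)$ one has $\Lambda_k^\vee \subset M^\vee = M \subset \Lambda_k$, and since $\Lambda$ and $\Lambda^\vee$ lie in $C = N^\tau$ both $\Lambda_k$ and $\Lambda_k^\vee$ are $\tau$-stable, so $\tau$ induces the Frobenius $\Phi$ on $V_k = \Lambda_k/\Lambda_k^\vee$. Self-duality of $M$ then says exactly that $U := M/\Lambda_k^\vee$ is Lagrangian in $V_k$, and $M \subset^{\le 2} M + \tau(M)$ says $\dim(U \cap \Phi(U)) \ge n - 2 = 4$, so $U \in S_V(k)$. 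Injectivity is immediate, since $M$ is recovered as the preimage of $U$ under the quotient map $\Lambda_k \to V_k$.

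The core of the argument is non-surjectivity, and the key point is that here $\pi$ does \emph{not} act as zero on $V = \Lambda/\Lambda^\vee$ — contrary to the vertex-lattice case, where exactly this feature forces the analogous map to be a bijection. Since $\Lambda$ is $2$-modular and $\mathcal{O}_E$-torsion free, $\pi$ acts on $V$ as a nilpotent operator with $\pi^2 = 0$ and $\ker(\pi) = \pi V = W_0$, where $W_0 := \pi\Lambda/\Lambda^\vee$ has $\F_p$-dimension $n = 6$. A short computation with the definition of $\langle\ ,\ \rangle_V$ shows $\langle \pi x, \pi y\rangle_V = 0$ for $x,y \in W_0$ (it is the reduction mod $p$ of $-p\pi_0\langle x',y'\rangle$, which lies in $p\Z_p$ because $\langle\Lambda,\Lambda\rangle \subset \pi_0^{-1}\Z_p$), so $W_0$ is a Lagrangian. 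Now every $M \in \V_\Lambda(k)$ is by definition an $\mathcal{O}_E \otimes_{\Z_p} W(k)$-lattice, hence $\pi M \subset M$, so its image $U$ satisfies $\pi U \subset U$; combined with $\pi U \subset W_0$ this gives $\pi U \subset U \cap W_0$, and since $\dim(\pi U) = n - \dim(U \cap W_0)$ we conclude $\dim(U \cap W_0) \ge n/2 = 3$. (One can sharpen this to a complete description of the image, namely $\{U \in S_V(k) : \pi U \subset U \cap \Phi(U) \cap \Phi^{-1}(U)\}$, by running the same translation on the non-redundant inclusion $\pi M + \pi\tau(M) \subset M \cap \tau(M)$; but the weaker statement already suffices.) To finish, take a Lagrangian $U_1 \subset V$ transverse to $W_0$ and defined over $\F_p$ — obtained, say, by completing a basis of $W_0$ to a symplectic basis — so that $U_1$ is $\Phi$-stable, whence $\dim(U_1 \cap \Phi(U_1)) = n \ge 4$ and $U_1 \in S_V(k)$; but $\dim(U_1 \cap W_0) = 0 < 3$, so $U_1$ is not $\pi$-stable and therefore not in the image.

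The only delicate parts are the bookkeeping translating the lattice-theoretic conditions on $M$ into linear-algebra conditions on $U$ (where one must keep careful track of which lattices are $\tau$- versus $\mathcal{O}_E$-stable, and in particular of the action of $\pi$ on $V$) together with the verification that $W_0$ is a Lagrangian; everything else is formal. The conceptual upshot, which will be recorded for use in the next section, is that the map misses precisely those points of $S_V$ that are not built from sufficiently $\pi$-stable Lagrangians — a phenomenon that genuinely appears only for $2$-modular lattices and is responsible for the irreducible components of the second type in Theorem~\ref{thm:intro}.
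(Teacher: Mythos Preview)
Your argument is correct and follows essentially the same route as the paper: both identify $W_0 = \overline{\pi\Lambda} = \pi\Lambda/\Lambda^\vee$ as a $\Phi$-stable Lagrangian (the paper obtains this from self-duality of $\pi\Lambda$, you via a direct form computation), and both exhibit a $\Phi$-stable Lagrangian complement of $W_0$ as a point of $S_V(k)$ not in the image. One small slip: in your isotropy computation you write ``for $x,y \in W_0$'', but you mean $x,y \in V$ (so that $\pi x,\pi y$ range over all of $W_0 = \pi V$); the parenthetical computation makes clear you have the right argument in mind. Your intermediate necessary condition $\dim(U \cap W_0) \ge 3$ is a mild enrichment over the paper, which instead lifts the complement directly and checks that the lift fails to be $\pi$-stable; your formulation has the advantage of pointing more explicitly toward the later description of the image as $S_{V\pi}$.
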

\begin{proof}
  The first claim is proved as in Lemma \ref{lem:spV}. If $M \in \V_{\Lambda}(k)$, we have $\Lambda_k^\vee \subset M^{\vee} = M \subset \Lambda_k$, therefore, the map is clearly injective. By definition of the form on $V_k$, if $M$ is a self-dual lattice, then its image is a Lagrangian subspace of $V_k$. Similarly, since, as we have remarked, the map $\tau$ induces the Frobenius $\Phi$ on $V_k$, the index of $M$ in $M + \tau(M)$ is equal to the codimension of its image $U$ in $U + \Phi(U)$. Therefore, $M$ is sent to a point of $S_V(k)$.

  Observe that the action of $\pi$ on $\Lambda$ induces a linear map  $\bar{\pi}: V_k \rightarrow V_k$ of rank $6$. Indeed, since $\Lambda^\vee = \pi^2 \Lambda$, the image of the map $\bar{\pi}$ is the six-dimensional subspace $\overline{\pi\Lambda}_k = \pi \Lambda_k/ \pi^2 \Lambda_k \subset V_k$. Moreover, $\overline{\pi \Lambda}_k$ is also the kernel of $\bar{\pi}$. As we have already observed, $\pi \Lambda_k$ is a self-dual, $\tau$-stable lattice, hence $\overline{\pi\Lambda}_k$ is a $\Phi$-stable Lagrangian subspace of $V_k$. Consider now $\overline{\mathcal{L}}$, a $\Phi$-stable Lagrangian complement of $\overline{\pi \Lambda}_k$ in $V_k$. For example, one can take the base change to $k$ of any Lagrangian complement of the image of $\pi \Lambda$ in $V$. Clearly, $\overline{\mathcal{L}}$ belongs to $S_V(k)$. Since $\overline{\mathcal{L}} \cap \overline{\pi\Lambda}_k = 0$, when we lift it to a $W(k)$-lattice $\mathcal{L} \subset \Lambda_k$, we have that $\mathcal{L} \cap \pi \Lambda_k = \pi^2 \Lambda_k$. Moreover, since $\overline{\pi\Lambda}_k$ is both the kernel and image of $\bar{\pi}$, we have that $\bar{\pi}(\overline{\mathcal{L}}) = \overline{\pi\Lambda}_k$. It follows that $\pi \mathcal{L} = \pi \Lambda_{k}$, which is not contained in $\mathcal{L}$, so $\mathcal{L}$ is not an $\mathcal{O}_E \otimes_{\Z_p}W(k)$--lattice, hence it does not belong to $\V_{\Lambda}(k)$.
\end{proof}

Our goal is now to find a description in terms of Deligne-Lusztig varieties of the image of the map $\V_{\Lambda}(k) \rightarrow S_V(k)$ above. Recall that the vector space $C$ carries also a symmetric form, which is related to the alternating form by the formula $(x, y) = \langle \pi x, y \rangle$. As we have seen in Section \ref{sec:lattices}, the duals of an $\mathcal{O}_E\otimes W(k)$-lattice $M$ with respect to the two forms satisfy $M^{\perp} = \pi^{-1}M^{\vee}$. In particular, if $M$ is self-dual with respect to the alternating form, we have that $M^{\perp} = \pi^{-1}M$. Hence, any lattice $M \in \V_{\Lambda}(k)$ is contained in its dual with respect to the symmetric form. Similarly, observe that the condition $\pi M + \pi \tau(M) \subset M \cap \tau(M)$ is equivalent to
\begin{equation*}
  M + \tau(M) \subset \pi^{-1}(M \cap \tau(M)) = \pi^{-1}(M + \tau(M))^\vee = (M + \tau(M))^{\perp},
\end{equation*}
and we can reformulate the definition of $\V_{\Lambda}(k)$ as
\begin{equation}\label{eq:defVk}
  \V_{\Lambda}(k) = \{ M \subset \Lambda_k \mid M = M^\vee,  M + \tau(M) \subset (M + \tau(M))^{\perp}, M \subset^{\le 2} (M + \tau(M))\}.
\end{equation}
This reformulation turns out to be particularly useful for describing the image of the map of Lemma \ref{lem:spV2}. Consider the six-dimensional $\F_p$-vector space $W = \Lambda/\pi\Lambda$ and its base change $W_k =W \otimes_{\F_p} k =  \Lambda_k/\pi\Lambda_k$. We endow $W$ with a symmetric bilinear form by setting $(x, y)$ as the image in $\F_p$ of $p(x', y')$ for two lifts $x', y'$ in $\Lambda$. We also extend this form $k$-linearly to $W_k$. 
\begin{lem}
  The bilinear form on $W_k$ defined above is well-defined, symmetric and non-degenerate.
\end{lem}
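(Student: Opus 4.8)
The plan is to derive all three properties from the symmetric $E$-bilinear form $(\cdot,\cdot)$ on $C$ together with the two lattice identities available for a $2$-modular lattice $\Lambda$: since $p/\pi^2 \in \Z_p^\times$ and $\pi^2\Lambda = \Lambda^\vee$, we have $p\Lambda = \pi^2\Lambda = \Lambda^\vee$; and by the lemma comparing the three duals, $\Lambda^\perp = \pi^{-1}\Lambda^\vee = \pi\Lambda$. Thus $W = \Lambda/\pi\Lambda = \Lambda/\Lambda^\perp$ is exactly the quotient of $\Lambda$ by the radical of the rescaled symmetric form. This is what forces the factor $p$ (rather than a power of $\pi$) in the definition: it is the unique normalization making the pairing simultaneously $\Z_p$-valued and non-degenerate on $W$.

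First I would verify that, for lifts $x', y' \in \Lambda$, the element $p(x',y')$ lies in $\Z_p$ and its class in $\F_p$ is independent of the lifts; the $k$-linear extension to $W_k$ then inherits well-definedness automatically. For integrality, write $p(x',y') = (x', py')$ with $py' \in p\Lambda = \Lambda^\vee$, and for $x' \in \Lambda$, $z \in \Lambda^\vee$ note $(x',z) = \langle \pi x', z\rangle \in \langle\Lambda, \Lambda^\vee\rangle \subset \Z_p$ because $\pi x' \in \pi\Lambda \subset \Lambda$. For independence of lifts it suffices to observe that, by the definition of the $\perp$-dual, $(\Lambda, \pi\Lambda) = (\Lambda, \Lambda^\perp) \subset \Z_p$; hence replacing $x'$ or $y'$ by an element of $\pi\Lambda$ alters $p(x',y')$ by an element of $p\Z_p$, which is zero in $\F_p$. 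Symmetry is then immediate, since $(\cdot,\cdot)$ is symmetric on $C$, so $p(x',y') = p(y',x')$, and this survives the $k$-linear extension.

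It remains to treat non-degeneracy, which I would prove first for the form on $W$ over $\F_p$ and then transfer to $W_k$ by base change, the Gram determinant being insensitive to extension of the ground field. A class $\bar x \in W$ with lift $x' \in \Lambda$ lies in the radical precisely when $p(x',y') \in p\Z_p$, equivalently $(x', y') \in \Z_p$, for all $y' \in \Lambda$; by the definition of the $\perp$-dual this means $x' \in \Lambda^\perp = \pi\Lambda$, i.e.\ $\bar x = 0$ in $W = \Lambda/\pi\Lambda$. Hence the form on $W$ has trivial radical, and so does its base change $W_k$. The argument is the verbatim analogue, with $\Lambda^\perp$ in place of $\Lambda^\vee$, of the well-definedness of the alternating form on $V$ in the vertex-lattice case, treated as in \cite{rtw}*{Lem.\ 6.4}; the only point needing a little care is the bookkeeping of powers of $\pi$ and the distinction between $p$ and $\pi^2 = \pi_0$, and no genuine obstacle arises.
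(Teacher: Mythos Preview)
Your proof is correct and follows essentially the same approach as the paper's. Both arguments hinge on the identity $\Lambda^\perp = \pi\Lambda$ for a $2$-modular lattice, use the integrality $p(\Lambda,\Lambda)\subset \Z_p$ coming from $p\Lambda = \Lambda^\vee$ (up to a unit) for well-definedness, and read off non-degeneracy from the characterization of $\Lambda^\perp$; your write-up is slightly more explicit in spelling out the bookkeeping $p\Lambda=\pi^2\Lambda=\Lambda^\vee$ at the outset.
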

\begin{proof}
  First, observe that for two elements $x, y \in \Lambda$ the value of the bilinear form $p(x, y) = p \langle \pi x, y \rangle = \langle  \pi x, \eta^{-1}\pi ^2 y\rangle$ is in $\Z_p$, since $\eta^{-1}\pi ^2 y \in \pi^2 \Lambda = \Lambda^{\vee}$, hence it makes sense to consider its image in $\F_p$.
  Since $\Lambda$ is a $2$-modular lattice we have $ \Lambda^{\perp} = \pi^{-1}\Lambda^\vee= \pi^{-1}(\pi^2 \Lambda) = \pi \Lambda $. Hence, if $x' \in \pi \Lambda$, we have $(x', y') \in \Z_p$ for every $y' \in \Lambda$, and therefore the image of $p(x',y')$ in $\F_p$ is $0$. This proves that the form is well-defined on the quotient $W = \Lambda/\pi\Lambda$ and therefore on $W_k$. It is also clear that it is symmetric. Assume there is an element $x' \in \Lambda$ such that for all $y' \in \Lambda$ the image of $p(x', y')$ is zero in $\F_p$. This means that $(x',y') \in \Z_p$ for all $y' \in \Lambda$, and therefore, $x' \in \Lambda^{\perp} = \pi \Lambda$. This proves that the form on $W$, and consequently on $W_k$ is non-degenerate.
\end{proof}

As we have already observed, the image of $\pi\Lambda_k$ in $V_k$ is a $\Phi$-stable Lagrangian. Therefore, the quotient map $V_k \rightarrow V_k/\overline{\pi\Lambda} = W_k$ commutes with the Frobenii on $V_k$ and $W_k$. It follows that $\tau$ induces again the identity on $W$ and the Frobenius $\Phi$ on $W_k$. Since $W_k$ is a six-dimensional $k$-vector space endowed with a symmetric form, it is a natural question to ask whether it is split, \textit{i.e.}\ whether there is a $\Phi$-stable maximal isotropic subspace. 
\begin{lem}
  The symmetric form on $W_k$ is split if and only if the Hermitian form $h$ on $C$ is split.
\end{lem}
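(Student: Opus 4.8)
The plan is to compute the two discriminants explicitly from an orthogonal basis of the $2$-modular lattice $\Lambda$ and to compare them; the hypothesis that $p$ is odd enters precisely in making such a basis available and in identifying the two discriminant groups. Since $p$ is odd, by the classification of Hermitian $\mathcal{O}_E$-lattices over the ramified extension $E/\Q_p$ (see \cite{jac}) the lattice $\Lambda$ has an $\mathcal{O}_E$-orthogonal basis $v_1, \dots, v_6$. Each diagonal entry $h(v_i, v_i)$ is fixed by $\sigma$, hence lies in $\Q_p^{\times}$, and since $\Lambda$ is $2$-modular it has $E$-valuation $-2$, so we may write $h(v_i, v_i) = \pi^{-2} c_i$ with $c_i \in \Z_p^{\times}$. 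Writing $\pi_0 = p u_0$ with $u_0 \in \Z_p^{\times}$, the identities $(x,y) = \langle \Pi x, y \rangle = \tfrac12 \trace_{E/\Q_p}(h(x,y))$ together with $\langle v_i, v_j \rangle = 0$ give $p\langle \pi v_i, v_j \rangle = \delta_{ij}\, p\pi^{-2} c_i = \delta_{ij}\, u_0^{-1} c_i$. Hence the symmetric form on $W = \Lambda/\pi\Lambda$ is, in the induced basis $\bar v_1, \dots, \bar v_6$, diagonal with entries $\overline{u_0^{-1} c_i}$, so its discriminant equals $\overline{u_0^{-6} \prod_i c_i} = \overline{\prod_i c_i}$ in $\F_p^{\times}/(\F_p^{\times})^2$.

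Next I would compute the discriminant of $C$. By definition it is the class of $(-1)^{n(n-1)/2}\det V$ in $\Q_p^{\times}/\mathrm{Norm}_{E/\Q_p}(E^{\times})$, where $V$ is a Gram matrix of the (Hermitian, equivalently symmetric) form on $C$; in the chosen orthogonal basis $\det V = \prod_i h(v_i, v_i) = \pi^{-12} \prod_i c_i = \pi_0^{-6} \prod_i c_i$. Since $\pi_0 = -\mathrm{Norm}_{E/\Q_p}(\pi)$, one has $\pi_0 \equiv -1$ and thus $\pi_0^{-6} \equiv 1$ modulo norms, so for $n = 6$ the discriminant of $C$ is the class of $(-1)^{15}\prod_i c_i \equiv -\prod_i c_i$. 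Moreover, for $p$ odd one has $\mathrm{Norm}_{E/\Q_p}(\mathcal{O}_E^{\times}) = (\Z_p^{\times})^2$ --- indeed $\mathrm{Norm}(a + b\pi) = a^2\bigl(1 - (b/a)^2 \pi_0\bigr)$ lies in $(\Z_p^{\times})^2$ for a unit $a + b\pi$ --- so reduction modulo $p$ gives an isomorphism $\Q_p^{\times}/\mathrm{Norm}_{E/\Q_p}(E^{\times}) \xrightarrow{\sim} \F_p^{\times}/(\F_p^{\times})^2$ under which the discriminant of $C$ corresponds to $-\overline{\prod_i c_i}$. In particular $C$ is split (discriminant $1$) if and only if $-\overline{\prod_i c_i} \in (\F_p^{\times})^2$.

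Finally I would invoke the classification of quadratic spaces over a finite field: a non-degenerate quadratic space of even dimension $2m$ over $\F_p$ admits an $\F_p$-rational --- hence $\Phi$-stable --- maximal isotropic subspace if and only if $(-1)^m$ times its discriminant is a square. Applying this to $W$ with $2m = 6$, the form on $W_k$ is split if and only if $(-1)^3 \cdot \overline{\prod_i c_i} = -\overline{\prod_i c_i} \in (\F_p^{\times})^2$, which by the previous paragraph is exactly the condition that $C$ is split. This proves the lemma.

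The bulk of the argument (manipulating the two forms via the relation $M^{\vee} = M^{\sharp} = \Pi M^{\perp}$, the trace formulas, and the fact that scaling a form by a unit square changes neither its discriminant nor, in even dimension, its Witt index) is routine. The delicate point is the bookkeeping of signs: one must check that the normalizing sign $(-1)^{n(n-1)/2}$ in the definition of the discriminant of $C$ is, for $n = 6$, compatible with the factor $(-1)^{m} = (-1)^3$ governing splitness of the $6$-dimensional quadratic space $W_k$ --- as the computation above shows, both contribute an overall $-1$, so the two conditions coincide. (For a general even signature $(2, n-2)$ one would similarly need $(-1)^{n(n-1)/2} \equiv (-1)^{n/2}$, which in fact holds for all even $n$.) A secondary point requiring care is the justification that a $2$-modular Hermitian $\mathcal{O}_E$-lattice is diagonalizable, which is where oddness of $p$ is used.
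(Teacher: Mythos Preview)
Your proof is correct and takes a genuinely different route from the paper's. The paper argues geometrically via the Bruhat--Tits building: it uses the fact (from \cite{rtw}*{Lem.\ 3.3}) that $C$ is split if and only if it contains a vertex lattice $\mathcal{L}$ of type $6$, and then observes that such a lattice containing $\pi\Lambda$ exists if and only if its image in $W_k$ is a $\Phi$-stable maximal isotropic subspace, going back and forth between lattices in $C$ and isotropic subspaces of $W_k$ via Lemma~\ref{rem:basis}. Your argument instead computes both discriminants explicitly from an orthogonal basis of $\Lambda$ and compares them through the isomorphism $\Q_p^{\times}/\mathrm{Norm}_{E/\Q_p}(E^{\times}) \cong \F_p^{\times}/(\F_p^{\times})^2$.

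Your approach is more elementary in that it avoids the Bruhat--Tits correspondence entirely, reducing the question to a direct (and careful) comparison of invariants; it also makes transparent why the statement is a purely numerical coincidence of signs, and your closing remark shows it extends verbatim to any even $n$. The paper's approach, on the other hand, is more structural: it exhibits an explicit bijection between the two kinds of witness objects (type-$n$ vertex lattices versus rational Lagrangians in $W$), which is exactly the dictionary used later in the paper when relating strata of $\N_{\Lambda}$ to sublattices. So your proof is self-contained and arguably cleaner for the lemma in isolation, while the paper's proof foreshadows and reuses the lattice-to-subspace correspondence that drives the rest of Section~\ref{sec:closedpoints}.
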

\begin{proof}
  In \cite{rtw}*{Lemma 3.3} it is proved that the Hermitian form on the $n$-dimensional space $C$ is split if and only if $C$ contains a vertex lattice of type $n$, that is, if and only if there is an $\mathcal{O}_E$-lattice $\mathcal{L} \subset C$ such that $\mathcal{L}^\vee = \pi\mathcal{L}$ or equivalently, such that $ \mathcal{L}^\perp = \mathcal{L} $. Since $\pi \Lambda$ is self-dual, it is itself a vertex lattice of type $0$. By the correspondence of  \cite{rtw}*{Prop.\ 3.4} between vertex lattices and the Bruhat-Tits simplicial complex of $\rm{SU}(C)(\Q_p)$, if the form is split there exists a vertex lattice $\mathcal{L}$ of maximal type containing $\pi \Lambda$. Therefore, the Hermitian form $h$ on $C$ is split if and only if there is a vertex lattice of type $n = 6$ containing $\pi \Lambda$. 

  If such a vertex lattice $\mathcal{L}$ exists, then from the fact that $\mathcal{L} = \mathcal{L}^\perp$ and the definition of the orthogonal form on $W_k$ it follows that the image of $\mathcal{L}_k$ in $W_k$ is a $\tau$-stable, isotropic subspace. Moreover, from the inclusions $\pi\mathcal{L} = \mathcal{L}^\vee \subset \pi\Lambda \subset \mathcal{L}$ it follows that $\pi \Lambda$ has index $n/2 = 3$ in $\mathcal{L}$. Therefore, the $\Phi$-stable isotropic subspace given by the image of $\mathcal{L}_k$ in $W_k$ has maximal dimension $3$, and hence the form on $W_k$ is split. 
  
  On the other hand, if there is a $\Phi$-stable maximal isotropic subspace $L$ in $W_k$, we can lift it to a $\tau$-stable $\mathcal{O}_E \otimes W(k)$-lattice $\pi \Lambda_k \subset^3 \mathcal{L} \subset \Lambda_k$. Moreover, since $L = L^\perp$, by the same argument as in the proof of Lemma \ref{lem:5.5} below we have that $\mathcal{L}^{\perp} = \mathcal{L}$. By Lemma \ref{rem:basis}, since $\mathcal{L} = \tau(\mathcal{L})$, it has a $\tau$-stable basis. Hence, we can consider the set of its $\tau$-fixed points $\mathcal{L}^\tau$ and obtain a vertex lattice of type $n = 6$ in $C$, from which it follows that the Hermitian form on $C$ is split.
\end{proof}

Our goal now is to describe the points in $\V_{\Lambda}(k)$ in terms of points of a Deligne-Lusztig variety for the orthogonal group $\SO(W_k)$. The first step in this direction is the following observation.
\begin{lem}\label{lem:5.5}
  The map $M \mapsto (M + \pi\Lambda_k) /\pi \Lambda_k$ induces a map from $\V_{\Lambda}(k)$ to the set $$\{ U \subset W_k \mid U + \Phi(U) \subset (U +\Phi(U))^{\perp}, U \subset^{\le 2} U + \Phi (U)\}.$$
\end{lem}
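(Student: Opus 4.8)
The plan is to check that the assignment $M \mapsto (M + \pi\Lambda_k)/\pi\Lambda_k$ is well-defined as a map into the indicated set, \emph{i.e.}\ that the image subspace $U$ of $W_k$ satisfies the two stated conditions. First I would record the obvious things: $M + \pi\Lambda_k$ is an $\mathcal{O}_E \otimes_{\Z_p} W(k)$-lattice lying between $\pi\Lambda_k$ and $\Lambda_k$ (since $M \subset \Lambda_k$), so its image $U$ is a genuine $k$-subspace of $W_k = \Lambda_k/\pi\Lambda_k$, and since $\tau$ induces the Frobenius $\Phi$ on $W_k$, the image of $M + \tau(M)$ is exactly $U + \Phi(U)$, while the image of $M \cap \tau(M)$ is contained in $U \cap \Phi(U)$.

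For the index condition, I would argue that the quotient map $\Lambda_k \to W_k$ sends $M$ onto $U$ and $M + \tau(M)$ onto $U + \Phi(U)$; since taking the image of an inclusion can only decrease the length of the quotient, from $M \subset^{\le 2} M + \tau(M)$ we immediately get $U \subset^{\le 2} U + \Phi(U)$. For the isotropy condition $U + \Phi(U) \subset (U + \Phi(U))^\perp$, I would use the reformulation (\ref{eq:defVk}) of $\V_\Lambda(k)$: we have $M + \tau(M) \subset (M + \tau(M))^\perp = \pi^{-1}(M + \tau(M))^\vee$, equivalently $\pi(M + \tau(M)) + \pi\tau(M+\tau(M)) \subset (M+\tau(M)) \cap \tau(M+\tau(M))$, which is just the condition $(x,y) \in \Z_p \cdot$ (appropriate unit) for $x, y$ in the preimage lattice. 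Concretely, for $x, y \in M + \tau(M)$ I would check that $p(x,y) = p\langle \pi x, y\rangle = \langle \pi x, \eta^{-1}\pi^2 y\rangle$; since $M + \tau(M)$ is $\pi$-stable modulo the defining inclusions and $\eta^{-1}\pi^2 y$ lands in a lattice pairing integrally against $\pi x$ (because $M + \tau(M) \subset (M+\tau(M))^\perp$), the value $p(x,y)$ lies in $\Z_p$ and hence its image in $\F_p$, which is $(\bar x, \bar y)$ for the symmetric form on $W_k$, is $0$. This says exactly $U + \Phi(U)$ is isotropic for the symmetric form on $W_k$, \emph{i.e.}\ $U + \Phi(U) \subset (U+\Phi(U))^\perp$.

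The only genuinely delicate point — and the one I would write out most carefully — is making sure the isotropy passes correctly through the quotient by $\pi\Lambda_k$, \emph{i.e.}\ that the symmetric form on $W_k$ defined via $(x,y) = $ image of $p(x',y')$ interacts well with the condition from (\ref{eq:defVk}). Here I would lean on the already-established facts that $\Lambda^\perp = \pi\Lambda$ (so the form on $W = \Lambda/\pi\Lambda$ is well-defined and non-degenerate) and that for any $\mathcal{O}_E\otimes W(k)$-lattice one has $M^\perp = \pi^{-1}M^\vee$; combining these, the condition $M + \tau(M) \subset (M+\tau(M))^\perp$ is precisely the integrality statement needed, and reducing modulo $\pi\Lambda_k$ turns it into isotropy of $U + \Phi(U)$ in $W_k$. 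I expect the main obstacle to be purely bookkeeping: tracking the various duals ($\vee$ versus $\perp$), the factor $\eta$ with $(\eta\pi)^2 = p$, and the difference between "$M \cap \tau(M)$ maps into $U \cap \Phi(U)$" versus equality, none of which is actually needed for this lemma since we only claim membership in a set, not a bijection.
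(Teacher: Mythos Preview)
Your proposal is correct and follows essentially the same approach as the paper. The only organizational difference is that the paper first establishes the general compatibility statement that for a lattice $M \subset \Lambda_k$, the orthogonal $U^\perp$ of its image $U$ in $W_k$ equals the image of $M^\perp \cap \Lambda_k$, and then uses this to compute $(U+\Phi(U))^\perp \supset q(\pi^{-1}M \cap \pi^{-1}\tau(M) \cap \Lambda_k) \supset q(M+\tau(M)) = U+\Phi(U)$; you instead check isotropy directly elementwise from $M+\tau(M) \subset (M+\tau(M))^\perp$, which is the same computation unpacked.
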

\begin{proof}
  First observe that the quotient map $q: \Lambda_k \rightarrow W_k$ is compatible with taking the dual (respectively the orthogonal) with respect to the symmetric forms on both sides. Indeed, if $M \subset \Lambda_k$ is a lattice in $\V_{\Lambda}(k)$ with image $U \subset W_k$, then by definition of the form on $W_k$, the orthogonal space of $U$ satisfies
  \begin{equation*}
    U^{\perp} = \{ x \in \Lambda_k \mid p(x, y) \in pW(k), \text{for all } y \in q^{-1}(U)\} /\pi \Lambda_k.
  \end{equation*}
  This means that $U^{\perp}$ is the image in $W_k$ of the lattice $(M + \pi \Lambda_k)^{\perp} = M^\perp \cap \Lambda_k$ with respect to the symmetric form on $C \otimes W(k)_{\Q}$. It follows
  \begin{align*}
    (U + \Phi(U))^\perp &= U^\perp \cap \Phi(U)^\perp = q(M^\perp \cap \Lambda_k) \cap q(\tau(M)^\perp \cap \Lambda_k)  \\
    &= q(\pi^{-1} M \cap \Lambda_k) \cap q(\pi^{-1} \tau(M) \cap \Lambda_k) \\
    & \supset q(\pi^{-1}M \cap \pi^{-1}\tau(M) \cap \Lambda_k) \\
    &\supset q(M + \tau(M) + \pi\Lambda_k) = U + \Phi(U),
  \end{align*}
  where the second inclusion follows from (\ref{eq:V(k)2}).
\end{proof}

Observe that the set appearing in Lemma \ref{lem:5.5} above as the image of the quotient map resembles now the description of the $k$-valued points of some Deligne-Lusztig variety for the orthogonal group. What is still missing is the information on the dimension of the image $U$ of $M$ in $W_k$. For example, if we restrict to $\dim(U) = 1$ we obtain the points of the generalized Deligne-Lusztig variety $R_W$ introduced in \ref{sec:dlvso}, while for $\dim(U) = 2$ we recover the points of the variety $Q_W$ of \ref{sec:dlvso}. We let $\V_\Lambda^{(i)}(k)$ denote the subset of lattices $M \in \V_\Lambda(k)$ such that $\pi\Lambda_k \subset^i M + \pi\Lambda_k$.

\begin{lem}\label{lem:spW}
   The restriction of the map $M \mapsto (M + \pi \Lambda_k) /\pi\Lambda_k$ induces a surjective map 
  \begin{equation*}
    \V^{(1)}_{\Lambda}(k) = \{ M \in \V_{\Lambda}(k) \mid \pi \Lambda_k \subset^1 M + \pi \Lambda_k \} \longrightarrow R_W(k)
  \end{equation*}
  onto the $k$-valued points of the generalized Deligne-Lusztig variety $R_W$ of Section \ref{sec:dlvso} with fibers equal to $\mathbb{A}^1(k)$. 
\end{lem}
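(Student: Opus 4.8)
The plan is to fix an isotropic line $l\in R_W(k)$ and describe explicitly the set of $M\in\V_\Lambda^{(1)}(k)$ with $(M+\pi\Lambda_k)/\pi\Lambda_k=l$; the surjectivity statement then drops out once this fibre is seen to be non-empty. By Lemma \ref{lem:5.5} the map already sends $\V_\Lambda^{(1)}(k)$ into $R_W(k)$ (the image being one-dimensional, and $U+\Phi(U)$ isotropic forces $(x,x)=(x,\Phi(x))=0$), so only surjectivity and the fibre description need proof. Write $q\colon\Lambda_k\to W_k=\Lambda_k/\pi\Lambda_k$ for the reduction, put $L_0=\pi\Lambda_k$ (self-dual and $\tau$-stable), and set $N=q^{-1}(l)\subset\Lambda_k$; then $L_0\subset N$ with colength one and, dualising and using $\Lambda_k^\vee=\pi^2\Lambda_k$, one gets the chain $\pi^2\Lambda_k\subset N^\vee\subset L_0\subset N\subset\Lambda_k$, with $N/N^\vee$ two-dimensional. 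A lattice $M\subset\Lambda_k$ satisfies $q(M)=l$ exactly when $M+L_0=N$, in which case $M\in\V_\Lambda^{(1)}(k)$ as soon as $M\in\V_\Lambda(k)$. If such an $M$ is moreover self-dual, then $M\cap L_0=(M+L_0)^\vee=N^\vee$, so $M$ lies in the chain $N^\vee\subset M\subset N$ and corresponds to a line in $N/N^\vee$ distinct from $L_0/N^\vee$ (that exceptional line being $M=L_0$, which is not in $\V_\Lambda^{(1)}(k)$). Conversely, since the alternating form induces a non-degenerate alternating form on the plane $N/N^\vee$, every line there is Lagrangian, so every such line yields a self-dual $M$ with $M+L_0=N$. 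Thus the candidates form a copy of $\mathbb{P}^1(k)\smallsetminus\{\mathrm{pt}\}=\mathbb{A}^1(k)$, and it remains to check that each candidate lies in $\V_\Lambda(k)$, i.e.\ satisfies $M\subset^{\le 2}M+\tau(M)$ and $\pi M+\pi\tau(M)\subset M\cap\tau(M)$ from (\ref{eq:defVk}).

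To verify these I would pass to $V_k=\Lambda_k/\pi^2\Lambda_k$, where $\bar M$ is Lagrangian of dimension $6$, the intersection $\bar M\cap\overline{\pi\Lambda}_k=\overline{N^\vee}$ has dimension $5$, and $\overline{N^\vee}=\bar N^\perp$ for the symplectic form on $V_k$. For the index condition, $\bar M\cap\Phi(\bar M)\supseteq\overline{N^\vee}\cap\Phi(\overline{N^\vee})$, and the latter is an intersection of two $5$-dimensional subspaces of the $6$-dimensional space $\overline{\pi\Lambda}_k$, hence of dimension $\ge 4$; therefore $\dim(\bar M+\Phi(\bar M))\le 8$, which is precisely $M\subset^{\le 2}M+\tau(M)$. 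For the second condition the key observation is that, since $\bar\pi$ has kernel $\overline{\pi\Lambda}_k$ and $\bar M+\overline{\pi\Lambda}_k=\bar N$, the line $\bar\pi(\bar M)=\bar\pi(\bar N)$ depends only on $l$, not on the chosen candidate; identifying $W_k$ with $\overline{\pi\Lambda}_k=\operatorname{im}\bar\pi$ through the $\Phi$-equivariant isomorphism $\beta$ induced by $\bar\pi$, this line is $\beta(l)$. The condition $\pi M+\pi\tau(M)\subset M\cap\tau(M)$ amounts to the three inclusions $\pi M\subseteq M$, $\pi M\subseteq\tau(M)$, $\pi\tau(M)\subseteq M$, which reduce respectively to $\beta(l)\subseteq\bar M\cap\overline{\pi\Lambda}_k=\bar N^\perp$, to $\beta(l)\subseteq\Phi(\bar M)\cap\overline{\pi\Lambda}_k=\Phi(\bar N)^\perp$, and to $\Phi(\beta(l))\subseteq\bar N^\perp$. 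Using the compatibility $(x,y)_{W_k}=\langle\beta(x),\tilde y\rangle_{V_k}$ between the symmetric form on $W_k$ and the symplectic form on $V_k$ — a direct computation from $p=\eta^2\pi^2$ and the defining formulas of the two forms — and that $\beta(l)$ lies in the Lagrangian $\overline{\pi\Lambda}_k\subseteq\bar N$, these become, after also invoking the symmetry of the form and $(\Phi(x),\Phi(y))=(x,y)^p$, the pair of conditions $(x,x)_{W_k}=0$ and $(x,\Phi(x))_{W_k}=0$ for $x$ spanning $l$ — exactly the defining equations of $R_W$. Hence every candidate $M$ lies in $\V_\Lambda^{(1)}(k)$, the fibre over each $l\in R_W(k)$ is $\mathbb{A}^1(k)$, and in particular the map is surjective.

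The routine inputs — the reduction of the fibre to lines in $N/N^\vee$, the automatic Lagrangian property in a symplectic plane, and the dimension count for the index condition — cause no trouble. The step I expect to require care is the verification of $\pi M+\pi\tau(M)\subset M\cap\tau(M)$: namely the observation that $\bar\pi(\bar M)=\beta(l)$ is independent of the candidate $M$, together with the precise matching of the symmetric form on $W_k$ with the symplectic form on $V_k$ via $\bar\pi$. This is where the signature-$(2,n-2)$ bookkeeping genuinely enters, and it is what makes the fibre an honest affine line rather than a single point as in the vertex-lattice case of Lemma \ref{lem:spV}.
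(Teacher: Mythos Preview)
Your proof is correct and follows essentially the same strategy as the paper: identify the fibre over $l$ with the lines in the symplectic plane $N/N^\vee$ distinct from $\overline{\pi\Lambda}_k$, then verify that every such candidate satisfies the defining conditions of $\V_\Lambda(k)$. The one cosmetic difference is that the paper chooses a Lagrangian complement $\mathcal{L}$ of $\overline{\pi\Lambda}_k$ in $V_k$ to identify $W_k$ with a subspace of $V_k$, whereas you work instead with the canonical $\Phi$-equivariant isomorphism $\beta\colon W_k\to\overline{\pi\Lambda}_k$ induced by $\bar\pi$ and the explicit compatibility $(x,y)_{W_k}=\langle\beta(x),\tilde y\rangle_{V_k}$; this avoids the auxiliary choice and makes the translation of $\pi M+\pi\tau(M)\subset M\cap\tau(M)$ into the equations $(x,x)=(x,\Phi(x))=0$ a bit more transparent, but the substance is the same.
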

\begin{proof}
  By Lemma \ref{lem:5.5} above, if $M \in \V_{\Lambda}^{(1)}(k)$ is mapped to a line $l$ in $W_k$, then $l$ and $l + \Phi(l)$ are both isotropic and therefore $l$ is a point in the variety $R_W(k)$ defined in the previous section. Observe that the map $M \mapsto M+\pi\Lambda_k/\pi\Lambda_k$ factors through the map $\V_{\Lambda}(k) \rightarrow S_V(k), M \mapsto M/\pi^2\Lambda_k$. As we have seen in Lemma \ref{lem:spV2} this latter map is injective but not surjective. In particular, its image is the proper subset $S_{V\pi}$ of $S_V(k)$ consisting of Lagrangian subspaces $U \subset V_k$ such that $\overline{\pi}(U) + \overline{\pi}(\Phi(U)) \subset U \cap \Phi(U)$, where $\overline{\pi}$ denotes again the rank-$6$ linear map on $V_k$ induced by the action of $\pi$ in $\Lambda$. It is then enough to prove the statement for the map $S_{V \pi} \rightarrow R_W$ induced by the quotient map $q: V_k \rightarrow W_k$.

  Fix a Lagrangian complement $\mathcal{L}$ of $\overline{\pi\Lambda}$ in $V$, that is a Lagrangian subspace of $V$ such that $V = \mathcal{L} \oplus \overline{\pi\Lambda}$. Then we can identify $W$ with $\mathcal{L}$ and a line $l\in R_W(k)$ with a line $l$ in $\mathcal{L}_k$. Via the isomorphism $W_k \cong \mathcal{L}_k$ we can define a symmetric form on $\mathcal{L}_k$. By definition it satisfies $(v_1, v_2) = \langle v_1, \overline{\pi}(v_2) \rangle = - \langle \overline{\pi}(v_1),  v_2 \rangle$ for all $v_1, v_2 \in \mathcal{L}_k$. Recall that the restriction of $\overline{\pi}$ induces a linear isomorphism between $\mathcal{L}_k$ and $\overline{\pi\Lambda}_k$. Consider a line $l \in R_W$ and its preimage $N = q^{-1}(l) = l \oplus \overline{\pi\Lambda}_k \subset V_k$ with orthogonal $N^\vee$ with respect to the alternating form. Observe that since $\overline{\pi\Lambda}_k$ is Lagrangian we have $N^\vee = l^\vee \cap \overline{\pi\Lambda}_k \subset^1 \overline{\pi\Lambda}_k \subset^1 N = l \oplus \overline{\pi\Lambda}_k$. Let $L \neq \overline{\pi\Lambda}_k$ be a six-dimensional subspace of $V_k$ such that
  \begin{equation}\label{eq:U}
    N^\vee \subset^1 L \subset^1 N.
  \end{equation} Clearly $L + \overline{\pi\Lambda}_k = N$ is mapped by $q$ to $l$. We show that $L$ is in $S_{V \pi}$.  By definition of $R_W$ we have that $l + \Phi(l)$ is isotropic with respect to the symmetric form. In other words we have that $\langle l, \overline{\pi}(l) \rangle  = \langle l, \overline{\pi}(\Phi(l)) \rangle = \langle \Phi(l), \overline{\pi}(\Phi(l)) \rangle = 0$. This means that $\overline{\pi}(l) + \overline{\pi}(\Phi(l)) \subset l^\vee \cap \overline{\pi\Lambda}_k = N^\vee \subset L$. Similarly, $\overline{\pi}(l) + \overline{\pi}(\Phi(l)) \subset \Phi(l)^\vee \cap \overline{\pi\Lambda}_k = \Phi(N)^\vee = \Phi(N^\vee) \subset \Phi(L)$. Here the Frobenius commutes with taking the orthogonal, that is we have the equality $\Phi(N)^\vee = \Phi(N^\vee)$, because $k$ is algebraically closed, hence $\Phi$ preserves dimensions, and clearly $\Phi(N^\vee) \subset \Phi(N)^\vee$. We can conclude  that $\overline{\pi}(L) +\overline{\pi}(\Phi(L)) = \overline{\pi}(l) + \overline{\pi}(\Phi(l)) \subset N^\vee \cap \Phi(N^\vee) \subset L \cap \Phi(L)$. 
  
  It remains then to prove that $L \in S_V$. Complete a basis of $N^\vee \subset^1 L$ to a basis of $L$, in other words find an element $x \in L$ such that $L = \langle x \rangle \oplus N^\vee$. We already know that $N^\vee$ is contained in its orthogonal $N$ with respect to the alternating form. Since $x \in L \subset N$ then $\langle x, N^\vee \rangle = 0$, hence $L$ is isotropic and has dimension $6$, from which it follows that it is Lagrangian. 
  Consider $L + \Phi(L) = \langle x, \Phi(x) \rangle + N^\vee + \Phi(N)^\vee$. Since $N^\vee \subset^1 \overline{\pi\Lambda}_k$ and $\overline{\pi\Lambda}_k$ is $\Phi$-stable, we have that $L + \Phi(L) \subset \langle x, \Phi(x) \rangle + \overline{\pi\Lambda}_k$ which has dimension at most eight, from which we can conclude that $L \in S_{V \pi}$. 
  
  We have proved that every subspace $L \neq \overline{\pi\Lambda}_k$ such that $N^\vee \subset^1 L \subset^1 N = l \oplus \overline{\pi\Lambda}_k$ is a preimage of $l$ in $S_{V \pi}$. It follows that the map $\V_{\Lambda}^{(1)}(k) \rightarrow R_W(k)$ is surjective, and its fibers are in bijection with the $k$-points of $\mathbb{P}(N/N^\vee) \small{\setminus} \{\overline{\pi\Lambda}_k \}$ which we can identify with $\mathbb{A}^1(k)$. 
\end{proof}

\begin{lem}\label{lem:splitV1}
  If the Hermitian form on $C$ is split, the subset of lattices $M \in \V_{\Lambda}(k)$ whose associated lattice $\Lambda(M)$ is \emph{not} a vertex lattice is contained in $\V_{\Lambda}^{(1)}(k)$. In particular, it is the preimage of $R_W \small \setminus Y_{\infty}$ under the map of Lemma \ref{lem:spW}.
\end{lem}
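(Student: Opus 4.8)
The plan is to analyze, for a lattice $M \in \V_{\Lambda}(k)$, the relationship between the index $i$ such that $\pi\Lambda_k \subset^{i} M + \pi\Lambda_k$ and the property of $\Lambda(M)$ being (or not being) a vertex lattice, using the characterization already obtained in Remark \ref{rem:vl2}. Recall that $\Lambda(M) = T_d \cap C$ and that $\Lambda(M)$ fails to be a vertex lattice precisely when $\pi T_d \not\subset M$, where $T_d = M + \tau(M) + \dotsm + \tau^d(M)$. First I would observe that, since $\Lambda$ is $2$-modular, we have $\pi^2 \Lambda_k = \Lambda_k^{\vee} \subset \pi\Lambda_k \subset \Lambda_k$, and $\pi\Lambda_k$ is self-dual and $\tau$-stable. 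Because $M \subset \Lambda_k$ and $M$ is self-dual, also $\Lambda_k^{\vee} = \pi^2\Lambda_k \subset M$, so $M$ sits between $\pi^2\Lambda_k$ and $\Lambda_k$, and hence $T_d$ does too by $\tau$-stability of both bounds. The key point is then to show that if $\Lambda(M)$ is not a vertex lattice then $M + \pi\Lambda_k$ must be as large as possible short of containing all of $\pi\Lambda_k$, i.e.\ that the image of $M$ in $W_k = \Lambda_k/\pi\Lambda_k$ is at most a line.

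The heart of the argument should go as follows. Using the splitting hypothesis on $C$, Proposition \ref{prop:2vl}(1) and the preceding lemma on $W_k$ being split, there is a $\tau$-stable maximal isotropic subspace of $W_k$, equivalently (by the proof of the lemma identifying splitness of $W_k$ with existence of a vertex lattice of type $6$ containing $\pi\Lambda$) a $\tau$-stable lattice $\mathcal{L}_0$ with $\pi\Lambda_k \subset^{3} \mathcal{L}_0 \subset \Lambda_k$ and $\mathcal{L}_0^{\perp} = \mathcal{L}_0$, so that $\mathcal{L}_0^{\vee} = \pi\mathcal{L}_0$. I would then argue contrapositively: suppose the image $U$ of $M$ in $W_k$ has dimension $\geq 2$; I want to deduce that $\pi T_d \subset M$, i.e.\ $\Lambda(M)$ is a vertex lattice. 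Since $T_d \subset \Lambda_k$ and $\Lambda(M) = T_d \cap C$ is by Proposition \ref{prop:1} a $2$-vertex lattice contained in the $2$-modular $\Lambda$, its Jordan decomposition has only $0$-, $1$-, $2$-modular summands; the condition $\pi T_d \subset M = M^{\vee} \supset T_d^{\vee}$ (using $M$ self-dual and $\tau$ of slope zero) amounts exactly to $\Lambda(M)$ having no $2$-modular Jordan summand, i.e.\ $\Lambda(M)$ being a vertex lattice in the sense of Remark \ref{rem:vl2}. I would track the image of the chain $M = T_0 \subset T_1 \subset \dotsm \subset T_d$ in $V_k = \Lambda_k/\pi^2\Lambda_k$ and in $W_k$, using the combinatorial Remarks \ref{lem:1}, \ref{lem:3}, \ref{lem:5}: the point is that a $2$-modular summand of $\Lambda(M)$ forces $\pi^2 T_d$ to be strictly between $T_d^{\vee}$ and $T_{d-1}$ at the relevant step, which translated into $W_k$ says that $(M+\pi\Lambda_k)/\pi\Lambda_k$ meets the kernel of $\bar\pi$ — but $\overline{\pi\Lambda}_k = \ker\bar\pi = \operatorname{im}\bar\pi$ — in a way that is only possible when $\dim U \leq 1$.

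Concretely, I expect the cleanest route to be: reduce to $S_{V\pi} \subset S_V(k)$ as in the proof of Lemma \ref{lem:spW}, i.e.\ Lagrangians $U' \subset V_k$ with $\bar\pi(U') + \bar\pi(\Phi(U')) \subset U' \cap \Phi(U')$; for such $U'$, set $l' = $ image of $U'$ in $W_k$. If $\dim l' \geq 2$ then, intersecting with the fixed $\tau$-stable complement $\mathcal{L}_0$ and using that $\bar\pi$ restricts to an isomorphism $\mathcal{L}_{0,k} \xrightarrow{\sim} \overline{\pi\Lambda}_k$, one checks $\bar\pi(U') = \overline{\pi\Lambda}_k$, hence $\pi M + \pi^2\Lambda_k = \pi\Lambda_k$, i.e.\ $\pi M \supset$ a set of representatives of $\pi\Lambda_k/\pi^2\Lambda_k$; combined with $\pi^2\Lambda_k \subset M$ this gives $\pi\Lambda_k \subset M$, and then $\pi T_d = \pi(M + \dotsm + \tau^d M) \subset \pi\Lambda_k \cap \tau(\pi\Lambda_k) \cap \dotsm = \pi\Lambda_k \subset M$ by $\tau$-stability, so $\Lambda(M)$ is a vertex lattice by Remark \ref{rem:vl2}. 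This proves the containment $\{M : \Lambda(M) \text{ not vertex}\} \subset \V^{(1)}_{\Lambda}(k)$. For the ``in particular'' part, I would then invoke Lemma \ref{lem:spW}: under $M \mapsto (M+\pi\Lambda_k)/\pi\Lambda_k$ the set $\V^{(1)}_{\Lambda}(k)$ surjects onto $R_W(k) = Y_{\infty} \sqcup \bigsqcup_{a\geq 2} Y_a$, and a point of $\V^{(1)}_{\Lambda}(k)$ lands in $Y_{\infty}$ exactly when its associated line $l$ is contained in the unique maximal $\Phi$-stable isotropic subspace — and I would show this happens if and only if $T_d \cap C$ is itself a vertex lattice (the line $l$ spans, together with $\pi\Lambda_k$, a $\tau$-stable lattice that is self-dual for the symmetric form, forcing $\Lambda(M)$ to be of vertex type), so that the complement $\V^{(1)}_\Lambda(k) \smallsetminus (\text{vertex-}\Lambda(M)\text{ locus})$ is exactly the preimage of $R_W \smallsetminus Y_\infty$.

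The main obstacle I anticipate is the bookkeeping in the middle step: carefully translating the condition ``$\Lambda(M)$ has a $2$-modular Jordan summand'' into a statement about the image of $M$ in $W_k$, and making the dimension count $\dim U \geq 2 \Rightarrow \bar\pi(U') = \overline{\pi\Lambda}_k$ rigorous, since $U'$ is a Lagrangian in $V_k$ (dimension $6$) while $W_k$ and $\overline{\pi\Lambda}_k$ are both $3$-dimensional and one must use both the self-duality of $M$ and the $\tau$-stability of $\pi\Lambda_k$ simultaneously. I would handle this by working throughout with the fixed $\tau$-stable Lagrangian complement $\mathcal{L}_0$ as a model for $W_k$, exactly as in the proof of Lemma \ref{lem:spW}, where the analogous identification $\bar\pi(l) + \bar\pi(\Phi(l)) \subset N^{\vee}$ was already carried out; the new input is only monotonicity of $\bar\pi(U')$ in $\dim U$.
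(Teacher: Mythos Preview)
Your proposal contains a genuine gap at the key step, rooted in a dimension error. In this setting $n=6$, so $V_k = \Lambda_k/\pi^2\Lambda_k$ has dimension $12$, the Lagrangian $U' = M/\pi^2\Lambda_k$ has dimension $6$, and both $W_k = \Lambda_k/\pi\Lambda_k$ and $\overline{\pi\Lambda}_k \subset V_k$ have dimension $6$, not $3$. More importantly, since $\ker\bar\pi = \overline{\pi\Lambda}_k$, one has $\dim \bar\pi(U') = 6 - \dim(U' \cap \overline{\pi\Lambda}_k) = \dim\big((U' + \overline{\pi\Lambda}_k)/\overline{\pi\Lambda}_k\big)$, which is exactly the dimension of the image $l'$ of $M$ in $W_k$. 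So if $\dim l' = 2$ then $\bar\pi(U')$ is $2$-dimensional, not all of $\overline{\pi\Lambda}_k$; your claimed implication ``$\dim l' \ge 2 \Rightarrow \bar\pi(U') = \overline{\pi\Lambda}_k$'' is simply false. Worse, the conclusion you draw from it, $\pi\Lambda_k \subset M$, is impossible: since both $\pi\Lambda_k$ and $M$ are self-dual, it would force $M = \pi\Lambda_k$ and hence $\dim l' = 0$.

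The paper's argument is quite different and does not try to show $\pi\Lambda_k \subset M$. Instead it works entirely in the orthogonal space $W_k$ and uses the invariant $a_0$ of Lemma~\ref{lem:lus3}: in the split $6$-dimensional case $a_0 = 2$, and in the split $4$-dimensional case $a_0 = 1$. If $\dim U = 3$ then $U$ is maximal isotropic, hence $\Phi$-stable, and its preimage in $\Lambda_k$ is a $\tau$-stable lattice whose $\tau$-fixed points form a vertex lattice of type $6$ containing $M$. If $\dim U = 2$, one reduces (after possibly passing to the $4$-dimensional quotient $(U\cap\Phi(U))^\perp/(U\cap\Phi(U))$) to showing that $U + \Phi(U)$ is $\Phi$-stable isotropic of dimension $3$, again yielding a vertex lattice containing $M$. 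The point is that the split hypothesis enters precisely via these small values of $a_0$, which force early $\Phi$-stability of the spaces $U + \Phi(U) + \dotsm$; this is what your argument is missing. For the ``in particular'' clause, the paper then observes that vertex lattices have isotropic image in $W_k$, so a line $l \in R_W$ lies in $Y_\infty$ if and only if $l + \Phi(l) + \Phi^2(l)$ is isotropic (and then $\Phi$-stable), which is exactly the condition for $\Lambda(M)$ to be a vertex lattice.
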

\begin{proof}
  Fix $M \in \V_{\Lambda}(k)$ and let $U$ be the image in $W_k$ of $M + \pi \Lambda_k$. We argue by cases on the possible dimension of $U$. By the previous lemma we know that $U$ and $U + \Phi(U)$ are isotropic, hence have dimension at most $3$. Therefore, if $U$ has dimension $3$, it is $\Phi$-stable. This is possible as the form is split. It follows that $M + \pi \Lambda_k$ is $\tau$-stable and contains $M$. Moreover, it satisfies $(M + \pi \Lambda_k)^\vee = M \cap \pi \Lambda_k \supset \pi M + \pi^2 \Lambda_k$, which means that its intersection with $C$ is a vertex lattice of type $6$. By minimality, it contains $\Lambda(M)$, which is then a vertex lattice itself, by Remark \ref{rem:vl2}.

  Suppose that $U$ has dimension $2$. If $U$ is $\Phi$-stable, then arguing as in the previous case, $\Lambda(M)$ is a vertex lattice. Suppose that $U$ is not $\Phi$-stable. Then since $U + \Phi(U)$ is isotropic and properly contains $U$, it has dimension $3$. Consider the inclusion $U \cap \Phi(U) \subset^1 U$. If the intersection $U \cap \Phi(U)$ is $\Phi$-stable, then we can consider the $4$-dimensional space given by the quotient $W_k'=(U \cap \Phi(U))^{\perp}/U\cap \Phi(U)$. The symmetric form on $W_k$ induces a well-defined, non-degenerate symmetric form on this space, and $\Phi$ induces again the Frobenius. In particular, the symmetric form on the quotient space $W_k'$ is still split (consider for example the image of a maximal $\Phi$-stable isotropic subspace of $W_k$ containing $U \cap \Phi(U)$). The image of $U$ in the quotient $W_k'$ is then an isotropic line $l$ such that $l + \Phi(l)$ is isotropic. Since for a split, $4$-dimensional symmetric space the parameter $a_0$ defined in Lemma \ref{lem:lus3} is $1$, it follows that $l + \Phi(l)$ is $\Phi$-stable. Therefore, $U + \Phi(U)$ is an isotropic $\Phi$-stable subspace of $W_k$. Its preimage $\mathcal{L}$ in $\Lambda_k$ is then a $\tau$-stable lattice containing $M$ and such that $\mathcal{L} \subset \mathcal{L}^\perp = \pi^{-1} \mathcal{L}^{\vee}$, hence $\mathcal{L}^{\tau}$ and consequently $\Lambda(M)$ are vertex lattices.

  Suppose now that the image $U$ of $M$ in $W_k$ has dimension $2$ and $U \cap \Phi(U)$ is not $\Phi$-stable. Since the latter is one-dimensional, there is a vector $v \in U$ such that $U \cap \Phi(U) = \langle \Phi(v) \rangle$. Since $U \cap \Phi(U)$ is not $\Phi$-stable, we have that $\Phi(v)$ and $\Phi^2(v)$ are linearly independent. The same holds then for $v$ and $\Phi(v)$, so we have that $U = \langle v, \Phi(v) \rangle$ and $U + \Phi(U) = \langle v, \Phi(v), \Phi^2(v) \rangle$. Since the latter is isotropic, we have that $v$ is orthogonal to $\Phi(v)$ as well as to $\Phi^2(v)$. Again by Lemma \ref{lem:lus3} we know that $a_0 = 2$ for a split six-dimensional symmetric space, so it follows that $U + \Phi(U)$ is $\Phi$-stable and isotropic. We can deduce as above that $\Lambda(M)$ is a vertex lattice. This proves that if $\Lambda(M)$ is not a vertex lattice, the image of $M$ in $W_k$ has dimension one, hence $M \in \V_{\Lambda}^{(1)}(k)$.

  Last, since the largest isotropic subspace of $W_k$ has dimension $3$, observe that if $M \in \V^{(1)}_{\Lambda}(k)$ is sent to a line $l \in Y_{\infty}$, it means that $l + \Phi(l) + \Phi^2(l)$ is $\Phi$-stable. Then we can argue as in the beginning of this proof to see that $\langle l + \Phi(l) + \Phi^2(l) \rangle + \overline{\pi\Lambda}_k$ lifts to a vertex lattice containing $M$. Conversely, if $l \in R_W(k) \setminus Y_{\infty}$ then $l + \Phi(l) + \Phi^2(l)$ is not isotropic. On the other hand, the image of a vertex lattice in $W_k$ is isotropic, hence it cannot contain $l + \Phi(l) + \Phi^2(l)$ and hence it cannot contain $M$.
\end{proof}

In the non-split case, as we are going to see, there are lattices $M \in \V_{\Lambda}^{(2)}(k)$ whose associated lattice $\Lambda(M)$ is not a vertex lattice. This is essentially a consequence of the different possible values of the parameter $a_0$ introduced in Lemma \ref{lem:lus3}.

\begin{lem}\label{lem:spW2}
  If the Hermitian form on $C$ is non-split, for every $2$-modular lattice $\Lambda$
  \begin{equation*}
    \V_{\Lambda}(k) = \{\pi\Lambda_k\} \sqcup \V_{\Lambda}^{(1)}(k) \sqcup \V_{\Lambda}^{(2)}(k),
  \end{equation*} 
  and the restriction of the map $M \mapsto (M + \pi \Lambda_k) /\pi\Lambda_k$ induces a surjective map 
  \begin{equation*}
    \V^{(2)}_{\Lambda}(k) \longrightarrow Q_W,
  \end{equation*}
 to the generalized Deligne-Lusztig variety $Q_W$ of Section \ref{sec:dlvso}.
\end{lem}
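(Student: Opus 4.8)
The plan is to follow the template of the proof of Lemma~\ref{lem:spW}, adapting it to the non-split situation and to the target $Q_W$.

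First I would establish the decomposition. By Lemma~\ref{lem:5.5}, for any $M \in \V_{\Lambda}(k)$ the image $U := (M + \pi\Lambda_k)/\pi\Lambda_k$ in $W_k$ satisfies $U + \Phi(U) \subseteq (U + \Phi(U))^\perp$; in particular $U$ and $U + \Phi(U)$ are isotropic. If $\dim U$ were $3$, then $U$ would be a maximal isotropic subspace of $W_k$, so $U = U^\perp$, and the inclusion $U + \Phi(U) \subseteq (U + \Phi(U))^\perp = U \cap \Phi(U)$ would force $\Phi(U) = U$; but a $\Phi$-stable maximal isotropic subspace of $W_k$ does not exist, since the form on $W_k$ is non-split. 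Hence $\dim U \in \{0,1,2\}$, which gives the three (visibly disjoint) pieces: $\dim U = 0$ means $M \subseteq \pi\Lambda_k$, whence $M = \pi\Lambda_k$ by self-duality of $\pi\Lambda_k$; $\dim U = 1$ is $\V_{\Lambda}^{(1)}(k)$; and $\dim U = 2$ is $\V_{\Lambda}^{(2)}(k)$.

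For the surjectivity onto $Q_W$, observe first that, by what was just shown, any $M \in \V_{\Lambda}^{(2)}(k)$ has $\dim U = 2$ with $U + \Phi(U)$ isotropic, so $U \in Q_W(k)$ and the map is well-defined. As in the proof of Lemma~\ref{lem:spW}, the map $M \mapsto (M + \pi\Lambda_k)/\pi\Lambda_k$ factors through the injection $\V_{\Lambda}(k) \hookrightarrow S_V(k)$, $M \mapsto M/\pi^2\Lambda_k$, with image $S_{V\pi}$; so it suffices to prove that the map $S_{V\pi} \to Q_W$ induced by $q \colon V_k \to W_k$ is surjective. I would fix, as there, a $\Phi$-stable Lagrangian complement $\mathcal{L}$ of $\overline{\pi\Lambda}$ in $V$ and identify $W_k$ with $\mathcal{L}_k$. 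Given $U \in Q_W(k)$, lift it to $\tilde U \subseteq \mathcal{L}_k$, set $N := \tilde U \oplus \overline{\pi\Lambda}_k = q^{-1}(U)$ and $N^\vee := \tilde U^\vee \cap \overline{\pi\Lambda}_k$, so that $N^\vee \subset^2 \overline{\pi\Lambda}_k \subset^2 N$ and $N/N^\vee$ is a $4$-dimensional symplectic space containing the Lagrangian $\overline{\pi\Lambda}_k/N^\vee$. Then I would take $L$ to be the preimage in $N$ of a suitably chosen Lagrangian complement of $\overline{\pi\Lambda}_k/N^\vee$ in $N/N^\vee$; such an $L$ is $6$-dimensional and isotropic, hence Lagrangian, so it comes from a self-dual lattice, and $L + \overline{\pi\Lambda}_k = N$ gives $q(L) = U$. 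That $L$ satisfies the condition defining $S_{V\pi}$ follows exactly as in Lemma~\ref{lem:spW}: from $U$ and $U + \Phi(U)$ being isotropic one gets $\overline{\pi}(\tilde U) + \overline{\pi}(\Phi(\tilde U)) \subseteq N^\vee \cap \Phi(N^\vee) \subseteq L \cap \Phi(L)$, and $\overline{\pi}(L) + \overline{\pi}(\Phi(L)) = \overline{\pi}(\tilde U) + \overline{\pi}(\Phi(\tilde U))$ because $\ker \overline{\pi} = \overline{\pi\Lambda}_k$ and $\mathcal{L}_k$ is $\Phi$-stable.

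The hard part will be checking that $L$ also lies in $S_V$, i.e.\ that $L \subseteq^{\le 2} L + \Phi(L)$. Since $L$ and $\Phi(L)$ are Lagrangian this is equivalent to $\dim(L \cap \Phi(L)) \ge 4$, whereas the above only yields $L \cap \Phi(L) \supseteq N^\vee \cap \Phi(N^\vee)$, which has dimension $3$ precisely when $\dim(U + \Phi(U)) = 3$. The remaining dimension has to be gained from the choice of the Lagrangian complement $L/N^\vee$: one picks it to contain the image of a vector $w \in N \cap \Phi(N)$ with $q(w)$ spanning $U \cap \Phi(U)$ and with $\Phi^{-1}(w) \in L$ as well, the existence of such a choice being a dimension count inside $N/N^\vee$ using $\Phi$-stability of $\overline{\pi\Lambda}_k$; in the degenerate case where $U$ itself is $\Phi$-stable one may instead take $L$ to be $\Phi$-stable, a $\Phi$-stable Lagrangian complement existing by Lang's theorem over the algebraically closed field $k$. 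Once $L \in S_{V\pi} \subseteq S_V$ is established, the lattice $M \subseteq \Lambda_k$ with $M/\pi^2\Lambda_k = L$ lies in $\V_{\Lambda}^{(2)}(k)$ and maps to $U$, which completes the proof.
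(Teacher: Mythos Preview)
Your decomposition argument and the general template for surjectivity match the paper exactly. The difference is in how you handle what you call ``the hard part'', and here you are making things much harder than necessary.

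The paper does not choose a general Lagrangian complement of $\overline{\pi\Lambda}_k/N^\vee$ in $N/N^\vee$ and then struggle to arrange $\dim(L\cap\Phi(L))\ge 4$. It simply takes the obvious complement coming from the lift $\tilde U\subset\mathcal L_k$, i.e.\ $L=\tilde U\oplus N^\vee$. The point is that with this choice the sum $L+\Phi(L)$ splits cleanly as $(\tilde U+\Phi(\tilde U))\oplus(N^\vee+\Phi(N^\vee))$, because the first summand lies in the $\Phi$-stable Lagrangian $\mathcal L_k$ and the second in $\overline{\pi\Lambda}_k$. Now $\dim(\tilde U+\Phi(\tilde U))\le 3$ since $U+\Phi(U)$ is isotropic, and for the other piece one simply dualizes: $\dim(N+\Phi(N))=\dim((U+\Phi(U))\oplus\overline{\pi\Lambda}_k)\le 9$, hence $\dim(N\cap\Phi(N))\ge 7$, hence $\dim(N^\vee+\Phi(N^\vee))=12-\dim(N\cap\Phi(N))\le 5$. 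So $\dim(L+\Phi(L))\le 3+5=8$ and $L\in S_V$ immediately. No careful choice of complement, no dimension chase inside $N/N^\vee$, no separate treatment of the $\Phi$-stable case via Lang's theorem.

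Your route could presumably be made to work, but the construction you sketch (finding $w$ with $q(w)$ spanning $U\cap\Phi(U)$ and both $w,\Phi^{-1}(w)\in L$) is not cleanly justified as written, and it is unnecessary once you use the canonical lift in $\mathcal L_k$.
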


\begin{proof}
  Fix $M \in \V_{\Lambda}(k)$ and let $U$ denote its image in $W_k$.  Again we argue by cases on the dimension of $U$. By Lemma \ref{lem:5.5} we know that $U$ and $U + \Phi(U)$ are isotropic subspaces of $W_k$. This already excludes the case $\dim(U) = 3$ as this would imply $U = \Phi(U)$, a contradiction to the fact that the symmetric form on $W_k$ is non-split. 

  Suppose now that $U$ has dimension $2$. Recall that the $k$-valued points of the variety $Q_W$ are isotropic subspaces $U$ of $W_k$ of dimension $2$ and such that $ U + \Phi(U)$ is isotropic, too. Then by Lemma \ref{lem:5.5} it is clear that $\V_{\Lambda}^{(2)}(k)$ is mapped to a point in $Q_W(k)$. We show that this map is surjective. Consider the subset $S_{V \pi}$ of $S_V(k)$ as in the proof of Lemma \ref{lem:spW}.  Fix again a Lagrangian complement $\mathcal{L}$ of $\overline{\pi\Lambda}$ in $V$, which we identify with $W$. Let $U \subset \mathcal{L}_k$ be a $2$-dimensional subspace in $Q_W(k)$, we show how to construct a preimage of $U$ in $S_{V \pi}(k)$, which means a preimage in $\V_{\Lambda}^{(2)}(k)$. Consider the subspace $N = U \oplus \overline{\pi\Lambda}_k$ and its orthogonal $N^\vee$ with respect to the alternating form.  Let $L$ be the six-dimensional subspace $ L = U \oplus N^\vee$, then clearly $L$ is sent to $U$ by the quotient map $V_k \rightarrow W_k$. We prove that $L \in S_{V \pi}$. Since $U $ is contained in the Lagrangian subspace $\mathcal{L}$, it is an isotropic subspace, that is $U \subset U^\vee$. Moreover, $U \subset N$, from which it follows that $\langle U, N^\vee \rangle = 0$, and we can conclude that $L = U \oplus N^\vee$ is Lagrangian. We need to prove that $L + \Phi(L)$ has dimension at most eight. Observe that since $U + \Phi(U)$ has dimension at most $3$, we have $\dim(N + \Phi(N)) = \dim((U + \Phi(U)) \oplus \overline{\pi\Lambda}_k) \le 9$ from which it follows, by $\dim(N) = 8$, that $\dim(N \cap \Phi(N)) \ge 7$. By taking duals and observing, as in the proof of Lemma \ref{lem:spW}, that $\Phi(N^\vee) = \Phi(N)^\vee$ we obtain $\dim(N^\vee + \Phi(N^\vee)) = 12 - \dim(N \cap \Phi(N))\le 5$. Hence, we conclude that $L + \Phi(L) = (U + \Phi(U)) \oplus (N^\vee + \Phi(N^\vee))$ has dimension at most $3 + 5 = 8$, hence $L$ belongs to $S_{V}(k)$. The fact that $L$ belongs to $S_{V
 \pi}$, that is $\overline{\pi}(L) + \overline{\pi}(\Phi(L)) \subset L \cap \Phi(L)$ follows from the fact that $U + \Phi(U)$ is isotropic with respect to the symmetric form on $\mathcal{L}_k$ and by the same argument as in the proof of Lemma \ref{lem:spW}. This proves that the map $\V_{\Lambda}^{(2)}(k) \rightarrow Q_W(k)$ is surjective.
\end{proof}

\begin{lem}\label{lem:spW2b}
  Recall the stratification $Q_W = \bigsqcup_{i = 0}^2 Z_i$ of Lemma \ref{lem:QVstrata}. The map of Lemma \ref{lem:spW2} above sends a lattice $M \in \V_{\Lambda}^{(2)}(k)$ to
  \begin{itemize}
    \item[(i)] a point in $Z_0(k)$ if and only if $\Lambda(M)$ is a vertex lattice, moreover, in this case there is another $2$-modular lattice $\Lambda'$ such that $M \in \V_{\Lambda'}^{(1)}(k)$,
    \item[(ii)] a point in $Z_1(k)$ if and only if $\Lambda(M)$ is not a vertex lattice and there is another $2$-modular lattice $\Lambda'$ such that $M \in \V_{\Lambda'}^{(1)}(k)$,
    \item[(iii)] a point in $Z_2(k)$ if and only if $\Lambda(M)$ is not a vertex lattice and for every $2$-modular lattice $\Lambda'$ containing $\Lambda(M)$, we have that $M \in \V_{\Lambda'}^{(2)}(k)$. 
  \end{itemize} 
  In particular, this means that there exist lattices $M \in \V_{\Lambda}^{(2)}(k)$ such that $\Lambda(M)$ is not a vertex lattice.
\end{lem}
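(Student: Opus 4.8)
The plan is to read off, from the image $U := (M+\pi\Lambda_k)/\pi\Lambda_k$ of $M$ in $W_k$, both whether $\Lambda(M)$ is a vertex lattice and how $M$ sits inside the various $2$-modular lattices. By Lemma \ref{lem:spW2} the subspace $U$ is a $2$-dimensional $k$-point of $Q_W$ (here $\dim_k W_k = 6$, so $d=3$), and by Lemma \ref{lem:QVstrata} its stratum is governed by $\Phi$-stability: $U \in Z_0$ iff $U = \Phi(U)$; $U \in Z_1$ iff $U \neq \Phi(U)$ but $U \cap \Phi(U)$ is a $\Phi$-stable (isotropic) line; and $U \in Z_2$ otherwise. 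The three equivalences will be established by matching these conditions with lattice-theoretic statements about $\Lambda(M)$.

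First I would prove (i). If $U = \Phi(U)$ then $M + \pi\Lambda_k$ is $\tau$-stable, and using $M^\vee = M$ together with $(\pi\Lambda_k)^\vee = \pi\Lambda_k$ one computes $(M+\pi\Lambda_k)^\vee = M \cap \pi\Lambda_k$ and $\pi(M+\pi\Lambda_k) = \pi M + \pi^2\Lambda_k \subseteq M\cap\pi\Lambda_k$, so $M + \pi\Lambda_k$ satisfies $\pi(M+\pi\Lambda_k) \subseteq (M+\pi\Lambda_k)^\vee \subseteq M+\pi\Lambda_k$; passing to $\tau$-fixed points via Lemma \ref{rem:basis} gives a vertex lattice $\mathcal{L} \subseteq C$ with $\mathcal{L}\otimes_{\Z_p}W(k) = M + \pi\Lambda_k$, which contains $\Lambda(M)$ since $T_d \subseteq M + \pi\Lambda_k$ by minimality of $T_d$; hence $\Lambda(M)$ is a vertex lattice by Remark \ref{rem:vl2}. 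Conversely, if $\Lambda(M)$ is a vertex lattice then $\pi\Lambda(M) \subseteq \Lambda(M)^\vee$, i.e. $\Lambda(M) \subseteq \Lambda(M)^\perp$, so $\Lambda(M)$ is isotropic for the symmetric form; therefore the image $\overline{T_d}$ of $\Lambda(M)$ in $W_k$ is a $\Phi$-stable isotropic subspace, which in the non-split space $W_k$ has dimension at most $2$; since $U \subseteq \overline{T_d}$ and $\dim_k U = 2$, we get $U = \overline{T_d} = \Phi(U)$, i.e. $U \in Z_0$.

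Next, by (i), in cases $Z_1$ and $Z_2$ the lattice $\Lambda(M)$ is not a vertex lattice; note also that then $M \neq \pi\Lambda'_k$ for every $2$-modular $\Lambda'$, since $M = \pi\Lambda'_k$ would force $\Lambda(M) = \pi\Lambda'$ to be a type-$0$ vertex lattice. Thus for every $2$-modular $\Lambda'$ containing $\Lambda(M)$ (equivalently, with $M \in \V_{\Lambda'}(k)$) we have $M \in \V_{\Lambda'}^{(1)}(k) \sqcup \V_{\Lambda'}^{(2)}(k)$, so (ii) and (iii) together reduce to the single claim: \emph{$U \in Z_1$ if and only if some $2$-modular $\Lambda' \supseteq \Lambda(M)$ has $M \in \V_{\Lambda'}^{(1)}(k)$}. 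For the reverse implication, if $M \in \V_{\Lambda'}^{(1)}(k)$ then by Lemma \ref{lem:spW} the image of $M$ in $\Lambda'_k/\pi\Lambda'_k$ is an isotropic line $l'$ with $l' + \Phi(l')$ isotropic; transporting $l'$ back through the $2$-modular lattice $\Lambda'$ exhibits a $\Phi$-stable isotropic line in $W_k$ attached to $M$, which one identifies with $U \cap \Phi(U)$, so $U \notin Z_2$, and since $\Lambda(M)$ is not a vertex lattice, $U \in Z_1$. For the forward implication, and likewise for the ``moreover'' clause of (i) (where $U = \overline{T_d}$ is a $2$-dimensional $\Phi$-stable isotropic subspace), one starts from the $\Phi$-stable isotropic line $\ell \subseteq W_k$ (namely $\ell = U\cap\Phi(U)$ in the $Z_1$ case, and any $\Phi$-stable line of $U$ in the vertex case), forms the associated $\tau$-stable $\mathcal{O}_E\otimes_{\Z_p}W(k)$-overlattices of $M$ inside $\Lambda_k$, takes $\tau$-fixed points and combines with $\Lambda(M)$ to produce, via Proposition \ref{prop:2vl}, a $2$-modular lattice $\Lambda' \supseteq \Lambda(M)$; one then checks directly that the image of $M$ in $\Lambda'_k/\pi\Lambda'_k$ is one-dimensional, i.e. $M \in \V_{\Lambda'}^{(1)}(k)$. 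Finally, the last assertion of the lemma follows because $Z_1$ (and $Z_2$) is nonempty by Lemma \ref{lem:QVstrata} and the map $\V_\Lambda^{(2)}(k) \to Q_W(k)$ of Lemma \ref{lem:spW2} is surjective, so some $M$ maps outside $Z_0$; for such an $M$, $\Lambda(M)$ is not a vertex lattice by (i).

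The main obstacle will be the construction of the $2$-modular lattice $\Lambda'$ in the forward direction of the $Z_1$/vertex case and, dually, the verification in the $Z_2$ case that no choice of $2$-modular overlattice of $\Lambda(M)$ can bring the index of $M$ down to $1$. Both require careful bookkeeping with the three dualities on $C$ and on $W_k$ (using $M^\perp = \pi^{-1}M^\vee$ and the compatibility of the quotient $\Lambda_k \to W_k$ with duals from Lemma \ref{lem:5.5}), with the $\tau$-fixed-point functor of Lemma \ref{rem:basis}, and with the Jordan-splitting and building combinatorics of Propositions \ref{prop:2vl} and \ref{prop:vlproperties}. Ultimately the dichotomy $Z_1$ versus $Z_2$ reflects the fact that the invariant $a_0$ of the $4$-dimensional non-split symmetric space $\ell^\perp/\ell$ equals $2$ rather than $1$ (Lemma \ref{lem:lus3}), which is exactly why $Z_2$ is nonempty and why there genuinely exist lattices $M \in \V_\Lambda^{(2)}(k)$ whose associated lattice $\Lambda(M)$ is not a vertex lattice.
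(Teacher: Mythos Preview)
Your treatment of the equivalence in (i) between $U\in Z_0$ and $\Lambda(M)$ being a vertex lattice is correct and essentially matches the paper (your converse, bounding the image of $T_d$ in $W_k$ by~$2$ in the non-split case, is in fact a bit cleaner than the paper's contrapositive argument via $U+\Phi(U)+\Phi^2(U)$).

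The remaining steps, however, are not proved. In the ``moreover'' of (i) and in the forward direction of (ii) you write that one ``forms the associated $\tau$-stable $\mathcal{O}_E\otimes_{\Z_p}W(k)$-overlattices of $M$'', takes $\tau$-fixed points, and ``combines with $\Lambda(M)$'' via Proposition~\ref{prop:2vl} to produce $\Lambda'$. This does not specify what lattice you are constructing, and Proposition~\ref{prop:2vl} only asserts existence of \emph{some} $2$-modular overlattice, with no control on the index of $M+\pi\Lambda'_k$ over $\pi\Lambda'_k$. The paper's argument takes place in the $12$-dimensional symplectic space $V_k=\Lambda_k/\pi^2\Lambda_k$, not in $W_k$: one lifts a generator of the $\Phi$-stable line to $x_L\in V_k$, sets $N_x=\langle x\rangle\oplus\overline{\pi\Lambda}_k$, and builds the explicit Lagrangian $X=\langle x_L\rangle\oplus N_x^\vee$; the verifications that $X$ is $\Phi$-stable and $\bar\pi$-stable (hence lifts to a self-dual $\tau$-stable $\mathcal{O}_E\otimes W(k)$-lattice $\pi\Lambda'_k$) are nontrivial and use the dimension bound $\dim(L+\Phi(L))\le 8$. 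For the ``moreover'' of (i) the paper uses an entirely different route through the Bruhat--Tits building (a type-$2$ vertex lattice $L_1$ between $\pi\Lambda$ and $L_2=(M+\pi\Lambda)^\tau$, then a Lagrangian complement of $\overline{\pi\Lambda}$ in $L_1/L_1^\vee$), followed by a contradiction argument to force the index to be~$1$. Your sketch does not supply either construction.

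The reverse implication in (ii) has a more concrete gap. The line $l'$ you invoke lives in $\Lambda'_k/\pi\Lambda'_k$, a space with no given map to $W_k=\Lambda_k/\pi\Lambda_k$, and $l'$ is not $\Phi$-stable in general; ``transporting $l'$ back'' is not a well-defined operation. What the paper actually does is show, by a chain of index-$1$ dichotomies each resolved using the hypothesis that $\Lambda(M)$ is \emph{not} a vertex lattice, first that $\pi\Lambda'\subset\Lambda$ and then that $\pi\Lambda'+\pi\Lambda$ is a $\tau$-stable lattice strictly between $\pi\Lambda_k$ and $M+\pi\Lambda_k$; its image in $W_k$ is then the required $\Phi$-stable line inside $U\cap\Phi(U)$. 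Without establishing $\pi\Lambda'\subset\Lambda$ (which is false for arbitrary pairs of $2$-modular lattices) your argument cannot proceed.
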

\begin{proof}
  \par{(i)} Recall that in the definition of the stratification of $Q_W = \bigsqcup_{i= 0}^2 Z_i$ given in Lemma \ref{lem:QVstrata} the closed points of $Z_0$ correspond to isotropic, $2$-dimensional $\Phi$-stable subspaces $U$ of $W_k$. Then we can argue as in the proof of Lemma \ref{lem:splitV1} to see that if $M$ is sent to $Z_0$, then $\Lambda(M)$ is a vertex lattice. Conversely, as we have seen in the proof of Lemma \ref{lem:splitV1} the image in $W_k$ of a vertex lattice $L \subset \Lambda$ is always an isotropic subspace with respect to the symmetric form.  Indeed, observe that $(L_k + \pi\Lambda_k)^\vee = (L_k)^\vee \cap \pi\Lambda_k \supset \pi L_k$, which means that the image of $L_k + \pi\Lambda_k$ in $W_k$ is an isotropic subspace. Let $U$ be a point in $Z_1(k) \sqcup Z_2(k)$, then $U + \Phi(U)$ is not $\Phi$-stable, as it has dimension $3$ and the form is non-split. Hence, $U + \Phi(U) + \Phi^2(U)$ has dimension at least $4$ so cannot be isotropic. Since the image of $\Lambda(M)$ in $W_k$ is $\Phi$-stable and hence contains $U + \Phi(U) + \Phi^2(U)$, it cannot be isotropic. Therefore, in this case $\Lambda(M)$ cannot be a vertex lattice. This proves the first statement. 
  
  Let $M$ be in the preimage of $Z_0$ and let $L_2 = (M + \pi\Lambda)^\tau$, which by the discussion above we know is a vertex lattice and since $\pi\Lambda_k \subset ^2 M + \pi\Lambda_k$ it has type $4$. Clearly, $\pi\Lambda$, which is a vertex lattice of type $0$, is contained in $L_2$. Recall the simplicial complex $\mathscr{L}$ of vertex lattices introduced in Proposition \ref{prop:vlproperties}. In the non split case, $\pi\Lambda$ and $L_2$ are both vertices of this complex, which we know is connected and isomorphic to the Bruhat-Tits building for $\mathrm{SU}(C)(\Q_p)$. It follows that we can find a vertex lattice $L_1$ of type $2$ such that $\pi\Lambda \subset^1 L_1 \subset^1 L_2$. Consider the $\F_p$-vector space $L_1 / L_1^\vee$,  then the image of $\pi\Lambda$ in this quotient is a Lagrangian subspace of dimension $1$. Consider a Lagrangian complement of $\pi\Lambda$ in $L_1 / L_1^\vee$. Its preimage in $C$ is again a self-dual $\mathcal{O}_E$-lattice contained in $L_1$, hence by Proposition \ref{prop:2vl} we can identify it with a lattice of the form $\pi\Lambda_1$ for some $2$-modular lattice $\Lambda_1$. Moreover, since its image modulo $L_1^\vee$ is a Lagrangian complement of $\pi\Lambda$ we have $L_1 = \pi\Lambda + \pi\Lambda_1$.

  We show that if $M \neq \pi\Lambda_1 \otimes W(k)$ then $M \in \V_{\Lambda_1}^{(1)}(k)$. Suppose this is not the case and $M \in \V_{\Lambda_1}^{(2)}(k)$, which means $\pi\Lambda_1 \subset^2 M + \pi\Lambda_1$, here we omit the subscript $k$ for better readability. Since $\pi\Lambda_1 \subset L_2 = M + \pi\Lambda$  it follows that $M + \pi\Lambda_1 \subset M + \pi\Lambda$, and since both contain $M$ with index two, this inclusion is actually an equality. 
  Let $U$ be the image of $M$ in $V_k$ and consider  the chain of subspaces in  $V_k$
  \begin{equation*}
    \overline{\pi\Lambda}_1 \subsetneq \overline{\pi\Lambda_1} + \overline{\pi\Lambda} \subsetneq U + \overline{\pi\Lambda}_1 = U + \overline{\pi\Lambda},
  \end{equation*}
  obtained as the image in $V$ of the chain of inclusions of lattices 
  $\pi\Lambda_1 \subset^1 L_1 = \pi\Lambda + \pi\Lambda_1 \subset^1 L_2 = M + \pi\Lambda_1 = M + \pi\Lambda$. Observe that the inclusions remain proper in $V_k$ as $\pi\Lambda_1 \subset  \Lambda$ and by duality $\pi^2\Lambda \subset \pi\Lambda_1$. 
  Since the image $\overline{\pi\Lambda_1}$ of $\pi\Lambda_1$ in $V_k$ is contained with codimension $2$ in $U + \overline{\pi\Lambda_1}$ we can find two vectors $u_1, u_2 \in U$ such that $U + \overline{\pi\Lambda_1} = \langle u_1, u_2 \rangle \oplus \overline{\pi\Lambda_1}$. Moreover, by the inclusions above, we can actually choose these two vectors such that $u_2  \in \overline{\pi\Lambda} \cap U$. However, since $U + \overline{\pi\Lambda_1} = U + \overline{\pi\Lambda}$ and all these spaces are Lagrangian, by taking the orthogonal on both sides $U \cap \overline{\pi\Lambda_1} = U \cap \overline{\pi\Lambda}$. This means that $u_2 \in \overline{\pi\Lambda_1}$ which leads to a contradiction.

  \par{(ii)} Consider now $M \in \V_{\Lambda}^{(2)}(k)$ with image $U$ in $Z_1(k)$. Denote by $L$ the image of $M$ in $V$. By the definition of $Z_1$ in Lemma \ref{lem:QVstrata}, we know that $U \cap \Phi(U)$ is a one-dimensional, $\Phi$-stable subspace of $W_k$. Let $x \in \mathcal{L}_k \cong W_k$ be a $\Phi$-stable element such that $U \cap \Phi(U) = \langle x \rangle$ and let $x_L$ be a lift in $L$. Since $x_L$ is $\Phi$-stable modulo $\overline{\pi\Lambda}_k$, there is an element $\pi\lambda \in \overline{\pi\Lambda}_k$ such that $\Phi(x_L) = x_L + \pi\lambda$. Consider the seven-dimensional subspace $N_x = \langle x \rangle  \oplus \overline{\pi\Lambda}_k$ and its orthogonal $N_x^\vee \subset \overline{\pi\Lambda}_k$. Observe that $N_x$ is $\Phi$-stable. As we have observed several times in this section, $\Phi(N_x^\vee) = \Phi(N_x)^\vee$, and therefore $N_x^\vee$ is $\Phi$-stable as well.
  
  We show that $X = \langle x_L \rangle \oplus N_x^\vee$ is the space we are looking for, that is, it corresponds to another $2$-modular lattice $\Lambda'$ such that $M \in \V^{(1)}_{\Lambda'}$. First, observe that $X$ is Lagrangian, since $N_x^\vee \subset^1 X \subset^1 N_x$ and hence we can argue as in the proof of Lemma \ref{lem:spW}. We need to prove that $X$ is $\Phi$-stable. First, we note that since $N_x = U \cap \Phi(U) \oplus \overline{\pi\Lambda}_k = (L + \overline{\pi\Lambda}_k) \cap (\Phi(L) + \overline{\pi\Lambda}_k)$ it follows that $N_x^\vee = (L \cap \overline{\pi\Lambda}_k) + (\Phi(L) \cap \overline{\pi\Lambda}_k)$ as all summands appearing are Lagrangian. Complete $x$ to a basis $\{x, y\}$ of $U \subset  \mathcal{L}_k$ and denote by $y_L$ an element in $L$ such that $L = \langle x_L, y_L \rangle \oplus (L \cap \overline{\pi\Lambda}_k)$. Then we have
  \begin{equation*}
    L + \Phi(L) = \langle x_L, y_L, \Phi(y_L) \rangle \oplus (\langle \pi\lambda \rangle + (L \cap \overline{\pi\Lambda}_k) + (\Phi(L) \cap \overline{\pi\Lambda}_k)) = \langle x_L, y_L, \Phi(y_L) \rangle \oplus (\langle \pi\lambda \rangle + N_x^\vee).
  \end{equation*}
  Since the image of $\langle x_L, y_L, \Phi(y_L) \rangle$ in $W_k$ is  $U + \Phi(U)$ and has dimension $3$, these elements are linearly independent. Moreover, we already know that $N_x^\vee$ has dimension $5$. Since $L \in S_V$, the dimension of $L + \Phi(L)$ cannot be larger than eight, so we have that $\pi\lambda \in N_x^\vee$. We conclude that 
  \begin{equation*}
    \Phi(X) = \langle \Phi(x_L) \rangle \oplus \Phi(N_x^\vee) = \langle x_L + \pi\lambda \rangle \oplus N_x^\vee = \langle x_L \rangle \oplus N_x^\vee = X.
  \end{equation*}

  It remains to prove that $X$ can be lifted to a lattice in $C \otimes W(k)_\Q$. Since $U = \langle x, y \rangle $ is isotropic with respect to the symmetric form on $W_k \cong \mathcal{L}_k$ we have that $\overline{\pi}(U) \subset U^\vee \cap \overline{\pi\Lambda}_k \subset \langle x \rangle^\vee \cap \overline{\pi\Lambda}_k = N_x^\vee$. It follows that $\overline{\pi}(X) = \overline{\pi}(x) \subset \overline{\pi}(U) \subset N_x^\vee \subset X$. In particular $X \in S_{V \pi}$, hence we can lift it to a $\tau$-stable, self-dual $W(k) \otimes \mathcal{O}_E$-lattice $X$. Since $\overline{\pi}(L) \subset X$ we have $\pi M \subset X$. By Proposition \ref{prop:2vl} we know that $\Lambda' = \pi^{-1} X^{\tau}$ is a $2$-modular lattice, which then contains $\Lambda(M)$. Moreover, since $X \cap L \subset^1 L$, we have that $\pi\Lambda'_K \subset^1 M + \pi\Lambda'_k$, in other words $M \in \V_{\Lambda'}^{(1)}(k)$.

  It remains to prove the ``if'' part of the second statement. Suppose $M$ is a lattice in $\V(k)$ such that there are two $2$-modular lattices $\Lambda_{1,2} \subset C$ such that $M \subset \Lambda_1 \cap \Lambda_2$ and $M \subset^i M + \pi\Lambda_i$, (here we omit the subscript $k$ to ease the notation). Moreover, $\Lambda(M)$ is not a vertex lattice. We want to prove that $M$ is mapped to a point in $Z_1(k)$ by the map $M \mapsto M + \pi\Lambda_2$ associated to $\Lambda_2$. By definition of $Z_1$ this is equivalent to showing that $(M + \pi\Lambda_2) \cap (\tau(M) + \pi\Lambda_2)$ is $\tau$-stable. It suffices to prove that $\pi\Lambda_1 + \pi\Lambda_2 \subset M + \pi\Lambda_2$. Indeed, if this is the case by $\tau$-stability $\pi\Lambda_1 + \pi\Lambda_2 \subset (M + \pi\Lambda_2) \cap (\Phi(M) + \pi\Lambda_2)$. Since $M + \pi\Lambda_2$ is not $tau$-stable, otherwise $\Lambda(M)$ would be a vertex lattice, it follows that the inclusion above is an equality. Consider the inclusions
  \begin{equation*}
    M \subset M + (\pi\Lambda_1 \cap \Lambda_2) \subset M + \pi\Lambda_1.
  \end{equation*} 
  Since the index of $M$ in $M + \pi\Lambda_1$ is $1$ we have that one of the inclusions above is actually an equality. If $M = M + (\pi\Lambda_1 \cap \Lambda_2)$ or equivalently $\pi\Lambda_1 \cap \Lambda_2 \subset M$, by taking duals on both sides we have $M \subset \pi\Lambda_1 + \pi^2\Lambda_2$. The latter is a $\tau$-stable lattice such that $\pi(\pi\Lambda_1 + \pi^2\Lambda_2) = \pi^2\Lambda_1 + \pi^3\Lambda_2 \subset \pi\Lambda_1 \cap \Lambda_2 = (\pi\Lambda_1 + \pi^2\Lambda_2)^\vee \subset \pi\Lambda_1 + \pi^2\Lambda_2$, where the first inclusion follows from the fact that $\pi^2\Lambda_{1,2} \subset M \subset \Lambda_{1,2}$. Therefore, in this case $M$ is contained in a $\tau$-stable vertex lattice, which contradicts the assumption on $\Lambda(M)$. It follows that the second inclusion above is an equality, that is $M + \pi\Lambda_1 = M + (\pi\Lambda_1 \cap \Lambda_2)\subset \Lambda_2$ from which it follows that $\pi\Lambda_1 \subset \Lambda_2$. Observe that by taking duals we also have $\pi^2 \Lambda_2 \subset \pi\Lambda_1$ from which we conclude that $\pi\Lambda_1 + \pi\Lambda_2 \subset \Lambda_1 \cap \Lambda_2$. Observe that $\pi\Lambda_1 + \pi\Lambda_2$ is a $\tau$-stable vertex lattice. Finally, consider the inclusions
  \begin{equation*}
    M \cap \pi\Lambda_1 \subset (M \cap \pi\Lambda_1) + (\pi\Lambda_1 \cap \pi\Lambda_2) \subset \pi\Lambda_1.
  \end{equation*}
  Again by the fact that $M \cap \pi\Lambda_1$ has index one in $\pi\Lambda_1$, one of the inclusions above is an equality. If the first inclusion is an equality, which is equivalent to $\pi\Lambda_1 \cap \pi\Lambda_2 \subset M \cap \pi\Lambda_1 \subset M$ by taking duals we have $M \subset \pi\Lambda_1 + \pi\Lambda_2$, and we have just observed that the latter is a $\tau$-stable vertex lattice, which contradicts the assumption on $\Lambda(M)$. It follows that the second inclusion is an equality, hence $\pi\Lambda_1 \subset (M \cap \pi\Lambda_1) + (\pi\Lambda_1 \cap \pi\Lambda_2) \subset M + \pi\Lambda_2$ and therefore $\pi\Lambda_2 \subsetneq \pi\Lambda_1 + \pi\Lambda_2 \subsetneq M + \pi\Lambda_2$. Here the first inclusion is proper as $\Lambda_{1,2}$ are distinct, while the second is because $M + \pi\Lambda_2$ is not $\tau$-stable. Since $\pi\Lambda_1 + \pi\Lambda_2$ is a $\tau$-stable lattice contained in $M + \pi\Lambda_2$, it follows that its image modulo $\pi\Lambda_2$ is a $\Phi$-stable subspace contained in the image $U$ of $M$. By $\Phi$-stablity it is contained in the intersection $U \cap \Phi(U)$ which has dimension $1$. It follows that $U \cap \Phi(U)$ is $\Phi$-stable and therefore $M$ is sent to a point of $Z_1(k)$.

  \par{(iii)} The last statement follows directly from the previous two.
\end{proof}

\begin{lem}\label{lem:spW2c}
  We denote by $\V^{(2)^\circ}_{\Lambda}(k)$ the preimage of $Z_2(k)$, that is the set of lattices $M \in \V_{\Lambda}^{(2)}(k)$ such that $\Lambda(M)$ is not a vertex lattice and $M \in \V_{\Lambda'}^{(2)}$ for every $2$-modular lattice $\Lambda(M) \subset \Lambda'$. Then the restriction of the map of Lemma \ref{lem:spW2} induces a surjective map 
  \begin{equation*}
    \V^{(2)^\circ}_{\Lambda}(k) \longrightarrow Z_2,
  \end{equation*}
 with fibers equal to $\mathbb{A}^2(k)$.
\end{lem}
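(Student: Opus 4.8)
The plan is to reuse the identifications set up in the proof of Lemma~\ref{lem:spW2}. Recall that $M\mapsto M/\pi^{2}\Lambda_{k}$ gives a bijection of $\V_{\Lambda}(k)$ with the set $S_{V\pi}$ of Lagrangians $L\subset V_{k}$ satisfying $\overline{\pi}(L)+\overline{\pi}(\Phi(L))\subset L\cap\Phi(L)$, under which $\V^{(2)}_{\Lambda}(k)$ corresponds to those $L$ whose image $U=q(L)$ under the quotient $q\colon V_{k}\to W_{k}$ is two-dimensional. By Lemma~\ref{lem:spW2b}\,(iii) the extra condition defining $\V^{(2)^{\circ}}_{\Lambda}(k)$ is exactly that $U$ lie in the open stratum $Z_{2}$ of $Q_{W}$, so the map of the statement becomes $L\mapsto q(L)$ on $\{L\in S_{V\pi}\mid q(L)\in Z_{2}\}$, whose surjectivity onto $Z_{2}(k)$ is immediate from Lemma~\ref{lem:spW2}. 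Thus the whole content is the computation of the fibre over a fixed $U\in Z_{2}$.

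Fix such a $U$ and put $N=q^{-1}(U)$, an eight-dimensional subspace of $V_{k}$ containing the Lagrangian $\overline{\pi\Lambda}_{k}$, with alternating dual $N^{\vee}\subset\overline{\pi\Lambda}_{k}$ of dimension four. If $L$ is Lagrangian with $q(L)=U$ then $L+\overline{\pi\Lambda}_{k}=N$, hence dualizing $L\cap\overline{\pi\Lambda}_{k}=N^{\vee}$; so $L$ is precisely a Lagrangian complement of $\overline{\pi\Lambda}_{k}/N^{\vee}$ inside the four-dimensional symplectic space $N/N^{\vee}$, equivalently (via the perfect pairing $U\times(\overline{\pi\Lambda}_{k}/N^{\vee})\to k$ induced by $\langle\,,\,\rangle$) a homomorphism $\psi\colon U\to\overline{\pi\Lambda}_{k}/N^{\vee}$ such that $(u,u')\mapsto\langle u,\psi(u')\rangle$ is symmetric. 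Hence the Lagrangians $L$ with $q(L)=U$ form an affine space of dimension $\dim\mathrm{Sym}^{2}U=3$. Next I would check that the $\overline{\pi}$-condition defining $S_{V\pi}$ is automatic for all such $L$: since $\overline{\pi}$ kills $\overline{\pi\Lambda}_{k}$ one has $\overline{\pi}(L)=\overline{\pi}(U)$ and $\overline{\pi}(\Phi(L))=\Phi(\overline{\pi}(U))=\overline{\pi}(\Phi(U))$, and the isotropy of $U+\Phi(U)$ for the symmetric form on $W_{k}$ translates into $\overline{\pi}(U)+\overline{\pi}(\Phi(U))\subset N^{\vee}\cap\Phi(N^{\vee})\subset L\cap\Phi(L)$.

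So the fibre is cut out inside this $\mathbb{A}^{3}$ only by the condition $L\in S_{V}$, i.e.\ $\dim(L+\Phi(L))\le 8$. Choosing a basis $\{u_{1},v\}$ of $U$ adapted to the one-dimensional space $U\cap\Phi(U)$, say with $\Phi(v)=\alpha u_{1}+\beta v\in U$ and $\Phi(u_{1})\notin U$, a short computation shows that modulo $N^{\vee}+\Phi(N^{\vee})$ the subspace $L+\Phi(L)$ is spanned by the images of $u_{1},v,\Phi(u_{1})$ together with the single vector $w_{\psi}=\Phi(\psi(v))-\alpha\psi(u_{1})-\beta\psi(v)\in\overline{\pi\Lambda}_{k}$, which depends $k$-linearly on $\psi$. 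Since $U\in Z_{2}$ forces $\dim(U+\Phi(U))=3$, hence $\dim(N\cap\Phi(N))=7$ and $\dim(N^{\vee}+\Phi(N^{\vee}))=5$, the inequality $\dim(L+\Phi(L))\le 8$ is equivalent to the single linear condition $w_{\psi}\in N^{\vee}+\Phi(N^{\vee})$, i.e.\ to $\psi$ lying in the kernel of a linear map from the three-dimensional space of admissible $\psi$ to the one-dimensional quotient $\overline{\pi\Lambda}_{k}/(N^{\vee}+\Phi(N^{\vee}))$. Thus the fibre is a linear subspace, and it equals $\mathbb{A}^{2}(k)$ once this map is shown to be nonzero. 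Proving that non-vanishing is the step I expect to be the real work: I would do it by exhibiting explicitly, for any $U\in Z_{2}$, a Lagrangian $L$ with $q(L)=U$ but $\dim(L+\Phi(L))=9$ --- equivalently an admissible $\psi$ with $\Phi(\psi(v))\notin N^{\vee}+\Phi(N^{\vee})+\langle\psi(u_{1}),\psi(v)\rangle$ --- using the freedom in $\psi(v)\in\overline{\pi\Lambda}_{k}/N^{\vee}$ together with the fact that, precisely because $U\cap\Phi(U)$ is \emph{not} $\Phi$-stable on $Z_{2}$, the Frobenius does not stabilise the relevant five-dimensional subspace of $\overline{\pi\Lambda}_{k}$. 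With this in hand the fibre is the kernel of a surjective $k$-linear map of affine spaces, hence in bijection with $\mathbb{A}^{2}(k)$, as claimed.
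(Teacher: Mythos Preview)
Your setup is correct and close to the paper's, but there is a genuine error at the crucial step. The expression $w_{\psi}=\Phi(\psi(v))-\alpha\psi(u_{1})-\beta\psi(v)$ does \emph{not} depend $k$-linearly on $\psi$: the Frobenius $\Phi$ is only $\sigma$-semilinear, so $w_{c\psi}=c^{p}\Phi(\psi(v))-c(\alpha\psi(u_{1})+\beta\psi(v))\neq c\,w_{\psi}$ for $c\notin\F_{p}$. Consequently the fibre is not the kernel of a $k$-linear map, and exhibiting a single $\psi$ with $w_{\psi}\neq 0$ does not by itself show the fibre has dimension two. Over an algebraically closed field the vanishing of such an additive expression of the shape ``$p$-power in $\psi(v)$ minus linear in $\psi$'' could a priori have the wrong size.

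The paper circumvents this by organising the two constraints in the opposite order. It first fixes $\pi\lambda$, the lift of the line $l=U\cap\Phi(U)$ (this is your $\psi(u_{1})$ with $u_{1}$ spanning $l$, so $\alpha=1,\beta=0$); then the dimension condition reads $\Phi(\pi\lambda_{2})\in\pi\lambda+(N^{\vee}+\Phi(N^{\vee}))$, which after applying the bijection $\Phi^{-1}$ places $\pi\lambda_{2}$ on a one-dimensional \emph{affine} line in $\overline{\pi\Lambda}_{k}/N^{\vee}$. The Lagrangian condition $\langle\pi\lambda,\Phi^{-1}(u)\rangle=\langle\pi\lambda_{2},u\rangle$ is then a genuinely $k$-linear condition on $\pi\lambda_{2}$, and the ``real work'' is checking it is non-constant on that line. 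The paper does this by observing that $N^{\vee}+\Phi(N^{\vee})=\langle u\rangle^{\vee}\cap\overline{\pi\Lambda}_{k}$ and that $\Phi^{-1}(N^{\vee})\not\subset N^{\vee}+\Phi(N^{\vee})$ (which is exactly where $U\in Z_{2}$ is used), so $\Phi^{-1}(N^{\vee})$ is not orthogonal to $u$. Hence $\pi\lambda_{2}$ is uniquely determined by $\pi\lambda$, and the fibre is parametrised by $\pi\lambda\in\overline{\pi\Lambda}_{k}/N^{\vee}\cong\mathbb{A}^{2}(k)$.

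Your argument can be repaired along exactly these lines: keep your parametrisation by $\psi$, but instead of claiming linearity in all of $\psi$, fix $\psi(u_{1})$ (two free parameters) and show that the dimension and symmetry conditions together determine $\psi(v)$ uniquely. The non-triviality check you anticipated is then the one the paper carries out, but it concerns the Lagrangian condition on the affine line of admissible $\psi(v)$, not the surjectivity of a map out of $\mathrm{Sym}^{2}U$.
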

\begin{proof}
  It remains to study the fibers of the map  $\V^{(2)^\circ}_{\Lambda}(k) \longrightarrow Z_2(k)$. Recall the isomorphism of Lemma \ref{lem:QVstrata} between $Z_2$ and the union of Deligne-Lusztig variety $X_B(t_2t_1) \cup X_B(t_3t_1)$. On closed points it gives a bijection $U \mapsto U \cap \Phi(U) \subset U$, compare Lemma \ref{lem:QVstrata}. Fix $U \in Z_2$, and let $l = U \cap \Phi(U) \subset \mathcal{L}_k$. We have already seen that $L = U \oplus N^\vee$ is a preimage in $S_{V\pi}$ (or equivalently it produces a preimage in $\V_{\Lambda}^{(2)}$) of $U$. Fix a basis $l = \langle u \rangle$ and let $l' =  \langle u + \pi\lambda \rangle$ be another lift of $l$ in $V_k$. If $L' \in S_{V \pi}$ is another preimage of $U$ and contains $l'$, then it is of the form
  \begin{equation*}
    L' = \langle u + \pi\lambda, \Phi^{-1}(u) + \pi\lambda_2 \rangle \oplus N^\vee,
  \end{equation*} for some $\pi\lambda_2$ in the $2$-dimensional space $\overline{\pi\Lambda}/N^\vee$. Consider
  \begin{align*}
    L' + \Phi(L') &= \langle u + \pi\lambda, \Phi^{-1}(u) + \pi\lambda_2, \Phi(u) + \Phi(\pi\lambda), u + \Phi(\pi\lambda_2) \rangle \oplus ( N^\vee + \Phi(N^\vee)) \\
    &= \langle u + \pi\lambda, \Phi^{-1}(u) + \pi\lambda_2, \Phi(u) + \Phi(\pi\lambda) \rangle \oplus ( \langle \pi\lambda - \Phi(\pi\lambda_2) \rangle + N^\vee + \Phi(N^\vee)).
  \end{align*}
  Since $L' + \Phi(L')$ has to have dimension eight, and its subspace
  \begin{equation*} 
    \langle u + \pi\lambda, \Phi^{-1}(u) + \pi\lambda_2, \Phi(u) + \Phi(\pi\lambda) \rangle \oplus (N^\vee + \Phi(N^\vee)) 
  \end{equation*}
  already has dimension $3 + 5 = 8$ we have that $\Phi((\pi\lambda_2)) \in \pi\lambda + (N^\vee + \Phi(N^\vee))$, which means that $\pi\lambda_2$ belongs to the one-dimensional (affine) subspace $\Phi^{-1}(\pi\lambda) + (N^\vee + \Phi^{-1}(N^\vee))/N^\vee \subset \overline{\pi\Lambda}_k/N^\vee$. Moreover, since $L'$ has to be Lagrangian, we have to impose another linear condition
  \begin{equation*}
    \langle \pi\lambda, \Phi^{-1}(u)\rangle = \langle \pi\lambda_2, u \rangle.
  \end{equation*} Observe that $N^\vee + \Phi(N^\vee) \subset \langle u \rangle^\vee \cap \overline{\pi\Lambda}_k$ as $u$ is contained in both the Lagrangian spaces $L$ and $\Phi(L)$. By comparing dimension we have equality and since $\Phi^{-1}(N^\vee)$ is not contained in $N^\vee + \Phi(N^\vee)$ it follows that $\Phi^{-1}(N^\vee)$ is not orthogonal to $u$. Therefore, the linear condition on $\pi\lambda_2$ above is non-trivial and determines a unique point in $\Phi^{-1}(\pi\lambda) + (N^\vee + \Phi^{-1}(N^\vee))/N^\vee \subset \overline{\pi\Lambda}_k/N^\vee$. It follows that a preimage $L'$ of $U$ is uniquely determined by how it lifts the subspace $U \cap \Phi(U)$, that is by a unique element in the $2$-dimensional space $\overline{\pi\Lambda}_k/N^\vee \cong \mathbb{A}^2(k)$. 
\end{proof}

\section{Geometry of $\bar{\N}^0$}\label{sec:geometry}
\noindent
In this section we study the irreducible components of the reduced scheme underlying $\bar{\N}^0$. Recall that the irreducible components of the analogous scheme for $\GU(1, n-1)$ are indexed over the set of vertex lattices of maximal type, as proved in \cite{rtw}, and are isomorphic to generalized Deligne-Lusztig varieties. In our case, if the form is split, we are going to see that in addition to components analogous to those of \textit{loc.cit.}, a second type of irreducible components appears. These components originate from $2$-modular lattices, and are universally homeomorphic to line bundles over a Deligne-Lusztig variety. In the non-split case, we are going to prove that there are again two types of irreducible components, which both originate from $2$-modular lattices and are universally homeomorphic to line bundles over a generalized Deligne-Lusztig variety, respectively to the closure of vector bundles of rank $2$ over a classical Deligne-Lusztig variety of Coxeter type. 

\subsection{The subscheme $\N_{\Lambda}$}
Let $k$ be a perfect field containing $\F$. Let $\Lambda$ be a vertex lattice or a $2$-modular lattice in $C$. We write again $\Lambda_k$ for $\Lambda \otimes_{\mathbb{Z}_p}W(k)$. We first define the closed subfunctor $\N_{\Lambda}$ of $\bar{\N}^{0}$ associated to  $\Lambda$, whose $\F$-points are in bijection with $\V_{\Lambda}(\F) = \{M \in \V(\F) \mid \Lambda(M) \subset \Lambda\}$. The construction is similar to that of \cite{rtw}*{Sec.\ 6} and \cite{vw}*{Sec.\ 4}, we recall here the main ideas and point out the differences, that are due to the fact that now we have to consider $2$-modular lattices as well. 

\begin{lem}\label{lem:XLambdaplus}
Let $\Lambda^{+} = \Lambda_{\F}$ and $\Lambda^{-} = \Lambda^{\vee}_{\F}$. They correspond to two $p$-divisible groups $X_{\Lambda^{\pm}}$ with  quasi-isogenies $\rho_{\Lambda^{\pm}}: X_{\Lambda^{\pm}} \rightarrow \mathbb{X}$.
\end{lem}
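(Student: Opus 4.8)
The plan is to follow the constructions of \cite{rtw}*{Sec.\ 6} and \cite{vw}*{Sec.\ 4}, producing $X_{\Lambda^{\pm}}$ by exhibiting $\Lambda^{+}$ and $\Lambda^{-}$ as Dieudonn\'e lattices inside the isocrystal $N$ and invoking (covariant) Dieudonn\'e theory. First I would check that $\Lambda^{+}=\Lambda_{\F}$ and $\Lambda^{-}=\Lambda^{\vee}_{\F}$ are $\mathcal{O}_E\otimes_{\Z_p}W$-lattices in $N\cong C\otimes_{\Q_p}W_{\Q}$: they are the extensions of scalars of the $\mathcal{O}_E$-lattices $\Lambda$ and $\Lambda^{\vee}$ of $C$, where $\Lambda^{\vee}$ is $\mathcal{O}_E$-stable since $\langle\Pi x,\Lambda\rangle=-\langle x,\Pi\Lambda\rangle\subset\langle x,\Lambda\rangle$. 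Being base-changed from $C$, both lattices are $\tau$-stable, and both are stable under the action $\Pi$ of $\pi$.

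The main point is to deduce $F$- and $V$-stability from $\tau$- and $\Pi$-stability. From the defining relation $\tau=\eta\,\Pi\,V^{-1}$ and from $FV=VF=p=(\eta\pi)^{2}$ one obtains, as operators on $N$,
\begin{equation*}
  V=\tau^{-1}\,\eta\,\Pi,\qquad F=\eta\,\Pi\,\tau .
\end{equation*}
Writing $M$ for either $\Lambda^{+}$ or $\Lambda^{-}$, and using $\tau M=M$, the fact that $\eta\in W^{\times}$ acts invertibly on the $W$-module $\Pi M$, and that $\Pi$ commutes with $\tau$, we get $FM=\eta\,\Pi\,\tau M=\Pi M$ and $VM=\tau^{-1}\eta\,\Pi M=\Pi M$. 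Since $\Pi M\subset M$ and $pM=\eta^{2}\Pi^{2}M=\Pi^{2}M\subset\Pi M$, the pair $(M,F)$ satisfies $pM\subset FM\subset M$ and is therefore the (covariant) Dieudonn\'e module of a $p$-divisible group $X_{M}$ over $\F$, endowed with an $\mathcal{O}_E$-action $\iota_{\Lambda^{\pm}}\colon\mathcal{O}_E\to\End(X_{\Lambda^{\pm}})$ coming from $\Pi$ and, since $(\Lambda^{+})^{\vee}=\Lambda^{-}$ and $(\Lambda^{-})^{\vee}=\Lambda^{+}$, with a quasi-polarization induced by the alternating form on $N$.

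Finally, $\Lambda^{\pm}$ and the Dieudonn\'e module of $\mathbb{X}$ are both lattices in $N$, which is the rational Dieudonn\'e module of $\mathbb{X}$; the inclusion $\Lambda^{\pm}\hookrightarrow N$ thus defines an $\mathcal{O}_E$-linear quasi-isogeny $\rho_{\Lambda^{\pm}}\colon X_{\Lambda^{\pm}}\to\mathbb{X}$, compatible with the polarizations up to a factor in $\Q_p^{\times}$. I do not anticipate a real obstacle: the only delicate point is the bookkeeping with the semilinear identities for $F$ and $V$, and one should keep in mind that — unlike the points of $\bar{\N}^{0}$ — the auxiliary groups $X_{\Lambda^{\pm}}$ need not satisfy the Kottwitz and Pappas conditions, so nothing further has to be verified.
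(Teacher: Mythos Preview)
Your proof is correct and follows the same strategy as the paper: show that $\Lambda^{\pm}$ are $F$-, $V$-, and $\Pi$-stable by exploiting the relation $\tau=\eta\Pi V^{-1}$ together with $\tau$-stability, then invoke Dieudonn\'e theory and read off the quasi-isogenies from the inclusions $\Lambda^{\pm}\subset N$. Your treatment is in fact slightly more uniform than the paper's, which cites \cite{rtw} for the vertex-lattice case and, in the $2$-modular case, deduces $F$- and $V$-stability of $\Lambda^{-}$ from that of $\Lambda^{+}$ via the adjunction $\langle Fx,y\rangle=\langle x,Vy\rangle^{\sigma}$; your single computation $FM=VM=\Pi M$ for any $\tau$- and $\Pi$-stable $W$-lattice $M$ covers both $\Lambda^{+}$ and $\Lambda^{-}$ and both types of $\Lambda$ at once.
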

\begin{proof}
  If $\Lambda$ is a vertex lattice, it is proved in \cite{rtw}*{Lem.\ 6.1} that both $\Lambda^{+}$ and $\Lambda^{-}$ are stable under $\pi, F, V$ from which the claim follows by Dieudonn\'e theory. If $\Lambda$ is a $2$-modular lattice, we know that $\Lambda$ is $\pi$- and $\tau$-stable, and consequently so is $\Lambda^{+}$. We then have \begin{equation*}
    F\Lambda^{+} = pV^{-1}\Lambda^{+} = \pi^2 V^{-1}\Lambda^{+} = \pi \tau \Lambda^{+} = \pi \Lambda^{+} \subset \Lambda^{+}.
  \end{equation*}
  Similarly, $V\Lambda^{+} = \pi \tau^{-1} \Lambda^{+} = \pi \Lambda^{+} \subset \Lambda^{+}$. As in the proof of \cite{rtw}*{Lem.\ 6.1}, we observe that for $x \in \Lambda^{-} = (\Lambda^{+})^{\vee}$ and $y \in \Lambda$ we have $ \langle Fx, y \rangle = \langle x, Vy \rangle^\sigma$ which is integral, since $Vy$ is again in $\Lambda^{+}$, as we have just seen. Therefore, $Fx \in \Lambda^{-}$. In the same way one proves that $\Lambda^{-}$ is stable under $V$. To prove that it is stable under $\pi$ it is enough to recall that $\Lambda^{-} = \pi^2\Lambda^{+}$, as $\Lambda$ is a $2$-modular lattice, and the statement follows from $\pi$-stability of $\Lambda^+$.

  Recall $N$, the rational Dieudonn\'e module of $\mathbb{X}$. From the inclusions $\Lambda^{\pm} \subset C\otimes_{\Q_p} W(\F)_{\Q} \cong N$,  again by means of Dieudonn\'e theory, we obtain the quasi-isogenies to $\mathbb{X}$.
\end{proof}

As in \cite{rtw}*{Sec.\ 6} we define the subfunctor $\widetilde{\N}_{\Lambda}$ of $\bar{\N}^0$ consisting of the tuples $(X, \rho, \lambda, \iota)$ over an $\F$-scheme $S$, such that $\rho_{X, \Lambda^+} \vcentcolon = (\rho_{\Lambda^{+}})^{-1}_S \circ \rho $ or equivalently $\rho_{X, \Lambda^-} \vcentcolon = \rho^{-1} \circ (\rho_{\Lambda^{-}})_S$ is an isogeny. Observe that as in \textit{loc.cit.}\ there is a commutative diagram
\[\begin{tikzcd}
  X_{\Lambda^{+}} \arrow{r}{\lambda} \arrow[swap]{d}{\rho_{\Lambda^+}} & X_{\Lambda^{-}}  \\%
  \mathbb{X} \arrow{r}{\lambda_{\mathbb{X}}}& \mathbb{X}^\vee \arrow[swap]{u}{{}^t\rho_{\Lambda^-}}
\end{tikzcd}\]
where $\lambda$ is the isogeny induced by the duality of lattices $\Lambda^{-} = (\Lambda^{+})^{\vee} = (\Lambda^{+})^{\sharp}$ and ${}^t\rho_{\Lambda^-}$ is the dual of the quasi-isogeny $\rho_{\Lambda^{+}}$. Since, by definition of $\bar{\N}^0$, the height of $\rho_X$ is zero, it follows that the height of the isogenies $\rho_{X, \Lambda^{\pm}}$, is half of the type $t(\Lambda)$ of $\Lambda$, \textit{i.e.}\ of the index of $\Lambda^\vee \subset \Lambda$. The next lemma  is proved in the same way as \cite{rtw}*{Lem.\ 6.2} and \cite{vw}*{Lem.\ 4.2}, as the arguments there do not make use of the fact that $\Lambda$ is a vertex lattice, or that the extension $\Q_p \subset E$ is (un)ramified. 

\begin{lem} The functor $\widetilde{\N}_{\Lambda}$ is representable by a projective $\F$-scheme, and it is a closed subscheme of $\bar{\N}^0$. \qed
\end{lem}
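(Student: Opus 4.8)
The plan is to argue exactly as in \cite{rtw}*{Lem.\ 6.2} and \cite{vw}*{Lem.\ 4.2}, whose proofs never use that $\Lambda$ is a vertex lattice nor that $E/\Q_p$ is (un)ramified, so that only minor bookkeeping is needed once Lemma \ref{lem:XLambdaplus} is available. First I would show that $\widetilde{\N}_\Lambda$ is a closed subfunctor of $\bar{\N}^0$. Over a scheme $S$ in $\Nilp$ the universal quasi-isogeny $\rho$, composed with the fixed quasi-isogenies $\rho_{\Lambda^{\pm}}$ of Lemma \ref{lem:XLambdaplus}, produces the quasi-isogeny $\rho_{X,\Lambda^+}\colon X \to X_{\Lambda^+}\times_\F S$. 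By the rigidity of quasi-isogenies and the fact that the locus over which a quasi-isogeny of $p$-divisible groups is an isogeny is closed in the base (see \cite{rz}*{Prop.\ 2.9}), the condition defining $\widetilde{\N}_\Lambda$ cuts out a closed formal subscheme of $\bar{\N}^0$. In particular $\widetilde{\N}_\Lambda$ is separated and, $\bar{\N}^0$ being locally formally of finite type, it is as well.

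Next I would establish projectivity by realizing $\widetilde{\N}_\Lambda$ as a closed subscheme of a scheme of finite subgroups. On $\widetilde{\N}_\Lambda$ both $\rho_{X,\Lambda^-}\colon X_{\Lambda^-}\times_\F S \to X$ and $\rho_{X,\Lambda^+}\colon X \to X_{\Lambda^+}\times_\F S$ are isogenies, and their composition is the fixed isogeny $X_{\Lambda^-}\times_\F S \to X_{\Lambda^+}\times_\F S$ induced by $\Lambda^\vee = \Lambda^\sharp \subset \Lambda$, of height $t(\Lambda)$. Hence $\ker(\rho_{X,\Lambda^-})$ is a finite locally free subgroup scheme of $X_{\Lambda^-}[p^N]$ for a fixed $N$ depending only on $t(\Lambda)$, and $X = (X_{\Lambda^-}\times_\F S)/\ker(\rho_{X,\Lambda^-})$. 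Since $X_{\Lambda^-}[p^N]$ is a finite locally free group scheme over $\F$, the functor of such subgroup schemes is representable by a projective $\F$-scheme $\mathcal{Q}$, a closed subscheme of an appropriate Grassmannian. Carrying along on $\mathcal{Q}$ the universal quotient $p$-divisible group $\mathcal{X} = (X_{\Lambda^-}\times_\F\mathcal{Q})/\mathcal{H}^{\mathrm{univ}}$, the remaining data of $\widetilde{\N}_\Lambda$ are pinned down by conditions on $\mathcal{H}^{\mathrm{univ}}$ that are each closed: it must be stable under the $\mathcal{O}_E$-action on $X_{\Lambda^-}$ (so that $\mathcal{X}$ inherits an action $\iota$), it must be isotropic of the correct order so that the principal polarization of $X_{\Lambda^-}$ coming from $\Lambda^\vee = (\Lambda^\vee)^\sharp$ descends to a principal quasi-polarization on $\mathcal{X}$ differing from $\rho^{*}\lambda_{\mathbb{X}}$ by the required $\Z_p^{\times}$-factor, and $\Lie(\mathcal{X})$ must satisfy Pappas' wedge condition (\ref{eq:wedge}). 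These conditions define a closed subscheme $Z \subseteq \mathcal{Q}$, and one verifies directly that $Z$ represents $\widetilde{\N}_\Lambda$: the universal quotient datum on $Z$ yields a quadruple in $\widetilde{\N}_\Lambda$, while a quadruple $(X,\iota,\lambda,\rho)$ in $\widetilde{\N}_\Lambda$ determines the point $\ker(\rho_{X,\Lambda^-}) \in \mathcal{Q}$ and these constructions are mutually inverse. Thus $\widetilde{\N}_\Lambda \cong Z$ is a projective $\F$-scheme, and by the first paragraph the forgetful morphism $\widetilde{\N}_\Lambda \to \bar{\N}^0$ is a closed immersion.

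The only genuine work is in the second step: setting up the bounded quotient picture correctly and checking that each PEL-type condition ($\mathcal{O}_E$-stability of $\mathcal{H}^{\mathrm{univ}}$, descent of the polarization up to the prescribed scalar, and the wedge condition on $\Lie(\mathcal{X})$) is representable by a closed subscheme of $\mathcal{Q}$, together with the routine but somewhat tedious verification that the resulting $Z$ represents $\widetilde{\N}_\Lambda$ on the nose. As stressed in \cite{rtw} and \cite{vw}, every one of these checks is insensitive to whether $\Lambda$ is a vertex lattice or a $2$-modular lattice and to the ramification of $E/\Q_p$, so their arguments transfer without change once Lemma \ref{lem:XLambdaplus} is in place.
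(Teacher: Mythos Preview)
Your proposal is correct and follows exactly the route the paper takes: the paper simply cites \cite{rtw}*{Lem.\ 6.2} and \cite{vw}*{Lem.\ 4.2} and notes that their arguments are insensitive to whether $\Lambda$ is a vertex lattice or to the ramification of $E/\Q_p$, which is precisely what you spell out in detail. One small slip: $\Lambda^\vee$ is not self-dual (rather $(\Lambda^\vee)^\sharp = \Lambda$), so the polarization on $X$ is not obtained by descending a principal polarization from $X_{\Lambda^-}$ but is recovered from the compatibility with $\lambda_{\mathbb{X}}$ via $\rho$; this does not affect the argument, since that compatibility is still a closed condition.
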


Denote by $\N_{\Lambda}$ the reduced scheme underlying $\widetilde{\N}_{\Lambda}$. Our goal is to extend to a morphism of schemes the bijection, respectively surjection, we have described in the previous section between the $k$-valued points of $\N_{\Lambda}$ and the Deligne-Lusztig varieties $S_V$, respectively $R_W$ or $Q_W$. The first step in this direction is given by defining a morphism of $\widetilde{\N}_{\Lambda}$ into a Grassmannian variety. 

As in the previous section we let $V$ denote the $t(\Lambda)$-dimensional vector space $V = \Lambda^{+}/\Lambda^{-}$. Consider the Grassmannian functor $\rm Grass(V)$ parametrizing subspaces of dimension $t(\Lambda)/2$ in $V$. Then $\rm Grass(V)$ is represented by a projective $\F$-scheme. The construction of a morphism $\widetilde{\N}_{\Lambda} \rightarrow \rm Grass(V)$ follows the strategy of \cite{vw}*{4.6}. First, we consider the Lie algebra of the universal vector extension of a $p$-divisible groups. It defines a functor $X \mapsto D(X)$. As remarked in \textit{loc.cit.}, if $X$ is a $p$-divisible group over a perfect field $k$ with Dieudonn\'e module $M$, this is just $D(X) \cong M/pM$ via a functorial isomorphism. The following is the key result to construct the desired map. 

\begin{prop}\cite{vw}*{Cor.\ 4.7}
  Let $S$ be an $\F$-scheme and let $\rho_i: X\rightarrow Y_i$ for $i =1,2$ be two isogenies of $p$-divisible groups over $S$ such that $\mathrm{Ker}(\rho_1) \subset  \mathrm{Ker}(\rho_2) \subset X[p]$. Then $\mathrm{Ker}(D(\rho_1))$ is locally a direct summand of the locally free $\mathcal{O}_S$-module $\mathrm{Ker}(D(\rho_2))$ of rank $\mathrm{ht}(\rho_1)$. The formation of $\mathrm{Ker}(D(\rho_i))$ commutes with base change in $S$. \qed
\end{prop}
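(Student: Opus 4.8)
The plan is to work Zariski-locally on $S$ and to deduce the statement from the case of a single isogeny. We use two facts about $D$, due to Messing: $D(Z)$ is locally free of rank $\mathrm{ht}(Z)$ for every $p$-divisible group $Z$ over $S$, and the formation of $D$ commutes with arbitrary base change in $S$. The reduction rests on an elementary remark: if $A\subseteq B$ are $\mathcal{O}_S$-submodules of a locally free $\mathcal{O}_S$-module $M$ that are both locally direct summands of $M$, then $A$ is locally a direct summand of $B$ — locally write $M=A\oplus A'$, and for $b\in B$ decompose $b=a+a'$ with $a\in A\subseteq B$, so $a'\in B\cap A'$ and $B=A\oplus(B\cap A')$ with $B\cap A'\cong B/A$ a direct summand of the locally free module $B$. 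Since $\mathrm{Ker}(\rho_1)\subseteq\mathrm{Ker}(\rho_2)\subseteq X[p]$, functoriality of $D$ gives $\mathrm{Ker}(D(\rho_1))\subseteq\mathrm{Ker}(D(\rho_2))$ inside $D(X)$; thus it suffices to prove that for \emph{every} isogeny $\rho\colon X\to Y$ with $\mathrm{Ker}(\rho)\subseteq X[p]$ the submodule $\mathrm{Ker}(D(\rho))\subseteq D(X)$ is locally a direct summand of rank $\mathrm{ht}(\rho)$ whose formation commutes with base change. Applying this to $\rho_1$ and to $\rho_2$ and invoking the remark then finishes the proof.

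So fix such a $\rho$ and set $t=\mathrm{ht}(\rho)$. The strategy is to reduce to showing that $\mathrm{Coker}(D(\rho))$ is locally free of rank $t$. Granting that, the short exact sequence $0\to\mathrm{Im}(D(\rho))\to D(Y)\to\mathrm{Coker}(D(\rho))\to 0$ splits Zariski-locally, so $\mathrm{Im}(D(\rho))$ is a local direct summand of $D(Y)$, hence locally free of rank $\mathrm{ht}(Y)-t$; then $0\to\mathrm{Ker}(D(\rho))\to D(X)\to\mathrm{Im}(D(\rho))\to 0$ splits locally as well, so $\mathrm{Ker}(D(\rho))$ is a local direct summand of $D(X)$, locally free of rank $\mathrm{ht}(X)-(\mathrm{ht}(Y)-t)=t$, using $\mathrm{ht}(X)=\mathrm{ht}(Y)$ for isogenous $p$-divisible groups. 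Finally, since $D$ commutes with base change and $\mathrm{Coker}(D(\rho))$ is flat, the formation of $\mathrm{Coker}(D(\rho))$, of $\mathrm{Im}(D(\rho))$ and hence of $\mathrm{Ker}(D(\rho))$ all commute with base change in $S$.

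There remains the local freeness of $\mathrm{Coker}(D(\rho))$ of rank $t$, which is the real content. Here $G=\mathrm{Ker}(\rho)$ is a finite locally free commutative group scheme killed by $p$, of order $p^{t}$, and the plan is to identify $\mathrm{Coker}(D(\rho))$ with the value on $S$ of the (crystalline) Dieudonné module of $G$, via the exact sequence that crystalline Dieudonné theory attaches to $0\to G\to X\xrightarrow{\rho}Y\to 0$. Over a perfect point this is precisely the four-term exact sequence $0\to D(G)\to D(X)\xrightarrow{D(\rho)}D(Y)\to D(G)\to 0$ obtained from the classical inclusion $M(X)\subseteq M(Y)$ of Dieudonné modules, with torsion quotient $M(G)$ killed by $p$, by tensoring with the residue field and computing $\mathrm{Tor}_{1}^{W}$. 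The input one needs is that the Dieudonné module of a finite locally free commutative group scheme killed by $p$ over an $\F$-scheme is locally free of rank $t$ with base-change-compatible formation; this is classical over perfect fields and is the theorem of Berthelot--Breen--Messing in general.

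One should note that the naive alternative — showing that the fibrewise rank of $D(\rho)$ is constant — is \emph{not} enough, since $S$ may be non-reduced and over a non-reduced ring a map of free modules of constant fibrewise rank can fail to have locally free cokernel. Equivalently, one can phrase the crystalline input without finite group schemes: taking the isogeny $\psi\colon Y\to X$ with $\psi\circ\rho=[p]_X$ and $\rho\circ\psi=[p]_Y$, additivity of $D$ together with $p=0$ in $\mathcal{O}_S$ gives $D(\psi)\circ D(\rho)=0=D(\rho)\circ D(\psi)$, and one then needs the exactness of the resulting two-periodic complex and local freeness of its homology. Establishing this final point carefully — transporting the computation valid over perfect fields to an arbitrary $\F$-scheme — is the main obstacle; everything else is formal.
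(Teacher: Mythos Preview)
The paper does not give its own proof of this statement: it is quoted verbatim from \cite{vw}*{Cor.\ 4.7} and marked with \qed, so there is no argument to compare against. Your sketch is a sound outline of how one actually proves such a result, and you correctly isolate the non-formal step.

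A few comments on the sketch itself. Your reduction lemma (if $A\subseteq B$ are both local direct summands of a locally free $M$, then $A$ is a local direct summand of $B$) is fine, and the factorisation $\rho_2=\psi\circ\rho_1$ coming from $\mathrm{Ker}(\rho_1)\subseteq\mathrm{Ker}(\rho_2)$ does give the inclusion $\mathrm{Ker}(D(\rho_1))\subseteq\mathrm{Ker}(D(\rho_2))$ you need, since $D$ is covariant. The reduction to local freeness of $\mathrm{Coker}(D(\rho))$ is the standard move, and your identification of this cokernel with the crystalline Dieudonn\'e module of the finite flat kernel $G=\mathrm{Ker}(\rho)$ is exactly the argument Vollaard--Wedhorn use (their Lemma~4.6 and Corollary~4.7): the input is the theorem of Berthelot--Breen--Messing that $\mathbb{D}(G)_S$ is locally free of rank $\log_p|G|$ and compatible with base change. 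You are right that the fibrewise-rank argument fails over non-reduced bases and that one genuinely needs the crystalline theory; what you have written is an honest account of where the content lies, not a gap.
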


For an $\F$-algebra $R$ consider $X \in \widetilde{\mathcal{N}}_{\Lambda}(R)$ and the isogenies $\rho_{X, \Lambda^{\pm}}$ defined above. Then by definition, the composition
\begin{equation*}
  \rho_{\Lambda}: X_{\Lambda^{-}} \xrightarrow{\rho_{X, \Lambda^{-}}} X \xrightarrow{\rho_{X, \Lambda^{+}}} X_{\Lambda^{+}}
\end{equation*}
is the isogeny induced by the inclusion of Dieudonn\'e modules $\Lambda^{-} \subset \Lambda^{+}$. Since $\Lambda$ is either a vertex lattice or a $2$-modular lattice we know that in any case $p\Lambda = \pi^2 \Lambda \subset \Lambda^\vee$, which means that $p \Lambda^{+} \subset \Lambda^{-}$. It follows that $\mathrm{Ker}(\rho_{X, \Lambda^-}) \subset \mathrm{Ker}(\rho_{\Lambda}) \subset X_{\Lambda^-}[p]$, so we can apply the previous proposition to conclude that $\mathrm{Ker}(D(\rho_{X,\Lambda^{-}}))$ is a direct summand of $\mathrm{Ker}(D(\rho_{\Lambda}))$ of rank $\mathrm{ht}(\rho_{X, \Lambda^-}) = t(\Lambda)/2$. Moreover, this construction commutes with base change. As in \textit{loc.cit.}, we observe that $\mathrm{Ker}(D(\rho_{\Lambda}))$ is just the vector space $V$. Similarly, if $R$ is a perfect field and $M$ is the Dieudonn\'e module of $X$ we have, by the isomorphism $D(X) \cong M/pM$ that $\mathrm{Ker}(D(\rho_{X, \Lambda^-})) \cong M/\Lambda^-$.

Recall the variety $S_V$ defined in Section \ref{sec:dlvsp}. It parametrizes Lagrangian subspaces $U$ in a symplectic vector space $V$ such that the codimension of $U$ in $U + \Phi(U)$ is at most $2$. Let $\Lambda$ be a vertex lattice and let $V$ be the $\F$-vector space $\Lambda/\Lambda^\vee$, which we have introduced in the previous section. The following result is analogous to \cite{rtw}*{Prop.\ 6.7} and \cite{vw}*{Thm.\ 4.8}, and it is proved in the same way, see also Remark \ref{rem:notiso} below for a comparison.

\begin{prop}\label{prop:vl}
  Let $\Lambda$ be a vertex lattice in $C$ and denote by $\N_{\Lambda}$ the reduced scheme underlying $\widetilde{\N}_{\Lambda}$. The map $f: \N_{\Lambda} \rightarrow \Grass(V)$ that sends $(X, \lambda, \rho, \iota)$ to $E(X) \vcentcolon = \ker(D(\rho_{X, \Lambda^-}))$ is a morphism of projective schemes, and it induces a  universal homeomorphism $\N_{\Lambda} \longrightarrow S_V$.
\end{prop}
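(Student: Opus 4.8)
The plan is to follow the strategy of \cite{rtw}*{Prop.\ 6.7} and \cite{vw}*{Thm.\ 4.8}, using the machinery already assembled in Sections \ref{sec:closedpoints} and \ref{sec:dlvs}. First I would verify that the map $f$ is well-defined, i.e.\ that the image $E(X) = \ker(D(\rho_{X,\Lambda^-}))$ actually lands in $S_V$ rather than merely in $\Grass(V)$. By the discussion preceding the proposition, $E(X)$ is a subspace of $V$ of dimension $t(\Lambda)/2$, and for $X$ over a perfect field it corresponds under the Dieudonn\'e dictionary to $M/\Lambda^-$ where $M$ is the Dieudonn\'e module of $X$. The conditions defining $\N_\Lambda$ (self-duality of $M$, stability under $\Pi$, and the wedge condition $M \subset^{\le 2} M + \tau(M)$) translate exactly into: $E(X)$ is Lagrangian in $V$ and $E(X) \subset^{\le 2} E(X) + \Phi(E(X))$, which is the defining condition of $S_V$ by Lemma \ref{lem:spV}. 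One must check the Lagrangian and wedge conditions are closed conditions on $\Grass(V)$, so that $f$ factors through the closed subscheme $S_V$; this is a standard argument with the universal subbundle and the symplectic form, together with the fact that $S_V = \overline{X_P(s_ms_{m-1}s_m)}$ is closed in $G/P$ by Lemma \ref{lem:normal}.

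Next I would establish that $f$ is bijective on $k$-valued points for every algebraically closed field $k \supset \F$. This is precisely the content of Lemma \ref{lem:spV}, which gives a bijection between $\V_\Lambda(k)$ and $S_V(k)$; one only needs to note that the composite of the identification $\N_\Lambda(k) \cong \V_\Lambda(k)$ (from Lemma \ref{lem:3.1} and its reformulations) with this bijection agrees with $f$ on points, which is immediate from the translation $E(X) = M/\Lambda^-$ and the definition of the form on $V$. Then I would invoke the local model diagram: $\widetilde{\N}_\Lambda$, or rather a suitable chart, maps to $\Grass(V)$ in a way smoothly equivalent to the local model for the corresponding Schubert-type situation. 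The key point, exactly as in \cite{rtw} and \cite{vw}, is that $f$ is a morphism of projective $\F$-schemes which is bijective on geometric points, and $\N_\Lambda$ is reduced by definition; since $S_V$ is also reduced (indeed normal with isolated singularities, Lemma \ref{lem:normal}), a proper, bijective morphism of reduced schemes of finite type over a field is a universal homeomorphism. More carefully, one argues that $f$ is proper (source is projective, target separated), quasi-finite (finite fibers on geometric points), hence finite; a finite bijective morphism of reduced Noetherian schemes whose target is, say, geometrically unibranch — which holds here since $S_V$ is normal — is a universal homeomorphism, or alternatively one cites directly that a finite universally injective surjective morphism is a universal homeomorphism.

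The main obstacle I expect is the surjectivity of $f$ at the level of schemes, or more precisely showing that the morphism $f$ (a priori only defined via the functorial recipe $(X,\lambda,\rho,\iota) \mapsto E(X)$) does have $S_V$ as its scheme-theoretic image rather than some smaller closed subscheme. On $k$-points this is Lemma \ref{lem:spV}, but upgrading to a scheme-theoretic statement requires either constructing, for each $k$-point $U \in S_V(k)$, an actual $R$-valued point of $\N_\Lambda$ mapping to it functorially (as in \cite{vw}*{Sec.\ 4}, where one writes down the $p$-divisible group explicitly from the Lagrangian via the universal extension), or using flatness/reducedness to conclude that the image, being a closed subscheme containing all $k$-points of the reduced scheme $S_V$, must be all of $S_V$. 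I would follow the \cite{vw} approach: the recipe $U \mapsto (\text{preimage lattice in } \Lambda^+)$ defines an inverse on $R$-points for $R$ reduced, and since both $\N_\Lambda$ and $S_V$ are reduced this suffices to pin down $f$ as a bijective morphism. A secondary technical point is checking that $f$ is indeed a morphism of schemes (not just of functors on perfect fields): this uses \cite{vw}*{Cor.\ 4.7} as quoted, which guarantees that $\ker(D(\rho_{X,\Lambda^-}))$ is locally a direct summand of the vector bundle $\ker(D(\rho_\Lambda))$ of constant rank $t(\Lambda)/2$, and that its formation commutes with base change — exactly the input needed to get a map into the Grassmannian functor. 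Once that is in place, the homeomorphism statement is formal from properness, bijectivity on geometric points, and reducedness of both sides, exactly as in \cite{rtw}*{Prop.\ 6.7}.
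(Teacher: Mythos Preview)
Your proposal is correct and follows essentially the same route as the paper: use \cite{vw}*{Cor.\ 4.7} to see that $f$ is a morphism commuting with base change, invoke Lemma \ref{lem:spV} for bijectivity on $k$-points with $k$ algebraically closed, and then deduce universal homeomorphism from properness and reducedness via the characterization as finite, surjective, and universally injective. The one place you overcomplicate things is your ``main obstacle'': you do not need the scheme-theoretic image to be $S_V$, only topological surjectivity, and since both schemes are reduced and of finite type over $\F$ the closed points are very dense, so surjectivity on geometric points suffices---this is exactly the shortcut the paper takes, and you mention it yourself as an alternative; the normality of $S_V$ and the local model diagram are not needed.
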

\begin{proof}
  By the previous proposition, $f$ commutes with base change, hence it is a morphism of projective schemes. Since we are working with projective schemes over the algebraically closed field $\F$ it is enough to check that the map induced on $k$-valued points is a bijection for any algebraically closed field $k$. Indeed, $f$ is a universal homeomorphism if and only if it is universally injective, finite and surjective, compare for example \cite{gw}*{Ex.\ 12.32}. Since we are considering reduced schemes, surjectiveness can be tested on $k$-valued points for $k$ algebraically closed. Universally injective is equivalent to the diagonal morphism being a bijection on $k$-valued points for any field $k$. Since a morphism of projective schemes is proper, hence separated, the diagonal morphism is already injective as it is a closed immersion. Moreover, for a scheme $X$ of finite type over an algebraically closed field $k$, the set of $k$-valued points is very dense in $X$, see \cite{gw}*{Prop.\ 3.35}. Therefore, a closed subscheme $Y \subset X$ coincides with $X$ if and only if it contains all $k$-valued points. This means that surjectiveness of the diagonal $\Delta_f$, which is equivalent to injectiveness of $f$, can be tested on $k$-points for $k$ algebraically closed. Last, a morphism is finite if and only if it is proper and quasi-finite. By \cite{gw}*{Rem.\ 12.16} it is sufficient to see if the map has finite fibers on $k$-valued points, with $k$ algebraically closed, which is already implied by being injective. Then we can conclude with Lemma \ref{lem:spV}.
\end{proof}

Recall that if $\Lambda$ is a $2$-modular lattice, the action of $\pi$ induces a linear map $\overline{\pi}$ of rank $6$ on the $12$-dimensional vector space $V = \Lambda^+/\Lambda^- = \Lambda^+/\pi^2\Lambda^+$. The image and kernel of $\overline{\pi}$ both coincide with the $6$-dimensional subspace $\overline{\pi\Lambda}$ given by the image of $\pi\Lambda$ in $V$. Consider then the closed subscheme $S_{V\pi}$ of the variety $S_V$, given by the Lagrangian subspaces $U \in S_V$ such that $\overline{\pi}(U) + \overline{\pi}(\Phi(U)) \subset U$ (observe that this is equivalent to the condition $\overline{\pi}(U) + \overline{\pi}(\Phi(U)) \subset U$ originally given in the definition of $S_{V\pi}$). Recall $S_{V\pi}$ has already been introduced in the proof of Lemma \ref{lem:spW}, where we have proven that it is the image of the map $\V_{\Lambda}(k) \rightarrow S_V(k)$ for $k$ an algebraically closed field. 

\begin{prop}\label{lem:closedimm}
  Let $\Lambda$ be a $2$-modular lattice in $C$ and denote by $\N_{\Lambda}$ the reduced scheme underlying $\widetilde{\N}_{\Lambda}$. The map $f : {\N}_{\Lambda} \rightarrow S_{V\pi}$ that sends $(X, \lambda, \rho, \iota)$ to $E(X) \coloneqq \mathrm{Ker}(D(\rho_{X,\Lambda^{-}}))$ is a universal homeomorphism of projective schemes.
\end{prop}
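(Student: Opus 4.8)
The plan is to mirror the proof of Proposition \ref{prop:vl} as closely as possible, substituting the Grassmannian $\Grass(V)$ by the closed subscheme $S_{V\pi}\subset S_V$. First I would note that the map $f$ is well-defined: for a $2$-modular lattice $\Lambda$ we still have $p\Lambda^{+}\subset\Lambda^{-}$ (indeed $\Lambda^{-}=\pi^2\Lambda^{+}$), so the argument preceding Proposition \ref{prop:vl} applies verbatim and produces, for every $\F$-algebra $R$ and every $(X,\lambda,\rho,\iota)\in\widetilde{\N}_{\Lambda}(R)$, the locally free direct summand $E(X)=\mathrm{Ker}(D(\rho_{X,\Lambda^{-}}))\subset V\otimes_{\F}R$ of rank $t(\Lambda)/2 = 6$. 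Since the formation of $\mathrm{Ker}(D(\rho_{X,\Lambda^{-}}))$ commutes with base change, this defines a morphism of schemes $\N_{\Lambda}\to\Grass(V)$. The next point is to check that this morphism factors through the closed subscheme $S_{V\pi}$: the self-duality of $M$ forces $E(X)$ to be Lagrangian, the wedge condition \eqref{eq:wedge} forces the codimension of $E(X)$ in $E(X)+\Phi(E(X))$ to be at most $2$ (so $E(X)\in S_V$), and the $\pi$-stability of $M$ (i.e.\ $\Pi M\subset M$, equivalently $\pi M + \pi\tau(M)\subset M\cap\tau(M)$ from \eqref{eq:V(k)2}) translates into $\overline{\pi}(E(X))+\overline{\pi}(\Phi(E(X)))\subset E(X)$, which is exactly the defining condition of $S_{V\pi}$. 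These are scheme-theoretic versions of the pointwise statements in Lemmas \ref{lem:spV2} and \ref{lem:5.5}; on a perfect field $k$ with Dieudonn\'e module $M$ one uses $E(X)\cong M/\Lambda^{-}_k$ as in the proof of Proposition \ref{prop:vl}.

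Once the morphism $f\colon\N_{\Lambda}\to S_{V\pi}$ is in place, I would invoke exactly the same formal argument as in the proof of Proposition \ref{prop:vl} to conclude it is a universal homeomorphism: both sides are projective over the algebraically closed field $\F$, and by \cite{gw}*{Ex.\ 12.32} it suffices to show $f$ is universally injective, finite, and surjective. Since $\N_{\Lambda}$ is reduced and both schemes are of finite type over $\F$ with very dense sets of closed points (\cite{gw}*{Prop.\ 3.35}), surjectivity and universal injectivity (via the diagonal, which is a closed immersion as $f$ is proper, hence a bijection on $k$-points iff $f$ is injective on $k$-points) can both be tested on $k$-valued points for $k$ algebraically closed; finiteness follows from properness plus quasi-finiteness (\cite{gw}*{Rem.\ 12.16}), which is implied by injectivity on closed points. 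So everything reduces to: \emph{$f$ induces a bijection $\N_{\Lambda}(k)\xrightarrow{\sim}S_{V\pi}(k)$ for every algebraically closed field $k$.} Injectivity is the first assertion of Lemma \ref{lem:spV2} (a lattice $M\in\V_{\Lambda}(k)$ with $\Lambda^{-}_k\subset M\subset\Lambda^{+}_k$ is recovered from its image). Surjectivity onto $S_{V\pi}(k)$ is precisely the content of the computation in the proof of Lemma \ref{lem:spW}: given a Lagrangian $L\in S_{V\pi}(k)$, its preimage under $q\colon\Lambda^{+}_k\to V_k$ is a self-dual $\mathcal{O}_E\otimes W(k)$-lattice in $\V_{\Lambda}(k)$ (the extra $\pi$-stability being guaranteed exactly by the $S_{V\pi}$-condition $\overline{\pi}(L)+\overline{\pi}(\Phi(L))\subset L\cap\Phi(L)$).

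The main obstacle is not the formal topological argument, which is identical to the vertex-lattice case, but rather verifying carefully that $f$ lands in the \emph{closed} subscheme $S_{V\pi}$ rather than merely in $S_V$, and that $S_{V\pi}$ is genuinely the right target (it has already been identified set-theoretically with the image of $\V_{\Lambda}(k)\to S_V(k)$ in the proof of Lemma \ref{lem:spW}, but one must make sure the scheme structure is the reduced induced one so that "universal homeomorphism" is the correct claim). Concretely, the condition $\overline{\pi}(U)+\overline{\pi}(\Phi(U))\subset U$ cuts out a closed subscheme of $S_V$, and one needs the relation $\Pi M\subset M$ on Dieudonn\'e modules — which holds over an arbitrary base in $\Nilp$, since $\iota(\pi)$ is an endomorphism — to degenerate to this closed condition after applying $D(-)$. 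I would spell this out by base-changing the universal situation, reducing to the case of a complete local ring and then, via the very density of closed points, to the perfect-field case already handled pointwise; this is the step where one must be slightly more careful than in Proposition \ref{prop:vl}, where the target was the whole Grassmannian and no closed condition beyond the Lagrangian one was imposed.
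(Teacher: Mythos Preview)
Your proposal is correct and follows essentially the same approach as the paper: reduce to checking bijectivity on $k$-valued points for $k$ algebraically closed via the same formal argument as in Proposition~\ref{prop:vl}, then invoke Lemma~\ref{lem:spV2} for injectivity and the identification of $S_{V\pi}$ as the image carried out in the proof of Lemma~\ref{lem:spW} for surjectivity. The paper's proof is terser (it simply cites Lemma~\ref{lem:spW}), and in particular it does not linger on the point you flag as the main obstacle: since $\N_\Lambda$ is reduced and $S_{V\pi}$ is a closed subscheme of $\Grass(V)$, the factorization of $f$ through $S_{V\pi}$ is automatic once it holds on closed points, so the elaborate reduction you sketch (complete local rings, very density) is unnecessary.
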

\begin{proof}
  As in the proof of Proposition \ref{prop:vl}, since we are working with reduced projective schemes over the algebraically closed field $\F$ it is enough to check that the map on $k$-valued points is a bijection, for any algebraically closed field $k$. Then we can conclude with Lemma \ref{lem:spW}.
\end{proof}

The remainder of this section is dedicated to combining the results on the geometric points of $\widetilde{\N}_{\Lambda}$ proved in the previous section, see Lemmas \ref{lem:spV} and \ref{lem:spW}, with the construction of Lemma \ref{lem:closedimm} of the universal homeomorphism $f$ onto the variety $S_{V\pi}$. Our goal is to obtain a description of the irreducible components of $\bar{\N}_{\mathrm{red}}^{0}$ in terms of Deligne-Lusztig varieties. Again the split and non-split cases are rather different and deserve to be treated separately.

\subsection{Irreducible components in the split case}
Assume that the Hermitian form on $C$ is split. As we have seen in Lemma \ref{lem:splitV1}, if a lattice $M \in \bar{\N}^0(k)$ is not contained in a vertex lattice, then it is contained in a $2$-modular lattice $\Lambda$ such that $\pi \Lambda_k \subset^1 M + \pi\Lambda_k$. These two cases will correspond to two types of irreducible components of $\bar{\N}^0_{\rm red}$.

\begin{rem}\label{rem:stratavl}
  We have already seen that if $\mathcal{L}$ is a vertex lattice, then $\N_{\mathcal{L}}$ is universally homeomorphic to the generalized Deligne-Lusztig variety $S_V$. Let $\mathcal{L}$ be a vertex lattice of type $6$ in $C$, which exists as we are considering the split case. Then $V = \mathcal{L}/\mathcal{L}^\vee$ is a symplectic $\F$-vector space of dimension $6$. It follows from Lemma \ref{lem:SVdim} that $\N_{\mathcal{L}}$ is irreducible and has dimension $5$. Moreover, it contains the open and dense subscheme $\N_{\mathcal{L}}^{\circ}$ corresponding to the open stratum $X_{B}(s_3s_2s_1) \sqcup X_B(s_3s_2s_3s_1) \sqcup X_B(s_3s_2s_3s_1s_2)$ in the stratification  of $S_V$ given in (\ref{eq:strC}). In terms of lattices, $\N_{\mathcal{L}}^\circ$ corresponds to those lattices $M$ in $N$ such that $\Lambda(M) = \mathcal{L}$.

  A similar stratification holds for lattices of smaller type, too. In particular, by (\ref{eq:strC}) for any vertex lattice $\mathcal{L}$, the corresponding scheme $\N_{\mathcal{L}}$ contains $\N_{\mathcal{L}}^{\circ}$ as an open and dense subscheme. Moreover, by Lemma \ref{lem:SVstr} and Proposition \ref{prop:vl} the closure of $\N_{\mathcal{L}}^{\circ}$ is the union of the subschemes $\N_{\mathcal{L'}}^\circ$ for all vertex lattices $\mathcal{L'} \subset \mathcal{L}$.
\end{rem}

Let now $\Lambda$ be a $2$-modular lattice in $C$ and denote again by $\N_{\Lambda}$ the reduced scheme underlying $\widetilde{\N}_{\Lambda}$. In Lemma \ref{lem:splitV1} we have seen that if $M$ is a lattice in $\V_{\Lambda}(k)$ such that the corresponding minimal $\tau$-stable lattice $\Lambda(M)$ is not a vertex lattice, then the index of $\pi\Lambda_k$ in $M + \pi\Lambda_k$ is $1$. Therefore, we are interested in the closed subscheme of $S_{V \pi}$ given by
\begin{equation}
  S_{V \pi}^{\small{\le 1}}(R) \coloneqq \{ U \in S_{V \pi}(R) \mid U + \overline{\pi\Lambda}_R \text{ is a direct summand of } V_R \text{ with } \mathrm{rk}(\overline{\pi\Lambda}+ U ) \le 7\},
\end{equation}
where $\overline{\pi\Lambda}_R$ is the image of $\pi\Lambda_R$ in $V_R = \Lambda_R/\pi^2\Lambda_R$. Consider the open subscheme $S_{V\pi}^{(1)}$ defined by the condition on the rank being an equality. We denote by $\N_{\Lambda}^{\le 1}$ and $\N_{\Lambda}^{(1)}$ their schematic preimage in $\N_{\Lambda}$ under the morphism $f$. Since $f$ is a universal homeomorphism, $\N_{\Lambda}^{\le 1}$ is closed in $\N_{\Lambda}$ and contains $\N_{\Lambda}^{(1)}$ as an open subscheme. 
\begin{lem}\label{lem:dense}
  The subscheme $\N_{\Lambda}^{(1)}$ is open and dense in $\N_{\Lambda}^{\le 1}$.
\end{lem}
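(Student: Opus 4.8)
The plan is to reduce the statement to a combinatorial fact about the Deligne--Lusztig stratification of $S_{V\pi}$, using that $f\colon \N_\Lambda \to S_{V\pi}$ is a universal homeomorphism (Proposition \ref{lem:closedimm}). A universal homeomorphism is in particular a closed bijection, so it carries closed (resp. dense, resp. open) subsets to closed (resp. dense, resp. open) subsets and preserves irreducibility; hence it suffices to prove the corresponding statement downstairs, namely that $S_{V\pi}^{(1)}$ is open and dense in $S_{V\pi}^{\le 1}$. Openness is immediate: $S_{V\pi}^{(1)}$ is by definition the open locus of $S_{V\pi}^{\le 1}$ where the rank of $\overline{\pi\Lambda}+U$ equals $7$ rather than being $\le 7$. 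The content is therefore density.

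First I would identify the closed points of $S_{V\pi}^{\le 1}$ with those Lagrangians $U\subset V_k$ satisfying $\overline{\pi}(U)+\overline{\pi}(\Phi(U))\subset U$ together with $\dim(U+\overline{\pi\Lambda}_k)\le 7$, i.e. $\dim(U\cap\overline{\pi\Lambda}_k)\ge 5$; these are exactly the points of $S_{V\pi}$ lying in $\V_\Lambda^{(1)}(k)\cup\{\pi\Lambda_k\}$ in the notation of Section \ref{sec:closedpoints}. By Lemma \ref{lem:spW} the locus $\V_\Lambda^{(1)}(k)$ maps onto $R_W(k)$ with one-dimensional affine fibres, and $\{\pi\Lambda_k\}$ is the single point with $U=\overline{\pi\Lambda}_k$, i.e. $\dim(U\cap\overline{\pi\Lambda}_k)=6$. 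So set-theoretically $S_{V\pi}^{\le 1}$ is the union of an irreducible piece of dimension $\dim R_W + 1 = (2d-3)+1 = 4$ (using Lemma \ref{lem:RVdim} with $d=3$) and the distinguished point $\overline{\pi\Lambda}_k$. The point $\overline{\pi\Lambda}_k$ corresponds under $f$ to the lattice $\pi\Lambda$ whose associated minimal $\tau$-stable lattice is the vertex lattice $\pi\Lambda$; by Lemma \ref{lem:splitV1} every $M$ not contained in a vertex lattice lies in $\V_\Lambda^{(1)}(k)$, and one checks directly that $\pi\Lambda_k$ does lie in the closure of $\V_\Lambda^{(1)}(k)$ (indeed it is the image of a line $l\in Y_\infty\subset R_W$, which by Lemma \ref{lem:RVdim} is in the closure of $Y_2$, and the $\mathbb{A}^1$-fibre over it degenerates to this point). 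Hence $S_{V\pi}^{\le 1}$ is irreducible of dimension $4$ with dense open subset the preimage of $R_W\setminus\{\text{boundary}\}$; in particular $S_{V\pi}^{(1)}$, being the preimage of all of $R_W$ under the fibration with $\mathbb{A}^1$-fibres, is open dense. Pulling back along the universal homeomorphism $f$ gives that $\N_\Lambda^{(1)}$ is open and dense in $\N_\Lambda^{\le 1}$.

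The one point requiring care — and the main obstacle — is that the scheme-theoretic statement must be argued for the \emph{reduced} schemes, and the density claim is really about topological closure: I must verify that $S_{V\pi}^{(1)}$ is not just dense on $k$-points but that its scheme-theoretic closure in $S_{V\pi}^{\le 1}$ is all of $S_{V\pi}^{\le 1}$. Since $S_{V\pi}^{\le 1}$ is reduced and we are over an algebraically closed field where closed points are very dense (\cite{gw}*{Prop.\ 3.35}), it is enough to show every closed point of $S_{V\pi}^{\le 1}$ is a specialization of a closed point of $S_{V\pi}^{(1)}$; the only closed point outside $S_{V\pi}^{(1)}$ is $\overline{\pi\Lambda}_k$, and the explicit $\mathbb{A}^1$-bundle description of Lemma \ref{lem:spW} exhibits a curve through a generic point of $S_{V\pi}^{(1)}$ specializing to it. An alternative, cleaner route is to observe that $f$ restricts to a universal homeomorphism $\N_\Lambda^{\le 1}\to S_{V\pi}^{\le 1}$ and to note that $S_{V\pi}^{\le 1}$, via the composed map to $R_W$, is (universally homeomorphic to) the total space of the line-bundle-like $\mathbb{A}^1$-fibration over the irreducible projective variety $R_W$ compactified by the section $\overline{\pi\Lambda}_k$; an $\mathbb{A}^1$-bundle over an irreducible base is irreducible with the fibre-wise-total-space as dense open, which is exactly $S_{V\pi}^{(1)}$. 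Either way the assertion follows once the $k$-point analysis of Section \ref{sec:closedpoints} is in hand.
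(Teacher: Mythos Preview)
Your overall strategy is correct: reduce along the universal homeomorphism $f$ and show that the single complementary point $\overline{\pi\Lambda}_k$ lies in the closure of $S_{V\pi}^{(1)}$. But the justification you give for that closure claim is wrong. The point $\overline{\pi\Lambda}_k$ is \emph{not} ``the image of a line $l\in Y_\infty$'': under $M\mapsto (M+\pi\Lambda_k)/\pi\Lambda_k$ it is sent to $0\in W_k$, not to any point of $R_W$, so it sits entirely outside the $\mathbb{A}^1$-fibration of Lemma~\ref{lem:spW}. Likewise your ``alternative route'' mis-describes the picture: $\overline{\pi\Lambda}_k$ is one point, not a section over $R_W$, so $S_{V\pi}^{\le 1}$ is not an $\mathbb{A}^1$-bundle compactified by a section. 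A correct version of your idea would read off from the proof of Lemma~\ref{lem:spW} that for any $l\in R_W$ the fiber of $S_{V\pi}^{(1)}\to R_W$ over $l$ is $\mathbb{P}(N/N^\vee)\smallsetminus\{\overline{\pi\Lambda}_k\}$, sitting inside the closed $\mathbb{P}^1=\mathbb{P}(N/N^\vee)\subset\Grass(V)$; hence $\overline{\pi\Lambda}_k$ lies in the closure of that fiber, and therefore of $S_{V\pi}^{(1)}$.

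The paper proceeds differently and avoids the fiber geometry altogether. It chooses a vertex lattice $\mathcal{L}$ of type $2$ with $\pi\Lambda\subset\mathcal{L}\subset\Lambda$ (which exists by connectedness of the simplicial complex $\mathscr{L}$, Proposition~\ref{prop:vlproperties}), checks that every $M\in\V_{\mathcal{L}}^\circ(k)$ satisfies $\pi\Lambda_k\subsetneq M+\pi\Lambda_k\subset\mathcal{L}_k$ and hence $M\in\N_\Lambda^{(1)}(k)$, and then invokes the already established closure relations for $\N_{\mathcal{L}}$ (Remark~\ref{rem:stratavl}) to get $X_{\pi\Lambda}\in\N_{\pi\Lambda}\subset\overline{\N_{\mathcal{L}}^\circ}\subset\overline{\N_\Lambda^{(1)}}$. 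This argument uses only the vertex-lattice stratification of $S_V$ established earlier, not the line-bundle description (which in the paper comes after this lemma), and it is reused verbatim in the non-split case.
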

\begin{proof}
  Observe that the complement of $S^{(1)}_{V\pi}$ in $S^{\le 1}_{V\pi}$ consists only of the point $\overline{\pi\Lambda}$. It follows that the complement of $\N_{\Lambda}^{(1)}$ in $\N_{\Lambda}^{\le 1}$ consists only of the $p$-divisible group $X_{\pi\Lambda}$ corresponding via Dieudonné theory and Lemma \ref{lem:XLambdaplus} to the lattice $\pi\Lambda_\F$. Our goal is to show that it belongs to the closure of $\N_{\Lambda}^{(1)}$.

  Let $\mathcal{L} \subset C$ be a vertex lattice of type $2$ containing $\pi\Lambda$. Such a lattice exists since $\pi\Lambda$ is a vertex lattice of type $0$ and the simplicial complex $\mathscr{L}$ of vertex lattices is connected, compare Proposition \ref{prop:vlproperties}. Then, using the fact that $\pi\Lambda$ is self-dual and by definition of vertex lattices we have $\pi\mathcal{L} \subset \mathcal{L}^\vee \subset \pi\Lambda \subset^1 \mathcal{L}$, from which follows that $\mathcal{L} \subset \Lambda$. 

  Observe that if $M \in \V_{\mathcal{L}}^\circ(k)$, where $k$ is an algebraically closed field, we have that $M \in \N_{\Lambda}^{(1)}(k)$. Indeed, given such a lattice $M$, we know that $\Lambda(M) = \mathcal{L}$ and therefore $M$ is not $\tau$-stable. Since both $M$ and $\pi\Lambda_k$ are self-dual lattices, if $M$ is contained in $\pi\Lambda$ then it is equal to it. Since $M$ is not $\tau$-stable, this is not possible. Hence, we have inclusions $\pi\Lambda_k \subsetneq \pi\Lambda_k + M \subset \mathcal{L}_k$ and since $\pi\Lambda_k$ has index $1$ in $\mathcal{L}$ the claim follows. It follows that there is an inclusion of reduced schemes $\N_{\mathcal{L}}^\circ \subset \N_{\Lambda}^{(1)}$. By Remark \ref{rem:stratavl} we know that $\N_{\pi\Lambda}$ is contained in the closure of $\N_{\mathcal{L}}^\circ$, hence its only element $X_{\pi\Lambda}$ belongs to the closure of $\N_{\Lambda}^{(1)}$. 
\end{proof}

We shortly recall the \emph{universal vector bundle on the Grassmannian}, for more details we refer to \cite{gw}*{Ex.\ 11.9}. Let $\Grass_n(W)$ be the Grassmannian variety parametrizing subspaces of dimension $m$ in a given vector space $W$. Then the universal vector bundle over $\Grass_m(W)$ is a locally trivial vector bundle of rank $m$. Its $k$-valued points, for any field $k$, consist of pairs $(U, v)$, where $U$ is a subspace belonging to $\Grass_m(W)$ and $v$ is a vector in $U$. Roughly speaking, one identifies the fiber of the universal vector bundle over a subspace  $U$ with $U$ itself. 

In this section we are in particular interested in the universal line bundle $\mathcal{O}(1)$ over the projective space $\mathbb{P}(W)$, where $W$ denotes again the six-dimensional $\F$-vector space $\Lambda /\pi\Lambda$. We also consider
\begin{equation}
  \mathcal{H}^1 \coloneqq \Hom(\mathcal{O}(1), \mathcal{O}(-1)) = \mathcal{O}(-2).
\end{equation}

In order to study $S_{V\pi}^{(1)}$ we first need to study the   subscheme $\mathcal{S}^{(1)}$ of $\Grass(V)$ parametrizing the Lagrangian subspaces $U \subset V$ such that the dimension of the subspace $\overline{\pi\Lambda} + U$ is equal to $7$. In other words, $\mathcal{S}^{(1)}$ is the intersection of the Lagrangian Grassmannian $\mathcal{L}\Grass(V)$ with a Schubert cell. It is also clear that $S_{V\pi}^{(1)}$ is a closed subscheme of $\mathcal{S}^{(1)}$. 

\begin{lem}\label{lem:S1P1}
  The quotient map $q: V= \Lambda/\pi^2\Lambda \longrightarrow W = \Lambda/\pi\Lambda$ induces a morphism from $\mathcal{S}^{(1)}$ to the line bundle $\mathcal{H}^1$ over $\mathbb{P}(W)$.
\end{lem}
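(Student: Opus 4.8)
The plan is to exhibit explicitly, on a representable functor of points, a morphism $\mathcal{S}^{(1)} \to \mathcal{H}^1$ and then check it is well-defined, i.e.\ lands in the line bundle $\mathcal{H}^1 = \Hom(\mathcal{O}(1), \mathcal{O}(-1))$ over $\mathbb{P}(W)$. Recall the set-up: $V = \Lambda/\pi^2\Lambda$ is a $12$-dimensional $\F$-vector space carrying the alternating form, $\overline{\pi\Lambda} \subset V$ is the $6$-dimensional image of $\pi\Lambda$, which is both the image and the kernel of the induced nilpotent map $\overline{\pi}$ on $V$, and $W = \Lambda/\pi\Lambda = V/\overline{\pi\Lambda}$ inherits the non-degenerate \emph{symmetric} form. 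A point of $\mathcal{S}^{(1)}(R)$ is a Lagrangian $U \subset V_R$, locally a direct summand, with $\overline{\pi\Lambda}_R + U$ locally free of rank $7$ and a direct summand. First I would observe that, since $U + \overline{\pi\Lambda}_R$ has rank $7$ and $U$ is Lagrangian of rank $6$, the intersection $U \cap \overline{\pi\Lambda}_R$ is locally free of rank $5$ and the quotient $q(U) = (U + \overline{\pi\Lambda}_R)/\overline{\pi\Lambda}_R \subset W_R$ is a line subbundle, which produces the map $\mathcal{S}^{(1)} \to \mathbb{P}(W)$, $U \mapsto q(U)$; that this is a morphism follows from the standard Grassmannian/Schubert-cell incidence arguments together with the local-direct-summand hypotheses, exactly as in the construction of morphisms out of the flag varieties in the earlier sections.

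The content is to produce the \emph{second coordinate}, i.e.\ a global section of $\mathcal{H}^1$ pulled back along $U \mapsto q(U)$, equivalently a morphism of line bundles $\ell \to \ell^{\vee}$ over $\mathcal{S}^{(1)}$ where $\ell = q^*\mathcal{O}(1)$ is the tautological line. The idea: for $U \in \mathcal{S}^{(1)}(R)$ choose (locally) a generator $v$ of $U$ modulo $U \cap \overline{\pi\Lambda}_R$, so $q(v)$ generates $q(U)$; then $\overline{\pi}(v) \in \overline{\pi\Lambda}_R$, and because $U$ is Lagrangian and $(x,y) = \langle \pi x, y\rangle$ relates the symmetric form on $W$ to the alternating form on $V$, the pairing $(q(v),q(v))$ — the value of the symmetric form of $W$ on the tautological line — is the natural quadratic datum. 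Concretely, $(q(v), q(v))$ is, up to the identification of $\overline{\pi\Lambda}$ with $\overline{\pi}(V)$, equal to $\langle \overline{\pi}(v), v\rangle$, which makes sense and is $R$-valued. This assignment $v \mapsto (q(v),q(v))$ is \emph{not} linear in $v$ but homogeneous of degree $2$, hence defines a section of $\mathcal{O}(-2) = \Hom(\mathcal{O}(1),\mathcal{O}(-1))$ on $\mathbb{P}(W)$ — precisely a point of the line bundle $\mathcal{H}^1$ lying over $q(U)$. Thus $U \mapsto (q(U), (q(\cdot),q(\cdot)))$ is the desired morphism $\mathcal{S}^{(1)} \to \mathcal{H}^1$.

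What needs care, and is the main obstacle, is showing this is genuinely a morphism of schemes (not merely a map on points) and that the section is well-defined independently of the local choice of generator $v$ of $U$ modulo $U \cap \overline{\pi\Lambda}_R$: changing $v$ by an element of $U \cap \overline{\pi\Lambda}_R$ changes $\overline{\pi}(v)$ and the pairing by terms that must be absorbed by the scaling that trivializes $\mathcal{O}(1)$ locally — here one uses that $U$ is isotropic, so $\langle \overline{\pi}(u'), u''\rangle = \pm(q(u'),q(u''))$ vanishes for $u', u'' \in U \cap \overline{\pi\Lambda}_R$ in the relevant combination, together with $\overline{\pi}$ restricted to $\overline{\pi\Lambda}_R$ being zero. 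I would phrase everything on an affine chart of $\mathbb{P}(W)$ where $\mathcal{O}(1)$ and $q^*\mathcal{O}(1)$ are trivialized by a fixed linear coordinate $\xi$ on $W$, write $v$ as the unique lift with $\xi(q(v)) = 1$, set $g(U) \coloneqq \langle \overline{\pi}(v), v\rangle \in R$, and check that $U \mapsto (q(U), g(U)\,\xi^{-2})$ glues over the standard cover of $\mathbb{P}(W)$ to a section of $\mathcal{O}(-2)$; the gluing cocycle condition is a direct computation using the homogeneity degree $2$. Functoriality in $R$ is automatic once the construction is written with $\overline{\pi}$, $\langle\,,\,\rangle$ and the quotient $q$, all of which commute with base change. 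Finally I would record, for use in the proof of Theorem \ref{thm:intro}, that on $k$-valued points this morphism sends $U$ to the pair $(q(U), (q(v),q(v)))$, matching the description in Lemma \ref{lem:spW} of the $\mathbb{A}^1$-fibers, so that $S^{(1)}_{V\pi} \subset \mathcal{S}^{(1)}$ maps into the zero section — or an appropriate subbundle — of $\mathcal{H}^1$, which is the geometric input needed downstream.
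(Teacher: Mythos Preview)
Your proposed second coordinate does not depend on $U$ at all, so the resulting map factors through $\mathbb{P}(W)$ and cannot be the morphism the lemma is after. Concretely: write $v = v_0 + w$ with $v_0 \in \mathcal{L}_R$ (a fixed Lagrangian complement of $\overline{\pi\Lambda}$) and $w \in \overline{\pi\Lambda}_R$. Since $\overline{\pi}(w)=0$ and $\overline{\pi\Lambda}_R$ is Lagrangian for $\langle\,,\,\rangle$, we have
\[
\langle \overline{\pi}(v), v\rangle = \langle \overline{\pi}(v_0), v_0\rangle + \langle \overline{\pi}(v_0), w\rangle = \langle \overline{\pi}(v_0), v_0\rangle,
\]
which is exactly $(q(v),q(v))$ computed on $W$ and depends only on $l=q(U)$. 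Your ``morphism'' is therefore the pullback of the global section $l\mapsto (l,(l,l))$ of $\mathcal{O}(-2)$; it collapses every $\mathbb{A}^1$-fibre of $\mathcal{S}^{(1)}\to\mathbb{P}(W)$ to a single point. In particular, on $S_{V\pi}^{(1)}$ (where $l\in R_W$ is isotropic) you land in the zero section, whereas the very next lemma needs the map to be a \emph{monomorphism} on $\mathcal{S}^{(1)}$ and an isomorphism $S_{V\pi}^{(1)}\cong \mathcal{H}^1|_{R_W}$. So your final sentence has the role of the morphism exactly backwards.

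What is missing is a datum that distinguishes Lagrangians $U$ with the same image $l$. The paper supplies this by fixing the complement $\mathcal{L}$, forming $N=l\oplus\overline{\pi\Lambda}_R$ and the reference Lagrangian $U_0=l\oplus N^\vee$, and then recording the \emph{displacement} of $U/N^\vee$ from $U_0/N^\vee$ inside the rank-two module $N/N^\vee$: the map $\psi_U\colon l\cong U_0/N^\vee\to \overline{\pi\Lambda}_R/N^\vee\hookrightarrow l^*$, $u\mapsto u-\phi_U(u)$, with the last arrow coming from the alternating pairing. This $\psi_U$ genuinely varies over the fibre (it recovers $w\bmod N^\vee$ in the notation above) and is what makes the morphism injective. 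Your quadratic form $(q(v),q(v))$ is the obstruction to $l$ lying in $R_W$, not a coordinate on the fibre; you need the linear datum $\psi_U$ instead.
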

\begin{proof}
  We first observe that the quotient map $q$ induces a map $\mathcal{S}^{(1)} \rightarrow \mathbb{P}(W)$. This follows directly from the definition of $\mathcal{S}^{(1)}$ as intersection of the Lagrangian Grassmannian and a Schubert cell. An $R$-point $U \in \mathcal{S}^{(1)}(R)$ is sent by $q$ to the direct summand $(U + \overline{\pi\Lambda}_R) /\overline{\pi\Lambda}_R$ of $W_R$. If $R \rightarrow R'$ is a morphism of $\F$-algebras, since $\overline{\pi\Lambda}_{R}$ is a free submodule of $V_{R}$ the quotient by $\overline{\pi\Lambda}_{R}$ commutes with the tensor product $\cdot \otimes_R R'$. In other words we have
  \begin{equation*}
    (U\otimes_R R' + \overline{\pi\Lambda}_{R'}) /\overline{\pi\Lambda}_{R'} = ((U + \overline{\pi\Lambda})\otimes_R R' )/\overline{\pi\Lambda}_{R'} = \left ( (U + \overline{\pi\Lambda}_R) /\overline{\pi\Lambda}_R \right ) \otimes_R R',
  \end{equation*}  
  which proves that the map induced by $q$ commutes with base change by  $\F$-algebras. It follows that $q$ induces a morphism of projective $\F$-schemes $\mathcal{S}^{(1)} \rightarrow \mathbb{P}(W)$.
  
  Our aim is to construct a morphism of $\mathbb{P}(W)$-schemes $\mathcal{S}^{(1)} \rightarrow \mathcal{H}^1$. By Yoneda's Lemma, it is enough to give a map $\mathcal{S}^{(1)}(R) \rightarrow \mathcal{H}^1(R)$ for every $\F$-algebra $R$, and then prove that this map commutes with tensor products, compare \cite{gw}*{Cor.\ 4.7}. In other words, our goal is to associate to any Lagrangian $U$ in $\mathcal{S}^{(1)}(R)$ that is sent by $q$ to $l \in \mathbb{P}(W)$ an $R$-linear map $\psi_U: l \rightarrow l^*$. We have already seen this in the proof of Lemma \ref{lem:splitV1} for $R = k$, however, the proof there requires fixing a basis for $l$, which may not exist in general, for example when $R$ is not a local ring. We give here another construction that is independent of the choice of a basis. 
  
  Fix a Lagrangian complement $\mathcal{L}$ of $\overline{\pi\Lambda}$, \textit{i.e.}\ a Lagrangian subspace of $V$ such that $V = \mathcal{L} \oplus \overline{\pi\Lambda}$. Observe that for any $\F$-algebra $R$, since the form on $V_R$ is just the $R$-linear extension of that of $V$, the tensor product $\mathcal{L}_R$ remains a Lagrangian complement of $\overline{\pi\Lambda}_R$ in $V_R$. We identify $\mathcal{L} \cong W$. Let $l\in \mathbb{P}(\mathcal{L})(R)$ for an $\F$-algebra $R$. As in the previous section we denote by $N = l \oplus \overline{\pi\Lambda}_R$ its preimage in $V_R$ under $q$ and by $N^\vee = l^\vee \cap \overline{\pi\Lambda}_R$ its orthogonal. Since $N^\vee \subset \overline{\pi\Lambda}_R \subset N$ the subspace
  \begin{equation*}
    U_0 = l \oplus N^\vee
  \end{equation*} is a submodule of $V_R$ that is sent to $l$ by the quotient map $q$. Observe that for any $x \in l$ and any $v + \pi\lambda \in l \oplus \overline{\pi\Lambda}_R$, since $l$ is contained in the Lagrangian $\mathcal{L}$, we have $\langle x, v + \pi\lambda \rangle = \langle x, \pi\lambda \rangle$. It follows that the orthogonal of $U_0$ satisfies
  \begin{equation*}
    U_0^\vee = l^\vee \cap N = l^\vee \cap (l \oplus \overline{\pi\Lambda}_R) = l \oplus (l^\vee \cap \overline{\pi\Lambda}_R) = l \oplus N^\vee = U_0,
  \end{equation*}
  from which it follows that $U_0$ is Lagrangian. 

  We claim that $U_0$ is a direct summand of $V_R$. Indeed, since $l$ is a direct summand of $\mathcal{L}_R$ it is also a direct summand of $V_R = \mathcal{L}_R \oplus \pi\Lambda_R$. It follows that $N = l \oplus \overline{\pi\Lambda}_R$ is a direct summand of $V_R$, for example one can take as complement the complement of $l$ in $\mathcal{L}_R$. Let $Q\subset \mathcal{L}_R$ denote such a complement. Observe that since the alternating form is non-degenerate, we have that $V_R^{\vee} = \{0\}$. Since $Q$ is a complement of $N$, we have $V_R = N + Q$ and $\{0\} = N \cap Q$. By taking the duals of these equalities we obtain $\{0\} = N^\vee \cap Q^\vee$ and $V_R = N^\vee + Q^\vee$. It follows that $N^\vee$ is a direct summand of $V_R$ and $Q^\vee$ is a complement. Observe that since $\overline{\pi\Lambda}_R \subset N$ by taking duals we have $N^\vee \subset \overline{\pi\Lambda}_R  $. We want to show that $N^\vee$ is also a direct summand of $\overline{\pi\Lambda}_R$. Let $\pi\lambda \in \overline{\pi\Lambda}_R$. By the previous observation, there exist unique $n \in N^\vee$ and $q \in Q^\vee$ such that $\pi\lambda = n + q$. Since $N^\vee \subset \overline{\pi\Lambda}_R$ it follows that $q \in Q^\vee \cap \overline{\pi\Lambda}_R$. Then $Q^\vee \cap \overline{\pi\Lambda}_R$ is the complement of $N^\vee$ in $\overline{\pi\Lambda}_R$. Then $U_0 = l \oplus N^\vee$ is a direct summand of $V_R$, for example, we can take as a complement the submodule $Q + Q^\vee\cap \overline{\pi\Lambda}_R$.
  
  Let $U$ be a Lagrangian subspace of $V$ such that $q(U) = l$ and consider the linear map $\phi_U$ obtained as the composition of the canonical isomorphisms
  \begin{equation*}
    U_0/N^\vee = U_0/(U_0 \cap \overline{\pi\Lambda}_R) \xrightarrow{\sim} (U_0 + \overline{\pi\Lambda}_R) /\overline{\pi\Lambda}_R = l = (U + \overline{\pi\Lambda}_R) /\overline{\pi\Lambda}_R \xrightarrow{\sim}U/(U \cap \overline{\pi\Lambda}_R) = U/N^\vee.
  \end{equation*}
  This induces a morphism of submodules of $N/N^\vee$
  \begin{align*}
    \psi_U: U_0/N^\vee & \longrightarrow \overline{\pi\Lambda}_R/N^\vee \\
    u &\mapsto u -\phi_U(u).
  \end{align*}
  Observe that there is an $R$-linear map from the module $\overline{\pi\Lambda}_R/N^\vee$ into the dual space $l^* = \mathrm{Hom}_R(l, R)$ given by the alternating form
  \begin{align*}
    \overline{\pi\Lambda}_R/N^\vee &\longrightarrow l^*\\
    x &\mapsto \left ( \begin{aligned}
      l &\longrightarrow R \\
      v &\mapsto \langle v, x' \rangle
    \end{aligned} \right ),
  \end{align*}
  where $x' \in \overline{\pi\Lambda}_R$ is any lift of $x$. Indeed, we have already observed that $l^\vee \cap \overline{\pi\Lambda}_R = N^\vee$, from which it follows that the map above is well-defined and injective (in particular bijective when $R$ is a field). It follows that we can identify the map $\psi_U: l \cong U_0/N^\vee \rightarrow \overline{\pi\Lambda}_R/N^\vee \hookrightarrow l^*$ with an element of $\mathrm{Hom}_R(l, l^*)$. The assignment $U \mapsto (l = q(U), \psi_U: l \rightarrow l^*)$ gives the desired map of sets
  \[\begin{tikzcd}
    \mathcal{S}^{(1)}(R) \arrow[rr] \arrow[rd] &   & \mathcal{H}^1(R)\arrow[ld] \\
                            & \mathbb{P}(W)(R). &             
  \end{tikzcd}\]

  Since we have been working exclusively with projective, hence flat,  modules (as direct summands of free modules), all quotients considered above commute with base change to another $\F$-algebra $R \rightarrow R'$. It follows that the map $U \mapsto \psi_U$ commutes with base change, too, and therefore induces a morphism of $\F$-projective schemes.
\end{proof}

Our next step is to restrict the morphism constructed in the previous lemma to the closed subscheme $S_{V\pi}^{(1)}$ of $\mathcal{S}^{(1)}$. We have already seen in Section \ref{sec:closedpoints} that there is a bijection between the closed points of $S_{V\pi}^{(1)}$ with those of the restriction of the line bundle $\mathcal{H}^1$ to the variety $R_W \subset \mathbb{P}(W)$. Recall that the latter has been defined in \ref{eq:RV} and is the closure of some generalized Deligne-Lusztig variety for the orthogonal group. 

\begin{lem}\label{lem:SVisoH1}
 The morphism of Lemma \ref{lem:S1P1} induces an isomorphism from $S_{V \pi}^{(1)}$ to the restriction of $\mathcal{H}^1$ to the variety $R_W \subset \mathbb{P}(W)$ studied in Section \ref{sec:dlvso}. It follows that $S_{V \pi}^{(1)}$ is normal, irreducible and of dimension $4$.
\end{lem}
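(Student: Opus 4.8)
The plan is to use Lemma \ref{lem:iso} to upgrade the bijection on closed points, established in Section \ref{sec:closedpoints} (Lemma \ref{lem:spW} together with the analysis of $S_{V\pi}$ in the proof there), into an isomorphism of schemes. Concretely, let $g\colon \mathcal{S}^{(1)}\to\mathcal{H}^1$ be the morphism of $\mathbb{P}(W)$-schemes constructed in Lemma \ref{lem:S1P1}, and restrict it to the closed subscheme $S_{V\pi}^{(1)}\subset\mathcal{S}^{(1)}$. First I would identify the scheme-theoretic image: since $S_{V\pi}^{(1)}$ parametrizes Lagrangians $U$ with $\overline{\pi}(U)+\overline{\pi}(\Phi(U))\subset U$, and $g$ sends such a $U$ to the pair $(l,\psi_U)$ with $l=q(U)$, the computation already carried out on $k$-points in the proof of Lemma \ref{lem:spW} shows that $l$ and $l+\Phi(l)$ must be isotropic for the symmetric form on $W$, i.e.\ $l\in R_W$. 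So $g$ factors through $\mathcal{H}^1|_{R_W}$, and I would record this as a morphism $g'\colon S_{V\pi}^{(1)}\to\mathcal{H}^1|_{R_W}$ of projective $\F$-schemes.

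Next I would construct the inverse morphism directly, following the recipe of Lemma \ref{lem:spW}: given a point $(l,\psi)$ of $\mathcal{H}^1|_{R_W}$ over an $\F$-algebra $R$, with $l\subset \mathcal{L}_R\cong W_R$ isotropic and $l+\Phi(l)$ isotropic, set $N=l\oplus\overline{\pi\Lambda}_R$, $N^\vee=l^\vee\cap\overline{\pi\Lambda}_R$, and let $U=U_\psi$ be the graph-type submodule of $N$ lifting $l$ determined by $\psi$ (the $U_0$ of Lemma \ref{lem:S1P1} corresponds to $\psi=0$, and a general $\psi\in\Hom_R(l,l^*)\cong\overline{\pi\Lambda}_R/N^\vee$ twists it). The verifications that $U_\psi$ is a Lagrangian direct summand and that $\overline{\pi}(U_\psi)+\overline{\pi}(\Phi(U_\psi))\subset U_\psi$ are exactly the computations done on $k$-points in the proof of Lemma \ref{lem:spW}; the point is that they are all statements about direct summands of free modules and quotients thereof, hence commute with base change, so they go through over an arbitrary $\F$-algebra. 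This yields a morphism $R_W$-point by $R_W$-point, and by Yoneda a morphism $\mathcal{H}^1|_{R_W}\to S_{V\pi}^{(1)}$; one then checks the two composites are the identity by the same base-change-compatible formulas, so $g'$ is an isomorphism. (Alternatively, one invokes Lemma \ref{lem:iso}: $g'$ is a morphism between projective schemes, bijective on closed points, hence by the argument there it is a universal homeomorphism, and then one still needs the explicit inverse to get a genuine isomorphism — so I prefer to build the inverse directly.)

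Finally, the geometric consequences follow from Lemma \ref{lem:RVnormal} and Lemma \ref{lem:RVdim}: $R_W$ is normal, irreducible, and of dimension $2d-3=3$ for the six-dimensional symmetric space $W$ ($d=3$), and $\mathcal{H}^1|_{R_W}=\mathcal{O}(-2)|_{R_W}$ is a line bundle over it, hence normal (a line bundle over a normal variety is normal), irreducible, and of dimension $3+1=4$. Transporting along the isomorphism gives the stated properties of $S_{V\pi}^{(1)}$. The main obstacle I anticipate is the bookkeeping in the construction of the inverse morphism over a general $\F$-algebra $R$: one must phrase everything — the orthogonal $N^\vee$, the Lagrangian condition $U_\psi^\vee=U_\psi$, the $\overline{\pi}$-stability, and the identification $\Hom_R(l,l^*)\cong\overline{\pi\Lambda}_R/N^\vee$ — in terms of direct summands rather than bases (bases need not exist if $R$ is not local), mirroring the care already taken in Lemma \ref{lem:S1P1}. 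Once that is set up the checks are the same linear-algebra identities as in Lemma \ref{lem:spW}, only now with $R$-coefficients.
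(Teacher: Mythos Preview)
Your approach is correct but takes a different route from the paper's. You propose to construct the inverse morphism $\mathcal{H}^1|_{R_W}\to S_{V\pi}^{(1)}$ explicitly and functorially over arbitrary $\F$-algebras $R$, redoing the linear algebra of Lemma~\ref{lem:spW} with direct summands instead of bases. This works, but as you yourself anticipate, the bookkeeping is delicate (in particular the conditions defining $S_{V\pi}$ involve the Frobenius $\Phi$, so one must interpret $\Phi(U_\psi)$ scheme-theoretically and verify the incidence conditions over general $R$).

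The paper avoids all of this with a cleaner argument: it first proves that the ambient morphism $\mathcal{S}^{(1)}\to\mathcal{H}^1$ is a \emph{monomorphism} --- checking injectivity on $R$-points is easy, since $U$ is recovered from $(l,\psi_U)$ as the preimage in $N$ of the graph of $\psi_U$ --- and since it is also proper, Zariski's main theorem implies it is a closed immersion. Restricting to the reduced subscheme $S_{V\pi}^{(1)}$, one then has a closed immersion into $\mathcal{H}^1$; the bijection on $k$-points from Lemma~\ref{lem:spW} identifies its image as $\mathcal{H}^1|_{R_W}$, and a closed immersion that is bijective on closed points of a reduced target is an isomorphism. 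This sidesteps the need to build the inverse or to handle the Frobenius condition over non-fields: injectivity on $R$-points is much easier to check than surjectivity, and the ``closed immersion $+$ reduced'' trick does the rest. Your direct construction buys a more hands-on understanding of the inverse, but at the cost of the work you flag; the paper's argument is shorter and more robust.
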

\begin{proof}
  As $\mathcal{S}^{(1)} \rightarrow \mathcal{H}^1$ is a morphism of projective schemes it is proper. We first show it is a monomorphism. Suppose $U_1, U_2 \in \mathcal{S}^{(1)}(R)$ are both sent by $q$ to $l \in \mathbb{P}(W_R)$, and that we also have $\psi_{U_1} = \psi_{U_2} \in \mathrm{Hom}_R(l, l^*)$. Then by definition of $\psi_{U_i}$, the quotients $U_i/N^\vee$ are the image under $\psi_{U_i}$ of the fixed element $U_0$ constructed in the proof of Lemma \ref{lem:S1P1}. It follows that $U_1/N^\vee = U_2/N^\vee$ as submodules of $N/N^\vee$. Since $U_1, U_2$ are both contained in the preimage $N = l + \overline{\pi\Lambda}_R$ of $l$ and are Lagrangian, they both contain the orthogonal $N^\vee$, hence $U_1 = U_2$. 
  
  The morphism $\mathcal{S}^{(1)} \rightarrow \mathcal{H}^1$ is then injective on $R$-points for any $\F$-algebra $R$. Therefore, it is a proper monomorphism, and by Zariski's main theorem it is a closed immersion, compare \cite{gw}*{Cor.\ 12.92, Prop.\ 12.94}. We restrict this closed immersion to the reduced closed subscheme $S_{V\pi}^{(1)} \hookrightarrow \mathcal{S}^{(1)}$. In the previous section, see the proof of Lemma \ref{lem:spW}, we have seen that this morphism induces a bijection between the $k$-valued points of $S_{V\pi}^{(1)}$ and those of the restriction of $\mathcal{H}^1$ to the variety $R_W$, for any algebraically closed field $k$. Since we are working with reduced schemes it follows that the closed immersion $S_{V\pi}^{(1)} \rightarrow \mathcal{H}^1_{\mid R_W}$ is actually an isomorphism. 
  
  The remaining properties follow from the corresponding statement for $R_W$, see Lemma \ref{lem:RVnormal}, and the fact that line bundles preserve normality and irreducibility, while they increase the dimension by one.
\end{proof}

The following result is an immediate consequence of the previous lemma, the definition of $\N_{\Lambda}^{(1)}$ and the fact that it is dense in $\N_{\Lambda}^{\le 1}$ by Lemma \ref{lem:dense}.

\begin{cor}
  $\N_{\Lambda}^{(1)}$ is universally homeomorphic to a locally trivial line bundle over $R_W$. It follows that its closure $\N_{\Lambda}^{\le 1}$ is irreducible and has dimension $4$.
\end{cor}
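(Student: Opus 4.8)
The plan is to deduce the statement almost formally from the universal homeomorphism $f\colon \N_{\Lambda}\to S_{V\pi}$ of Proposition~\ref{lem:closedimm} and the identification of $S_{V\pi}^{(1)}$ in Lemma~\ref{lem:SVisoH1}. First I would recall that $\N_{\Lambda}^{(1)}$ was defined as the schematic preimage $f^{-1}(S_{V\pi}^{(1)})$ of the open subscheme $S_{V\pi}^{(1)}\subset S_{V\pi}$. Since universal homeomorphisms are stable under base change, the restriction $f\colon \N_{\Lambda}^{(1)}\to S_{V\pi}^{(1)}$ is again a universal homeomorphism. By Lemma~\ref{lem:SVisoH1} there is an isomorphism $S_{V\pi}^{(1)}\xrightarrow{\sim}\mathcal{H}^1_{\mid R_W}$, the restriction to $R_W$ of the line bundle $\mathcal{H}^1=\mathcal{O}(-2)$ on $\mathbb{P}(W)$, which is a locally trivial line bundle over $R_W$. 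Composing these maps shows that $\N_{\Lambda}^{(1)}$ is universally homeomorphic to a locally trivial line bundle over $R_W$, which is the first assertion.

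For the second assertion, note that a universal homeomorphism is in particular a homeomorphism of underlying topological spaces, hence preserves irreducibility and Krull dimension. The base $R_W$ is irreducible of dimension $2d-3=3$ (here $\dim W=6$, so $d=3$) by Lemmas~\ref{lem:RVnormal} and~\ref{lem:RVdim}, and a line bundle over an irreducible base is irreducible with dimension one larger; hence $\mathcal{H}^1_{\mid R_W}$ is irreducible of dimension $4$, and therefore so is $\N_{\Lambda}^{(1)}$. By Lemma~\ref{lem:dense}, $\N_{\Lambda}^{(1)}$ is open and dense in $\N_{\Lambda}^{\le 1}$. The closure of an irreducible subspace is irreducible, so $\N_{\Lambda}^{\le 1}$ is irreducible; and since a dense subscheme of a scheme of finite type over a field has the same dimension as the ambient scheme, $\dim\N_{\Lambda}^{\le 1}=\dim\N_{\Lambda}^{(1)}=4$.

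The argument is short precisely because the substantive content has already been established: the construction of the morphism to $\mathcal{H}^1$ in Lemma~\ref{lem:S1P1}, the closed-immersion-to-isomorphism argument in Lemma~\ref{lem:SVisoH1}, and the density of $\N_{\Lambda}^{(1)}$ in $\N_{\Lambda}^{\le 1}$ in Lemma~\ref{lem:dense}. The only point requiring a little care is the interplay between "schematic preimage" and "restriction of a universal homeomorphism": one uses that $S_{V\pi}^{(1)}\hookrightarrow S_{V\pi}$ is an open immersion (the locus where the rank bound is an equality), so that $\N_{\Lambda}^{(1)}\hookrightarrow\N_{\Lambda}$ is open as well, and then invokes base-change stability of universal homeomorphisms along this immersion. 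I do not expect any genuine obstacle here beyond this bookkeeping.
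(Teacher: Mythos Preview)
Your proposal is correct and matches the paper's approach exactly: the paper states that the corollary is ``an immediate consequence of the previous lemma, the definition of $\N_{\Lambda}^{(1)}$ and the fact that it is dense in $\N_{\Lambda}^{\le 1}$ by Lemma~\ref{lem:dense}'' and gives no further argument. You have simply spelled out the details the paper left implicit, invoking base-change stability of universal homeomorphisms and the elementary topological facts about irreducibility and dimension.
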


We are now ready to prove the first part of Theorem \ref{thm:intro}. 

\begin{prop}\label{prop:irred}
  Assume that the Hermitian form over $C$ is split. Then $\bar{\N}_{\rm red}^0$ has irreducible components of two types.
  \begin{itemize}
    \item[(i)]  For every maximal vertex lattice $\mathcal{L}$, there is an irreducible component $\N_{\mathcal{L}}$, which is universally homeomorphic to the generalized Deligne-Lusztig variety $S_V$ for the symplectic group $\Sp_6$ and has dimension $5$.
    \item[(ii)] For every $2$-modular lattice $\Lambda$, there is an irreducible component $\N^{\le 1}_{\Lambda}$. It contains the dense subscheme $\N^{(1)}_{\Lambda}$, which is universally homeomorphic to a locally trivial line bundle over the generalized Deligne-Lusztig variety $R_W$. These components have dimension $4$.
  \end{itemize}
\end{prop}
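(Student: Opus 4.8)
The plan is to realise $\bar{\N}^0_{\mathrm{red}}$ as a locally finite union of irreducible closed subschemes coming from the two families, and then read off which of them are maximal.

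First I would assemble the cover. By Proposition \ref{prop:2vl}(2) we have $\V(\F)=\bigcup_{\Lambda}\V_{\Lambda}(\F)$, the union over all $2$-modular lattices $\Lambda$, so $\bar{\N}^0_{\mathrm{red}}=\bigcup_{\Lambda}\N_{\Lambda}$. For a fixed $2$-modular $\Lambda$ I would then split $\N_{\Lambda}$ into the closed subscheme $\N_{\Lambda}^{\le 1}$ and its complement: by Lemma \ref{lem:splitV1} (in its contrapositive form), a point $M\in\V_{\Lambda}(\F)$ with $M\notin\N_{\Lambda}^{\le 1}$ has $\Lambda(M)$ a vertex lattice, and every vertex lattice is contained in a vertex lattice $\mathcal{L}$ of maximal type $6$ by the connectedness of $\mathscr{L}$ (Proposition \ref{prop:vlproperties}(1)), so $M\in\V_{\mathcal{L}}(\F)=\N_{\mathcal{L}}(\F)$. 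Since also $\V_{\Lambda}^{(1)}(\F)\subseteq\N_{\Lambda}^{\le 1}(\F)$, this yields, on $\F$-points and hence (reducedness, finite type over $\F$) as schemes,
\begin{equation*}
  \bar{\N}^0_{\mathrm{red}}=\Big(\bigcup_{\mathcal{L}\text{ of type }6}\N_{\mathcal{L}}\Big)\cup\Big(\bigcup_{\Lambda\text{ }2\text{-modular}}\N_{\Lambda}^{\le 1}\Big),
\end{equation*}
a locally finite union of closed subschemes (because $\bar{\N}^0$ is locally formally of finite type). By Proposition \ref{prop:vl} and Lemma \ref{lem:SVdim} each $\N_{\mathcal{L}}$ is irreducible of dimension $5$ (non-empty since $\V_{\mathcal{L}}^{\circ}(\F)\neq\emptyset$ by Proposition \ref{prop:vlproperties}(2)); by Lemmas \ref{lem:dense} and \ref{lem:SVisoH1} each $\N_{\Lambda}^{\le 1}$ is irreducible of dimension $4$ (non-empty since $\V_{\Lambda}^{\circ}(\F)\neq\emptyset$).

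Next I would show that no member of this cover is contained in another. Since every irreducible subset contained in a locally finite closed union lies in a single member, this gives that the irreducible components of $\bar{\N}^0_{\mathrm{red}}$ are exactly the listed subschemes. For a type-$6$ vertex lattice $\mathcal{L}$: $\N_{\mathcal{L}}$ cannot lie in any $\N_{\Lambda}^{\le 1}$ for dimension reasons ($5>4$), and $\N_{\mathcal{L}}\subseteq\N_{\mathcal{L}'}$ forces $\mathcal{L}\subseteq\mathcal{L}'$ by Proposition \ref{prop:vlproperties}(2), hence $\mathcal{L}'$ has type $6$ and $\mathcal{L}=\mathcal{L}'$. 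For a $2$-modular lattice $\Lambda$ I would exhibit a distinguishing point: pick $M\in\V_{\Lambda}^{\circ}(\F)$, i.e. with $\Lambda(M)=\Lambda$; since $\Lambda$ is $2$-modular it is not a vertex lattice, so by Lemma \ref{lem:splitV1}, $M\in\V_{\Lambda}^{(1)}(\F)\subseteq\N_{\Lambda}^{\le 1}(\F)$. If $M$ lay on some $\N_{\mathcal{L}}$ we would have $\Lambda=\Lambda(M)\subseteq\mathcal{L}$, forcing $\Lambda$ to be a vertex lattice by Remark \ref{rem:vl2} — impossible; and if $M$ lay on $\N_{\Lambda'}^{\le 1}$ for $\Lambda'\neq\Lambda$ we would get $\Lambda=\Lambda(M)\subseteq\Lambda'$, whence $\Lambda'^{\vee}=\pi^{2}\Lambda'\subseteq\pi^{2}\Lambda=\Lambda^{\vee}$ and so $\Lambda'\subseteq\Lambda$, i.e. $\Lambda=\Lambda'$, a contradiction. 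Thus the irreducible subscheme $\N_{\Lambda}^{\le 1}$ contains a point lying on no other member of the cover, so it is not contained in any of them.

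Combining these two steps identifies the irreducible components of $\bar{\N}^0_{\mathrm{red}}$ with the $\N_{\mathcal{L}}$ ($\mathcal{L}$ of type $6$) and the $\N_{\Lambda}^{\le 1}$ ($\Lambda$ $2$-modular), and the claimed dimensions together with the homeomorphism types follow from Proposition \ref{prop:vl}, Proposition \ref{lem:closedimm} and Lemmas \ref{lem:dense}, \ref{lem:SVisoH1}. I expect the main obstacle to be the first step — verifying that the two families of closed subschemes genuinely exhaust $\bar{\N}^0_{\mathrm{red}}$ — since this rests entirely on Lemma \ref{lem:splitV1}, namely that in the split case a lattice $M$ whose minimal $\tau$-stable hull $\Lambda(M)$ fails to be a vertex lattice automatically lands in the "index $\le 1$" stratum $\V_{\Lambda}^{(1)}$ of some $2$-modular lattice $\Lambda$; everything else is bookkeeping with dimensions and with the inclusion relations among vertex and $2$-modular lattices already recorded in Propositions \ref{prop:vlproperties} and \ref{prop:2vl}.
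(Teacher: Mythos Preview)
Your argument is correct and follows essentially the same route as the paper: establish the closed cover $\bar{\N}^0_{\mathrm{red}}=\bigcup_{\mathcal{L}}\N_{\mathcal{L}}\cup\bigcup_{\Lambda}\N_{\Lambda}^{\le 1}$ via Lemma \ref{lem:splitV1}, invoke the irreducibility and dimension statements already proved (Proposition \ref{prop:vl}, Lemmas \ref{lem:SVdim}, \ref{lem:dense}, \ref{lem:SVisoH1}), and then show no member of the cover is redundant by exhibiting a point $M$ with $\Lambda(M)$ equal to the given lattice. The paper phrases the last step as ``the open dense subscheme of $\N_{\mathcal{L}}$ (resp.\ $\N_{\Lambda}^{(1)}$) corresponding to $\Lambda(M)=\mathcal{L}$ (resp.\ $\Lambda(M)=\Lambda$) is not contained in any other member'', and uses the dimension gap $5>4$ exactly as you do; your explicit verification that two distinct $2$-modular lattices cannot satisfy $\Lambda\subseteq\Lambda'$ is a nice addition that the paper leaves implicit.
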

\begin{proof}
  We have seen in the previous section that for $k$ algebraically closed, if a lattice $M \in \V(k)$ is not contained in a vertex lattice, then $\pi \Lambda_k \subset^1 M + \pi\Lambda_k$ for some $2$-modular lattice $\Lambda$. Therefore, the union of the subsets $\N_{\mathcal{L}}$ for $\mathcal{L}$ running over the set of vertex lattices of maximal type, together with $\N^{\le 1}_{\Lambda}$ for $\Lambda$ running over the set of $2$-modular lattices, contains $\bar{\N}_{\rm red}^0$. Again we are using the fact that a reduced scheme over $\F$ is determined by its closed points.

  For a maximal vertex lattice $\mathcal{L}$, we have seen that the irreducible scheme $\N_{\mathcal{L}}$ contains the open and dense subscheme corresponding to the stratum $X_B(s_3s_2s_1) \sqcup X_B(s_3s_2s_3s_1) \sqcup X_B(s_3s_2s_3s_1s_2)$ in the decomposition (\ref{eq:strC}) of $S_V$. Its $k$-points correspond to those lattices $M \in \V(k)$ such that $\Lambda(M) = \mathcal{L}$ and which are therefore not contained in $\N_{\mathcal{L}'}$ for any other maximal vertex lattice $\mathcal{L}'$. Similarly, observe that the irreducible subscheme $\N^{(1)}_{\Lambda}$ contains an open and dense subscheme whose $k$-points are the lattices $M$ such that $\Lambda(M) = \Lambda$. This subscheme corresponds to the dense subvariety of the Deligne-Lusztig variety $Y_{a_0}$ introduced in the discussion of Remark \ref{rem:hereditary}. Its $k$-valued points are therefore not contained in any $\N^{\le 1}_{\Lambda'}$ for any other $2$-modular lattice $\Lambda'$ nor in $\N_{\mathcal{L}}$ for a maximal vertex lattice $\mathcal{L}$. 

  Since for any vertex lattice there is a $2$-modular lattice containing it, we need to check if for some vertex lattice $\mathcal{L}$ the corresponding component $\N_{\mathcal{L}}$ is contained in the union of the components $\N_{\Lambda}^{\le 1}$.  Since the dimension of any $\N_{\mathcal{L}}$ is $5$ and that of any $\N^{\le 1}_{\Lambda}$ is $4$, this is not possible, hence we can conclude that these are exactly the irreducible components of $\bar{\N}_{\rm red}^0$.
\end{proof}

The following result will be relevant in the next section for a comparison with the decomposition given by the set of admissible elements on the generalized affine Deligne-Lusztig variety $X(\mu, b)$ associated to our problem. Recall that we have proven in Lemma \ref{lem:SVstr} that the variety $S_V$ has a stratification in terms of varieties $S_{V'}$ for smaller dimensional symplectic vector spaces $V'$. Similarly, we have seen in Lemma \ref{lem:RVdim} that $R_W$ has a stratification in terms of the generalized Deligne-Lusztig varieties $Y_a$ of Definition \ref{def:ya}. In particular, since in this section $W$ is a split orthogonal space of dimension $6$, by Lemma \ref{lem:lus3} $R_W$ has only two strata $R_W = Y_{\infty} \sqcup Y_2$.

\begin{cor}\label{cor:str}
  The irreducible components of $\bar{\N}^0_{\mathrm{red}}$ are stratified as follows.
  \begin{itemize}
    \item[(i)] For $\mathcal{L}$ a vertex lattice in $C$ of type $6$, the corresponding irreducible component $\N_{\mathcal{L}}$ can be decomposed as 
    \begin{equation*}
      \N_{\mathcal{L}} = \bigsqcup_{\mathcal{L'} \subset \mathcal{L}} \N_{\mathcal{L'}}^{\circ},
    \end{equation*}
    where the union runs over the vertex lattices of $C$ contained in $\mathcal{L}$ and each $\N_{\mathcal{L'}}^\circ$ is universally homeomorphic to the generalized Deligne-Lusztig variety $S_{V'}$, with $V'$ the symplectic vector space $\mathcal{L}'/\mathcal{L}'^\vee$. The strata are then given by the union over the vertex lattices of a fixed type and the closure of each stratum is given by the strata corresponding to smaller type.
    \item[(ii)] For $\Lambda$ a $2$-modular lattice in $C$, the corresponding irreducible component $\N_{\Lambda}^{\le 1}$ can be decomposed as \begin{equation*}
      \N_{\Lambda}^{\le 1} = \N_{\Lambda}^{(0)} \sqcup \N_{\Lambda, \infty} \sqcup \N_{\Lambda, 2},
    \end{equation*} and the closure of each stratum is the union of the strata preceding it. Here, $\N_{\Lambda}^{(0)}$ is defined in an analogous way as $\N_{\Lambda}^{(1)}$ and its only point is the $p$-divisible group $X_{\pi\Lambda}$ associated to the lattice $\pi\Lambda \otimes_{\mathcal{O}_E} W(\F)$ contained in $N$. The other two strata are universally homeomorphic to the restriction of the line bundle over $R_W$ to its strata $R_W = Y_{\infty} \sqcup Y_2$. In particular the closed subscheme $\N_{\Lambda}^{(0)} \sqcup \N_{\Lambda, \infty}$ is contained in the union of the irreducible components $\N_{\mathcal{L}}$, for all vertex lattices $\mathcal{L} \subset \Lambda$.
  \end{itemize}
\end{cor}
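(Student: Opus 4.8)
The plan is to combine the universal homeomorphism $f\colon \N_\Lambda \to S_{V\pi}$ from Proposition \ref{lem:closedimm} with the stratification results of the previous section (Lemmas \ref{lem:spV}, \ref{lem:spW}, \ref{lem:splitV1}) and the closed-immersion/isomorphism statements just established (Lemma \ref{lem:SVisoH1}, Corollary before this one), and to transport everything back along $f$. For part (i), the decomposition $\N_{\mathcal{L}} = \bigsqcup_{\mathcal{L}'\subset\mathcal{L}} \N_{\mathcal{L}'}^\circ$ is essentially a restatement of Proposition \ref{prop:vlproperties}(2) at the level of $\F$-points: a lattice $M\in\V_{\mathcal{L}}(\F)$ has a well-defined minimal $\tau$-stable lattice $\Lambda(M)$, which by Remark \ref{rem:vl2} is again a vertex lattice contained in $\mathcal{L}$, and $M\in\V_{\mathcal{L}'}^\circ(\F)$ for exactly one such $\mathcal{L}'$. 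To promote this from a decomposition of point sets to a stratification of schemes, I would use that each $\N_{\mathcal{L}'}^\circ$ is, via $f$ and Lemma \ref{lem:spV}, universally homeomorphic to the open stratum of $S_{V'}$ appearing in the hereditary description of Lemma \ref{lem:SVstr}, and that Proposition \ref{prop:vl} identifies $\N_{\mathcal{L}'}$ with $S_{V'}$ universally homeomorphically; the closure relations in $S_V$ from Lemma \ref{lem:SVdim} (each stratum's closure is the union of lower strata, indexed by $\Phi$-stable isotropic subspaces, i.e.\ by smaller vertex lattices) then transport directly, since universal homeomorphisms preserve the lattice of closed subschemes.

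For part (ii), I would first fix a $2$-modular lattice $\Lambda$ and recall from Lemma \ref{lem:splitV1} that in the split case the lattices $M\in\V_\Lambda(\F)$ with $\Lambda(M)$ \emph{not} a vertex lattice all lie in $\V_\Lambda^{(1)}(\F)$, and correspond under the map of Lemma \ref{lem:spW} precisely to $R_W\smallsetminus Y_\infty = Y_2$. This splits $\N_\Lambda^{\le 1}$ into three locally closed pieces: the single point $\N_\Lambda^{(0)}$ corresponding to the Lagrangian $\overline{\pi\Lambda}$ (equivalently the lattice $\pi\Lambda_\F$), and the preimages under $f$ of the restriction of $\mathcal{H}^1$ to the two strata $Y_\infty$ and $Y_2$ of $R_W$ (using $R_W = Y_\infty \sqcup Y_2$ from Lemma \ref{lem:lus3} for a split six-dimensional orthogonal space). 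Set $\N_{\Lambda,\infty}$ and $\N_{\Lambda,2}$ to be these preimages; by Lemma \ref{lem:SVisoH1} and the corollary preceding this statement, $f$ restricts to a universal homeomorphism onto $\mathcal{H}^1\!\mid_{Y_\infty}$ and $\mathcal{H}^1\!\mid_{Y_2}$ respectively. The closure relations then follow from those in $R_W$ (Lemma \ref{lem:RVdim}, Remark \ref{rem:infty}: $R_i$ is closed, $Y_2$ open and dense, $\overline{Y_2}=R_W$, and $Y_\infty = R_{a_0}$ is the minimal closed stratum) pulled back along the line bundle and then along the universal homeomorphism $f$; note the point $\overline{\pi\Lambda}$ lies in the closure of $\mathcal{H}^1\!\mid_{Y_\infty}$ (it is the $\mathbb{A}^1$-degeneration of the fiber) exactly as in the proof of Lemma \ref{lem:dense}.

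The final assertion of (ii), that $\N_\Lambda^{(0)} \sqcup \N_{\Lambda,\infty}$ is contained in $\bigcup_{\mathcal{L}\subset\Lambda} \N_{\mathcal{L}}$, is the place where I expect the real work to be. The idea is that a lattice $M$ whose image in $W_k$ is a line $l\in Y_\infty$ has $l + \Phi(l) + \Phi^2(l)$ $\Phi$-stable and isotropic of dimension three, so by the argument in the first paragraph of the proof of Lemma \ref{lem:splitV1} the lattice $\langle l+\Phi(l)+\Phi^2(l)\rangle + \overline{\pi\Lambda}_k$ lifts to a vertex lattice of type $6$ containing $M$ — hence $\Lambda(M)$ is a vertex lattice $\mathcal{L}$, and one checks $\mathcal{L}\subset\Lambda$ because $\mathcal{L} = (M+\pi\Lambda)^\tau$ and $M+\pi\Lambda \subset \Lambda_k$ forces this after intersecting with $C$; the point $\pi\Lambda_\F$ is of course contained in the self-dual, hence vertex, lattice $\pi\Lambda$. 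Since this containment holds on $k$-points for all algebraically closed $k$ and everything in sight is reduced of finite type over $\F$, it holds scheme-theoretically. The main obstacle is thus bookkeeping: making sure the minimality of $\Lambda(M)$ and the index constraints interact correctly so that the lifted $\tau$-stable lattice really does contain $M$ and really is contained in $\Lambda$, and that the three pieces $\N_\Lambda^{(0)}$, $\N_{\Lambda,\infty}$, $\N_{\Lambda,2}$ are mutually disjoint locally closed subschemes whose union is all of $\N_\Lambda^{\le 1}$ — but each of these points is already essentially contained in Lemmas \ref{lem:splitV1}, \ref{lem:spW}, \ref{lem:dense} and the discussion of $R_W$, so the proof is mostly a matter of assembling them.
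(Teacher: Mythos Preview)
Your proposal is correct and follows essentially the same route as the paper: part~(i) is obtained by transporting the hereditary stratification of $S_V$ (Lemma~\ref{lem:SVstr}) through the universal homeomorphism of Proposition~\ref{prop:vl}, using the bijection between $\Phi$-stable isotropic subspaces of $V$ and vertex lattices $\mathcal{L}'\subset\mathcal{L}$; part~(ii) is obtained by pulling back the decomposition $R_W=Y_\infty\sqcup Y_2$ of Lemma~\ref{lem:RVdim} along the line bundle and the universal homeomorphism, with the final containment statement read off from Lemma~\ref{lem:splitV1}. The paper's proof is terser but invokes exactly the same ingredients; your more explicit bookkeeping (and the appeal to Lemma~\ref{lem:dense} for the closure relation at the point $\N_\Lambda^{(0)}$) is fine, though your formula $\mathcal{L}=(M+\pi\Lambda)^\tau$ is not literally correct---what matters is that $M$ is contained in some $\tau$-stable lattice inside $\Lambda_k$ whose $\tau$-fixed points form a vertex lattice, and then minimality of $\Lambda(M)$ plus Remark~\ref{rem:vl2} finish the job.
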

\begin{proof}
  The first statement follows from the universal homeomorphism $f: \N_{\mathcal{L}} \longrightarrow S_{V}$ and the stratification of $S_V$ proved in Lemma \ref{lem:SVstr}. Recall that the irreducible components of the smaller dimensional strata of $S_V$ are indexed over the isotropic subspaces $U$ of $V = \mathcal{L}/\mathcal{L}^\vee$, and that the component corresponding to $U$ is again a generalized Deligne-Lusztig variety $S_{V'}$ for $V' = U^\vee/U$. One can then observe that the isotropic subspaces of $V$ are in bijection with the vertex lattices $\mathcal{L'}$ of $C$ that are contained in $\mathcal{L}$.

  For a $2$-modular lattice $\Lambda$, it is clear that $\N_{\Lambda}^{\le 1} = \N_{\Lambda}^{(0)} \sqcup \N_{\Lambda}^{(1)}$. In particular, $\N_{\Lambda}^{(0)}$ is the preimage under $f$ of $S_{V\pi}^{(0)}$, the closed subscheme of $S_{V\pi}^{\le 1}$ consisting of Lagrangian submodules $U$ such that the rank of $U + \overline{\pi\Lambda}_R$ is $6$, which is equivalent to $U = \overline{\pi\Lambda}_R$. Observe that $\pi\Lambda \otimes W(\F)$ is a $\tau$-stable, self-dual lattice, hence it belongs to $\N_{\mathcal{L}}(\F)$ for some vertex lattice of type $6$ contained in $\Lambda$, and it corresponds to a $p$-divisible group $X_{\pi\Lambda} \in \N_{\Lambda}(\F)$. 
  
  The open and dense subscheme $\N_{\Lambda}^{(1)}$ is universally homeomorphic by the previous proposition to a line bundle over $R_W$. Then the stratification follows from the decomposition of $R_W$ given in Lemma \ref{lem:RVdim}. We have seen in Lemma \ref{lem:splitV1} that the closed points of $\N_{\Lambda}^{(1)}$ that are mapped by $q$ into $Y_{\infty}$ correspond to lattices $M$ such that $\Lambda(M)$ is a vertex lattice, from which the last statement follows.
\end{proof}

\begin{rem}
  If we compare the previous proposition with the analogous results \cite{rtw}*{Prop.\ 6.6} for signature $(1, n-1)$, we see that irreducible components homeomorphic to Deligne-Lusztig varieties for the symplectic group appear in both cases. However, the existence of a second family of irreducible components, those homeomorphic to the line bundle, is a new specific feature of signature $(2,4)$. 
\end{rem}
\begin{rem}
  Another difference to signature $(1, n-1)$ is that the intersection pattern is now quite hard to describe in terms of the Bruhat-Tits building. For example, even if we know there is a point $M \in \N^{(1)}_{\Lambda_1}(k) \cap \N^{(1)}_{\Lambda_2}(k)$, for two distinct $2$-modular lattices, it is in general not true that the whole fiber over the image of $M$ in $R_W$ is contained in the intersection. Indeed, this would be the case if and only if $\pi \Lambda_1 \subset \Lambda_2$, which is not true in general. On the other hand, for vertex lattices, the intersection pattern can be described also in our case in terms of the Bruhat-Tits building for $\mathrm{SU}(C)(\Q_p)$ by Proposition \ref{prop:vlproperties} and the previous corollary. 
  
  The intersection of two components corresponding to different types of lattices is also not easy to describe. Recall the decomposition of the closed subvariety $Y_{\infty}$ of $R_W$ given in Remark \ref{rem:infty}. For dimension $6$ and split symmetric form, $Y_{\infty}$ can be decomposed as a union of three strata
  \begin{equation*}
    Y_{\infty} = X_{P_1}(1) \sqcup X_{P_2}(t_1) \sqcup X_B(t_1t_2).
  \end{equation*}
  As we have seen in Lemma \ref{lem:RVdim} the closed points of $X_B(t_1t_2)$ are those $l \in \mathbb{P}(W)(k)$ such that $l + \Phi(l) + \Phi^2(l)$ is $\Phi$-stable and has dimension $3$. By the discussions in the previous chapter, see in particular the proof of Lemma \ref{lem:splitV1}, these points correspond to lattices $M \in \N_{\Lambda}^{(1)}(k)$ such that $\Lambda(M)$ is a vertex lattice of type $6$. This means that for some vertex lattice $\mathcal{L}$ of type $6$, the intersection $\N_{\Lambda}^{\le 1} \cap \N_{\mathcal{L}}^\circ$ is non-empty, but by dimension reasons it does not contain the whole stratum $\N_{\mathcal{L}}^\circ$. Similarly, the subvariety of $Y_{\infty}$ that in Lemma \ref{lem:RVdim} is identified with $X_{P_2}(t_1)$ corresponds to those lattices $M$ such that $\Lambda(M)$ is a vertex lattice of type $4$. Therefore, for some vertex lattices $\mathcal{L}$ of type $4$ the intersection of the stratum $\N_{\mathcal{L}}^\circ$ with $\N_{\Lambda}^{(1)}$ is non-empty. In particular, this intersection is contained in a subscheme of $\N_{\Lambda \infty}$ universally homeomorphic to the restriction of the line bundle $\mathcal{H}^1$ to the subvariety $X_{P_2}(t_1)$ of $Y_{\infty}$. This subscheme has then dimension $2$, while the stratum $\N_{\mathcal{L}}^\circ$ has dimension $3$, compare Lemma \ref{lem:SVstr}, and therefore is not contained in $\N_{\Lambda}^{(1)}$.
  On the other hand, we have seen in the proof of Lemma \ref{lem:dense} that if $\mathcal{L}$ is a vertex lattice of type $2$ such that $\pi\Lambda \subset \mathcal{L}$ then $\N_{\mathcal{L}}$ is contained in $\N_{\Lambda}^{\le 1}$. 
\end{rem}

\begin{rem}
  As we have already observed, for a vertex lattice $\mathcal{L}$, the stratum $\N^{\circ}_{\mathcal{L}}$ corresponding to lattices $M$ such that $\Lambda(M) = \mathcal{L}$ is open and dense in $\N_{\mathcal{L}}$. It is interesting to notice that for a $2$-modular lattice $\Lambda$, the subscheme $\N^{\circ}_{\Lambda}$ is open and dense in $\N^{(1)}_{\Lambda}$, but it is not dense in the whole $\N_{\Lambda}$, as $\N_{\Lambda}^{\le 1}$ is closed in $\N_{\Lambda}$. This, together with the fact that the intersection pattern is harder to describe, and that the stratification of Corollary \ref{cor:str} of the irreducible components do not extend to a stratification of the whole $\bar{\N}^0_{\mathrm{red}}$ in terms of Deligne-Lusztig varieties, can all be seen as consequences of the fact that the underlying group-theoretical datum is not fully Hodge-Newton decomposable. 
\end{rem}

\subsection{Irreducible components in the non-split case} 
Assume now that the Hermitian form on $C$ is non-split. We have seen in the previous section, compare Lemma \ref{lem:spW2}, that any lattice $M$ in $\V(k)$ is contained in some $2$-modular lattice $\Lambda_k$ such that $\pi\Lambda_k \subset^{\le 2} M + \pi\Lambda_k$.  We are going to see in this section that there are two families of irreducible components of $\bar{\N}_{\mathrm{red}}^0$ and this time both are indexed over the set of $2$-modular lattices. Roughly speaking, these components are characterized by the index of the inclusion $\pi\Lambda_k \subset^{\le 2} M + \pi\Lambda_k$. Again our strategy is to use the universal homeomorphism $f: \N_\Lambda \longrightarrow S_{V\pi}$ and the results on closed points of the previous section to describe the irreducible components of $\bar{\N}^0_{\rm red}$ in terms of Deligne-Lusztig varieties. 

Observe that for a $2$-modular lattice $\Lambda \subset C$ the corresponding subscheme $\N_{\Lambda}$ contains the closed subscheme $\N_{\Lambda}^{\le 1}$ defined in the same way as in the split case. Observe that the proofs of Lemma \ref{lem:SVisoH1} and its corollary do not make use of the fact that the Hermitian form over $C$ is split, therefore one can show in the same way the following result.

\begin{lem}\label{lem:homeom}
 Let $\N_{\Lambda}^{(1)}$ be the preimage under the universal homeomorphism $f$ of the locally closed subscheme $S_{V\pi}^{(1)}$ defined as in the split case. Then $\N_{\Lambda}^{(1)}$ is universally homeomorphic to the restriction of the line bundle $\mathcal{H}^1$ to the variety $R_W$. It follows that its closure $\N_{\Lambda}^{\le 1}$ is irreducible and has dimension $4$.
\end{lem}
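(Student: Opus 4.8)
The plan is to follow exactly the same chain of arguments that was used to establish Lemma \ref{lem:SVisoH1} and its corollary in the split case, verifying that none of the intermediate steps depended on splitness of the Hermitian form on $C$. First I would record that the constructions of the subschemes $\mathcal{S}^{(1)} \subset \Grass(V)$ and $S_{V\pi}^{(1)} \subset S_{V\pi}$ make sense verbatim: here $V = \Lambda^+/\Lambda^-$ is a $12$-dimensional symplectic space, $\overline{\pi}$ is the rank-$6$ operator induced by $\pi$, and $\overline{\pi\Lambda}$ is its (common) image and kernel — all of this is purely lattice-theoretic and uses only that $\Lambda$ is $2$-modular, not the discriminant of $C$. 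Then I would invoke Lemma \ref{lem:S1P1}, whose proof constructs the morphism $\mathcal{S}^{(1)} \to \mathcal{H}^1$ over $\mathbb{P}(W)$ by fixing a Lagrangian complement $\mathcal{L}$ of $\overline{\pi\Lambda}$ and producing for each Lagrangian $U$ the map $\psi_U \colon l \to l^*$; inspecting that proof, every step (the construction of $U_0 = l \oplus N^\vee$, the verification that $U_0$ is Lagrangian and a direct summand, the identification of $\overline{\pi\Lambda}_R/N^\vee$ with a submodule of $l^*$ via the alternating form) is insensitive to splitness.

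Next I would reproduce the argument of Lemma \ref{lem:SVisoH1} itself: the morphism $\mathcal{S}^{(1)} \to \mathcal{H}^1$ is a proper monomorphism (injectivity on $R$-points is checked exactly as before, since $U_1, U_2$ both sit inside the preimage $N$ of $l$ and contain $N^\vee$), hence a closed immersion by Zariski's main theorem; restricting to the reduced closed subscheme $S_{V\pi}^{(1)}$, one checks on $k$-valued points — using Lemma \ref{lem:spW}, which as stated holds for $2$-modular $\Lambda$ regardless of the discriminant — that the image is precisely the restriction $\mathcal{H}^1_{\mid R_W}$ of the line bundle to $R_W \subset \mathbb{P}(W)$. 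Since both sides are reduced, the closed immersion is an isomorphism. Normality, irreducibility and dimension $4$ of $S_{V\pi}^{(1)}$ then follow from Lemma \ref{lem:RVnormal} together with the fact that a line bundle preserves normality and irreducibility and raises dimension by one. Finally, pulling back through the universal homeomorphism $f \colon \N_\Lambda \to S_{V\pi}$ of Proposition \ref{lem:closedimm} gives that $\N_\Lambda^{(1)}$ is universally homeomorphic to $\mathcal{H}^1_{\mid R_W}$, and the density argument of Lemma \ref{lem:dense} — which again uses only connectedness of the simplicial complex $\mathscr{L}$ of vertex lattices and the existence of a vertex lattice of type $2$ between $\pi\Lambda$ and some larger lattice, both available in the non-split case — shows $\N_\Lambda^{(1)}$ is dense in $\N_\Lambda^{\le 1}$, whence the closure is irreducible of dimension $4$.

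The only point that genuinely needs checking, rather than quoting, is that $R_W$ retains the relevant properties when $W$ is the $6$-dimensional \emph{non-split} orthogonal space; but this is already covered by Lemmas \ref{lem:RVnormal} and \ref{lem:RVdim}, which treat both the split and non-split orthogonal cases simultaneously (recalling that in the non-split case the parameter $a_0$ of Lemma \ref{lem:lus3} equals $n/2 = 3$, so $R_W = Y_\infty \sqcup Y_3 \sqcup Y_2$ has one more stratum than in the split case, but is still irreducible of dimension $2d-3 = 3$, hence $\mathcal{H}^1_{\mid R_W}$ has dimension $4$). I do not expect a serious obstacle here: the whole content of the lemma is the observation — which the paper has deliberately arranged by keeping the proofs of Lemmas \ref{lem:S1P1}, \ref{lem:SVisoH1}, \ref{lem:dense} and \ref{lem:RVnormal}--\ref{lem:RVdim} free of any splitness hypothesis — that the split-case proof transfers word for word. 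The mild care required is simply to state explicitly which earlier lemmas are being reused and to confirm that $R_W$ in Lemma \ref{lem:RVnormal} is allowed to be non-split, which it is.
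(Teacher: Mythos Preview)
Your proposal is correct and follows exactly the approach the paper takes: the paper's own argument for Lemma \ref{lem:homeom} consists of the single observation that the proofs of Lemma \ref{lem:SVisoH1} and its corollary (together with Lemma \ref{lem:dense}) never used the splitness hypothesis, so they carry over verbatim. Your write-up is simply a careful unpacking of that observation, checking each cited lemma in turn, and is more detailed than what the paper provides but substantively identical.
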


Similarly to the split case, we also consider the open subscheme $S_{V\pi}^{(2)}$ of $S_{V\pi}$ given by 
\begin{equation}
  S_{V \pi}^{(2)}(R) \coloneqq \{ U \in S_{V \pi}(R) \mid U + \overline{\pi\Lambda}_R \text{ is a direct summand of } V_R \text{ with } \mathrm{rk}(\overline{\pi\Lambda}_R + U ) = 8\}. 
\end{equation}
We denote by $\N_{\Lambda}^{(2)}$ its preimage  under the universal homeomorphism $f$. It is again an open subscheme of $\N_{\Lambda}$. Recall that by Lemma \ref{lem:spW2} all lattices in $\N_{\Lambda}(k)$ are such that $\pi\Lambda_k \subset^{\le 2} M + \pi\Lambda_k$, and since $f$ is a bijection on closed points, we have that all Lagrangian subspaces $U$ in $S_{V\pi}(k)$ satisfy $\dim(U + \overline{\pi\Lambda}_k) \le 8$. 
However, as we are going to see in Proposition \ref{prop:nonsplit}, $\N_{\Lambda}^{(2)}$ is not dense in $\N_{\Lambda}$, and its closure will be one type of irreducible components of this scheme. The other irreducible component will be $\N_{\Lambda}^{\le 1}$.

Again, let $W$ be the six-dimensional $\F$-vector space given by $\Lambda/\pi\Lambda$. Recall that it is endowed with a non-split symmetric form. In Section \ref{sec:dlvso} we have studied the variety $Q_W \subset \Grass_2(W)$. Recall that it is the closure of some generalized Deligne-Lusztig variety for the non-split orthogonal group of rank $6$. Moreover, we have proven in Lemma \ref{lem:QVstrata} that there is a stratification $Q_W = Z_0 \sqcup Z_1 \sqcup Z_2$. Since the form is non-split, the open dense subvariety $Z_2$ is isomorphic to the union $X_B(t_2t_1) \sqcup X_{B}(\Phi(t_2t_1))$ and has therefore two irreducible components.

\begin{lem}\label{lem:morphism}
  The map $S_{V\pi}^{(2)} \rightarrow \Grass_2(W)$ induced by $q: V \rightarrow W = V/\overline{\pi\Lambda}$ is a morphism of projective schemes. It sends $S_{V\pi}^{(2)}$ to the projective scheme $Q_W$ of Section \ref{sec:dlvso}.  
\end{lem}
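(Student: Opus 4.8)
The statement is the analogue for the non-split case of Lemma \ref{lem:S1P1}, so the plan is to follow the same two-step pattern: first produce a morphism $S_{V\pi}^{(2)} \to \Grass_2(W)$ induced by the linear projection $q\colon V \to W = V/\overline{\pi\Lambda}$, and then check that it lands inside the closed subscheme $Q_W$.

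\textbf{Step 1: the map to $\Grass_2(W)$.} For an $\F$-algebra $R$ and a Lagrangian submodule $U \in S_{V\pi}^{(2)}(R)$, the defining condition is that $U + \overline{\pi\Lambda}_R$ is a direct summand of $V_R$ of rank $8$. Since $\overline{\pi\Lambda}_R$ is a free submodule of rank $6$, the image $(U + \overline{\pi\Lambda}_R)/\overline{\pi\Lambda}_R$ is a rank-$2$ direct summand of $W_R = V_R/\overline{\pi\Lambda}_R$, hence a point of $\Grass_2(W)(R)$. As in the proof of Lemma \ref{lem:S1P1}, since $\overline{\pi\Lambda}_{R}$ is free, quotienting by it commutes with base change along any $\F$-algebra map $R \to R'$, so
\begin{equation*}
  \bigl((U + \overline{\pi\Lambda}_R)/\overline{\pi\Lambda}_R\bigr)\otimes_R R' = (U\otimes_R R' + \overline{\pi\Lambda}_{R'})/\overline{\pi\Lambda}_{R'}.
\end{equation*}
Therefore this assignment is functorial in $R$ and, by Yoneda's Lemma, defines a morphism of projective $\F$-schemes $S_{V\pi}^{(2)} \to \Grass_2(W)$. (Here one uses that $S_{V\pi}^{(2)}$ is an open subscheme of the projective scheme $S_{V\pi}$, hence of finite type over $\F$, and that $q$ preserves direct-summand-ness precisely because the rank is pinned to $8$ on $S_{V\pi}^{(2)}$.)

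\textbf{Step 2: the image lies in $Q_W$.} Recall $Q_W$ parametrizes $(d-1)=2$-dimensional isotropic subspaces $U'$ of $W$ with $U' + \Phi(U')$ isotropic; it is a closed subscheme of $\Grass_2(W)$. One shows that the image of $S_{V\pi}^{(2)}$ satisfies these two conditions. That the image $U' = q(U)$ is isotropic for the symmetric form on $W$ follows, as in the proof of Lemma \ref{lem:5.5}, from compatibility of $q$ with the duals: the quotient map sends the symmetric orthogonal $M^\perp = \pi^{-1}M^{\vee}$ of a lattice $M$ with image $U$ to $U'^{\perp}$, and since $M$ is self-dual for the alternating form one has $M \subset M^\perp$, hence $U' \subset U'^{\perp}$. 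That $U' + \Phi(U')$ is isotropic follows from the condition $\overline{\pi}(U) + \overline{\pi}(\Phi(U)) \subset U \cap \Phi(U)$ defining $S_{V\pi}$, translated via $(x,y) = \langle \pi x, y\rangle = \langle \overline\pi v_1, v_2\rangle$ exactly as in the proofs of Lemmas \ref{lem:spW} and \ref{lem:spW2}. Since $Q_W$ is closed in $\Grass_2(W)$ and the two conditions cut it out, and since they hold after every base change (they are closed conditions that can be tested functorially), the morphism factors through $Q_W$. To make this rigorous over an arbitrary $\F$-algebra $R$ rather than only on geometric points one phrases the isotropy of $U'$ and $U' + \Phi(U')$ as the vanishing of the induced maps to the relevant duals; this is a closed condition on $\Grass_2(W)$ whose $R$-points we have just verified are satisfied by the image of any $U \in S_{V\pi}^{(2)}(R)$.

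\textbf{Main obstacle.} The genuinely delicate point, as in Lemma \ref{lem:S1P1}, is not the geometric-point check but verifying that all constructions are functorial in the $\F$-algebra $R$ and compatible with base change — in particular that $q$ really does carry a direct summand of $V_R$ to a direct summand of $W_R$ (which needs the rank to be constant, hence the restriction to $S_{V\pi}^{(2)}$), and that the isotropy conditions defining $Q_W$ are closed and preserved, so that we may conclude the factorization scheme-theoretically rather than only on closed points. Once this bookkeeping with free modules and base change is in place — which, as in the split case, ultimately rests on working exclusively with direct summands of free modules so that all quotients commute with $\cdot \otimes_R R'$ — the statement follows.
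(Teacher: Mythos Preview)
Your Step 1 is essentially identical to the paper's: both refer back to Lemma \ref{lem:S1P1} for the fact that $U \mapsto (U+\overline{\pi\Lambda}_R)/\overline{\pi\Lambda}_R$ is a morphism of projective schemes, using freeness of $\overline{\pi\Lambda}_R$ to get compatibility with base change.

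For Step 2, however, the paper takes a much shorter route than you do. Rather than verifying the isotropy of $U'$ and of $U'+\Phi(U')$ functorially over every $\F$-algebra $R$, the paper simply observes that $Q_W$ is a closed subscheme of $\Grass_2(W)$ and that $S_{V\pi}^{(2)}$ is reduced and of finite type over $\F$, so it suffices to check the factorization on closed points; this was already done in Lemma \ref{lem:spW2}. Your functorial approach is not wrong in spirit, but it introduces a genuine subtlety you do not fully resolve: over a general $\F$-algebra $R$ the Frobenius $\Phi$ is not $R$-linear, so the expression ``$U'+\Phi(U')$ is isotropic'' does not literally make sense as a condition on $R$-points, and your sentence about ``vanishing of the induced maps to the relevant duals'' is not enough to handle this. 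The paper's closed-point argument sidesteps this issue entirely. So your proof is correct in outline but unnecessarily elaborate in Step 2, and the added generality you aim for there is precisely where the argument becomes imprecise; the paper's two-line reduction to Lemma \ref{lem:spW2} is both simpler and cleaner.
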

\begin{proof}
  The fact that the map induced by $q$ is a morphism of projective schemes is proved in the same way as in Lemma \ref{lem:S1P1}. In order to find the image of $S_{V\pi}^{(2)}$ under this map, it is enough to consider its closed points. Then the statement follows from Lemma \ref{lem:spW2}. 
\end{proof}

Denote by $S_{V\pi}^{(2)^\circ}$ the preimage of the open subscheme $Z_2 \subset Q_W$ under the morphism of Lemma \ref{lem:morphism}. As in the split case, our next goal is to construct a morphism from $S_{V\pi}^{(2)^\circ}$ to a vector bundle over $Z_2$. We have seen in Remark \ref{rem:flag} that there is a morphism $Z_2 \cong X_{B}(t_2t_1) \sqcup X_B(t_3t_1)\rightarrow \mathcal{F}l(W)$, where $\mathcal{F}l(W)$ is the partial flag variety parametrizing flags of the form $U_1 \subset U_2$ with $\dim(U_i) = i$. Consider now the maps $\pi_i: \mathcal{F}l(W) \rightarrow \Grass_i(W)$ sending a flag to its term of dimension $i$. We denote by $\mathcal{U}_i$ the pullback of the universal vector bundle on $\Grass_i(W)$ along the map $\pi_i$. Then we consider $\mathcal{H}^2$ the rank-$2$, locally trivial vector bundle on $\mathcal{F}l(W)$ obtained as the homomorphism bundle 
\begin{equation}
  \mathcal{H}^2 \coloneqq \Hom(\mathcal{U}_1, \mathcal{U}_2^*).
\end{equation}

\begin{lem}\label{lem:S2homeomH2}
  The morphism $g: S_{V\pi}^{(2)^\circ} \xlongrightarrow{q} Z_2 \rightarrow  \mathcal{F}l(W)$  induces a universal homeomorphism from $S_{V\pi}^{(2)^\circ}$ to the pullback along $Z_2 \rightarrow  \mathcal{F}l(W)$ of the rank-$2$ vector bundle $\mathcal{H}^2$.
\end{lem}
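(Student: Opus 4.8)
The plan is to follow the template established in Lemma~\ref{lem:S1P1} and Lemma~\ref{lem:SVisoH1}, but now producing a rank-two homomorphism bundle instead of a line bundle. First I would construct a morphism of $\F$-schemes $S_{V\pi}^{(2)^\circ} \to \mathcal{H}^2$ lying over the morphism $g\colon S_{V\pi}^{(2)^\circ} \xrightarrow{q} Z_2 \to \mathcal{F}l(W)$. By Yoneda it suffices to give, functorially in an $\F$-algebra $R$, a map on $R$-points. Given a Lagrangian $U \in S_{V\pi}^{(2)^\circ}(R)$ with image $U_W = q(U) \subset W_R$ of rank $2$, the associated point of $\mathcal{F}l(W)$ is the flag $U_W \cap \Phi(U_W) \subset U_W$ (this is exactly the flag recorded in Remark~\ref{rem:flag} and Lemma~\ref{lem:QVstrata}, so one must first check $U_W \cap \Phi(U_W)$ is locally free of rank one — over $Z_2$ the intersection has constant rank, so this is fine after base change). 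As in Lemma~\ref{lem:S1P1}, fix once and for all a Lagrangian complement $\mathcal{L}$ of $\overline{\pi\Lambda}$ in $V$, identify $\mathcal{L}\cong W$, set $N = U_W \oplus \overline{\pi\Lambda}_R$ with orthogonal $N^\vee = U_W^\vee \cap \overline{\pi\Lambda}_R$ (using $U_W \subset \mathcal{L}$), and let $U_0 = U_W \oplus N^\vee$ be the distinguished Lagrangian preimage of $U_W$. The same direct-summand arguments as in Lemma~\ref{lem:S1P1} show $N$, $N^\vee$, $U_0$ are all direct summands of $V_R$, and $U_0$ is Lagrangian. Then $U$ and $U_0$ both lie in $N$ and contain $N^\vee$, so the canonical isomorphism $U_0/N^\vee \xrightarrow{\sim} U_W \xrightarrow{\sim} U/N^\vee$ gives a map $\psi_U\colon U_0/N^\vee \to \overline{\pi\Lambda}_R/N^\vee$, $u \mapsto u - \phi_U(u)$, which via the nondegenerate pairing $\overline{\pi\Lambda}_R/N^\vee \hookrightarrow U_W^*$ becomes an element of $\Hom_R(U_W, U_W^*)$.

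The new point is that, because $U$ actually lies in $S_{V\pi}^{(2)}$ (so $\overline{\pi}(U)+\overline{\pi}(\Phi(U)) \subset U$) and maps to $Z_2$, the morphism $\psi_U$ is \emph{not} free on the whole of $U_W$: the isotropy conditions force $\psi_U$ to factor through the quotient $U_W \twoheadrightarrow U_W/(U_W \cap \Phi(U_W))$, or dually to land in $(U_W\cap\Phi(U_W))^\perp$-type data, which is precisely what cuts $\Hom_R(U_W,U_W^*)$ down to $\Hom(\mathcal{U}_1,\mathcal{U}_2^*)$ of rank $2$. Concretely I would verify on $R$-points — reusing the computation already carried out in the proof of Lemma~\ref{lem:spW2c}, where exactly this counting was done: a preimage $L'$ of $U$ in $S_{V\pi}^{(2)}$ is uniquely determined by how it lifts the line $U_W \cap \Phi(U_W)$, and the lift varies over a $2$-dimensional affine space $\overline{\pi\Lambda}_k/N^\vee$ — that the assignment $U \mapsto (\text{flag}, \psi_U)$ takes values in $\mathcal{H}^2$ pulled back to $Z_2$, and that it commutes with base change along $\F$-algebra maps (all quotients involved are of projective modules, hence stable under $\otimes_R R'$, exactly as in Lemma~\ref{lem:S1P1}). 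This produces the morphism $S_{V\pi}^{(2)^\circ} \to \mathcal{H}^2|_{Z_2}$ of projective $\F$-schemes.

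Next I would show this morphism is a universal homeomorphism. Since everything is a reduced projective scheme over the algebraically closed field $\F$, by the standard package (as invoked in Proposition~\ref{prop:vl}: universally injective $+$ finite $+$ surjective, each testable on $k$-points for $k$ algebraically closed via \cite{gw}*{Ex.\ 12.32}, Rem.\ 12.16, Prop.\ 3.35) it suffices to prove the induced map on $k$-points is bijective for every algebraically closed $k \supseteq \F$. Surjectivity and injectivity on $k$-points are exactly the content of the fiber computation in Lemma~\ref{lem:spW2c}: the map $\V_{\Lambda}^{(2)^\circ}(k) \to Z_2(k)$ has fibers $\mathbb{A}^2(k)$, and $\mathbb{A}^2(k)$ is identified — through the two linear conditions derived there (the dimension-eight condition on $L'+\Phi(L')$, and the Lagrangian condition $\langle \pi\lambda,\Phi^{-1}(u)\rangle = \langle \pi\lambda_2, u\rangle$) — precisely with the fiber of $\mathcal{H}^2|_{Z_2}$ over the corresponding flag, i.e. with $\Hom_k(U_W\cap\Phi(U_W), U_W^*)$. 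One has to match the two descriptions of the $2$-dimensional fiber; I expect this bookkeeping — checking that the parametrization by $\overline{\pi\Lambda}_k/N^\vee$ in Lemma~\ref{lem:spW2c} agrees, as a $k$-vector space, with the fiber of $\Hom(\mathcal{U}_1,\mathcal{U}_2^*)$, compatibly with the affine structure — to be the main obstacle, since it is the only genuinely new linear-algebra identification and must be done carefully enough to see it is an isomorphism of vector bundles and not merely a bijection on points. Once the $k$-point bijection is established, finiteness and universal injectivity follow formally as in Proposition~\ref{prop:vl} and Proposition~\ref{lem:closedimm}, and composing with the universal homeomorphism $f\colon \N_\Lambda \to S_{V\pi}$ gives the stated conclusion for the preimage $\N_\Lambda^{(2)^\circ}$.
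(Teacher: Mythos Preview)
Your overall architecture matches the paper: construct a morphism to $\mathcal{H}^2|_{Z_2}$ functorially on $R$-points via a comparison map $\psi_U$, then verify bijectivity on $k$-points for algebraically closed $k$ using Lemma~\ref{lem:spW2c}, and conclude by the universal-homeomorphism package. The gap is in how you land in the rank-$2$ bundle. The paper does \emph{not} build $\psi_U$ on all of $T=U_W$ and then argue a factorization; it defines $\psi_U$ only on the line $l$ from the start, by taking the preimage $U\cap(l\oplus\overline{\pi\Lambda}_R)=l_U\oplus N^\vee$ of $l$ inside $U$ and comparing it with $l\oplus N^\vee\subset U_0$. This directly produces $\psi_U\colon l\to\overline{\pi\Lambda}_R/N^\vee\hookrightarrow T^*$, an element of $\Hom(l,T^*)=\mathcal{H}^2$, with no factorization step required.

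Your proposed factorization is in fact backwards. Re-examine the computation in Lemma~\ref{lem:spW2c}: a preimage $L'$ of $U\in Z_2(k)$ has the form $\langle u+\pi\lambda,\ \Phi^{-1}(u)+\pi\lambda_2\rangle\oplus N^\vee$ with $l=\langle u\rangle$, where $\pi\lambda\in\overline{\pi\Lambda}_k/N^\vee$ is the \emph{free} $2$-dimensional parameter and $\pi\lambda_2$ is then \emph{forced} by the Lagrangian and dimension-eight constraints. So for the full $\psi_U\colon T\to\overline{\pi\Lambda}_k/N^\vee$, the restriction $\psi_U|_l$ is arbitrary while the value on a complement of $l$ is determined (via Frobenius) by $\psi_U|_l$. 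Thus $\psi_U$ does not vanish on $l$ and does not factor through $T/l$; nor does its image lie in the annihilator of $l$ in $T^*$, since no condition $\langle u,\pi\lambda\rangle=0$ is imposed. Either version of your factorization would land you in $\Hom(\mathcal{U}_2/\mathcal{U}_1,\mathcal{U}_2^*)$ or $\Hom(\mathcal{U}_2,(\mathcal{U}_2/\mathcal{U}_1)^*)$ rather than $\Hom(\mathcal{U}_1,\mathcal{U}_2^*)$. The fix is simply to record $\psi_U|_l$ directly, as the paper does. Note finally that recovering $L'$ from $\psi_U|_l$ uses Frobenius, so the resulting morphism is not a monomorphism on general $R$-points---this is why the conclusion is only a universal homeomorphism, in contrast to the isomorphism in Lemma~\ref{lem:SVisoH1}.
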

\begin{proof}
  As in the proof of Lemma \ref{lem:SVisoH1}, we start with defining maps $S_{V\pi}^{(2)^\circ}(R)\rightarrow \mathcal{H}^2(R)$ for any $\F$-algebra $R$. Let $U \in S_{V\pi}^{(2)^\circ}(R)$ and denote by $l \subset T$ its image in $\mathcal{F}l(W)$. Observe that by definition of $g$ above we have that $T = q(U)$, where $q: V_R \rightarrow W_R = V_R/\overline{\pi\Lambda}_R$. We fix again a Lagrangian complement $\mathcal{L}$ of $\overline{\pi\Lambda}$ in $V$, and we identify $W$ and $\mathcal{L}$, so that we can consider the image of $U$ as a flag $l \subset T \subset \mathcal{L}$. 
  
  Consider $N = T \oplus \overline{\pi\Lambda}_R$ and its orthogonal $N^\vee = T^\vee \cap \overline{\pi\Lambda}_R$. Then, as in the proof of Lemma \ref{lem:S1P1} one shows that the submodule $U_0 = T \oplus N^\vee$ is a Lagrangian direct summand of $V_R$, and it is sent by $q$ to $T \in Z_2$ and therefore by $g$ to $l \subset T$. Observe again that since $U$ is Lagrangian and it is contained in $N$, we have $N^\vee \subset U \subset N$. It follows that the preimage under $q$ of $l$ in $U$ is a submodule of the form $l_U \oplus N^\vee$ for a rank-one submodule $l_U \subset U$ such that $l_U \subset l \oplus \overline{\pi\Lambda}_R$. Since $N = T \oplus \overline{\pi\Lambda}_R = U + \overline{\pi\Lambda}_R$ it follows that $N^\vee = U \cap \overline{\pi\Lambda}_R$, as $U$ and $\overline{\pi\Lambda}_R$ are Lagrangian direct summands. Therefore, the intersection of $U$ with the preimage in $V_R$ of $l$ satisfies $U \cap (l \oplus \overline{\pi\Lambda}_R) = l_U \oplus N^\vee$, and is then again a direct summand of $V$. 
  We have an isomorphism of submodules of $N/N^\vee$ given by the second isomorphism theorem
  \begin{equation*}
    \phi_U:  (l \oplus N^\vee)/N^\vee \cong l \cong (l_U \oplus N^\vee)/N^\vee = (U \cap (l \oplus \overline{\pi\Lambda}_R))/N^\vee
  \end{equation*} which gives again a morphism of submodules of $N/N^\vee$
  \begin{align*}
    \psi_U: l &\longrightarrow \overline{\pi\Lambda}_R/N^\vee\\
    v &\mapsto v - \phi_U(v)
  \end{align*}
  By the same arguments as in the proof of Lemma \ref{lem:S1P1}, there is an injective morphism 
  \begin{align*}
    \overline{\pi\Lambda}_R/N^\vee &\longrightarrow T^*\\
    x &\mapsto \left ( 
    \begin{aligned}
      T &\longrightarrow R \\
      v &\mapsto \langle v, x' \rangle
    \end{aligned} \right ), 
  \end{align*}
  which is an isomorphism when $R$ is a field. Again, we have been working only with direct summands of the free module $V_R$, hence with projective modules. Therefore, taking quotients and intersections commutes with tensoring with another $\F$-algebra $R \rightarrow R'$. It follows that the map $S_{V\pi}^{(2)^\circ}(R) \longrightarrow \mathcal{H}^2(R)$ sending a Lagrangian $U$ to $ (g(U) = l \subset T, \psi_U: l \rightarrow T^{*})$ commutes with tensor product by $\F$-algebras and hence it gives a morphism of projective $\F$-schemes $S_{V\pi}^{(2)^\circ} \rightarrow \mathcal{H}^2$. We have so constructed a morphism making the diagram 
  \[\begin{tikzcd}
    S_{V\pi}^{(2)^\circ} \arrow[r] \arrow[d] & \mathcal{H}^2 \arrow[d] \\
    Z_2 \arrow[r]                                  & \mathcal{F}l(W)          
  \end{tikzcd}\]
  commute. It follows that there is a morphism $S_{V\pi}^{(2)^\circ}$ to the rank-$2$ vector bundle $\mathcal{H}^2_{\mid Z_2}$ obtained as the pullback $\mathcal{H}^2 \times_{\mathcal{F}l(W)} Z_2$. Last, as we have already seen, to prove that a projective morphism is a universal homeomorphism it suffices to show that it induces a bijection on the sets of $k$-valued points, for any algebraically closed field $k$. Then we can conclude with Lemma \ref{lem:spW2}.
\end{proof} 

\begin{rem}
  Observe that the morphism above is a universal homeomorphism but not an isomorphism. Indeed, the construction of Lemma \ref{lem:spW2} giving the bijection on closed points, involves taking the Frobenius, and therefore cannot be extended to a generic $\F$-algebra $R$. 
\end{rem}

We denote by $\N_{\Lambda}^{(2)^\circ}$ the open subscheme of $\N_{\Lambda}^{(2)}$ defined as the preimage of $S_{V\pi}^{(2)^\circ}$ under the universal homeomorphism $f$. Observe that by the previous lemma, $\N_{\Lambda}^{(2)^\circ}$ has two irreducible components corresponding to the two irreducible components of $Z_2 \cong X_{B}(t_2t_1) \cup X_{B}(t_3t_1)$. We can now conclude the proof of Theorem \ref{thm:intro}.

\begin{prop}\label{prop:nonsplit}
  Assume the Hermitian form over $C$ is non-split. Then $\bar{\N}^0_{\mathrm{red}}$ has irreducible components of two types.
  \begin{itemize}
    \item[(i)] For every $2$-modular lattice $\Lambda$ there is an irreducible component $\N_{\Lambda}^{\le 1}$. It contains the dense subscheme $\N_{\Lambda}^{(1)}$, which is universally homeomorphic to a locally trivial line bundle over the generalized Deligne-Lusztig variety $R_W$. 
    \item[(ii)] For every $2$-modular lattice $\Lambda$ there are two  irreducible components contained in  $\overline{\N_{\Lambda}^{(2)^\circ}}$. Each of them is the closure of one of the irreducible components of the open subscheme $\N_{\Lambda}^{(2)^\circ}$ and is universally homeomorphic to a rank-$2$ vector bundle over the classical Deligne-Lusztig variety $X_B(t_2t_1)$, respectively $X_B(\Phi(t_2t_1)) = X_B(t_3t_1)$.
  \end{itemize}
  It follows that $\bar{\N}^0_{\mathrm{red}}$ is pure of dimension $4$.
\end{prop}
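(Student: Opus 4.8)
The plan is to reduce everything to the covering of $\bar{\N}^0_{\mathrm{red}}$ by the closed subschemes $\N_\Lambda$ attached to $2$-modular lattices, and to isolate inside each $\N_\Lambda$ the pieces that will be the irreducible components. First I would record the cover: by Lemma~\ref{lem:spW2}, for every algebraically closed $k\supseteq\F$ each $M\in\V(k)$ is contained in some $2$-modular $\Lambda_k$ with $\pi\Lambda_k\subset^{\le 2}M+\pi\Lambda_k$, and since $\bar{\N}^0_{\mathrm{red}}$ is reduced of finite type over $\F$ it is determined by its closed points, so $\bar{\N}^0_{\mathrm{red}}=\bigcup_\Lambda\N_\Lambda$ over the $2$-modular lattices. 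For fixed $\Lambda$ one has $\N_\Lambda=\N_\Lambda^{\le 1}\cup\N_\Lambda^{(2)}$ with $\N_\Lambda^{(2)}$ open and $\N_\Lambda^{\le 1}$ closed, and by Lemma~\ref{lem:spW2b} the closed points of $\N_\Lambda^{(2)}$ lying (under $f$ followed by $q$) over $Z_0\sqcup Z_1$ already belong to $\N_{\Lambda'}^{(1)}\subseteq\N_{\Lambda'}^{\le 1}$ for some other $2$-modular $\Lambda'$, while the points over $Z_2$ form $\N_\Lambda^{(2)^\circ}$. Hence, on closed points, $\N_\Lambda^{(2)}\subseteq\overline{\N_\Lambda^{(2)^\circ}}\cup\bigcup_{\Lambda'}\N_{\Lambda'}^{\le 1}$, and therefore $\bar{\N}^0_{\mathrm{red}}=\bigcup_\Lambda\bigl(\N_\Lambda^{\le 1}\cup\overline{\N_\Lambda^{(2)^\circ}}\bigr)$.

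Next I would read off the two families from the structural lemmas. Lemma~\ref{lem:homeom} gives that $\N_\Lambda^{\le 1}$ is irreducible of dimension $4$ and contains the dense open $\N_\Lambda^{(1)}$, universally homeomorphic to the restriction of $\mathcal{H}^1$ to $R_W$, i.e.\ to a locally trivial line bundle over the generalized Deligne--Lusztig variety $R_W$ for the non-split orthogonal group of rank $6$; this is statement~(i). For~(ii), Lemma~\ref{lem:S2homeomH2} identifies $\N_\Lambda^{(2)^\circ}$, up to universal homeomorphism, with the pullback to $Z_2$ of the rank-$2$ vector bundle $\mathcal{H}^2$. Since the form on $W=\Lambda/\pi\Lambda$ is non-split, Lemma~\ref{lem:QVstrata} gives $Z_2\cong X_B(t_2t_1)\sqcup X_B(\Phi(t_2t_1))$ with $\Phi(t_2t_1)=t_3t_1$, two classical Deligne--Lusztig varieties of Coxeter type, each of dimension $\ell(t_2t_1)=2$. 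Thus $\N_\Lambda^{(2)^\circ}$ has exactly two irreducible components, each universally homeomorphic to a rank-$2$ vector bundle over $X_B(t_2t_1)$, respectively over $X_B(t_3t_1)$, hence of dimension $4$; taking closures in $\N_\Lambda$ yields the two irreducible closed subsets of dimension $4$ whose union is $\overline{\N_\Lambda^{(2)^\circ}}$.

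Then I would check exhaustion and distinctness, and deduce purity. All closed subsets produced above are irreducible of dimension $4$ and cover $\bar{\N}^0_{\mathrm{red}}$ by the first paragraph, so every irreducible component of $\bar{\N}^0_{\mathrm{red}}$ occurs among them; in particular $\bar{\N}^0_{\mathrm{red}}$ is pure of dimension $4$. To see that they are genuinely distinct — so that one gets exactly the components of the two stated shapes, with no coincidences and no nesting — I would show, as in the split case (proof of Proposition~\ref{prop:irred}), that in each of them the locus $\{\Lambda(M)=\Lambda\}$ is open and dense: for $\N_\Lambda^{(1)}$ this rests on $Y_{a_0}=Y_3$ being dense in $R_W$ (Lemmas~\ref{lem:RVdim} and~\ref{lem:lus3}) together with the density in $Y_3$ of the lines $l$ with $T_l=W$ (cf.\ Remark~\ref{rem:hereditary}); for each component of $\N_\Lambda^{(2)^\circ}$ one uses the analogous genericity on the deepest stratum $Z_2$ of $Q_W$ combined with Lemma~\ref{lem:spW2b}(iii). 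Since a $2$-modular lattice is maximal among the $2$-vertex lattices of its volume, $\Lambda(M)=\Lambda$ determines $\Lambda$ uniquely; moreover it forces the index of $\pi\Lambda_k$ in $M+\pi\Lambda_k$ to be $1$ on $\N_\Lambda^{(1)}$ but $2$ on $\N_\Lambda^{(2)^\circ}$ (and $\N_\Lambda^{(2)}$ is disjoint from $\N_\Lambda^{\le 1}$). This rules out all coincidences and containments and finishes the proof.

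The step I expect to be the \emph{main obstacle} is exactly this density statement in the last paragraph: establishing that the lattices $M$ whose minimal $\tau$-stable companion $\Lambda(M)$ equals the prescribed $2$-modular lattice are dense inside the line bundle over $R_W$ and inside the two rank-$2$ bundles over the Coxeter varieties $X_B(t_2t_1)$, $X_B(t_3t_1)$. Everything else is bookkeeping layered on top of the lemmas already proved, but without this genericity one cannot exclude that several of the listed subschemes collapse onto the same irreducible component.
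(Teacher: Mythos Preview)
Your approach is essentially the paper's own: cover by the $\N_\Lambda$ for $2$-modular $\Lambda$, split each into $\N_\Lambda^{\le 1}$ and $\N_\Lambda^{(2)^\circ}$ via Lemma~\ref{lem:spW2b}, read off irreducibility and dimension from Lemmas~\ref{lem:homeom} and~\ref{lem:S2homeomH2}, and separate the components using the density of the locus $\{\Lambda(M)=\Lambda\}$ coming from Remark~\ref{rem:hereditary}. The paper's proof is organized the same way and invokes the same lemmas.

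There is one concrete slip in your density argument for $\N_\Lambda^{(1)}$: you write that it ``rests on $Y_{a_0}=Y_3$ being dense in $R_W$'', but Lemma~\ref{lem:RVdim}, which you cite, says that $Y_2$ is the open dense stratum of $R_W$, not $Y_{a_0}$. In the non-split case $a_0=3$, and $Y_3$ is a proper closed stratum of dimension $2d-4$, so the argument as written does not go through. The fix is immediate: replace $Y_3$ by $Y_2$. The locus $\{l\in Y_2 : T_l=W\}$ is open and dense in $Y_2$ by Remark~\ref{rem:hereditary}, hence open and dense in $R_W$; for such $l$ and any preimage $M$, the image of $\Lambda(M)_k$ in $W_k$ is $\Phi$-stable and contains $l$, hence equals $W_k$, so $\Lambda(M)_k+\pi\Lambda_k=\Lambda_k$ and Nakayama gives $\Lambda(M)=\Lambda$. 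With this correction your ``main obstacle'' is handled exactly as in the paper, and the rest of your outline is fine.
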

\begin{proof}
  By Lemma \ref{lem:homeom} and by Lemma \ref{lem:S2homeomH2} we know that $\N_{\Lambda}^{\le 1}$ and the two components of the closure of $\N_\Lambda^{(2)^{\circ}}$ are irreducible and have dimension $4$. From this it also follows that $\N_{\Lambda}^{\le 1}$ is not contained in the closure of $\N_\Lambda^{(2)^{\circ}}$. Moreover, we have seen in Lemma \ref{lem:spW2b} that a point $M \in \N_{\Lambda}(k)$ is either contained in $\N_{\Lambda}^{(2)^\circ}(k)$ or there exists another $\Lambda'$ such that $M \in \N_{\Lambda'}^{\le 1}(k)$. Again, since we are working with reduced schemes over the algebraically closed field $\F$, this implies that $\bar{\N}^0_{\mathrm{red}}$ is contained in the union $\bigcup_\Lambda \N_{\Lambda}^{\le 1} \sqcup \bigsqcup_{\Lambda}\N_{\Lambda}^{(2)^\circ}$ running over all $2$-modular lattices $\Lambda$.

  Last, observe that each subscheme $\N_{\Lambda}^{(1)}$, respectively $\N_\Lambda^{(2)^\circ}$, contains the preimage of the open and dense subset of $R_W$, respectively $Z_2$ mentioned in Remark \ref{rem:hereditary}. In other words, it contains the open subscheme whose $k$-valued points corresponds to lattices $M$ such that $\Lambda(M) = \Lambda$ and therefore are not contained in any other subscheme $\N_{\Lambda'}$. 
\end{proof}

\begin{rem}\label{rem:notiso}
  Observe that in the proof of Lemma \ref{lem:closedimm}, which is the key ingredient for the proof in the split case of Theorem \ref{thm:intro}, we adopt the same strategy as \cite{rtw}*{Sec.\ 6} to construct a map from the Rapoport-Zink space into the Grassmannian variety $\Grass(V)$. It is stated in \textit{loc.cit.} that this gives an isomorphism between $\bar{\N}^0_{\mathrm{red}}$ and the closed subvariety $S_V$ of the Grassmannian. The proof of \cite{rtw}*{Prop.\ 6.7} relies on the previous result \cite{vw}*{Thm.\ 4.8}, which however is only true up to a Frobenius twist, as noted by R.\ Chen\footnote[1]{Private communication with R.\ Chen, M.\ Rapoport and T.\ Wedhorn}. It seems that the Frobenius twist does not really affect the map in the ramified case, so that one still expects to have an isomorphism. This is yet still open and probably requires a careful analysis of the corresponding Zink's windows for displays. On the other hand, our construction of the homeomorphism of Lemma \ref{lem:S2homeomH2}, on which the proof of Theorem \ref{thm:intro} for the non-split case is based, involves the relative Frobenius morphism and hence is not an isomorphism. 
\end{rem}
\begin{rem}
  The following observations will be relevant in the next section for a comparison with the decomposition given by the set of admissible elements on the generalized affine Deligne-Lusztig variety $X(\mu, b)$. Recall the stratification of $R_W$ given in Lemma \ref{lem:RVdim}. In the non-split case there are three strata $R_W = Y_{\infty} \sqcup Y_3 \sqcup Y_2$. It follows from Lemma \ref{lem:homeom} that $\N_{\Lambda}^{(1)}$ has a stratification
  \begin{equation*}
    \N_{\Lambda}^{(1)} = \N_{\Lambda, \infty} \sqcup \N_{\Lambda, 3} \sqcup \N_{\Lambda, 2},
  \end{equation*}
  where each stratum is universally homeomorphic to a line bundle over the corresponding stratum of $R_W$. Moreover, the closure of each stratum is the union of the ones preceding it. 
  
  Consider a vertex lattice $\mathcal{L} \subset \Lambda$. Since the form is non-split, $\mathcal{L}$ has type at most $4$. By Proposition \ref{prop:vl} the corresponding closed subscheme $\N_\mathcal{L}$ is universally homeomorphic to the generalized Deligne-Lusztig variety $S_V$ for the symplectic group of rank $\le 4$. Moreover, it has a stratification in terms of vertex lattices of smaller type as we have already seen in Corollary \ref{cor:str}. Recall that the $k$-valued points of $\N_{\Lambda,\infty}$ correspond to lattices $M$ such that $\Lambda(M)$ is a vertex lattice and $\pi\Lambda_k \subset^{\le 1} M + \pi\Lambda_k$. 
  
  We show that for every vertex lattice $\mathcal{L}$ there is a $2$-modular lattice $\Lambda$ such that $\N_{\mathcal{L}}^\circ \subset \N_{\Lambda}^{\le 1}$. If $\Lambda(M)$ has type $0$, then by Proposition \ref{prop:2vl} $\pi^{-1}\Lambda(M)$ is a $2$-modular lattice and by Lemma \ref{lem:dense} $M$ belongs to $\N_{\pi^{-1}\Lambda(M)}^{\le 1}$. We have also seen in the proof of Lemma \ref{lem:dense}, that if $\mathcal{L}$ has type $2$ and contains $\pi\Lambda$, then $\N_\mathcal{L}^\circ \subset \N_{\Lambda}^{(1)}$. Observe that by the correspondence between the complex of vertex lattices $\mathscr{L}$ and the Bruhat-Tits building for $\mathrm{SU}(C)(\Q_p)$, for every vertex lattice of type $2$, there is self-dual lattice contained in it, which is equivalent by Proposition \ref{prop:2vl} to the existence of a $2$-modular lattice $\Lambda$ such that $\pi\Lambda \subset \mathcal{L}$. Therefore, $\N_{\mathcal{L}}^\circ \subset \N_{\Lambda,\infty} \subset \N_{\Lambda}^{(1)}$ for a suitable $2$-modular lattice $\Lambda$. Last, arguing as in the proof of the first part of Lemma \ref{lem:spW2b} we can see that if $\mathcal{L}$ is a vertex lattice of type $4$ there is a $2$-modular lattice $\Lambda$ such that $\N_{\mathcal{L}} \subset \N_{\Lambda,\infty} \subset \N_{\Lambda}^{(1)}$.

  Let $\mathtt{V}_d$ and $\mathtt{M}$ denote respectively the set of vertex lattices of type $d$ and of $2$-modular lattices in $C$. Combining the previous observations we obtain a decomposition
  \begin{equation}\label{eq:strata1}
   \bigsqcup_{\Lambda \in \mathtt{M}} \N_{\Lambda}^{\le 1} = \bigsqcup_{\mathcal{L} \in \mathtt{V_0}} \N_{\mathcal{L}}^\circ \sqcup \bigsqcup_{\mathcal{L} \in \mathtt{V_2}} \N_{\mathcal{L}}^\circ \sqcup \bigsqcup_{\mathcal{L} \in \mathtt{V_4}} \N_{\mathcal{L}}^\circ \sqcup \bigsqcup_{\Lambda \in \mathtt{M}} \N_{\Lambda,3} \sqcup \bigsqcup_{\Lambda \in \mathtt{M}} \N_{\Lambda,2}.
  \end{equation}
  Moreover, by the previous discussion, the decomposition on the right is actually a stratification where the closure of each stratum is the union of the strata preceding it.
\end{rem}  

\begin{rem}\label{rem:last}
  We also have a decomposition of $\bar{\N}^0_{\mathrm{red}}$.
  \begin{equation}\label{eq:strata}
    \bar{\N}^0_{\mathrm{red}} = \bigsqcup_{\mathcal{L} \in \mathtt{V_0}} \N_{\mathcal{L}}^\circ \sqcup \bigsqcup_{\mathcal{L} \in \mathtt{V_2}} \N_{\mathcal{L}}^\circ \sqcup \bigsqcup_{\mathcal{L} \in \mathtt{V_4}} \N_{\mathcal{L}}^\circ \sqcup \bigsqcup_{\Lambda \in \mathtt{M}} \N_{\Lambda,3} \sqcup \bigsqcup_{\Lambda \in \mathtt{M}} \N_{\Lambda,2} \sqcup \bigsqcup_{\Lambda \in \mathtt{M}} \N_{\Lambda}^{(2)^\circ}.
  \end{equation}
  It is enough to check that the $k$-valued points of $\bar{\N}^0_{\mathrm{red}}$, for $k$ an algebraically closed field, are all contained in the union on the right. We have seen in Lemma \ref{lem:spW2} that every lattice $M \in \bar{\N}^0(k) = \bar{\N}^0_{\mathrm{red}}(k)$ is contained in a $2$-modular lattice $\Lambda_k$ and $\pi\Lambda_k \subset^{\le2} M + \pi\Lambda_k$. If $M \in \N_{\Lambda}^{(2)}$, if $M$ does not belong to $\N_{\Lambda}^{(2)^\circ}$ it either belongs to $\N_{\mathcal{L}}$ for some vertex lattice $\mathcal{L}$ or to $\N_{\Lambda'}^{(1)}$ for another $2$-modular lattice $\Lambda'$, see Lemma \ref{lem:spW2b}. If $M \in \N_{\Lambda,\infty}$, then by the same argument as in Corollary \ref{cor:str} there is a vertex lattice $\mathcal{L} \subset \Lambda$ such that $M \in \N_{\mathcal{L}}$. It would be interesting to give a description of the closure of $\N_{\Lambda}^{(2)^\circ}$ and hence to prove or disprove that (\ref{eq:strata}) is a stratification. This is also tightly related to the problem of describing the intersection pattern between components of type $\N_{\Lambda}^{(2)^\circ}$ and components of type $\N_{\Lambda}^{\le 1}$.
\end{rem}

\section{Affine Deligne-Lusztig varieties}\label{sec:adlvs}
\subsection{Reminder on affine Deligne-Lusztig varieties}

Affine Deligne-Lusztig varieties were first introduced in \cite{rap}, in this section we collect some definitions and results before we present the group-theoretical datum associated to our problem. We follow the exposition in \cite{gh_cox}*{Sec.\ 2} and \cite{he_adlv}*{Sec.\ 1-3}, and refer there for further details.

Let $F$ be a non-Archimedean local field, and denote by $\breve{F}$ the completion of its maximal unramified extension in a fixed algebraic closure $\bar{F}$. The field  $F$ can have the same characteristic as its residue field, in which case it is a field of formal Laurent power series $F = \F_q (\!(t)\!)$, or it can have characteristic zero, \textit{i.e.} it is a finite extension of $\mathbb{Q}_p$. 

Fix a connected reductive group $G$ over $F$. We denote by $\sigma$ both the Frobenius map on $\breve{F}$ and the map induced on $G(\breve{F})$. Let $I$ be a $\sigma$-invariant Iwahori subgroup of $G(\breve{F})$. Let $T$ be a maximal torus in $G$ such that the alcove corresponding to $I$ lies in the apartment of the Bruhat-Tits building attached to $T$. To this data we attach the extended affine Weyl group $\widetilde{W} = N_T(\breve{F})/T(\breve{F})\cap I$, where $N_T$ is the normalizer of $T$ in $G$. In the following we often write $w \in \widetilde{W}$ for both an element in the extended affine Weyl group and a representative in $N_T(\breve{F})$. 

Recall that fixing a special vertex in the base alcove gives a decomposition $\widetilde{W} = X_*(T)_{\Gamma} \rtimes W_0$, where $W_0 = N_T(\breve{F})/T(\breve{F})$ is the finite Weyl group, $X_*(T)$ is the coweight lattice of $T$, and $\Gamma$ denotes the Galois group $\Gal(\bar{F}/F^{\mathrm{un}})$. For a cocharacter $\mu^\vee$ we denote by $t^{\mu^\vee}$ the corresponding element in the extended affine Weyl group.
The choice of the base alcove determines also a set $\widetilde{\mathbb{S}}$ of simple affine reflections generating the affine Weyl group $W_a \subset \widetilde{W}$. Both $\widetilde{W}$ and $\widetilde{\mathbb{S}}$ are equipped with an action of $\sigma$. 

Denote by $\Omega$ the set of elements of $\widetilde{W}$ normalizing the base alcove. Recall that the affine Weyl group $W_a$ is an infinite Coxeter group. There is a decomposition $\widetilde{W} = W_{a} \rtimes \Omega$, which allows us to extend to $\widetilde{W}$ the notion of length on $W_a$ by setting to zero the length of any element in $\Omega$. Similarly, the Bruhat order can be extended from $W_a$ to $\widetilde{W}$ by setting $w\tau \le w'\tau'$ if and only if $\tau = \tau' \in \Omega$ and $w \le w'$ in  $W_a$. For any subset $J \subset \widetilde{\mathbb{S}}$ we denote by $W_J$ the subgroup of $W_a$ generated by the reflections in $J$ and by $^J\widetilde{W}$ the set of minimal length representatives for the cosets $W_{J} \backslash \widetilde{W}$. 

Two elements $b,b' \in G(\breve{F})$ are $\sigma$-conjugate if there exists $g \in G(\breve{F})$ such that $b = g^{-1}b'\sigma(g)$. Denote by $B(G)$ the set of $\sigma$-conjugacy classes in $G(\breve{F})$. A class $[b] \in B(G)$ is completely determined by its Newton point $\nu_{b}\in X_{*}(T)^{\Gamma}_{\Q, \mathrm{dom}}$, and its image under the Kottwitz map $\kappa: B(G) \rightarrow \pi_1(G)_{\Gamma}$, compare \cites{kot, kot1} and \cite{rr}. Here the fundamental group $\pi_1(G)$ is defined as the quotient of  $X_*(T)$ by the coroot lattice.

We consider the restriction of $\sigma$-conjugation to $\widetilde{W}$ and study the set of conjugacy classes $B(\widetilde{W})$ with respect to this restricted action. It is proved in \cite{ghkr} and \cite{he_geohom}*{Sec.\ 3} that the inclusion $N_T \hookrightarrow G$ gives a surjection from the set $B(\widetilde{W})$ of $\sigma$-conjugacy classes of $\widetilde{W}$ to the set of $\sigma$-conjugacy classes $B(G)$. This map becomes a bijection if we restrict it to classes in $B(\widetilde{W})$ containing a $\sigma$-straight element of $\widetilde{W}$, compare \cite{hn_minii}*{Thm.\ 3.3}. Recall that an element $ w \in \widetilde{W}$ is said to be $\sigma$-straight if it satisfies $\ell((w\sigma)^n) = n \ell(w)$ for all integers $n$. By \cite{heminlength}*{Lem.\ 1.1}, this is equivalent to $\ell(w) = \langle \nu_w, 2\rho \rangle$, where $\rho$ denotes half the sum of all positive roots and $\nu_w$ is the Newton point of $w$. An example of $\sigma$-straight elements is given by $\sigma$-Coxeter elements, that are elements in $\widetilde{W}$ given by the product of one reflection for each $\sigma$-orbit in $\widetilde{\mathbb{S}}$, compare \cite{hn_minii}*{Prop.\ 3.1}.

For any $w \in \widetilde{W}$ there is an integer $n$ such that
\begin{equation}\label{eq:newton}
  (w\sigma)^n = w\sigma(w)\dotsm\sigma^n(w) = t^{\mu^\vee}
\end{equation} 
for some cocharacter $\mu^{\vee}$. Then the Newton point of $w$ is the unique dominant element $\nu_w \in X_{*}(T)\otimes \Q$ lying in the $W_0$-orbit of $\frac{1}{n}\mu^{\vee}$. One can see that this does not depend on the choice of the exponent $n$. Moreover, by \cite{ric}*{Lem\ 1.2} there is a bijection between the fundamental group $\pi_1(G)_{\Gamma}$ and the subgroup of length zero elements $\Omega$. With this bijection one can identify the Kottwitz map on $B(\widetilde{W})$ with the projection $\widetilde{W} \rightarrow \Omega$. 

For $b \in G(\breve{F})$ and $w \in \widetilde{W}$ the corresponding \emph{affine Deligne-Lusztig variety} is defined as
\begin{equation*}
  X_{w}(b) = \{ g \in G(\breve{F})/I \mid g^{-1}b\sigma(g) \in IwI\},
\end{equation*} 
where we are identifying the element $w$ in the extended affine Weyl group with a representative in $N_T(\breve{F})$. In the following, we are going to study some so-called \emph{fine affine Deligne-Lusztig varieties}, compare \cite{gh_cox1}*{Sec.\ 3.4}. First, let $\mu^{\vee}$ be a minuscule coweight in $X_{*}(T)_\Gamma$, the \emph{admissible set} associated to $\mu^\vee$ is 
\begin{equation*}
  \mathrm{Adm}(\mu^\vee) = \{ w \in \widetilde{W} \mid w \le t^{x(\mu^\vee)} \text{ for some } x \in W_0\}.
\end{equation*}
Fix a subset $J \subset \widetilde{\mathbb{S}}$ and denote by $P_J$ the corresponding parahoric subgroup of $G(\breve{F})$. For $w \in {}^J \widetilde{W}$ and $b \in G(\breve{F})$ the associated fine affine Deligne-Lusztig variety is
\begin{equation*}
  X_{J,w}(b) = \{ g \in G(\breve{F})/P_J \mid g^{-1}b\sigma(g) \in P_J \cdot_{\sigma} IwI \}.
\end{equation*} In other words, it is the image of the affine Deligne-Lusztig variety for $I, b, w$ under the map $G/I \rightarrow G/P_J$. For a minuscule cocharacter $\mu$ we also consider the union 
\begin{equation*}
  X(\mu,b)_{J} = \bigcup_{w \in \mathrm{Adm}(\mu)} \{ g \in G(\breve{F})/P_J \mid g^{-1}b\sigma(g) \in  P_JwP_J \}. 
\end{equation*} 
The varieties appearing in the union above are called \emph{coarse} affine Deligne-Lusztig varieties, and it is proved in \cite{gh_cox1}*{Thm.\ 4.1.2} that $X(\mu, b)_J$ can be actually written as a union of \textit{fine} affine Deligne-Lusztig varieties as follows
\begin{equation}\label{eq:genadlv}
  X(\mu,b)_{J} = \bigsqcup_{w \in \mathrm{Adm}(\mu) \cap {}^J\widetilde{W}} \{ g \in G(\breve{F})/P_J \mid g^{-1}b\sigma(g) \in P_J \cdot_{\sigma} IwI \} = \bigsqcup_{w \in \mathrm{Adm}(\mu) \cap {}^J\widetilde{W}} X_{J,w}(b).
\end{equation}

The reason why we are interested in $X(\mu,b)_J$ is that it naturally arises in the study of Rapoport-Zink spaces. One can associate to a Rapoport-Zink space a quadruple $(G, \mu, b, J)$, as explained in \cite{zhu}*{Def.\ 3.8} and the corresponding union of affine Deligne-Lusztig varieties $X(\mu, b)_J$ over $F$ a mixed characteristic field. If the axioms of \cite{hr}*{Sec.\ 5} are satisfied, there is an isomorphism of perfect schemes
\begin{equation*}
  \N^{0,\mathrm{perf}} \cong X(\mu, \id)_J,
\end{equation*}
compare \cite{zhu}*{Prop.\ 3.11} and \cite{ghn_fully}*{Sec.7}. The axioms of \cite{hr}*{Sec.\ 5} have been shown to hold for ramified unitary groups in odd dimension in \cite{zhu}*{Prop.\ 0.4}, but are still to be proven in even dimension. In any case, by \cite{rap} there is in general a bijection between the $\F$-valued points of $\N^0$ and those of the corresponding $X(\mu, b)_J$, again defined over a field of mixed characteristic.

Before describing the group theoretical datum attached to our specific problem, we recall some more general results that we need in the sequel. We start with the reduction method \`a la Deligne and Lusztig as stated and proved in \cite{he_stab}.

\begin{thm}\label{thm:red}\cite{he_stab}*{Prop.\ 3.3.1}
  Let $w \in \widetilde{W}$, $s \in \widetilde{\mathbb{S}}$ and $b \in G(\breve{F})$ and assume $F$ has equal characteristic. 
  \begin{itemize}
    \item[(i)] If $\ell(sw\sigma(s)) = \ell(w) $ then there is a universal homeomorphism $X_{w}(b) \rightarrow X_{sw\sigma(s)}(b)$.
    \item[(ii)] If  $\ell(sw\sigma(s)) = \ell(w) - 2$, then $X_{w}(b) = X_1 \sqcup X_2$, with $X_1$ open and universally homeomorphic to a Zariski-locally trivial $\mathbb{G}_m$-bundle over $X_{sw}(b)$, while $X_2$ is closed and universally homeomorphic to a Zariski-locally trivial $\mathbb{A}^1$-bundle over $X_{sw\sigma(s)}(b)$.
  \end{itemize}
  If $F$ has mixed characteristic the statements above still hold, provided one replaces $\mathbb{G}_m$ and $\mathbb{A}^1$ with their perfections.
\end{thm}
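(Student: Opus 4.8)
This is \cite{he_stab}*{Prop.\ 3.3.1}, the affine incarnation of the classical Deligne--Lusztig reduction; I only outline how one would argue. The plan is to work with the projection $\pi\colon G(\breve F)/I \to G(\breve F)/P_s$, where $P_s = I\sqcup IsI$ is the parahoric subgroup of type $\{s\}$, a morphism whose fibres are copies of $P_s/I\cong\mathbb{P}^1$ (of its perfection, when $F$ has mixed characteristic). The basic mechanism is that moving a point $gI$ within the fibre $\pi^{-1}(\pi(gI))$ replaces $g^{-1}b\sigma(g)$ by a $\sigma$-conjugate of it under $P_s$; hence the Iwahori--Bruhat relations $IsI\cdot IwI = IswI$ when $\ell(sw)=\ell(w)+1$ and $IsI\cdot IwI = IwI\sqcup IswI$ when $\ell(sw)=\ell(w)-1$, together with their right-hand analogues for $\sigma(s)$, determine in which Iwahori double coset $(g')^{-1}b\sigma(g')$ lies as $g'$ runs over a fibre.

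For part (i) I would follow the case analysis according to whether $\ell(sw)=\ell(w)+1$ or $\ell(sw)=\ell(w)-1$ (the possibility $sw=w\sigma(s)$, in which $sw\sigma(s)=w$ and there is nothing to prove, being excluded). Using that $w$ and $sw\sigma(s)$ are $\sigma$-conjugate by $s\in P_s$, one produces from the Bruhat relations a canonical bijection between the $\mathbb{F}$-valued points of $X_w(b)$ and of $X_{sw\sigma(s)}(b)$, realized fibrewise through $\pi$, and checks that it is induced by a proper, universally injective morphism of the underlying (perfect) schemes; such a morphism is automatically a universal homeomorphism. For part (ii) one has $\ell(sw)=\ell(w\sigma(s))=\ell(w)-1$ and $\ell(sw\sigma(s))=\ell(w)-2$, and the classical picture applies: over each point of $\pi(X_w(b))$ the fibre $\cong\mathbb{P}^1$ of $\pi$ carries the ``double-coset function'' which equals $w$ away from two distinguished points, equals $sw$ at one of them and $sw\sigma(s)$ at the other. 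The induced decomposition is $X_w(b)=X_1\sqcup X_2$: $X_1$ is open, and $\pi$ together with the ``$sw$-section'' exhibits it as universally homeomorphic to a Zariski-locally trivial $\mathbb{G}_m$-bundle over $X_{sw}(b)$ (the $\mathbb{G}_m$ being $\mathbb{P}^1$ with the two distinguished points removed); $X_2$ is closed and universally homeomorphic to a Zariski-locally trivial $\mathbb{A}^1$-bundle over $X_{sw\sigma(s)}(b)$ (the $\mathbb{A}^1$ being $\mathbb{P}^1$ with one point removed). Local triviality holds because the relevant sections and the fibrewise stratification are cut out by explicit coordinate functions.

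The step I expect to be the genuine obstacle is not this combinatorics---the input from $\widetilde W$ and the $\mathrm{BN}$-pair relations is insensitive to the residue characteristic---but making the geometry precise when $F$ has mixed characteristic. There $G(\breve F)/I$ and $G(\breve F)/P_s$ must be taken as perfect ind-schemes (Witt-vector affine flag varieties, as in \cite{zhu}), $\pi$ becomes a perfectly proper, perfectly smooth morphism with fibres the perfection of $\mathbb{P}^1$, and the ``coordinates'' above are perfections of the usual ones, so that the fibres of $X_1\to X_{sw}(b)$ and $X_2\to X_{sw\sigma(s)}(b)$ come out as the perfections of $\mathbb{G}_m$ and $\mathbb{A}^1$. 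Once this formalism is in place the equal-characteristic argument carries over essentially verbatim, which is exactly what is done in \cite{he_stab}.
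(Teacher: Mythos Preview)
The paper does not contain its own proof of this statement: Theorem~\ref{thm:red} is stated with a citation to \cite{he_stab}*{Prop.\ 3.3.1} and used as a black box, so there is nothing to compare your proposal against in this paper. Your sketch is a faithful outline of the standard argument for the affine Deligne--Lusztig reduction (parahoric projection to $G(\breve F)/P_s$, Iwahori--Bruhat relations, and the fibrewise $\mathbb{P}^1$-picture), and your remarks on the mixed-characteristic adaptation via Witt-vector affine flag varieties are appropriate.
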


Applying the reduction method repeatedly delivers a decomposition of an affine Deligne-Lusztig variety $X_w(b)$ into pieces homeomorphic to sequences of one-dimensional fiber bundles over affine Deligne-Lusztig varieties for elements in the Weyl group that have minimal length in their $\sigma$-conjugacy class. Recall that for an element $x$ of minimal length in its $\sigma$-conjugacy class the affine Deligne-Lusztig variety $X_x(b)$ is non-empty if and only if $x \in [b]$, see \cite{he_geohom}*{Thm. 3.2}. 

For an element $w$ in the affine Weyl group $W_a$, we denote by $\supp(w)$ the support of $w$, \textit{i.e.}\ the subset of affine reflections in $\widetilde{\mathbb{S}}$ appearing in a reduced expression for $w$. For $w\tau \in \widetilde{W} = W_a \rtimes \Omega$ following \cite{hn_minii}*{Sec. 4.3} we define the $\sigma$-support as
\begin{equation*}
  \supp_{\sigma}(w\tau) = \bigcup_{n \in \mathbb{Z}} (\tau \sigma)^n (\supp(w)).
\end{equation*}
For $w \in \widetilde{W}$ and a subset $J$ of the affine  reflections $\widetilde{\mathbb{S}}$ we define $I(w,J, \sigma)$ to be the maximal subset of $J$ that is stable under $\Ad(w) \sigma$, where $\Ad(w)$ is just the conjugation action of $w$, compare \cite{gh_cox1}*{3.1}.

\begin{thm} \label{thm:finecoarse} \cite{gh_cox1}*{Thm.\ 4.1.2}
  For any $J \subset \widetilde{\mathbb{S}}$ and $w \in { }^{J} \widetilde{W}$ and $b \in G(\breve{F})$, the fine affine Deligne-Lusztig variety satisfies
  \begin{equation*}
    X_{J,w}(b) \cong \{gP_{I(w,J, \sigma)} \mid g^{-1}b\sigma(g) \in P_{I(w,J,\sigma)}wP_{I(w,J,\sigma)}\}. 
  \end{equation*} 
\end{thm}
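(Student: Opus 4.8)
The plan is to reduce the fine affine Deligne-Lusztig variety, which is defined at ``Iwahori-labeled'' level inside $G(\breve{F})/P_J$, to one living at the genuine parahoric level $P_K$ with $K:=I(w,J,\sigma)$, by an induction on $|J\setminus K|$ whose base case is a double-coset identity and whose inductive step is a fibration argument of Deligne--Lusztig reduction type. First I would unwind the definitions: by construction $X_{J,w}(b)$ is the image in $G(\breve{F})/P_J$ of the Iwahori-level variety $X_w(b)$, equivalently $X_{J,w}(b)=\{gP_J\mid g^{-1}b\sigma(g)\in P_J\cdot_\sigma IwI\}$ with $P_J\cdot_\sigma IwI=\{p\,x\,\sigma(p)^{-1}\mid p\in P_J,\ x\in IwI\}$. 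Since $J'\subset J$ gives ${}^{J}\widetilde{W}\subset{}^{J'}\widetilde{W}$ and $K\subset J'$ forces $I(w,J',\sigma)=K$ as well (any $\Ad(w)\sigma$-stable subset of $J'$ is one of $J$, hence contained in $K$), the statement is stable under the induction, so it suffices to treat the two extreme steps.

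For the base case $J=K$ one must show $P_K\cdot_\sigma IwI=P_KwP_K$. Here $w\in{}^{K}\widetilde{W}$ yields $\ell(vw)=\ell(v)+\ell(w)$ and hence $IvI\cdot IwI=IvwI$ for all $v\in W_K$; combined with the $\Ad(w)\sigma$-stability of $W_K$ (so that $W_KwW_K=W_Kw$ inside $\widetilde{W}$) the $\sigma$-twisted $P_K$-orbit of $IwI$ collapses to $\bigsqcup_{v\in W_K}IvwI=P_KwI$, and then, absorbing the right factor, to $P_KwP_K$. This is routine Iwahori--Bruhat bookkeeping once the length-additivity and the stability of $K$ are in place, and it shows $\{gP_K\mid g^{-1}b\sigma(g)\in P_KwP_K\}$ is exactly $X_{K,w}(b)$.

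For the inductive step, assume $J\neq K$ and pick a simple reflection $\tilde s\in J\setminus K$; set $J':=J\setminus\{\tilde s\}$, so $K\subset J'\subsetneq J$ and $w\in{}^{J'}\widetilde{W}$. The projection $\pi\colon G(\breve{F})/P_{J'}\to G(\breve{F})/P_J$ is a Zariski-locally trivial fibration with fibre $P_J/P_{J'}$ (a $\mathbb{P}^1$-bundle over the relevant Levi flag variety), hence smooth and proper. Restricting, $\pi$ carries $X_{J',w}(b)$ into $X_{J,w}(b)$ (since $P_{J'}\cdot_\sigma IwI\subset P_J\cdot_\sigma IwI$), and surjectivity on points is immediate: given $gP_J\in X_{J,w}(b)$, write $g^{-1}b\sigma(g)=p\,x\,\sigma(p)^{-1}$ and replace $g$ by $gp$ to land in $IwI\subset P_{J'}\cdot_\sigma IwI$. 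The content is that the restricted map has singleton fibres and is in fact an isomorphism of (perfect, in mixed characteristic) schemes: over a point of $X_{J,w}(b)$ the fibre is cut out inside $P_J/P_{J'}$ by the condition that $\sigma$-twisted conjugation by a lift preserves the twisted coset, and because $\tilde s\notin K$, i.e.\ $\Ad(w)\sigma$ does not stabilize the extra simple reflection, this condition becomes a nondegenerate Lang-type equation with a unique solution. This is precisely the zero-dimensional-fibre instance of the Deligne--Lusztig reduction recalled in Theorem~\ref{thm:red} applied at parahoric level, so $X_{J,w}(b)\cong X_{J',w}(b)$ and the induction closes.

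The main obstacle is this last fibration step: verifying not merely that the restricted projection is bijective on $\bar{\F}$-points but that it is an isomorphism of ind-schemes, which requires identifying the fibre explicitly with a quotient of a Lang torsor and checking the relevant reducedness/smoothness — exactly the technology underpinning Theorem~\ref{thm:red}. A secondary point requiring care is the combinatorial input that $\tilde s\notin I(w,J,\sigma)$ really does force the fibre equation to be nondegenerate, which unwinds to the structure of $\Ad(w)\sigma$-orbits on $\widetilde{\mathbb{S}}$ and the interaction of $\supp$ with the twisting. Once these are settled, the whole construction is manifestly functorial in $b$, works uniformly in equal and (via perfections) mixed characteristic, and is compatible with the decomposition~(\ref{eq:genadlv}) of $X(\mu,b)_J$ into fine pieces.
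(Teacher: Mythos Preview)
The paper does not give its own proof of this statement: it is quoted verbatim as a citation of \cite{gh_cox1}*{Thm.\ 4.1.2} and used as a black box, so there is no in-paper argument to compare against.

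That said, your outline is essentially the strategy of the original reference: induct on $|J\setminus K|$ with $K=I(w,J,\sigma)$, peeling off one simple reflection outside the $\Ad(w)\sigma$-stable set at a time and using a Lang-map/reduction argument to see that the projection between successive parahoric levels restricts to an isomorphism on the relevant loci. One small correction to your base case: the $\Ad(w)\sigma$-stability of $K$ gives $W_Kw\,\sigma(W_K)=W_Kw$, not $W_KwW_K=W_Kw$ as you wrote in the parenthetical; this is exactly what is needed since the orbit in question is the $\sigma$-twisted one $P_K\cdot_\sigma IwI=\bigcup_{p\in P_K}p\,IwI\,\sigma(p)^{-1}$. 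You also need, in addition to $w\in{}^K\widetilde{W}$, that $w\in\widetilde{W}^{\sigma(K)}$ to get length-additivity on the right; this follows from $\Ad(w)\sigma(K)=K$ together with $w\in{}^K\widetilde{W}$, but it deserves a sentence. With those fixes the sketch is sound.
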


For $b \in G(\breve{F})$ we consider the $\sigma$-centralizer $\mathbb{J}_b = \{g \in G(\breve{F}) \mid g^{-1}b\sigma(g) = b\}$, and its action on the affine Deligne-Lusztig variety $X_w(b)$.
Combining \cite{gh_cox1}*{Thm 4.1.1-2} we obtain the following result.
\begin{thm}\label{thm:finsupp}
  Let $J \subset \widetilde{\mathbb{S}}$ and $w \in {}^J\widetilde{W} \cap W_a\tau$ be such that $W_{\supp_\sigma(w) \cup I(w, J, \sigma)}$ is finite. Then 
  \begin{equation*}
    X_{J, w}(\tau) \cong \bigsqcup_{i \in \mathbb{J}_{\tau}/ (\mathbb{J}_{\tau} \cap P_{\supp_{\sigma}(w) \cup I(w, J, \sigma)})} i X_{I(w, J, \sigma)}(w), 
  \end{equation*}
  where $X_{I(w, J, \sigma)}(w) = \{ g \in P_{\supp_{\sigma}(w) \cup I(w, J, \sigma)}/P_J \mid g^{-1}\tau\sigma(g) \in P_{I(w,J, \sigma)}wP_{I(w,J, \sigma)}\}$ is a classical Deligne-Lusztig variety in the partial flag variety $P_{\supp_{\sigma}(w) \cup I(w, J, \sigma)}/P_{I(w, J, \sigma)}$.
\end{thm}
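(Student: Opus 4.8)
\textbf{Proof plan for Theorem \ref{thm:finsupp}.}

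The plan is to combine the two results of \cite{gh_cox1} cited just before the statement, namely Theorem \ref{thm:finecoarse} and the $\mathbb{J}_\tau$-orbit description of \cite{gh_cox1}*{Thm.\ 4.1.1}, and to check that the finiteness hypothesis on $W_{\supp_\sigma(w) \cup I(w,J,\sigma)}$ makes both applicable simultaneously. First I would apply Theorem \ref{thm:finecoarse} to rewrite the fine affine Deligne-Lusztig variety $X_{J,w}(\tau)$ as the locus of cosets $gP_{I(w,J,\sigma)}$ with $g^{-1}\tau\sigma(g) \in P_{I(w,J,\sigma)}wP_{I(w,J,\sigma)}$; here one uses that $b = \tau \in \Omega$ has length zero, so $I\tau I = \tau I$ and the $\sigma$-twisted double coset $P_J \cdot_\sigma I w I$ simplifies. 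Write $K = \supp_\sigma(w) \cup I(w,J,\sigma)$ and note that $K$ is stable under $\Ad(\tau)\sigma$: indeed $\supp_\sigma(w)$ is $\Ad(\tau)\sigma$-stable by its very definition as a union over $(\tau\sigma)^n$-orbits, and $I(w,J,\sigma)$ is $\Ad(w)\sigma$-stable, hence $\Ad(\tau)\sigma$-stable once one absorbs the $W_a$-part of $w$, whose support lies in $K$. Thus $P_K$ is a $\sigma$-stable (after twisting by $\tau$) parahoric and the partial flag variety $P_K/P_{I(w,J,\sigma)}$ carries a well-defined Frobenius $\Ad(\tau)\sigma$.

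Next I would invoke \cite{gh_cox1}*{Thm.\ 4.1.1}, which under the finiteness assumption $|W_K| < \infty$ decomposes the above locus into $\mathbb{J}_\tau$-translates of the subscheme cut out inside $P_K/P_{I(w,J,\sigma)}$; the point is that an element $g$ with $g^{-1}\tau\sigma(g) \in P_{I(w,J,\sigma)}wP_{I(w,J,\sigma)} \subset P_K \cdot_\sigma P_K$ can, after multiplying on the left by a suitable element of $\mathbb{J}_\tau$, be moved into $P_K$, and the ambiguity in this choice is exactly $\mathbb{J}_\tau \cap P_K$. This yields
\begin{equation*}
  X_{J,w}(\tau) \cong \bigsqcup_{i \in \mathbb{J}_\tau/(\mathbb{J}_\tau \cap P_K)} i\, X_{I(w,J,\sigma)}(w),
\end{equation*}
with $X_{I(w,J,\sigma)}(w)$ as in the statement. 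Finally I would identify $X_{I(w,J,\sigma)}(w)$ with a classical Deligne-Lusztig variety: since $P_K$ is a reductive group (its parahoric quotient) with Borel $I$ and Frobenius $\Ad(\tau)\sigma$, and $I(w,J,\sigma) \subset K$, the quotient $P_K/P_{I(w,J,\sigma)}$ is the partial flag variety for the parabolic of type $I(w,J,\sigma)$, and the defining condition $g^{-1}\tau\sigma(g) \in P_{I(w,J,\sigma)} w P_{I(w,J,\sigma)}$ is precisely the relative-position-$w$ condition for a generalized Deligne-Lusztig variety in that partial flag variety; here one checks that $w \in {}^J\widetilde{W}$ with $\supp_\sigma(w) \subset K$ maps to an element of the finite Weyl group $W_K$ under the natural quotient, which is needed for the relative position to make sense.

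The main obstacle will be the bookkeeping in the second step: verifying that the $\mathbb{J}_\tau$-action is transitive on the set of connected components arising from the various left $\mathbb{J}_\tau$-classes, with stabilizer exactly $\mathbb{J}_\tau \cap P_K$, and that no components are lost or double-counted. This is essentially the content of \cite{gh_cox1}*{Thm.\ 4.1.1}, so in practice the proof reduces to carefully matching hypotheses — in particular checking that the finiteness of $W_K$ is the precise condition under which that theorem applies to our $w$ and $J$, and that combining it with Theorem \ref{thm:finecoarse} does not require any additional genericity on $b = \tau$. The remaining identifications (stability of $K$, the flag-variety description of $X_{I(w,J,\sigma)}(w)$) are formal once the group-theoretic setup is in place, so I would keep those brief and cite \cite{gh_cox1}*{Sec.\ 3-4} for the details.
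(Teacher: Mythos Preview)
Your proposal is correct and matches the paper's approach exactly: the paper does not give a proof but simply states that the result is obtained by combining \cite{gh_cox1}*{Thm.\ 4.1.1} and \cite{gh_cox1}*{Thm.\ 4.1.2} (the latter being Theorem \ref{thm:finecoarse}). Your plan spells out precisely this combination, with the additional bookkeeping about the $\Ad(\tau)\sigma$-stability of $K = \supp_\sigma(w) \cup I(w,J,\sigma)$ and the identification of $X_{I(w,J,\sigma)}(w)$ as a classical Deligne-Lusztig variety, which the paper leaves entirely to the cited reference.
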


We conclude with two simple results on the non-emptiness pattern, which are surely known to experts, but for which we could not find any reference in the literature.
\begin{prop}\label{prop:minlen} 
  Let $b$ be a basic element, that is $b \in [\tau]$ for a length-zero element $\tau \in \Omega$. Let $w\tau \in \widetilde{W} = W_a \rtimes \Omega$ be a minimal length element in its $\sigma$-conjugacy class.  Then $X_{w\tau}(b) \neq \emptyset$ if and only if $\supp_{\sigma}(w\tau)$ generates a finite subgroup of $W_a$.
\end{prop}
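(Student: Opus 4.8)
\textbf{Proof plan for Proposition \ref{prop:minlen}.}

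The plan is to reduce the statement to the known criterion for non-emptiness of classical and fine affine Deligne-Lusztig varieties, using the hypothesis that $w\tau$ has minimal length in its $\sigma$-conjugacy class. First I would treat the ``if'' direction. Assume $J\vcentcolon=\supp_\sigma(w\tau)$ generates a finite subgroup $W_J\subset W_a$. Since $w\tau$ is of minimal length in its $\sigma$-conjugacy class, by \cite{he_geohom}*{Thm.\ 3.2} it suffices to show $w\tau\in[b]=[\tau]$; equivalently, since the Kottwitz map on $B(\widetilde W)$ is the projection to $\Omega$, it suffices to check that the Newton point of $w\tau$ equals that of $\tau$, \textit{i.e.}\ that $\nu_{w\tau}=0$ (as $\tau$ is basic, its Newton point is central of length zero, which for a basic $b\in[\tau]$ with trivial image I would normalize to be $0$). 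The key point is that $\supp_\sigma(w\tau)$ being of finite type forces this: by construction $(\tau\sigma)$ permutes $\supp(w)$, so $w\tau$ lies in the finite-type parahoric subgroup $W_J\rtimes\langle\tau\rangle$ (more precisely, a reduced expression of $w$ involves only reflections in $J$, and $J$ is $\tau\sigma$-stable by definition of $\supp_\sigma$), and an element of finite order composed with $\sigma$ has a power equal to $t^{\mu^\vee}$ with $\mu^\vee$ in the span of the finite root system, forcing $\nu_{w\tau}$ to be trivial. Thus $X_{w\tau}(b)\neq\emptyset$.

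For the ``only if'' direction I would argue contrapositively: suppose $\supp_\sigma(w\tau)$ generates an infinite subgroup of $W_a$. Since $W_a$ is an affine Coxeter group, an infinite standard parabolic subgroup must contain a full affine component, so $\supp_\sigma(w\tau)$ contains all simple affine reflections of some irreducible component of the affine Dynkin diagram. In this situation $w\tau$ is ``not basic'' in the strong sense that its Newton point $\nu_{w\tau}$ is non-central; one sees this because $(w\tau\sigma)^n=t^{\mu^\vee}$ with $\mu^\vee$ having non-trivial pairing against the affine root direction coming from that component, hence $\nu_{w\tau}\neq 0$. But then $w\tau\notin[\tau]=[b]$, and again by minimality of the length of $w\tau$ and \cite{he_geohom}*{Thm.\ 3.2} (non-emptiness for minimal length elements holds precisely when the element lies in $[b]$), we conclude $X_{w\tau}(b)=\emptyset$.

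A cleaner way to package both directions, which I would prefer to carry out, is to invoke the characterization of minimal length elements together with Theorem \ref{thm:finsupp} applied with $J=\widetilde{\mathbb S}$ replaced by the Iwahori case (or directly with the $\sigma$-support): when $W_{\supp_\sigma(w\tau)}$ is finite, $X_{w\tau}(\tau)$ is a disjoint union of translates $iX_{I(w,\cdot,\sigma)}(w)$ of a \emph{classical} Deligne-Lusztig variety inside a finite-type partial flag variety, and a classical Deligne-Lusztig variety $X_B(w)$ is always non-empty (it contains at least the points coming from the Lang map). Conversely, when $W_{\supp_\sigma(w\tau)}$ is infinite the same reduction machinery breaks down precisely because no finite-type parahoric contains $w\tau$, and one reads off $\nu_{w\tau}\neq\nu_\tau$ from the affine component, so emptiness follows from \cite{he_geohom}*{Thm.\ 3.2}.

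\textbf{Main obstacle.} The crux is the equivalence, for a minimal-length element $w\tau$ in its $\sigma$-conjugacy class, between the finiteness of $W_{\supp_\sigma(w\tau)}$ and the condition $\nu_{w\tau}=\nu_\tau$ (equivalently $[w\tau]=[\tau]$). The finite-to-basic implication is the routine part; the delicate direction is showing that if $\supp_\sigma(w\tau)$ spans an infinite affine parabolic then the Newton point genuinely moves off the center — this requires using that $w\tau$ is $\sigma$-conjugate (within $\widetilde W$) to a $\sigma$-straight element of the same length (by \cite{hn_minii} and \cite{heminlength}*{Lem.\ 1.1}, minimal length implies $\ell(w\tau)=\langle\nu_{w\tau},2\rho\rangle$), and then that a straight element whose $\sigma$-support is a full affine component cannot have trivial Newton vector. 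I expect this to be the step that needs the most care, and I would isolate it as a lemma about straight elements and their $\sigma$-supports in affine Weyl groups.
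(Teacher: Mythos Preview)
Your ``if'' direction is essentially the paper's argument: finite $\sigma$-support forces the product $(w\tau\sigma)^n$ to be a power of $\tau$, whence $\nu_{w\tau}=\nu_\tau$ and $\kappa(w\tau)=\kappa(\tau)$, so $w\tau\in[\tau]$ and non-emptiness follows from \cite{he_geohom}*{Thm.\ 3.2} for minimal-length elements.

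The ``only if'' direction, however, contains a genuine error. You write that ``minimal length implies $\ell(w\tau)=\langle\nu_{w\tau},2\rho\rangle$'', i.e.\ that a minimal-length element is $\sigma$-straight. This is false: minimal-length elements need not be straight. The paper itself exhibits the element $s_3s_2s_3s_1s_0s_2$ of length~$6$ with Newton point $(1,0,0)$, so $2\langle\nu,\rho\rangle=5$; it is minimal in its $\sigma$-conjugacy class (this is exactly Lemma~\ref{lem:estell}) but not straight. Consequently your reduction of ``infinite $\sigma$-support $\Rightarrow$ non-central Newton point'' to a statement about straight elements does not go through, and you correctly flag this step as the main obstacle --- but your proposed way around it rests on the incorrect straightness claim.

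The paper avoids this difficulty altogether. Instead of analyzing the Newton point directly, it invokes the canonical factorisation of minimal-length elements from \cite{he_geohom}*{Thm.\ 2.3}: one writes $w\tau = ux$ with $x$ $\sigma$-straight, $x\in{}^J\widetilde W^{\sigma(J)}$, $\Ad(x)\sigma(J)=J$, and $u$ supported in the finite group $W_J$. Then $X_{w\tau}(b)\neq\emptyset \iff X_x(\tau)\neq\emptyset$ by \cite{he_geohom}*{Thm.\ 3.2}, and for straight $x$ this holds iff $x$ is $\sigma$-conjugate to $\tau$ (\cite{he_geohom}*{Prop.\ 4.5}). Since both $x$ and $\tau$ are straight with the same image in $\Omega$, this forces $x=\tau$, so $w\tau=u\tau$ with $\supp_\sigma(u\tau)\subset J$ finite. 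This decomposition is precisely what lets one bypass the Newton-point computation you identified as delicate.
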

\begin{proof}
  By \cite{he_geohom}*{Thm.\ 2.3} there is a set $J \subset \widetilde{\mathbb{S}}$, a $\sigma$-straight element $x \in {}^{J}\widetilde{W}^{\sigma(J)}$, and an element $u$ with $\sigma$-support in the finite subgroup $\widetilde{W}_J$ such that $w\tau = ux$ and $\Ad(x)\sigma(J) = J$. By \cite{he_geohom}*{Thm.\ 3.2} $X_{ux}(b)$ is non-empty if and only if $X_x(b)$ is non-empty. Since $b$ is $\sigma$-conjugate to $\tau$, $X_x(b)$ is non-empty if and only if the same is true for $X_x(\tau)$. For a $\sigma$-straight $x$ the affine Deligne-Lusztig variety $X_x(\tau)$ is non-empty, if and only if $x$ is $\sigma$-conjugate to $\tau$, compare \cite{he_geohom}*{Prop.\ 4.5}. Since both $x$ and $\tau$ are $\sigma$-straight and $\sigma$-conjugate, $x$ has length zero, too. As we have seen, the set of length-zero elements $\Omega$ is in bijection with the image of the Kottwitz map, which can then be identified with the projection $\widetilde{W} \rightarrow \Omega$. Since $\sigma$-conjugate elements have the same image under the Kottwitz map, if $x$ and $\tau$ are $\sigma$-conjugate and both have length zero they are actually equal.
  It follows that $X_{w\tau}(\tau)$ is non-empty if and only if $w\tau = u\tau$. This means that $\supp_{\sigma}(w\tau) = \supp_{\sigma}(u\tau)$, which is finite by definition of $u$ and $x = \tau$.

  Assume $\supp_{\sigma}(w\tau)$ is finite. Since the elements $(\tau\sigma)^i(w)$ belong to the finite subgroup of $\widetilde{W}$ generated by $\supp_{\sigma}(w)$, there is an integer $n$ such that $w(\tau\sigma)(w)\dotsm (\tau\sigma)^n(w) = 1$. It is easy to see that $(w\tau\sigma)^n = w(\tau\sigma)(w)\dotsm (\tau\sigma)^n(w)\tau^n = \tau^n$. By the formula (\ref{eq:newton}) above to compute the Newton point of elements of $\widetilde{W}$, it follows that $w\tau$ and $\tau$ have the same Newton point. As we have seen, the Kottwitz map can be identified with the projection $\widetilde{W} \rightarrow \Omega$, from which it follows that $\kappa(w\tau) = \kappa(\tau)$ and therefore $w\tau$ and $\tau$ are $\sigma$-conjugate.
\end{proof}

\begin{lem}\label{lem:estell}
  If $\ell(w) \le 2\langle \nu_w, \rho \rangle +1$ then $w$ has minimal length in its $\sigma$-conjugacy class.
\end{lem}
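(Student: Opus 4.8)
The statement to prove is that $\ell(w) \le 2\langle \nu_w, \rho\rangle + 1$ implies $w$ has minimal length in its $\sigma$-conjugacy class. The natural tool is He--Nie's theory of minimal length elements, together with the basic inequality $\ell(w) \ge \langle \nu_w, 2\rho\rangle$, which holds for \emph{every} $w \in \widetilde{W}$ (this is essentially the content of \cite{heminlength}*{Lem.\ 1.1}, one direction of which characterizes $\sigma$-straight elements as those achieving equality). So for any $w$ one always has $\langle \nu_w, 2\rho\rangle \le \ell(w)$, and our hypothesis squeezes $\ell(w)$ into $\{\langle \nu_w, 2\rho\rangle, \langle \nu_w, 2\rho\rangle + 1\}$. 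The plan is to split into these two cases.

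\emph{Case $\ell(w) = \langle \nu_w, 2\rho\rangle$.} Here $w$ is $\sigma$-straight by definition, hence it is of minimal length in its $\sigma$-conjugacy class: straight elements are minimal length elements of their class, as recalled in the discussion preceding \cite{hn_minii}*{Thm.\ 3.3} in the excerpt (and proved in \cite{hn_minii}). So there is nothing more to do in this case.

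\emph{Case $\ell(w) = \langle \nu_w, 2\rho\rangle + 1$.} This is the heart of the argument. I would invoke the structure theorem of He--Nie on minimal length elements, or more precisely the ``down-up'' reduction: any $w$ can be connected, via a chain of steps of the form $w \rightsquigarrow s w \sigma(s)$ (which preserve length) and $w \rightsquigarrow$ (an element of strictly smaller length, when $\ell(sw\sigma(s)) = \ell(w) - 2$), to a minimal length element $w_{\min}$ in its $\sigma$-conjugacy class, with $\ell(w_{\min}) \le \ell(w)$ and $\ell(w_{\min}) \equiv \ell(w) \pmod 2$ being \emph{not} automatic — rather one knows $\nu_{w_{\min}} = \nu_w$ and $\ell(w_{\min}) \ge \langle \nu_w, 2\rho\rangle$. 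Combined with $\ell(w_{\min}) \le \ell(w) = \langle \nu_w, 2\rho\rangle + 1$, we get $\ell(w_{\min}) \in \{\langle\nu_w,2\rho\rangle, \langle\nu_w,2\rho\rangle+1\}$. If the reduction process ever performs a genuine length-decreasing step, the length drops by $2$, which would force $\ell(w_{\min}) \le \ell(w) - 2 = \langle\nu_w, 2\rho\rangle - 1 < \langle\nu_w, 2\rho\rangle$, contradicting the basic inequality applied to $w_{\min}$ (which has the same Newton point). Hence no length-decreasing step occurs, every step is of the length-preserving type $w \rightsquigarrow sw\sigma(s)$, and therefore $\ell(w) = \ell(w_{\min})$, i.e.\ $w$ already has minimal length in its class.

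\emph{Main obstacle.} The delicate point is making precise the claim that the reduction chain to a minimal length element either preserves length at every step or strictly decreases it by $2$, and that the Newton point is preserved throughout; this is exactly \cite{he_geohom}*{Thm.\ 2.3} (the decomposition $w\tau = ux$ with $x$ $\sigma$-straight and $u$ supported in a finite parabolic, up to $\sigma$-conjugacy by length-preserving moves) together with the monotonicity of length under the reduction steps. One must be slightly careful that $\langle \nu_w, 2\rho \rangle$ and $\langle \nu_w, \rho\rangle$ are being used consistently (the factor of $2$), and that the basic inequality $\ell(w) \ge \langle \nu_w, 2\rho\rangle$ is indeed unconditional — for this I would cite \cite{heminlength} or \cite{hn_minii}. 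Once these facts are in hand the parity/contradiction argument is immediate, so I expect the write-up to be short.
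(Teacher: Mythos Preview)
Your proposal is correct and uses essentially the same idea as the paper: a strictly shorter $\sigma$-conjugate would have length $\le \ell(w)-2 \le 2\langle\nu_w,\rho\rangle-1$, contradicting the basic inequality $\ell(\cdot) \ge 2\langle\nu_{\cdot},\rho\rangle$. The paper's write-up is more direct---it simply observes that $\sigma$-conjugation preserves the parity of the length (an elementary fact, since conjugation by a simple reflection changes length by $0$ or $\pm 2$) and concludes in two lines, without splitting into cases or invoking the full He--Nie reduction machinery.
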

\begin{proof}
  Observe that $\sigma$-conjugation preserves the parity of the length. If for some $v \in \widetilde{W}$ we have $ \ell(vw\sigma(v^{-1})) < \ell(w)$ it follows that $\ell(vw\sigma(v)^{-1}) \le \ell(w) -2 \le 2\langle \nu_w, \rho \rangle -1$ which is smaller than the length of a $\sigma$-straight element with same Newton point. By \cite{he_geohom}*{Thm.\ 2.3} this is a contradiction.
\end{proof}

\subsection{The group-theoretical datum associated to $\GU(2,4)$ over a ramified prime}\label{sec:adlvs3}
As we have mentioned above, we can associate to our Rapoport-Zink space a group-theoretical datum $(W_a, J, \sigma, \mu)$ and study the corresponding union of fine affine Deligne-Lusztig varieties $X(\mu,b)_J$. As explained in \cite{he_hecke}*{Ex.\ 2.2} the extended affine Weyl group associated to the ramified unitary group in even dimension $2m$ is the same in both split and non-split case. It has affine Dynkin diagram of type $BC_m$ (or ${ }^{2}BC_m$, in the non-split case, which only differs in the orientation, which is irrelevant for the Weyl group), as depicted below. 
\begin{center}
  \begin{dynkinDiagram} [extended, arrows = false, edge length=1cm, labels={\alpha_0,\alpha_1,\alpha_2,\alpha_3,\alpha_{m-2},\alpha_{m-1},\alpha_m}, label macro/.code={\drlap{#1}}]B{}
  \end{dynkinDiagram}
\end{center}
By looking at the Dynkin diagram we immediately see that the subsets of reflections $\widetilde{\mathbb{S}} \setminus \{s_0\}$ and $\widetilde{\mathbb{S}} \setminus \{s_1\}$ generate two finite Weyl groups of type $C_m$, while the reflections in  $\widetilde{\mathbb{S}} \setminus \{s_m\}$ generate a finite group of type $D_m$. Following \cite{he_hecke}*{Ex.\ 2.2} we observe that there is exactly one symmetry of the Dynkin diagram, namely the transformation given by exchanging $s_0$ and $s_1$. It follows that the length-zero subset $\Omega$ consists of exactly two elements. The action of $\sigma$ on the extended affine Weyl group is then given by the adjoint action of one of these two elements. If the form is split then the action of $\sigma$ is trivial, if the form is non-split, then the Frobenius is given by the action of the non-trivial element $\tau \in \Omega$.

The choice of the subset $J$ of affine simple reflections is determined by the level structure. As in \cite{rtw} the level structure for our Rapoport-Zink space is given by the parahoric subgroup stabilizing a lattice in the vector space $C$ which is self-dual with respect to the Hermitian form. By \cite{gh_cox1}*{Sec.\ 7.4} this parahoric level structure corresponds to the subset $J = \{s_0, s_1,\dots, s_{m-1}\}$.

Last, the cocharacter $\mu^\vee$ corresponds to the choice of the signature. In our case $\mu^\vee$ is then the fundamental coweight $\omega_2^\vee$ corresponding to the simple root $\alpha_2$. Observe that for $m \ge 3$ the data $(BC_{m}, \widetilde{\mathbb{S}}\setminus \{s_m\}, 1, \omega_2^\vee)$ and $({}^2BC_{m}, \widetilde{\mathbb{S}}\setminus \{s_m\}, \tau, \omega_2^\vee)$ are not among those appearing in \cite{ghn_fully}*{Sec.\ 3}. This means that the corresponding union $X(\omega_2^\vee, 1)_J$ of affine Deligne-Lusztig varieties is not fully Hodge-Newton decomposable, which is the main source of difference with the case studied in \cite{rtw}. For example, we cannot expect a decomposition of $X(\omega_2^\vee, 1)_J$ as a disjoint union of classical Deligne-Lusztig varieties, which matches our results in Section \ref{sec:geometry}.

\subsubsection{The split case} 
Consider the group-theoretical datum $({BC}_3, J = \{0,1,2\}, \id, \omega_2^\vee)$ associated to the  group $\GU(2,4)$ ramified over $p$ and such that the Hermitian form on $C$ is split. We first need to compute the admissible set and its representatives in ${}^JW$.  Let $\Adm(\omega_2^\vee)^J = \Adm(\omega_2^\vee) \cap {}^J\widetilde{W}$ denote the set of minimal length representatives in ${}^J\widetilde{W}$ of the admissible elements. This set can be easily computed with the mathematical software Sagemath \cite{sagemath} and a code can be found in Appendix \ref{AppendixC}. We obtain
\begin{align*}
  \Adm(\omega_2^\vee)^J = \{&1, s_3, s_3s_2, s_3s_2s_1, s_3s_2s_3, s_3s_2s_3s_1, s_3s_2s_3s_1s_2, s_3s_2s_0, s_3s_2s_1s_0, \\ 
  &s_3s_2s_3s_0,s_3s_2s_1s_0s_2, s_3s_2s_3s_0s_2, s_3s_2s_3s_1s_0, s_3s_2s_3s_0s_2s_1, s_3s_2s_3s_1s_0s_2, \\
  &s_3s_2s_3s_1s_0s_2s_1, s_3s_2s_3s_1s_2s_0, s_3s_2s_3s_1s_0s_2s_0, s_3s_2s_3s_1s_0s_2s_1s_0\}.
\end{align*}
In the following proposition we show that the $J$-admissible elements can be grouped into three families, corresponding to three different behaviors of the affine Deligne-Lusztig variety $X_{J, w}(1)$.
\begin{prop}\label{prop:split}
  Consider the group theoretical datum $({BC}_3, J = \{0,1,2\}, \id, \omega_2^\vee)$ associated to ramified $\GU(2,4)$. Then $w \in \Adm(\omega_2^\vee)^J$ satisfies one of the following properties.
  \begin{itemize}
    \item[(i)] $w$ has finite support in a subgroup of type $C_r$ of $W_a$ with $r \le 3$. In this case the affine Deligne-Lusztig variety $X_{J,w}(1)$ has irreducible components isomorphic to (generalized) Deligne-Lusztig varieties for the symplectic group $\mathrm{Sp}_{2r}$. The set of irreducible components is in bijection with the set of vertex lattices of type $2r$.
    \item[(ii)] $w$ has full $\sigma$-support and can be reduced by one step of Deligne and Lusztig's reduction method. In this case the affine Deligne-Lusztig variety $X_{J, w}(1)$ has irreducible components universally homeomorphic to  $\mathbb{A}^1$-bundles over a classical Deligne-Lusztig variety for $\mathrm{SO}_6$. The set of irreducible components of $X_{J, w}(1)$ is in bijection with the set of $2$-modular lattices.
    \item[(iii)] $w$ has full $\sigma$-support and minimal length in its $\sigma$-conjugacy class, in which case $X_{J,w}(1)$ is empty. 
  \end{itemize}
\end{prop}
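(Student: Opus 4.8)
The plan is to verify the three cases by directly inspecting the nineteen elements of $\Adm(\omega_2^\vee)^J$ listed above and applying the combinatorial criteria from Section~\ref{sec:adlvs}. For each $w$, the first step is to compute its support $\supp(w)$ (which here coincides with its $\sigma$-support since $\sigma$ is trivial in the split case), and check whether the generated subgroup $W_{\supp(w)}$ is finite. The $J$-admissible elements naturally split according to whether $s_0$ appears in a reduced expression: those not involving $s_0$, namely $1, s_3, s_3s_2, s_3s_2s_1, s_3s_2s_3, s_3s_2s_3s_1, s_3s_2s_3s_1s_2$, have support contained in $\{s_1,s_2,s_3\}$, which generates a finite group of type $C_3$ (or a smaller parabolic of type $C_r$); these fall into case (i). For the remaining twelve elements, which all contain $s_0$, the full support is $\{s_0,s_1,s_2,s_3\}$, generating the infinite affine Weyl group $W_a$ of type $BC_3$, so they have full support.

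\textbf{Case (i).} For the seven elements with support in a type-$C_r$ parabolic, I would apply Theorem~\ref{thm:finsupp}: since $W_{\supp_\sigma(w) \cup I(w,J,\sigma)}$ is finite, $X_{J,w}(1)$ decomposes as a disjoint union indexed by $\mathbb{J}_\tau/(\mathbb{J}_\tau \cap P_{\supp_\sigma(w)\cup I(w,J,\sigma)})$ of translates of a classical Deligne-Lusztig variety in a partial flag variety for a group of type $C_r$. One then identifies these classical Deligne-Lusztig varieties with the (generalized) Deligne-Lusztig varieties for $\Sp_{2r}$ studied in Section~\ref{sec:dlvsp}: the elements $s_3s_2s_1$, $s_3s_2s_3s_1$, $s_3s_2s_3s_1s_2$ correspond (after relabeling indices to match the symplectic notation of Section~\ref{sec:dlvsp6}) to the open strata of $S_V$ in dimension $r=3$, and the shorter words to the hereditary strata for smaller $r$. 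The bijection between irreducible components and vertex lattices of type $2r$ comes from identifying $\mathbb{J}_\tau$ with a unitary similitude group and its orbit on the relevant vertices of the Bruhat--Tits building with the set of type-$2r$ vertex lattices, exactly as in Proposition~\ref{prop:vlproperties} and Lemma~\ref{lem:spV}.

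\textbf{Cases (ii) and (iii).} For the twelve full-support elements, the dividing line is whether $w$ has minimal length in its $\sigma$-conjugacy class. By Proposition~\ref{prop:minlen}, a full-support element (with $\supp_\sigma(w)$ infinite) that is of minimal length in its class has $X_{J,w}(1) = \emptyset$; so I would first determine, using Lemma~\ref{lem:estell} (checking $\ell(w) \le 2\langle\nu_w,\rho\rangle + 1$) together with a direct length computation in Sagemath, which of the twelve elements are already of minimal length --- these are the case~(iii) elements (the longest ones, of length $7$ and $8$ in the list, such as $s_3s_2s_3s_1s_0s_2s_1$ and $s_3s_2s_3s_1s_0s_2s_1s_0$). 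For the rest --- case~(ii) --- I would exhibit one reduction step: find $s \in \widetilde{\mathbb{S}}$ with $\ell(sw\sigma(s)) = \ell(w)-2$, so that Theorem~\ref{thm:red}(ii) writes $X_w(1)$ (hence, passing to $G/P_J$, also $X_{J,w}(1)$) as a union of a $\mathbb{G}_m$-bundle and an $\mathbb{A}^1$-bundle over affine Deligne-Lusztig varieties for shorter elements; iterating lands on an element whose support generates a finite group of type $D_3$ (the parabolic $\widetilde{\mathbb{S}}\setminus\{s_m\}$), giving via Theorem~\ref{thm:finsupp} a classical Deligne-Lusztig variety for $\SO_6$. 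The $\mathbb{A}^1$-bundle factor over the non-Coxeter $\SO_6$ Deligne-Lusztig variety is precisely what produces the component $\N_\Lambda^{\le 1}$ of Proposition~\ref{prop:irred}(ii), and counting the $\mathbb{J}_\tau$-orbits gives the bijection with $2$-modular lattices via part~(1) of Proposition~\ref{prop:2vl}.

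\textbf{Main obstacle.} The genuinely delicate point is the bookkeeping in case~(ii): one must check that after the reduction steps the relevant subset $I(w,J,\sigma)$ and the support $\supp_\sigma$ together generate exactly the type-$D_3$ subgroup $W_{\widetilde{\mathbb{S}}\setminus\{s_m\}}$ (so that Theorem~\ref{thm:finsupp} applies and yields an $\SO_6$-variety rather than something larger), and that the resulting Deligne-Lusztig element is the one whose $\mathbb{A}^1$-bundle matches the line bundle $\mathcal{H}^1$ over $R_W$ from Lemma~\ref{lem:SVisoH1}. I expect verifying this identification --- and in particular matching the affine Weyl combinatorics to the lattice-theoretic picture of Section~\ref{sec:geometry} --- to require the most care, while the length computations and the emptiness assertions of case~(iii) are routine given Proposition~\ref{prop:minlen} and the Sagemath computation of $\Adm(\omega_2^\vee)^J$.
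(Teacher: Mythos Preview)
Your overall architecture (sort by support, apply Theorem~\ref{thm:finsupp} for the finite-support elements, Proposition~\ref{prop:minlen} plus Lemma~\ref{lem:estell} for the minimal-length ones, and Theorem~\ref{thm:red} in between) is exactly the paper's approach, but the sorting of the nineteen elements into the three cases is wrong, and this affects all three parts.

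The key error is your criterion for finite support. You split according to whether $s_0$ appears in a reduced word, and conclude that the twelve elements involving $s_0$ have full support. But look at the Dynkin diagram: $\{s_0,s_2,s_3\}$ also generates a \emph{finite} subgroup of type $C_3$, symmetric to $\{s_1,s_2,s_3\}$. So $s_3s_2s_0$, $s_3s_2s_3s_0$, and $s_3s_2s_3s_0s_2$ belong to case~(i), not to (ii)/(iii); there are ten finite-support elements, coming in pairs via $s_0\leftrightarrow s_1$, and the paper uses this to match the two $\mathrm{SU}(C)$-orbits of self-dual lattices. Full support means both $s_0$ and $s_1$ appear (together with $s_2,s_3$), leaving only nine elements.

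This miscount propagates. Among those nine, exactly \emph{one}, namely $w=s_3s_2s_3s_1s_0$, is in case~(ii): conjugation by $s_3$ drops the length by two, and $s_3ws_3 = s_2s_1s_0$ already has support $\{s_0,s_1,s_2\}$ of type $D_3$, so no ``iteration'' is needed --- the statement says ``one step'' for a reason. The $\mathbb G_m$-branch lands on $s_3s_2s_1s_0$, which is $\sigma$-Coxeter hence $\sigma$-straight with non-basic Newton point, so only the $\mathbb A^1$-branch survives. The remaining eight full-support elements are case~(iii): they have lengths $4,5,6,6,6,7,7,8$, not just $7$ and $8$ as you suggest. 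The paper computes their Newton points directly (via \eqref{eq:newton}); all but $s_3s_2s_3s_1s_0s_2$ turn out to be $\sigma$-straight, and that one element needs Lemma~\ref{lem:estell}. Your plan to use Lemma~\ref{lem:estell} as the primary filter would also work in principle, but you would still need the correct list of candidates.
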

\begin{proof}
  \par{(i)} We first consider the elements of $\Adm(\omega_2^\vee)^J$ with finite $\sigma$-support
  \begin{equation*}
  1, s_3, s_3s_2, s_3s_2s_3, s_3s_2s_1, s_3s_2s_0, s_3s_2s_3s_1, s_3s_2s_3s_0, s_3s_2s_3s_1s_2, s_3s_2s_3s_0s_2.
  \end{equation*}
 It is clear that their support generates a subgroup of type $C_r$, compare also the Dynkin diagram above.  As we have recalled in Theorem \ref{thm:finsupp} the corresponding fine affine Deligne-Lusztig variety $X_{J,w}(1)$ can be decomposed as a disjoint union of classical Deligne-Lusztig varieties for the group $\mathrm{Sp}_{2r}$. Since the support of $w$ generates the Weyl group $C_r$, it satisfies the hypothesis of Theorem \ref{thm:br} and hence the corresponding (classical) Deligne-Lusztig variety is irreducible.
 
 By Theorem \ref{thm:finsupp} the index set of the disjoint decomposition of $X_{J,w}(1)$ depends on the set of reflections $\supp_{\sigma}(w) \cup I(w, \sigma, J)$. If $w = 1$ the set $\supp_{\sigma}(w) \cup I(1, \sigma, J)$ coincides with $J$. If $w = s_3$ it is $\{s_0, s_1, s_3\}$. If the reflection $s_2$ appears in a reduced expression of $w$, then $I(w, \sigma, J)$ is empty. Observe that the subset $\Adm(\omega_1^\vee)^J = \{1, s_3, s_3s_2, s_3s_2s_1, s_3s_2s_0\}$, which corresponds to the admissible set for $\GU(1,5)$, produces the same collection of sets $\supp_{\sigma}(w) \cup I(w, \sigma, J)$. These were studied in \cite{gh_cox1}*{Ex.\ 7.4.1}. In particular, it is proved there that the index set $\mathbb{J}_1/\mathbb{J}_1 \cap P_{\supp_{\sigma}(w) \cup I(w,J, \sigma)}$ in the decomposition of $X_{J,w}(1)$ is in bijection with the set of vertex lattices of type $0,2,4$ or $6$, respectively. These observations are summarized in the following table.
 \begin{center}
  \bgroup
\def\arraystretch{1.5}

  \begin{tabular}{|l|l|l|}
  \hline
  Elements &  $\supp_{\sigma}(w) \cup I(w, \sigma, J)$ & Type \\
  \hline
  $1$  & $J=\{s_0, s_1, s_2\}$ & $0$ \\
  \hline
  $s_3$ &  $\{s_0, s_1, s_3\}$ & 2 \\
  \hline
  $s_3s_2, ~~~ s_3s_2s_3$ & $\{s_2, s_3\}$ & 4 \\
  \hline
  $s_3s_2s_1, ~~~  s_3s_2s_3s_1, ~~~ s_3s_2s_3s_1s_2$ & $\{s_1, s_2, s_3\}$ & $6$ \\
  \hline
  $s_3s_2s_0, ~~~ s_3s_2s_3s_0, ~~~  s_3s_2s_3s_0s_2$ & $\{s_0, s_2, s_3\}$ & $6$ \\
  \hline
 \end{tabular}\egroup
 \end{center}
 Since $\sigma = 1$ we actually have two $\sigma$-stable subgroups of type $C_3$ in $W_a$, one is generated by $\{s_1,s_2, s_3\}$ and the other by $\{s_0, s_2, s_3\}$. This corresponds to the fact that if the form is split there are two orbits of self-dual lattices in $C$, as remarked in \cite{gh_cox1}*{Ex.\ 7.4}, and explains why the elements above come in pairs. Observe that the elements appearing in the list above are exactly the same elements in the stratification (\ref{eq:strC}) of $S_V$, and consequently in the stratification of the irreducible components of type $\N_{\mathcal{L}}$ of Proposition \ref{prop:irred}. 

 \par{(ii)} There is only one element in $\Adm(\omega_2^\vee)^J$ with full support that can be reduced via Deligne and Lusztig's method. Indeed, by conjugating $s_3s_2s_3s_1s_0$ with $s_3$ we obtain the shorter element $s_2s_1s_0$ that is a Coxeter element for the finite subgroup of type $D_3$ generated by $\{s_0, s_1, s_2\}$. The other element produced by the reduction method is $s_3s_2s_1s_0$ which is a $\sigma$-Coxeter element for $W_a$, and it is therefore $\sigma$-straight with non-basic Newton point $(\frac{1}{2}, \frac{1}{2}, 0)$. It follows that $X_{s_3w}(1)$ is empty. By Theorem \ref{thm:finecoarse} the fine affine Deligne-Lusztig variety $X_{J,w}(b)$ is isomorphic to the  affine Deligne-Lusztig variety $X_w(1)$, as $I(J, \sigma, w ) = \emptyset$. By the reduction method and the previous observations, the latter is universally homeomorphic to a line bundle over the affine Deligne-Lusztig variety $X_{s_3ws_3}(1)$. Using Theorem \ref{thm:finsupp}, we obtain the disjoint decomposition of $X_{s_3ws_3}(1)$ into classical Deligne-Lusztig varieties for $\mathrm{SO}_6$. Again, since $s_3ws_3 = s_2s_1s_0$ has full support in the finite subgroup of type $D_3$, the classical Deligne-Lusztig varieties $X_B(s_3ws_3)$ are irreducible. It follows that they are the irreducible components of $X_{s_3ws_3}(1)$. Last, observe that $\supp(s_3ws_3) \cup I(s_3ws_3, J, \sigma) = \{s_0, s_1, s_2\} = J$. We have seen in the proof of (i) that in this case the index set $\mathbb{J}_1/\mathbb{J}_1 \cap P_{\supp_{\sigma}(w) \cup I(w,J, \sigma)}$ of the decomposition of $X_{s_3ws_3}(1)$ into classical Deligne-Lusztig varieties is in bijection with the set of vertex lattices of type $0$. By Proposition \ref{prop:2vl} these are in bijection with the set of $2$-modular lattices.

 \par{(iii)} Last, by Proposition \ref{prop:minlen}, in order to prove that $X_{J,w}(1)$ is empty for the remaining elements, it is enough to prove that these elements have minimal length in their $\sigma$-conjugacy classes. By the formula (\ref{eq:newton}), we can compute their Newton points, the corresponding SageMath code can be found in Appendix \ref{AppendixC},
 \begin{center}
  \bgroup
\def\arraystretch{1.2}

  \begin{tabular}{|l|c|}
    \hline
    Element & Newton point \\
    \hline
    $s_3s_2s_1s_0$  &$(\tfrac{1}{2},\tfrac{1}{2}, 0)$\\ 
    \hline
    $s_3s_2s_1s_0s_2$ &$(1, 0, 0)$ \\ 
    \hline
    $s_3s_2s_3s_1s_2s_0, ~~~ s_3s_2s_3s_0s_2s_1$ &$(\tfrac{2}{3}, \tfrac{2}{3}, \tfrac{2}{3})$ \\
    \hline
    $s_3s_2s_3s_1s_0s_2$  &$(1, 0, 0)$\\
    \hline
    $s_3s_2s_3s_1s_0s_2s_0, ~~~ s_3s_2s_3s_1s_0s_2s_1$ &$(1, \tfrac{1}{2},\tfrac{1}{2})$  \\
    \hline
    $s_3s_2s_3s_1s_0s_2s_1s_0$  &$(1,1,0)$ \\
    \hline
  \end{tabular}\egroup
 \end{center}
 Recall that for an affine Weyl group of type $\widetilde{B}_3$, the half-sum of the positive roots $\rho$ is $(\tfrac{5}{2}, \tfrac{3}{2}, \tfrac{1}{2})$. It is then straightforward to see that all elements in the list above, except for $w = s_3s_2s_3s_1s_0s_2$ are $\sigma$-straight. Observe that the remaining element has length $6$ and Newton point $(1,0,0)$. It satisfies then the hypothesis $\ell(w) = 2 \langle \rho, \nu_w \rangle +1$ of Lemma \ref{lem:estell} which implies that it has minimal length in its $\sigma$-conjugacy class. 
\end{proof}

\subsubsection{The non-split case} 
Consider now the group-theoretical datum $({}^2BC_3, J = \{0,1,2\}, \sigma, \omega_2^\vee)$ associated to the group $\GU(2,4)$ over a ramified prime and with non-split Hermitian form on $C$. Recall that in this case the Frobenius $\sigma$ on the extended affine Weyl group exchanges the reflections $s_0$ and $s_1$. The admissible set $\Adm(\omega_2^\vee)^J$ does not depend on $\sigma$, hence it coincides with the admissible set computed for the split case. The following proposition is the analogue of Proposition \ref{prop:split} for the non-split case.

\begin{prop}\label{prop:nonsplitadlv}
  Consider the group theoretical datum $({}^2BC_3, J = \{0,1,2\}, \sigma, \omega_2^\vee)$ associated to the non-split ramified group $\GU(2,4)$. Then $w \in \Adm(\omega_2^\vee)^J$ satisfies one of the following properties.
  \begin{itemize}
    \item[(i)] $w$ has $\sigma$-support in a finite subgroup of $W_a$ of type $C_r$, with $r \le 2$. In this case the affine Deligne-Lusztig variety $X_{J,w}(1)$ has irreducible components isomorphic to (generalized) Deligne-Lusztig varieties for the symplectic group $\mathrm{Sp}_{2r}$. The set of irreducible components of $X_{J,w}(1)$ is in bijection with the set of vertex lattices of type $2r$.
    \item[(ii)] $w$ has full $\sigma$-support, and can be reduced with Deligne and Lusztig's method to an element with finite $\sigma$-support in a subgroup of type $D_3$ of $W_a$. In this case the affine Deligne-Lusztig variety $X_{J, w}(1)$ has irreducible components universally homeomorphic to vector bundles of dimension $1$ or $2$ over a classical Deligne-Lusztig variety for $\mathrm{SO}_6$. The set of irreducible components of $X_{J,w}(1)$ is in bijection with the set of $2$-modular lattices.
    \item[(iii)] $w$ has full $\sigma$-support and minimal length in its $\sigma$-conjugacy class, in which case $X_{J,w}(1)$ is empty. 
  \end{itemize}
\end{prop}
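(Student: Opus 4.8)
The plan is to adapt the proof of Proposition \ref{prop:split} to the setting where the Frobenius $\sigma$ acts on $\widetilde{\mathbb{S}}$ by exchanging $s_0$ and $s_1$. Since $\Adm(\omega_2^\vee)$ depends only on the extended affine Weyl group, the list of nineteen minimal length representatives $\Adm(\omega_2^\vee)^J$ is the same as in the split case and can again be produced with the SageMath code of Appendix \ref{AppendixC}. The first step is to compute the $\sigma$-support of each admissible element. The crucial difference with the split case is that whenever the ordinary support of $w$ contains exactly one of $s_0,s_1$, its $\sigma$-support contains both; for instance $\supp_\sigma(s_3 s_2 s_1) = \supp_\sigma(s_3 s_2 s_0) = \widetilde{\mathbb{S}}$, whereas in the split case these elements had support generating a finite group of type $C_3$. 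Consequently the only admissible elements whose $\sigma$-support generates a proper finite subgroup — necessarily of type $C_r$ with $r \le 2$, since there is no $\sigma$-stable finite parabolic of type $C_3$ — are $1$, $s_3$, $s_3 s_2$ and $s_3 s_2 s_3$; every other admissible element has full $\sigma$-support. Among the latter, a short computation with the reduction steps of Theorem \ref{thm:red} separates those that reduce to an element with $\sigma$-support in the ($\sigma$-stable) finite subgroup of type $D_3$ generated by $J$ from those that are already of minimal length in their $\sigma$-conjugacy class.

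For the elements of family (i) the argument is identical to part (i) of Proposition \ref{prop:split}: by Theorem \ref{thm:finsupp} the fine affine Deligne-Lusztig variety $X_{J,w}(1)$ is a disjoint union of classical Deligne-Lusztig varieties for $\Sp_{2r}$, irreducible by Theorem \ref{thm:br} since $w$ has full support in $C_r$, and the index set $\mathbb{J}_1/(\mathbb{J}_1 \cap P_{\supp_\sigma(w)\cup I(w,J,\sigma)})$ is in bijection with the vertex lattices of type $2r$ via the correspondence between the Bruhat-Tits building of $\mathrm{SU}(C)$ and the complex $\mathscr{L}$ of Proposition \ref{prop:vlproperties}. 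Unlike in the split case these elements do not come in pairs, reflecting that in the non-split case there is a single orbit of self-dual lattices. For the elements of family (iii), one would compute their Newton points from formula (\ref{eq:newton}) (again with the SageMath code of Appendix \ref{AppendixC}) and check that each is either $\sigma$-straight or satisfies the hypothesis $\ell(w) \le 2\langle \nu_w,\rho\rangle + 1$ of Lemma \ref{lem:estell}, hence has minimal length in its $\sigma$-conjugacy class. Since $b = 1$ is basic and $\supp_\sigma(w)$ is full, so generates the infinite group $W_a$, Proposition \ref{prop:minlen} gives $X_w(1) = \emptyset$, and $X_{J,w}(1) = \emptyset$ because $I(w,J,\sigma) = \emptyset$ whenever $s_2 \in \supp(w)$ (Theorem \ref{thm:finecoarse}).

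The heart of the proof is family (ii). For each such $w$ the plan is to exhibit a sequence of conjugations by simple reflections, governed by Theorem \ref{thm:red}, bringing $w$ to an element $w'$ with $\supp_\sigma(w') \subset J$; along the way the length-preserving steps contribute universal homeomorphisms and the length-dropping steps contribute $\mathbb{G}_m$- and $\mathbb{A}^1$-bundles. The $\sigma$-straight companions produced by the dropping steps are $\sigma$-Coxeter elements of $W_a$ with non-basic Newton point, hence contribute empty affine Deligne-Lusztig varieties, so only the $\mathbb{A}^1$-bundle pieces survive; keeping track of how many such steps occur shows that $X_{J,w}(1)\cong X_w(1)$ is universally homeomorphic to an iterated affine bundle of rank $1$ or $2$ over $X_{w'}(1)$, the two possibilities matching the two families of irreducible components $\N_\Lambda^{\le 1}$ and $\N_\Lambda^{(2)^\circ}$ of Proposition \ref{prop:nonsplit}. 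Finally Theorem \ref{thm:finsupp} decomposes $X_{w'}(1)$ into classical Deligne-Lusztig varieties for $\SO_6$, irreducible by Theorem \ref{thm:br} since $w'$ has full $\sigma$-support in $D_3$ (note that $w'$ need not be a Coxeter element, which is exactly why the corresponding stratum of $\bar{\N}^0_{\mathrm{red}}$ is not of finite Coxeter type), and its index set $\mathbb{J}_1/(\mathbb{J}_1\cap P_J)$ is in bijection with the vertex lattices of type $0$, hence with the $2$-modular lattices by Proposition \ref{prop:2vl}.

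The main obstacle is precisely family (ii): one must carry out the reductions explicitly for every admissible element of full $\sigma$-support, correctly distinguish the elements reducing to a $D_3$-element from the $\sigma$-straight ones, and track which $\mathbb{G}_m$- and $\mathbb{A}^1$-bundles appear so as to recover the rank-$1$ and rank-$2$ vector bundles and to match them, lattice by lattice, with the components of $\bar{\N}^0_{\mathrm{red}}$ described in Section \ref{sec:geometry}. A secondary point requiring care is checking that the combinatorially mixed $\sigma$-supports of the non-split case still produce exactly the three families claimed, with no admissible element left unaccounted for.
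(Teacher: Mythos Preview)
Your overall plan matches the paper's approach, but there is a genuine gap in family (iii). You claim that every admissible element of full $\sigma$-support and minimal length in its $\sigma$-conjugacy class ``is either $\sigma$-straight or satisfies the hypothesis $\ell(w)\le 2\langle\nu_w,\rho\rangle+1$ of Lemma \ref{lem:estell}''. This fails for the pair $w = s_3s_2s_3s_1s_0s_2s_0$ and $s_3s_2s_3s_1s_0s_2s_1$ (which are $\sigma$-conjugate to each other by $\tau$): they have length $7$ and Newton point $(1,0,0)$, so $2\langle\nu_w,\rho\rangle+1 = 6 < 7$, and neither is $\sigma$-straight. The paper handles this case by a separate argument: $\sigma$-conjugate $w$ to $w' = s_0s_2s_3s_2s_0\cdot s_3s_2$, observe that $x = s_0s_2s_3s_2s_0$ is $\sigma$-straight with $\Ad(x)$ fixing $\{s_2,s_3\}$, so by \cite{he_geohom}*{Thm.\ 2.3} $w'$ lies in the class of $x$ in $B(G)$; then rule out $w'$ being $\sigma$-conjugate to $x$ in $B(\widetilde{W})$ via a parity invariant---the number of occurrences of $s_3$ in a reduced expression has well-defined parity preserved under $\sigma$-conjugation (this is the lemma immediately following the proposition in the paper, specific to type $\widetilde{B}_m$). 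Without this extra step your case analysis is incomplete.

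A smaller inaccuracy in family (ii): you assert that the companion elements produced by the length-dropping steps are all $\sigma$-Coxeter. In the reduction of $s_3s_2s_3s_1s_2s_0$ one of the companions is $s_3s_1s_2s_3s_2$, which is not $\sigma$-Coxeter; it has Newton point $(\tfrac12,\tfrac12,0)$ and length $5$, and one must invoke Lemma \ref{lem:estell} to see it has minimal length. The conclusion (empty contribution) is correct, but the justification you give does not cover it.
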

\begin{proof}
  \par{(i)} We first consider the elements of $\Adm(\omega_2^\vee)^J$ with finite $\sigma$-support
  \begin{equation*}
  1, s_3, s_3s_2, s_3s_2s_3.
  \end{equation*}
 It is clear that their support generates a subgroup of type $C_r$, compare also the Dynkin diagram above.  By Theorem \ref{thm:finsupp} the corresponding fine affine Deligne-Lusztig variety $X_{J,w}(1)$ can be decomposed as a disjoint union of classical Deligne-Lusztig varieties for the group $\mathrm{Sp}_{2r}$. Since the $\sigma$-support of $w$ generates the Weyl group $C_r$, it satisfies the hypothesis of Theorem \ref{thm:br} and hence the corresponding Deligne-Lusztig variety is irreducible.
 
 By Theorem \ref{thm:finsupp} the index set of the decomposition of $X_{J,w}(1)$ depends on the set of reflections $\supp_{\sigma}(w) \cup I(w, \sigma, J)$. If $w = 1$ this coincides with $J$, if $w = s_3$ this is $\{s_0, s_1, s_3\}$, otherwise it coincides with the support of $w$, so it is $\{s_2, s_3\}$. Again by \cite{gh_cox1}*{Ex. 7.4.2} we know that the index set $\mathbb{J}_1/\mathbb{J}_1 \cap P_{\supp(w) \cup I(w,J, \sigma)}$ in the decomposition of $X_{J,w}(1)$ is in bijection with vertex lattices of type $0,2$ or $4$, respectively. If we compare this with the first part of Proposition \ref{prop:split}, we see that the elements corresponding to vertex lattices of type $6$ are missing. This is due to the fact that if the Hermitian form on $C$ is non-split,  such vertex lattices do not exist, as we have recalled in Section \ref{sec:lattices}.  Last, observe that the elements appearing in the list above are exactly the same elements as in the stratification (\ref{eq:strC}) of $S_V$, and consequently in the stratification of the irreducible closed subschemes $\N_{\mathcal{L}}$ as in (\ref{eq:strata}). 

 \par{(ii)} There are five elements in $\Adm(\omega_2^\vee)^J$ with full $\sigma$-support that can be reduced via Deligne and Lusztig's method, namely 
\begin{equation*}
    s_3s_2s_3s_1, s_3s_2s_3s_0, s_3s_2s_3s_1s_0, s_3s_2s_3s_1s_2s_0, s_3s_2s_3s_0s_2s_1.
\end{equation*}
Indeed, by $\sigma$-conjugating the first three with $s_3$ we obtain the shorter elements $s_2s_1$, $s_2s_0$ and $s_2s_1s_0$, respectively. The first two are $\sigma$-Coxeter elements for the finite $\sigma$-stable subgroup of $W_a$ of type $D_3$ generated by $\{s_0, s_1, s_2\}$, the third still has full $\sigma$-support in this subgroup. The three elements of the form $ws_3$ produced by the reduction method are $s_3s_2s_1$, $s_3s_2s_0$ and $s_3s_2s_1s_0$, respectively. The first two are $\sigma$-Coxeter elements for $W_a$ and therefore $\sigma$-straight with non-basic with Newton point $(\tfrac{1}{3}, \tfrac{1}{3}, \tfrac{1}{3})$. The latter has Newton point $(\tfrac{1}{2}, \tfrac{1}{2}, 0)$ and length $4$, so it is again $\sigma$-straight.  
 
For $w$ one of these three elements $\{s_3s_2s_3s_1, s_3s_2s_3s_0, s_3s_2s_3s_1s_0\}$, by Theorem \ref{thm:finecoarse} the fine affine Deligne-Lusztig varieties $X_{J,w}(b)$ is isomorphic to the  affine Deligne-Lusztig variety $X_w(1)$. By the reduction method the latter is then universally homeomorphic to a line bundle over the affine Deligne-Lusztig variety $X_{s_3ws_3}(1)$. Using Theorem \ref{thm:finsupp}, we further obtain a disjoint decomposition of $X_{s_3ws_3}(1)$ into classical Deligne-Lusztig varieties for $\mathrm{SO}_6$. Again, since $s_3ws_3$ has full support in the finite subgroup of type $D_3$, the varieties $X(s_3ws_3)$ are irreducible. It follows that they are the irreducible components of $X_{s_3ws_3}(1)$. Last, observe that $\supp_{\sigma}(w) \cup I(w, J, \sigma) = \{s_0, s_1, s_2\} = J$. We have already seen that in this case the index set $\mathbb{J}_1/\mathbb{J}_1 \cap P_J$ in the decomposition of $X_{s_3ws_3}(1)$ into classical Deligne-Lusztig varieties is in bijection with the set of $2$-modular lattices.

Consider now $w = s_3s_2s_3s_1s_2s_0$ and observe that it is $\sigma$-conjugate to $s_3s_2s_3s_0s_2s_1$ by the length-zero element $\tau$. Therefore, it is enough to study $X_{J,w}(1)$ as the two are universally homeomorphic. The reduction method consists first of two length-preserving $\sigma$-conjugations, namely by $s_1$ and $s_3$. We obtain the element $s_2s_3s_1s_2s_3s_2$ that can be reduced via $\sigma$-conjugation by $s_2$ to the shorter element $s_3s_1s_2s_3$. Another conjugation by $s_3$ brings us to $s_1s_2$, which has finite $\sigma$-support in a subgroup of type $D_3$. It remains to check the other two elements produced by the two length-decreasing steps of the reduction method, namely $s_3s_1s_2s_3s_2$ and $s_3s_1s_2$. The latter is $\sigma$-Coxeter as we already remarked, hence $\sigma$-straight, so the corresponding affine Deligne-Lusztig variety is empty. We compute the Newton point of $s_3s_1s_2s_3s_2$ by taking $\sigma$-powers and obtain $(\tfrac{1}{2}, \tfrac{1}{2}, 0)$. Then we can see that this element satisfies the hypothesis of Lemma \ref{lem:estell} and hence has minimal length in its $\sigma$-conjugacy class and the corresponding affine Deligne-Lusztig variety is again empty. By the reduction method it follows that $X_w(1)$ is universally homeomorphic to a $2$-dimensional vector bundle over $X_{s_1s_2}(1)$, whose irreducible components are the classical Deligne-Lusztig varieties $X_B(t_2t_1)$ in the notation of Section \ref{sec:dlvs}. We have then obtained the analogous decomposition as in Proposition \ref{prop:nonsplit} and Remark \ref{rem:last}.

\par{(iii)} Last, we need to prove that the remaining admissible elements have minimal length in their conjugacy classes. Using SageMath, we first compute their Newton points
\begin{center}
  \bgroup
\def\arraystretch{1.2}
\begin{tabular}{|l|c|}
\hline
Element & Newton point \\
\hline 
  $s_3s_2s_1, s_3s_2s_0$ & $(\tfrac{1}{3}, \tfrac{1}{3}, \tfrac{1}{3})$ \\
  \hline
  $s_3s_2s_1s_0$  & $(\tfrac{1}{2},\tfrac{1}{2}, 0)$ \\
  \hline 
  $s_3s_2s_1s_0s_2$ &$(1, 0, 0)$ \\ 
  \hline
  $s_3s_2s_1s_3s_2, s_3s_2s_0s_3s_2$ & $(\tfrac{1}{2},\tfrac{1}{2}, 0)$\\
  \hline 
  $s_3s_2s_3s_1s_0s_2$  &$(1, 0, 0)$\\
  \hline
  $s_3s_2s_3s_1s_0s_2s_0, ~~~ s_3s_2s_3s_1s_0s_2s_1$ &$(1, 0,0)$  \\
  \hline
  $s_3s_2s_3s_1s_0s_2s_1s_0$  &$(1,1,0)$ \\
  \hline
\end{tabular}
\egroup
\end{center}
One can easily check that the elements the first three rows, together with the last one are $\sigma$-straight. They are followed by three elements that satisfy the hypothesis of Lemma \ref{lem:estell} and therefore have minimal length in their $\sigma$-conjugacy class.
It remains to check the length $7$ elements $s_3s_2s_3s_1s_0s_2s_0$ and $s_3s_2s_3s_1s_0s_2s_1$, which are $\sigma$-conjugate to each other by the length $0$ element $\tau$. Hence, it is enough to prove the statement for one of them. We can $\sigma$-conjugate $w = s_3s_2s_3s_1s_0s_2s_0$ first by $s_2$ and then by $s_0$ to obtain $w' = s_0s_2s_3s_2s_0s_3s_2$, which still has length seven. Observe that the subword $x = s_0s_2s_3s_2s_0$ of $w'$ is $\sigma$-straight. Indeed, it is obtained by $\sigma$-conjugation from $s_3s_2s_1s_0s_2$, which we have already seen is $\sigma$-straight. Moreover, one can directly check that $x$ fixes the reflections $\{s_2,s_3\}$. It follows that $w' = xs_3s_2$ is factored as the product of a straight element and an element of finite order fixed by $x$, which by \cite{he_geohom}*{Thm.\ 2.3} implies that $w'$ is in the class of $x$ in $B(G)$. Suppose $w'$ does not have minimal length in its $\sigma$-conjugacy class in $B(\widetilde{W})$. Then since $\sigma$-conjugation preserves the parity of the length, $w'$ is conjugate to an element of the same length as $x$, which means to a $\sigma$-straight element with the same Newton point as $w'$. If follows that $w'$ has to be conjugate to $x$, by the bijection of \cite{hn_minii}*{Thm.\ 3.3} between conjugacy classes in $B(G)$ and classes in $B(\widetilde{W})$ containing a $\sigma$-straight element. Observe that the reflection $s_3$ appears once in any reduced expression for $x$ and twice in $w'$. Then these two cannot be $\sigma$-conjugate by the next lemma, and we can conclude that $w'$, and therefore $w$ has minimal length in its $\sigma$-conjugacy class.
\end{proof}

\begin{lem} 
  Let $W_a$ be the affine Weyl group of type $\widetilde{B_m}$. Let $n_m(w)$ be the number of times the reflection $s_m$ appears in any reduced expression of $w$. Then $n_m(w)$ is well-defined and its parity is preserved by $\sigma$-conjugation.
\end{lem}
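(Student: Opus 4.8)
The statement has two parts: first, that $n_m(w)$ is well-defined, meaning any two reduced expressions of $w$ contain $s_m$ the same number of times; second, that $\sigma$-conjugation preserves the parity of $n_m(w)$. For the first part, the plan is to use Tits' solution to the word problem in Coxeter groups: any two reduced expressions of $w$ are related by a sequence of braid moves (no nil-moves, since we are dealing with reduced words). A braid move replaces a factor $\underbrace{s_is_js_i\cdots}_{m_{ij}}$ by $\underbrace{s_js_is_j\cdots}_{m_{ij}}$, where $m_{ij}$ is the order of $s_is_j$. In the affine Weyl group of type $\widetilde{B_m}$, the reflection $s_m$ is only non-commuting with $s_{m-1}$, and the corresponding edge in the Coxeter diagram has label $m_{m-1,m}=4$ (the double bond). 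Hence every braid move involving $s_m$ exchanges a word of the form $s_ms_{m-1}s_ms_{m-1}$ with $s_{m-1}s_ms_{m-1}s_m$ — both of which contain $s_m$ exactly twice. Braid moves not involving $s_m$ obviously do not change the count. Therefore $n_m(w)$ is invariant under all braid moves, hence well-defined. Equivalently and more slickly, one can observe that the map $W_a\to\Z/2\Z$ sending a reduced word to the number of occurrences of $s_m$ modulo $2$ is well-defined because $s_m$ occurs an even number of times in every braid relation, so the assignment $s_i\mapsto \delta_{i,m}\in\Z/2\Z$ extends to a well-defined function on $W_a$ (even a homomorphism is not needed; the braid-invariance argument suffices, but in fact on the abelianization one does get a homomorphism, which is even cleaner to state).

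\textbf{The conjugation-invariance step.} For the parity statement under $\sigma$-conjugation, the plan is to reduce to ordinary conjugation plus the action of $\sigma$. Since $\sigma$ acts on $\widetilde{\mathbb S}$ by the unique nontrivial diagram automorphism of type $\widetilde{B_m}$ (equivalently ${}^2BC_m$), which swaps $s_0\leftrightarrow s_1$ and fixes $s_2,\dots,s_m$, we have $n_m(\sigma(w))=n_m(w)$ for every $w\in W_a$: a reduced expression maps under $\sigma$ to a reduced expression, and $\sigma$ fixes $s_m$. Thus it suffices to show that $n_m(\cdot)\bmod 2$ is invariant under ordinary conjugation, i.e. $n_m(vwv^{-1})\equiv n_m(w)\pmod 2$ for all $v,w\in W_a$. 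By induction on $\ell(v)$ it is enough to treat $v=s$ a simple reflection, so we must compare $n_m(w)$ and $n_m(sws)$. Here the plan is to use the same "parity function" $\epsilon\colon W_a\to\Z/2\Z$, $\epsilon(w)=n_m(w)\bmod 2$, whose well-definedness we established above; one checks directly from the braid-invariance argument that $\epsilon$ is in fact a group homomorphism $W_a\to\Z/2\Z$ (because in each defining relation $s_m$ appears an even number of times: the relations are $s_i^2=1$, $(s_is_j)^2=1$ for non-adjacent $i,j$, $(s_is_j)^3=1$ for simply-laced edges not involving $s_m$, and $(s_{m-1}s_m)^4=1$ — in all of these the total number of $s_m$'s is even). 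Granting that $\epsilon$ is a homomorphism to the abelian group $\Z/2\Z$, we get immediately $\epsilon(sws)=\epsilon(s)+\epsilon(w)+\epsilon(s)=\epsilon(w)$, which is exactly the claimed parity invariance under conjugation, and combined with $\sigma$-invariance, under $\sigma$-conjugation.

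\textbf{Expected main obstacle.} The only genuinely delicate point is verifying that $\epsilon$ descends from the free monoid on $\widetilde{\mathbb S}$ to a well-defined homomorphism on $W_a$; this is where one must be careful about the type-$\widetilde{B_m}$ Coxeter data and in particular about the double bond at $s_m$ (it is precisely the fact that $4$ is even that makes the argument work — an odd bond would break it, which is why this is special to the end node $s_m$ of a type-$B$ diagram and not, say, to a node on an odd-labelled edge). Everything else — that $\sigma$ fixes $s_m$, that reduced expressions pull back to reduced expressions under a diagram automorphism, and the induction on $\ell(v)$ — is routine. I would structure the written proof as: (1) define $\epsilon(w)=n_m(w)\bmod 2$ via any fixed reduced expression; (2) invoke Tits' theorem and check $\epsilon$ is unchanged by every braid move, proving both well-definedness of $n_m(w)$ and that $\epsilon$ is a homomorphism on $W_a$; (3) note $\epsilon\circ\sigma=\epsilon$ since $\sigma(s_m)=s_m$; (4) conclude $\epsilon(g^{-1}w\sigma(g))=\epsilon(w)$ for all $g$, which is the parity statement.
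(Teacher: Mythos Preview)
Your proof is correct, and for the well-definedness of $n_m(w)$ it coincides with the paper's argument (both invoke Tits/Matsumoto and observe that the only braid move involving $s_m$ is $s_ms_{m-1}s_ms_{m-1}\leftrightarrow s_{m-1}s_ms_{m-1}s_m$, which preserves the $s_m$-count).

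For the parity under $\sigma$-conjugation, however, you take a genuinely different and cleaner route. The paper argues by a case analysis on the length of $s_iw\sigma(s_i)$: it invokes the exchange property and the fact (cited from \cite{bb}*{Ex.\ 1.16}) that among simple reflections $s_m$ is conjugate only to itself, so that when a letter is deleted by left- or right-multiplication by $s_m$ it must be another $s_m$, and then checks the three length cases $\ell(s_mws_m)=\ell(w)\pm 2$ or $\ell(w)$ separately. Your approach bypasses all of this by observing that $\epsilon(w)=n_m(w)\bmod 2$ is actually a group homomorphism $W_a\to\Z/2\Z$ (since $s_m$ occurs an even number of times in every Coxeter relation of $\widetilde{B_m}$, the crucial one being $(s_{m-1}s_m)^4=1$), whence conjugation-invariance is immediate from the abelianness of the target, and $\sigma$-invariance from $\sigma(s_m)=s_m$. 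What your approach buys is brevity and conceptual clarity: it identifies $\epsilon$ with the character of $W_a$ coming from the odd-edge component $\{s_m\}$ of the Coxeter graph, and no case analysis or exchange-property bookkeeping is needed. What the paper's approach buys is self-containedness at the level of reduced words, without appealing to the abelianization; it also makes explicit exactly how $n_m$ (not just its parity) changes in each case, which is slightly more information.
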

\begin{proof}
  Recall that two reduced expression for $w$ are connected by a sequence of so-called \emph{braid moves}, see \cite{bb}*{Thm.\ 3.3.1}. The only braid move involving the reflection $s_m$ consists of substituting the subword $s_ms_{m-1}s_ms_{m-1}$ with the subword $s_{m-1}s_ms_{m-1}s_m$, and therefore it does not change the number of times $s_m$ appears in an expression for $w$. It follows that $n_m(w)$ is well-defined.

  It is enough to prove the second statement for $s_iw\sigma(s_i)$ where $s_i$ is a reflection in $W_a$. If $\ell(s_iw\sigma(s_i)) = \ell(w) + 2$, since $s_m$ is fixed by $\sigma$, the number $n_m(s_iw\sigma(s_i))$ is either equal to $n_m(w)$ or, if $s_i = s_m$, it increases by $2$, hence the parity is preserved. By the exchange property of Coxeter groups, see \cite{bb}*{Sec.\ 1.5}, if $\ell(s_iw) < \ell(w)$ then $s_iw$ has a reduced expression obtained by deleting one reflection $s_j$ from a reduced expression for $w$. Moreover, in this case $s_j$ and $s_i$ are conjugate. By \cite{bb}*{Ex.\ 1.16}, the only reflection conjugate to $s_m$ is $s_m$ itself, so the only case to consider is $s_i = s_m$. If $\ell(s_mws_m) = \ell(w)$ it means that multiplication on the left with $s_m$ deletes one instance of $s_m$ and the multiplication on the right replaces it. Therefore, the number of times $s_m$ appears does not change. If $\ell(s_mws_m) = \ell(w) - 2$ it means that $s_m$ gets deleted twice from a reduced expression of $w$, and again parity is preserved.
\end{proof}

\begin{rem}
  In \cite{he_aug} the authors study a family of elements in $\widetilde{W}$ called \emph{of finite Coxeter part}. For such elements the corresponding affine Deligne-Lusztig varieties can be decomposed via the reduction method as iterated fibrations over classical Deligne-Lusztig varieties for Coxeter elements. The $J$-admissible elements we have just studied give fibrations over classical Deligne-Lusztig varieties, too, which are however not of Coxeter type, in general. For example the Deligne-Lusztig variety $X_B(t_1t_2t_3)$ for the non-split orthogonal group found in the proof of Proposition \ref{prop:nonsplit} is not of Coxeter type. 
\end{rem}

\appendix
\section{The Gröbner basis of Proposition \ref{prop:radical6}}\label{Appendix}
We list here the polynomials of the Gröbner basis $G$ used in the proof of Proposition \ref{prop:radical6}. To make the notation more readable and the lexicographic order more intuitive we have substituted the variables $x_{ij}$ of Proposition \ref{prop:pap} with the twenty-one letters of the Italian alphabet. The symmetric matrix $X$ used in the proof of Proposition \ref{prop:radical6} becomes in the new notation the following matrix with entries in $\F_p[a, b, \dots, z]$
\begin{equation*} 
  X = \left( \begin{array}{cccccc}
    a & b & c & d & e & f \\ 
    b & g & h & i & l & m \\ 
    c & h & n & o & p & q \\
    d & i & o & r & s & t \\
    e & l & p & s & u & v\\
    f & m & q & t & v & z
  \end{array} \right).
\end{equation*}
The monomial order is then simply the usual alphabetical order. We list here the elements of the Gr\"obner basis used in the proof of Proposition \ref{prop:radical6} and already divide them into the subsets $G_{ij}$ according to Lemma \ref{lem:grob}. We also underline the distinguished generators used in the last part of the proof.

\begin{longtable}{| L | M |}
  \hline
  G_{11} = G_{a} &  a + g +n + r + u+ z \\
  \hline
  G_{12} = G_{b} & b^2 + g^2 + h^2 + i^2 + l^2 + m^2 \\ 
  &bc + gh + hn + io + lp + mq \\ 
  & bd + gi + ho + ir + ls + mt\\
  & be + gl + hp + is + lu + mv \\
  & bf + gm + hq + it + lv + mz \\
  & bh - cg - cr - cu - cz + do + ep + fq \\
  &bi + co - dg - dn - du - dz + es + ft \\
  &bl + cp + ds - eg - en - er - ez + fv \\
  &bm + cq + dt + ev - fg - fn - fr - fu \\
  &bn + br + bu + bz - ch - di - el - fm \\ 
  &bo^2 + br^2 + bs^2 + bt^2 - cio - dho + din - dir + diu + diz - dls - dmt - eis - fit \\ 
  &bop + brs + bsu + btv - clo - dhp - dis + dln + eiz - els - emt - fiv \\
  &boq + brt + btu + btz - cmo - dhq - dit + dmn - elt - fmt \\
  &bos - bpr - dhs + dip + ehr - eio \\
  &bot - bqr - dht + diq + fhr - fio \\
  &bou - bps - dhu + dlp + ehs - elo \\
  & bov - bqs - dhv + dmp + eiq - emo + fhs - fip \\
  & boz - bqt - dhz + dmq + fht - fmo \\
  & bp^2 + bs^2 + bu^2 + bv^2 - clp - dls - ehp - eis + eln + elr - elu + elz - emv - flv \\
  & bpq + bst + buv + bvz - cmp - dms - ehq - eit - elv + emn + emr - fmv \\
  & bpt - bqs - eht + eiq + fhs - fip \\
  & bpv - bqu - ehv + elq + fhu - flp \\
  & bpz - bqv - ehz + emq + fhv - fmp \\
  & bq^2 + bt^2 + bv^2 + bz^2 - cmq - dmt - emv - fhq - fit - flv + fmn + fmr  + fmu - fmz \\
  & bru - bs^2 - diu + dls + eis - elr \\
  & brv - bst - div + dms + eit - emr \\
  & brz - bt^2 - diz + dmt + fit - fmr \\
  & bsv - btu - eiv + elt + fiu - fls \\
  & bsz - btv - eiz + emt + fiv - fms \\
  & \boxed{buz - bv^2 - elz + emv + flv - fmu} \\
  \hline
  G_{13} = G_c &  c^2 + h^2 + n^2 + o^2 + p^2 + q^2 \\
  & cd + hi + no + or + ps + qt \\
  & ce + hl + np + os + pu + qv \\
  & cf + hm + nq + ot + pv + qz \\
  & cgi + cho + cir + cls + cmt - dgh - dhn - dio - dlp - dmq \\
  & cgl + chp + clr + clu + cmv + dhs - dlo - egh - ehn - ehr - elp - emq \\
  & cgm + chq + cmr + cmu + cmz + dht - dmo + ehv - emp - fgh - fhn - fhr - fhu - fmq \\
  &chi + cno + cor + cps + cqt - dh^2 - dn^2 - do^2 - dp^2 - dq^2 \\
  & chl + cnp + cpr + cpu + cqv + dns - dop - eh^2 - en^2 - enr - ep^2 - eq^2 \\
  & chm + cnq + cqr + cqu + cqz + dnt - doq + env - epq - fh^2 - fn^2  - fnr - fnu - fq^2 \\ 
  & cho^2 + chr^2 + chs^2 + cht^2 - cino - cior - clpr - cmqr - dhno - dhor - 2dhps - 2dhqt + din^2 + dio^2 - dis^2 - dit^2 - diu^2 - 2div^2 - diz^2 + 2dlop + dlrs + dlsu + 2dmoq + dmrt + 2dmtu + dmtz + ehpr + eirs + eisu + 2eitv - elo^2 - elr^2 - els^2 - elt^2 + emrv - 2emst+ fhqr + firt + fitz + flrv - fmo^2  - fmr^2 - 2fmru + fms^2 - fmt^2 \\
  & chop + chrs + chsu + chtv - clno - clor - clps - cmqs - dhnp - dhpr - dhpu - dhqv - dins + diop + dln^2 + dlnr + dlp^2 + dmpq - ehqt - eit^2 - eiv^2 - eiz^2 + emoq + emrt + emsv + emtz + fhqs + fist + fiuv + fivz - fmop - fmrs - fmsu - fmtv \\
  &choq + chrt + chtu + chtz - cmno - cmor - cmps - cmqt - dhnq - dhqr - dhqu - dhqz - dint + dioq + dmn^2 + dmnr + dmp^2 + dmq^2 - elnt + eloq + emns - emop \\
  & chp^2 + chs^2 + chu^2 + chv^2 - clnp - clpr - clpu - cmqu - dlns + dlop - ehnp - ehpr - ehpu - 2ehqv - eins + eiop + eln^2 + 2elnr - elo^2 + elp^2 - elt^2 - elv^2 - elz^2 + 2empq + emst + emuv + emvz + fhqu + flst + fluv + flvz - fmp^2 - fms^2 - fmu^2 - fmv^2 \\
  & chpq + chst + chuv + chvz - cmnp - cmpr - cmpu - cmqv - dmns \\
  & + dmop - ehnq - ehqr - ehqu - ehqz - eint + eioq - elnv + elpq \\
  & + emn^2 + 2emnr + emnu - emo^2 + emq^2 \\
  & chq^2 + cht^2 + chv^2 + chz^2 - cmnq - cmqr - cmqu - cmqz - dmnt + dmoq - emnv + empq- fhnq  - fhqr - fhqu - fhqz - fint + fioq - flnv + flpq + fmn^2 + 2fmnr + 2fmnu - fmo^2 - fmp^2 + fmq^2 \\
  &ci^2 + co^2 + cr^2 + cs^2 + ct^2 - dhi - dno - dor - dps - dqt \\
  &cil + cop + crs + csu + ctv - ehi - eno - eor - eps - eqt \\
  &cim + coq + crt + ctu + ctz + eov - ept - fhi - fno - for - fou - fqt \\
  & cip - clo - dhp + dln + eho - ein \\
  & ciq - cmo - dhq + dmn + fho - fin \\
  & cis - clr - dhs + dlo + ehr - eio \\
  & cit - cmr - dht + dmo + fhr - fio \\
  & ciu - cls - dhu + dlp + ehs - eip \\
  & civ - cms - dhv + dmp + fhs - fip \\
  & ciz - cmt - dhz + dmq + fht - fiq \\
  & cl^2 + cp^2 + cs^2 + cu^2 + cv^2 - ehl - enp - eos - epu - eqv \\
  & clm + cpq + cst + cuv + cvz - fhl - fnp - fos - fpu - fqv \\
  & clq - cmp - ehq + emn + fhp - fln \\
  & clt - cms - eht + emo + fhs - flo \\
  & clv - cmu - ehv + emp + fhu - flp \\
  & clz - cmv - ehz + emq + fhv - flq \\
  & cm^2 + cq^2 + ct^2 + cv^2 + cz^2 - fhm - fnq - fot - fpv - fqz \\
  & cos - cpr - dns + dop + enr - eo^2 \\
  & cot - cqr - dnt + doq + fnr - fo^2 \\
  & cou - cps - dnu + dp^2 + ens - eop \\
  & cov - cqs - dnv + dpq + fns - fop \\
  & coz - cqt - dnz + dq^2 + fnt - foq \\
  & cpt - cqs - ent + eoq + fns - fop \\
  & cpv - cqu - env + epq + fnu - fp^2 \\
  & cpz - cqv - enz + eq^2 + fnv - fpq \\
  & cru - cs^2 - dou + dps + eos - epr \\
  & crv - cst - dov + dqs + eot - eqr \\
  & crz - ct^2 - doz + dqt + fot - fqr \\
  & csv - ctu - eov + ept + fou - fps \\
  & csz - ctv - eoz + eqt + fov - fqs \\
  & \boxed{cuz - cv^2 - epz + eqv + fpv - fqu} \\
  \hline
  G_{14} = G_d & d^2 + i^2 + o^2 + r^2 + s^2 + t^2 \\
  & de + il + op + rs + su + tv \\
  & df + im + oq + rt + sv + tz \\
  & dgl + dhp + dis + dlu + dmv - egi - eho - eir - els - emt \\
  & dgm + dhq + dit + dmu + dmz + eiv - ems - fgi - fho - fir - fiu - fmt \\
  & dhl + dnp + dos + dpu + dqv - ehi - eno - eor - eps - eqt \\
  & dhm + dnq + dot + dqu + dqz + eov - eqs - fhi - fno - for - fou - fqt \\
  & dhop + dhrs + dhsu + dhtv - dinp - dios - dipu - diqv - eho^2 - ehr^2 - ehs^2 - eht^2 + eino + eior + eips + eiqt \\
  &dhoq + dhrt + dhtu + dhtz - dinq - diot - diqu - diqz - elot + elqr - fho^2 - fhr^2- fhs^2 - fht^2  + fino + fior + fips + fiqt + flos - flpr \\
  & dhp^2 + dhs^2 + dhu^2 + dhv^2 - dlnp - dlos - dlpu - dmqu - ehop - ehrs - ehsu- ehtv- eiqv + elno  + elor + elps + elqt + emqs + fiqu - flqs \\
  & dhpq + dhst + dhuv + dhvz - dmnp - dmos - dmpu - dmqv - einq - eiot - eiqu - eiqz- elov + elqs + emno + emor + emou + emqt - fhop - fhrs - fhsu - fhtv + finp + fios + fipu + fiqv \\
  &dhq^2 + dht^2 + dhv^2 + dhz^2 - dmnq - dmot - dmqu - dmqz - emov + emqs - fhoq - fhrt- fhtu - fhtz - flov + flpt + fmno + fmor+ 2fmou - fmps + fmqt \\
  &dil + dop + drs + dsu + dtv - ei^2 - eo^2 - er^2 - es^2 - et^2 \\
  & dim + doq + drt + dtu + dtz + erv - est - fi^2 - fo^2 - fr^2 - fru - ft^2 \\
  &dip^2 + dis^2 + diu^2 + div^2 - dlop - dlrs - dlsu - dmtu - eiop - eirs - eisu - 2eitv + elo^2 + elr^2 + els^2 + elt^2 + emst + fitu - flst \\
  & dipq + dist + diuv + divz - dmop - dmrs - dmsu - dmtv - eioq - eirt - eitu - eitz- elrv + elst + emo^2 + emr^2 + emru + emt^2 \\
  & diq^2 + dit^2 + div^2 + diz^2 - dmoq - dmrt - dmtu - dmtz - emrv + emst - fioq - firt- fitu - fitz - flrv + flst + fmo^2 + fmr^2 + 2fmru - fms^2 + fmt^2 \\
  & dl^2 + dp^2 + ds^2 + du^2 + dv^2 - eil - eop - ers - esu - etv \\
  & dlm + dpq + dst + duv + dvz - fil - fop - frs - fsu - ftv \\
  & dlq - dmp - eiq + emo + fip - flo \\
  & dlt - dms - eit + emr + fis - flr \\
  & dlv - dmu - eiv + ems + fiu - fls \\
  & dlz - dmv - eiz + emt + fiv - flt \\
  & dm^2 + dq^2 + dt^2 + dv^2 + dz^2 - fim - foq - frt - fsv - ftz \\
  & dpt - dqs - eot + eqr + fos - fpr \\
  & dpv - dqu - eov + eqs + fou - fps \\
  & dpz - dqv - eoz + eqt + fov - fpt \\
  & dsv - dtu - erv + est + fru - fs^2 \\
  & dsz - dtv - erz + et^2 + frv - fst \\
  & \boxed{duz - dv^2 - esz + etv + fsv - ftu} \\
  \hline
  G_{15} = G_{e} & e^2 + l^2 + p^2 + s^2 + u^2 + v^2 \\
  & \boxed{ef + lm + pq + st + uv + vz} \\
  & egm + ehq + eit + elv + emz - fgl - fhp - fis - flu - fmv \\
  & ehm + enq + eot + epv + eqz - fhl - fnp - fos - fpu - fqv \\
  & ehoq + ehrt + ehtu + ehtz - einq - eiot - eiqu - eiqz - elpt + elqs - fhop - fhrs - fhsu - fhtv + finp + fios + fipu + fiqv \\
  & ehpq + ehst + ehuv + ehvz - elnq - elot - elpv - elqz - fhp^2 - fhs^2 - fhu^2 - fhv^2 + flnp + flos + flpu + flqv \\
  & ehq^2 + eht^2 + ehv^2 + ehz^2 - emnq - emot - empv - emqz - fhpq - fhst - fhuv - fhvz + fmnp + fmos + fmpu + fmqv \\
  & eim + eoq + ert + esv + etz - fil - fop - frs - fsu - ftv \\
  & eipq + eist + eiuv + eivz - eloq - elrt - elsv - eltz - fip^2 - fis^2 - fiu^2 - fiv^2 + flop + flrs + flsu + fltv \\
  & eiq^2 + eit^2 + eiv^2 + eiz^2 - emoq - emrt - emsv - emtz - fipq - fist - fiuv - fivz + fmop + fmrs + fmsu + fmtv \\
  & elm + epq + est + euv + evz - fl^2 - fp^2 - fs^2 - fu^2 - fv^2 \\
  & elq^2 + elt^2 + elv^2 + elz^2 - empq - emst - emuv - emvz - flpq - flst - fluv - flvz + fmp^2 + fms^2 + fmu^2 + fmv^2 \\
  & em^2 + eq^2 + et^2 + ev^2 + ez^2 - flm - fpq - fst - fuv - fvz \\
  \hline
  G_{16} = G_f & f^2 + m^2 + q^2 + t^2 + v^2 + z^2 \\
  \hline
  G_{22} = G_g & gn + gr + gu + gz - h^2 - i^2 - l^2 - m^2 + nr + nu + nz - o^2 - p^2 - q^2 + ru + rz - s^2 - t^2 + uz - v^2 \\
  & go^2 + gr^2 + gs^2 + gt^2 - 2hio + i^2n - i^2r + i^2u + i^2z - 2ils - 2imt + nr^2 + ns^2 + nt^2 - o^2r + o^2u + o^2z - 2ops - 2oqt + r^2u + r^2z - rs^2 - rt^2 + s^2z - 2stv + t^2u \\
  & gop + grs + gsu + gtv - hip - hlo - i^2s + iln + ilz - imv - l^2s - lmt + nrs + nsu + ntv - o^2s + opz - oqv - p^2s - pqt + rsu + rsz - s^3 - st^2 + suz - sv^2 \\
  & goq + grt + gtu + gtz - hiq - hmo - i^2t + imn - l^2t - m^2t + nrt + ntu + ntz - o^2t - p^2t - q^2t + rtu + rtz - s^2t - t^3 + tuz - tv^2 \\
  & gos - gpr - his + hlr + i^2p - ilo \\
  & got - gqr - hit + hmr + i^2q - imo \\
  & gou - gps - hiu + hls + ilp - l^2o \\
  & gov - gqs - hiv + hms + ilq - lmo \\
  & goz - gqt - hiz + hmt + imq - m^2o \\
  & gp^2 + gs^2 + gu^2 + gv^2 - 2hlp - 2ils + l^2n + l^2r - l^2u + l^2z - 2lmv + ns^2 + nu^2 + nv^2 - 2ops + p^2r - p^2u + p^2z - 2pqv + ru^2 + rv^2 - s^2u + s^2z - 2stv + u^2z - uv^2 \\
  & gpq + gst + guv + gvz - hlq - hmp - ilt - ims - l^2v + lmn + lmr - m^2v + nst + nuv + nvz - opt - oqs - p^2v + pqr - q^2v + ruv + rvz - s^2v - t^2v + uvz - v^3 \\
  & gpt - gqs - hlt + hms + ilq - imp \\
  & gpv - gqu - hlv + hmu + l^2q - lmp \\
  & gpz - gqv - hlz + hmv + lmq - m^2p \\
  & gq^2 + gt^2 + gv^2 + gz^2 - 2hmq - 2imt - 2lmv + m^2n + m^2r + m^2u - m^2z + nt^2 + nv^2 + nz^2 - 2oqt - 2pqv + q^2r + q^2u - q^2z + rv^2 + rz^2 - 2stv + t^2u - t^2z + uz^2 - v^2z \\
  & gru - gs^2 - i^2u + 2ils - l^2r \\
  & grv - gst - i^2v + ilt + ims - lmr \\
  & grz - gt^2 - i^2z + 2imt - m^2r \\
  & gsv - gtu - ilv + imu + l^2t - lms \\
  & gsz - gtv - ilz + imv + lmt - m^2s \\
  & \boxed{guz - gv^2 - l^2z + 2lmv - m^2u} \\
  \hline
  G_{23} = G_{h} &   h^2o^2 + h^2r^2 + h^2s^2 + h^2t^2 - 2hino - 2hior - 2hips - 2hiqt + i^2n^2 + i^2o^2 - i^2s^2 - i^2t^2 - i^2u^2 - 2i^2v^2 - i^2z^2 + 2ilop + 2ilrs + 2ilsu + 2iltv + 2imoq + 2imrt + 2imtu + 2imtz - l^2o^2 - l^2r^2 - l^2s^2 - l^2t^2 + 2lmrv - 2lmst - m^2o^2 - m^2r^2 - 2m^2ru + m^2s^2 - m^2t^2 + n^2r^2 + n^2s^2 + n^2t^2 - 2no^2r - 2nops - 2noqt + o^4 + o^2p^2 + o^2q^2 - o^2u^2 - 2o^2v^2 - o^2z^2 + 2opsu + 2optv + 2oqtu + 2oqtz - p^2s^2 + 2pqrv - 4pqst - 2q^2ru + 2q^2s^2 - q^2t^2 - r^2u^2 - 2r^2v^2 - r^2z^2 + 2rs^2u + 4rstv + 2rt^2z - s^4 - 2s^2t^2 - s^2v^2 -s^2z^2 + 2stuv + 2stvz - t^4 - t^2u^2 - t^2v^2 \\
  & h^2op + h^2rs + h^2su + h^2tv - hinp - hipr - hipu - hiqv - hlno - hlor - hlps - hlqt - i^2ns + i^2op + iln^2 + ilnr + ilp^2 - ilt^2 - ilv^2 - ilz^2 + impq + imst + imuv + imvz + lmoq + lmrt + lmsv + lmtz - m^2op - m^2rs - m^2su - m^2tv + n^2rs + n^2su + n^2tv - no^2s - nopr - nopu - noqv - np^2s - npqt + o^3p + o^2su + o^2tv + op^3 + opq^2 - opru - ops^2 - opt^2 - opv^2 - opz^2 - oqrv + oquv + oqvz + p^2rs + p^2tv + pqrt - pqtu + pqtz - q^2tv - rsv^2 - rsz^2 + rtuv + rtvz + s^2tv - st^2u + st^2z - suz^2 + sv^2z - t^3v + tuvz - tv^3 \\
  & h^2oq + h^2rt + h^2tu + h^2tz - hinq - hiqr - hiqu - hiqz - hmno - hmor - hmps - hmqt - i^2nt + i^2oq + imn^2 + imnr + imp^2 + imq^2 - l^2nt + l^2oq + lmns - lmop + n^2rt + n^2tu + n^2tz - no^2t - noqr - noqu - noqz - np^2t - nq^2t + o^3q + o^2tu + o^2tz + op^2q - 2opst + oq^3 - oqru - oqrz + oqs^2 - oqt^2 + p^2rt + p^2tz - pqsz - pqtv + q^2rt + q^2sv \\
  & h^2p^2 + h^2s^2 + h^2u^2 + h^2v^2 - 2hlnp - 2hlpr - 2hlpu - 2hlqv - 2ilns + 2ilop + l^2n^2 + 2l^2nr - l^2o^2 + l^2p^2 - l^2t^2 - l^2v^2 - l^2z^2 + 2lmpq + 2lmst + 2lmuv + 2lmvz - m^2p^2 - m^2s^2 - m^2u^2 - m^2v^2 + n^2s^2 + n^2u^2 + n^2v^2 - 2nops - 2np^2u - 2npqv + o^2p^2 + o^2u^2 + o^2v^2 - 2opsu - 2oqtu + p^4 + p^2q^2 + p^2s^2 - p^2t^2 - p^2z^2 - 2pqrv + 4pqst + 2pqvz + 2q^2ru - 2q^2s^2 - q^2v^2 - s^2v^2 - s^2z^2 + 2stuv + 2stvz - t^2u^2 - t^2v^2 - u^2z^2 + 2uv^2z - v^4 \\
  & h^2pq + h^2st + h^2uv + h^2vz - hlnq - hlqr - hlqu - hlqz - hmnp - hmpr - hmpu - hmqv- ilnt + iloq - imns + imop - l^2nv + l^2pq + lmn^2 + 2lmnr + lmnu - lmo^2 + lmq^2 + n^2st + n^2uv + n^2vz - nopt - noqs - np^2v - npqu - npqz - nq^2v + o^2pq + o^2uv + o^2vz - 2optu - 2oqtv + p^3q - p^2rv + 2p^2st + p^2vz + pq^3 + pqru - pqrz - pqs^2 + pqt^2 - pquz - pqv^2 + q^2rv + q^2uv \\
  & h^2q^2 + h^2t^2 + h^2v^2 + h^2z^2 - 2hmnq - 2hmqr - 2hmqu - 2hmqz - 2imnt + 2imoq - 2lmnv + 2lmpq + m^2n^2 + 2m^2nr + 2m^2nu - m^2o^2 - m^2p^2 + m^2q^2 + n^2t^2 + n^2v^2 + n^2z^2 - 2noqt - 2npqv - 2nq^2z + o^2q^2 + o^2v^2 + o^2z^2 - 2optv - 2oqtz + p^2q^2 + p^2t^2 + p^2z^2 - 2pqvz + q^4 + q^2t^2 + q^2v^2 \\
  & hiop + hirs + hisu + hitv - hlo^2 - hlr^2 - hls^2 - hlt^2 - i^2np - i^2os - i^2pu - i^2qv + ilno + ilor + ilps + ilqt + nors + nosu + notv - npr^2 - nps^2 - npt^2 - o^3s + o^2pr - o^2pu - o^2qv + op^2s + opqt + orsu + ortv - os^3 - ost^2 - pr^2u + prs^2 + pstv - pt^2u - qr^2v + qrst - qs^2v + qstu \\
  & hioq + hirt + hitu + hitz - hmo^2 - hmr^2 - hms^2 - hmt^2 - i^2nq - i^2ot - i^2qu - i^2qz + imno + imor + imps + imqt - l^2ot + l^2qr + lmos - lmpr + nort + notu + notz - nqr^2 - nqs^2 - nqt^2 - o^3t + o^2qr - o^2qu - o^2qz - op^2t + 2opqs + oq^2t + ortu + ortz - os^2t - ot^3 + pstz - pt^2v - qr^2u - qr^2z + qrs^2 + qrt^2 - qs^2z + qstv \\
  & hip^2 + his^2 + hiu^2 + hiv^2 - hlop - hlrs - hlsu - hltv - ilnp - ilos - ilpu - ilqv + l^2no + l^2or + l^2ps + l^2qt + nos^2 + nou^2 + nov^2 - nprs - npsu - nptv - o^2ps + op^2r - op^2u - opqv + oru^2 + orv^2 - os^2u - ot^2u + p^3s + p^2qt - prsu - prtv + ps^3 + pst^2 + psv^2 - ptuv - qrsv + qrtu - qsuv + qtu^2 \\
  & hipq + hist + hiuv + hivz - hmop - hmrs - hmsu - hmtv - ilnq - ilot - ilqu - ilqz - l^2ov + l^2qs + lmno + lmor + lmou + lmqt + nost + nouv + novz - nqrs - nqsu - nqtv - o^2pt - op^2v + opqr - opqz + oruv + orvz - ostu - ot^2v + p^2qs + pq^2t - prsv + ps^2t + psvz - ptv^2 - qrsz + qst^2 - qsuz + qtuv \\
  & hiq^2 + hit^2 + hiv^2 + hiz^2 - hmoq - hmrt - hmtu - hmtz - imnq - imot - imqu - imqz - 2lmov + lmpt + lmqs + m^2no + m^2or + 2m^2ou - m^2ps + m^2qt + not^2 + nov^2 + noz^2 - nqrt - nqtu - nqtz - o^2qt - 2opqv + oq^2r + oq^2u - oq^2z + orv^2 + orz^2 - ot^2u - ot^2z + p^2qt - 2prtv + 2pst^2 + psz^2 - ptvz + q^3t + qrtu - qrtz - qs^2t - qsvz + qt^3 + qtv^2 \\
  & hloq + hlrt + hltu + hltz - hmop - hmrs - hmsu - hmtv - ilnq - ilot - ilqu - ilqz + imnp + imos + impu + imqv - l^2pt + l^2qs + nprt + nptu + nptz - nqrs - nqsu - nqtv - o^2pt + o^2qs - opqz + oq^2v - p^3t + p^2qs + prtu + prtz - ps^2t - pt^3 + ptuz - ptv^2 - qrsu - qrsz + qs^3 + qst^2 - qsuz + qsv^2 \\
  & hlpq + hlst + hluv + hlvz - hmp^2 - hms^2 - hmu^2 - hmv^2 - l^2nq - l^2ot - l^2pv - l^2qz + lmnp + lmos + lmpu + lmqv + npst + npuv + npvz - nqs^2 - nqu^2 - nqv^2 - op^2t + opqs - p^3v + p^2qu - p^2qz + pq^2v + pruv + prvz - ps^2v - pt^2v + puvz - pv^3 - qru^2 - qrv^2 + qs^2u - qs^2z + 2qstv - qu^2z + quv^2 \\
  & hlq^2 + hlt^2 + hlv^2 + hlz^2 - hmpq - hmst - hmuv - hmvz - lmnq - lmot - lmpv - lmqz + m^2np + m^2os + m^2pu + m^2qv + npt^2 + npv^2 + npz^2 - nqst - nquv - nqvz - opqt + oq^2s - p^2qv + pq^2u - pq^2z + prv^2 + prz^2 - 2pstv + pt^2u - pt^2z + puz^2 - pv^2z + q^3v - qruv - qrvz + qs^2v + qt^2v - quvz + qv^3 \\
  & hos - hpr - ins + iop + lnr - lo^2 \\
  & hot - hqr - int + ioq + mnr - mo^2 \\
  & hou - hps - inu + ip^2 + lns - lop \\
  & hov - hqs - inv + ipq + mns - mop \\
  & hoz - hqt - inz + iq^2 + mnt - moq \\
  & hpt - hqs - lnt + loq + mns - mop \\
  & hpv - hqu - lnv + lpq + mnu - mp^2 \\
  & hpz - hqv - lnz + lq^2 + mnv - mpq \\
  & hru - hs^2 - iou + ips + los - lpr \\
  & hrv - hst - iov + iqs + lot - lqr \\
  & hrz - ht^2 - ioz + iqt + mot - mqr \\
  & hsv - htu - lov + lpt + mou - mps \\
  & hsz - htv - loz + lqt + mov - mqs \\
  & \boxed{huz - hv^2 - lpz + lqv + mpv - mqu} \\
  \hline
  G_i = G_{24} & i^2p^2 + i^2s^2 + i^2u^2 + i^2v^2 - 2ilop - 2ilrs - 2ilsu - 2iltv + l^2o^2 + l^2r^2 + l^2s^2 + l^2t^2 + o^2s^2 + o^2u^2 + o^2v^2 - 2oprs - 2opsu - 2optv + p^2r^2 + p^2s^2 + p^2t^2 + r^2u^2 + r^2v^2 - 2rs^2u - 2rstv + s^4 + s^2t^2 + s^2v^2 - 2stuv + t^2u^2 \\
  & i^2pq + i^2st + i^2uv + i^2vz - iloq - ilrt - iltu - iltz - imop - imrs - imsu - imtv - l^2rv + l^2st + lmo^2 + lmr^2 + lmru + lmt^2 + o^2st + o^2uv + o^2vz - oprt - optu - optz - oqrs - oqsu - oqtv - p^2rv + p^2st + pqr^2 + pqru + pqt^2 + r^2uv + r^2vz - rs^2v - rstu - rstz - rt^2v + s^3t + s^2vz + st^3 - stuz - stv^2 + t^2uv \\
  & i^2q^2 + i^2t^2 + i^2v^2 + i^2z^2 - 2imoq - 2imrt - 2imtu - 2imtz - 2lmrv + 2lmst + m^2o^2 + m^2r^2 + 2m^2ru - m^2s^2 + m^2t^2 + o^2t^2 + o^2v^2 + o^2z^2 - 2oqrt - 2oqtu - 2oqtz - 2pqrv + 2pqst + q^2r^2 + 2q^2ru - q^2s^2 + q^2t^2 + r^2v^2 + r^2z^2 - 2rstv - 2rt^2z + s^2t^2 + s^2z^2 - 2stvz + t^4 + t^2v^2 \\
  & ilpq + ilst + iluv + ilvz - imp^2 - ims^2 - imu^2 - imv^2 - l^2oq - l^2rt - l^2sv - l^2tz + lmop + lmrs + lmsu + lmtv + opst + opuv + opvz - oqs^2 - oqu^2 - oqv^2 - p^2rt - p^2sv - p^2tz + pqrs + pqsu + pqtv + rsuv + rsvz - rtu^2 - rtv^2 - s^3v + s^2tu - s^2tz + st^2v + suvz - sv^3 - tu^2z + tuv^2 \\
  & ilq^2 + ilt^2 + ilv^2 + ilz^2 - impq - imst - imuv - imvz - lmoq - lmrt - lmsv - lmtz + m^2op + m^2rs + m^2su + m^2tv + opt^2 + opv^2 + opz^2 - oqst - oquv - oqvz - pqrt - pqsv - pqtz + q^2rs + q^2su + q^2tv + rsv^2 + rsz^2 - rtuv - rtvz - s^2tv + st^2u - st^2z + suz^2 - sv^2z + t^3v - tuvz + tv^3 \\
  & ipt - iqs - lot + lqr + mos - mpr \\
  & ipv - iqu - lov + lqs + mou - mps \\
  & ipz - iqv - loz + lqt + mov - mpt \\
  & isv - itu - lrv + lst + mru - ms^2 \\
  & isz - itv - lrz + lt^2 + mrv - mst \\
  & \boxed{iuz - iv^2 - lsz + ltv + msv - mtu} \\
  \hline
  G_{25} = G_l & l^2q^2 + l^2t^2 + l^2v^2 + l^2z^2 - 2lmpq - 2lmst - 2lmuv - 2lmvz + m^2p^2 + m^2s^2 + m^2u^2 + m^2v^2 + p^2t^2 + p^2v^2 + p^2z^2 - 2pqst - 2pquv - 2pqvz + q^2s^2 + q^2u^2 + q^2v^2 + s^2v^2 + s^2z^2 - 2stuv - 2stvz + t^2u^2 + t^2v^2 + u^2z^2 - 2uv^2z + v^4 \\
  \hline
  G_{33} = G_n & nru - ns^2 - o^2u + 2ops - p^2r \\
  & nrv - nst - o^2v + opt + oqs - pqr \\
  & nrz - nt^2 - o^2z + 2oqt - q^2r \\
  & nsv - ntu - opv + oqu + p^2t - pqs \\
  & nsz - ntv - opz + oqv + pqt - q^2s \\
  & \boxed{nuz - nv^2 - p^2z + 2pqv - q^2u} \\
  \hline
  G_{34} = G_o & osv - otu - prv + pst + qru - qs^2 \\
  & osz - otv - prz + pt^2 + qrv - qst \\
  & \boxed{ouz - ov^2 - psz + ptv + qsv - qtu} \\
  \hline
  G_{44} = G_r & \boxed{ruz - rv^2 - s^2z + 2stv - t^2u} \\
  \hline 
\end{longtable}

\section{Code for Chapter \ref{sec:moduli}}\label{AppendixB}
The following script can be run in SageMath \cite{sagemath} and produces the Gr\"obner basis above together with the computations needed in the proof of Proposition \ref{prop:radical6}. One can slightly modify the matrix in the beginning to adapt the code to higher dimension $n$. We caution the reader that the function for computing the set of unlucky primes, in the sense of Proposition \ref{prop:win}, is highly inefficient. Especially  the last part of this code requires about one day running time on a laptop.

\begin{verbatim}

  # Define the polynomial ring, fix the lexicographic order and the matrix X

  R.< a,b,c,d,e,f,g,h,i,l,m,n,o,p,q,r,s,t,u,v,z> = 
  PolynomialRing(QQ, 21, order= "lex")
  M = MatrixSpace(R,6,6)
  X = M([a,b,c,d,e,f,b,g,h,i,l,m,c,h,n,o,p,q,d,i,o,r,s,t,e,l,p,s,u,v,
  f,m,q,t,v,z])

  # Define the ideal J

  L = [X.trace()]
  for row in X*X : 
    for entry in row:
        if not entry in L:
            L.append(entry)
  
  for minor in X.minors(3):
    if not minor in L:
        L.append(minor)
        
  J = R.ideal(L)

  # Compute the Gr\"obner basis, the output is listed above 

  grob = J.groebner_basis()

  # The following function takes two sets of polynomials F and G,
  # computes the matrices Z, Y, R such that G = Z.L,   L = Y.G,   R.G = 0,
  # inspects their coefficients and produces the set of coefficients
  # that are not 1 or -1. 

  def unlucky_primes(F, G, ring):
    #F is a set of generators for the ideal and G is the Groebner basis over Q
    #we need to check the entries of three matrices Z,Y,R with F = Z.G; 
    #G = Y.F; R matrix of syzygies.
    generators = list(F)
    Ideal_F = ring.ideal(generators)
    Ideal_G = ring.ideal(list(G))
    unlucky = []
    for poly in G:
        row_Z = poly.lift(Ideal_F) #this produces the entries of the matrix Z 
        #on the line corresponding to poly in G
        for index in range(len(row_Z)):
            entry = row_Z[index]
            for coeff in entry.coefficients():
                if not coeff == 1 and not coeff == -1:
                  if coeff not in unlucky:
                    unlucky.append(coeff)
                        
    for poly in F:
        row_Y = poly.lift(Ideal_G) #this produces the entries of the matrix Y 
        #on the line corresponding to poly in G
        for index in range(len(row_Y)):
            entry = row_Y[index]
            for coeff in entry.coefficients():
              if not coeff == 1 and not coeff == -1:
                if coeff not in unlucky:
                  unlucky.append(coeff)
    
    Sy = Ideal_G.syzygy_module()
    for row in Sy:
        for entry in row:
            for coeff in entry.coefficients():
              if not coeff == 1 and not coeff == -1:
                if coeff not in unlucky:
                  unlucky.append(coeff)
    if not unlucky:
        print("There are no unlucky primes")
    return(unlucky)

  # We apply our function on the polynomials defining J and the Groebner basis 
  # found above

  unlucky_primes(L, grob, R)

  #Output: [2]

  # We have to check now that the leading coefficients of the marked generators 
  # are non zero divisors modulo J. This is done by comparing the division 
  # ideal (J : lc) with J.

  # We start with the the leading coefficient of all but one marked generators 
  # of degree one

  lc_1 = u*z -v^2  
  J == J.quotient(ideal(lc_1))

  # Then the leading coefficient of the remaining marked generator 
  # of degree one

  lc_2 = f 
  J == J.quotient(ideal(lc_2))

  # Last, the leading coefficient of the marked generator of degree two 
  # which is not already monic

  lc_3 = q^2 + t^2 + v^2 + z^2 
  J == J.quotient(ideal(lc_3))

  # Again, computing the division ideal requires computing a Groebner basis 
  # for (xJ, lc(x-1)) hence we have to check again if there are unlucky primes.
  # We define a new polynomial ring obtained by adding the auxiliary variable x

  S.<x, a,b,c,d,e,f,g,h,i,l,m,n,o,p,q,r,s,t,u,v,z> = 
  PolynomialRing(QQ, 22, order= "lex")
  division1 = []
  division2 = []
  division3 = []
  # we construct the ideal xJ (one for each leading coefficient)
  for poly in L:
    division1.append(x*poly)
    division2.append(x*poly)
    division3.append(x*poly)

  # We add the remaining polynomial lc*(x-1), compute a Groebner basis and
  # find the unlucky primes
  # This last part requires about a day running time on a laptop.

  division1.append(lc_1*x -lc_1)
  grob_division1 = S.ideal(division1).groebner_basis()

  print(unlucky_primes(division1, grob_division1, S))

  # Output: [2, 6, 3] 

  division2.append(lc_2*x -lc_2)
  grob_division2 = S.ideal(division2).groebner_basis()

  print(unlucky_primes(grob_division2, division2, S))

  # Output: [2, 4, 3, 6]

  division3.append(lc_3*x -lc_3)
  grob_division3 = S.ideal(division3).groebner_basis()

  print(unlucky_primes(division3, grob_division3, S))
  
  # Output: [2, -2]

  # Last, we need to check that the two discriminants of the polynomials 
  # of degree two are not zero-divisors. 

  # The discriminant of the quadratic polynomial in the variable l is

  delta1 =  (m*p*q + m*s*t + m*u*v + m*v*z)^2 -( m^2*p^2 + m^2*s^2 + 
  m^2*u^2 + m^2*v^2 + p^2*t^2 + p^2*v^2 + p^2*z^2 - 2*p*q*s*t - 2*p*q*u*v
  - 2*p*q*v*z + q^2*s^2 + q^2*u^2 + q^2*v^2 + s^2*v^2 + s^2*z^2 - 
  2*s*t*u*v - 2*s*t*v*z + t^2*u^2 + t^2*v^2 + u^2*z^2 - 2*u*v^2*z + v^4)
  
  # Observe that it is enough to check that it is not a zero divisor modulo
  # J_m. Recall that we need to add one auxiliary varible x to compute
  # the quotient ideal.

  J_m = S.ideal(J).elimination_ideal([m,n,o,p,q,r,s,t,u,v,z])
  J_m == J_m.quotient(S.ideal(delta1))

  # Output: True

  # Similarly, for the discriminant of the quadratic polynomial in f
  
  delta2 =  m^2 + q^2 + t^2 + v^2 + z^2
  J_g = S.ideal(J).elimination_ideal([g,h,i,l,m,n,o,p,q,r,s,t,u,v,z])
  J_g == J_g.quotient(S.ideal(delta2))

  # Output: True

  # Last, since we have computed two new Groebner bases, we have to check
  # for unlucky primes

  division1 = []

  for poly in J_m.gens():
    division1.append(x*poly)
  
  division1.append(delta1*x -delta1)
  grob_division1 = S.ideal(division1).groebner_basis()

  unlucky_primes(division1, grob_division1, S)

  # Outuput: [2,-2, 3, -3, 4]

  division2 = []
  
  for poly in J_g.gens():
    division2.append(x*poly)
    
  division2.append(delta2*x -delta2)
  grob_division2 = S.ideal(division2).groebner_basis()

  unlucky_primes(grob_division2, division2, S)

  # Interestingly for this last computation the number of unlucky primes
  # explodes:
  # [2, 3, 5, 7, 11, 13, 17, 19, 23, 29, 31, 37, 41, 43, 47, 53,
  # 59, 61, 67, 71, 73, 79, 101, 103, 107, 109, 113, 127, 131, 137, 167,
  # 173, 179, 193, 211, 223, 263, 283, 313, 359, 461, 809]
\end{verbatim}

\section{Code for Chapter \ref{sec:adlvs}}\label{AppendixC}
The following script can be run in SageMath \cite{sagemath} and produces the list of admissible elements for the group theoretical datum $(\widetilde{B}_3, J = \{0, 1, 2\}, \sigma, \omega_2^\vee)$ studied in Section \ref{sec:adlvs3}. The function \verb|newtonPoint| also computes the Newton point of a given element in the extended affine Weyl group.

\begin{verbatim}
  # We define the extended affine Weyl group we will be working with and
  # fix the cocharacter omega_2 = (1,1,0) and the non-trivial length-zero 
  # element tau, which gives the action of the Frobenius in the non-split case

  E = ExtendedAffineWeylGroup(["B", 3, 1])
  WF = E.WF()
  F = E.fundamental_group()
  b = E.lattice_basis()
  Wa = E.affine_weyl()

  omega_2 = PW0(b[2])

  tau = F[1]

  # Here we compute the set Adm(omega_2)^J: first we find all elements
  # smaller in the Bruhat order than omega_2 or any conjugate via the
  # finite Weyl group, then take minimal length representatives
  # in the left coset W_J\W

  compare = [] 
  #contains all the t^{x(omega_2)}
  for x in W0:
    e = WF(x*omega_2*x^-1)
    if e in compare:
        continue
    compare.append(e)

  adm = [] 
  #will contain all the elements smaller than t^{x(omega_2)} modulo W_J
  for w in compare:
    for i in range(w.length() + 1):
        for u in Wa.elements_of_length(i):
            if (WF(u)).bruhat_le(w):
                x = u.coset_representative([0,1,2], side = "left").
                reduced_word() 
                if WF.from_reduced_word(x) in adm:                                
                    continue
                adm.append(WF.from_reduced_word(x))

  print(adm) # The output is listed in Section 7.2

  # The following function computes the Newton point of a given element in
  # the extended affine Weyl group. Observe that changing the parameter
  # coweight_space one can use it for other groups. The parameter tau
  # is a length-zero element whose adjoint action is the Frobenius

  R = RootSystem(['B',3])
  coweight_space = R.coweight_space()

  def newtonPoint(w, tau, coweight_space):
    powers = w
    sigma = tau
    order = 1
    while not powers.to_classical_weyl().is_one():
        powers = powers*sigma*w*sigma^-1
        sigma = sigma*tau
        order = order +1
    
    newton = coweight_space(powers.to_translation_right()).
    to_dominant_chamber()/order
    return(newton)

    # We compute the Newton points of the admissible element in the split case
    for w in adm:
        print(w)
        print(newtonPoint(w, WF.from_reduced_word([ ]), coweight_space))
        print( " ")
    
    # Newton points of the admissible elements in the non-split case
    for w in adm:
        print(w)
        print(newtonPoint(w, tau, coweight_space))
        print(" ")
    
    # The output is presented in Section 7.2
\end{verbatim}

\bibliographystyle{alpha} 
\bibliography{GU.bbl} 

\end{document}